\documentclass[12pt,oneside, reqno]{amsart}
\usepackage{txfonts}
\usepackage{amsfonts}
\usepackage{mathrsfs}
\usepackage{amsmath}
\usepackage{amssymb}
\usepackage{bbm}
\usepackage{stmaryrd}
\usepackage{esint}
\usepackage{xcolor}

\usepackage{enumerate}

\newtheorem{theorem}{Theorem}[section]
\newtheorem{lemma}[theorem]{Lemma}
\newtheorem{remark}[theorem]{Remark}
\newtheorem{definition}[theorem]{Definition}
\newtheorem{proposition}[theorem]{Proposition}

\newtheorem{corollary}[theorem]{Corollary}

\def\<{{\langle}}
\def\>{{\rangle}}

\def\bt{\begin{theorem}}
	\def\et{\end{theorem}}
\def\bl{\begin{lemma}}
	\def\el{\end{lemma}}
\def\br{\begin{remark}}
	\def\er{\end{remark}}
\def\bd{\begin{definition}}
	\def\ed{\end{definition}}
\def\bp{\begin{proposition}}
	\def\ep{\end{proposition}}
\def\bc{\begin{corollary}}
	\def\ec{\end{corollary}}

\allowdisplaybreaks

\allowdisplaybreaks
\numberwithin{equation}{section}
\begin{document}
	\title{Generic Strong Well-Posedness for McKean--Vlasov SDEs with Jumps in a Krylov-Stable Coefficient Space}
	\date{}
	\author{ MINGBO ZHANG${}^{1,\ast}$}
	
	%\thanks{}
	\dedicatory{
		${}^{1,\ast}$ School of Statistics and data science, Jiangxi University of Finance and Economics\\
		Nanchang, Jiangxi 333000, P. R. China\\
		M. Zhang: zmb1982zqz@hotmail.com }
	
	\footnote[0]{$^{\S}$This work was supported by NSFC (12361030).
		
		${}^\ast$Corresponding author}
	
	\keywords{McKean--Vlasov stochastic differential equations with jumps;
		generic strong well-posedness; discontinuous coefficients;
		Krylov occupation estimate; Baire category}
	\subjclass[2020]{60H10, 60G51, 60B05, 54E52}
	
	\date{}

	\begin{abstract}
		We prove that strong well-posedness is generic for McKean--Vlasov
		stochastic differential equations with jumps in a complete coefficient
		space that allows genuinely discontinuous state dependence. For every
		$T>0$, there exists a dense residual subset $\mathfrak R_T$, independent
		of the initial state, such that every coefficient $a\in\mathfrak R_T$
		generates a unique strong solution for every deterministic initial
		condition $x\in\mathbb R^d$. Moreover, the solution map
		$x\mapsto X^{a,x}$ is continuous in $\mathcal S_T^2$, and every
		approximation by spatially Lipschitz coefficients converges locally
		uniformly in $x$ to the same canonical solution family.
		
		The coefficients are uniformly bounded, uniformly non-degenerate in the
		diffusion component, and Lipschitz continuous in the law with respect to
		the $2$-Wasserstein distance, while their state dependence may be merely
		measurable. The ambient space is obtained by completing a spatially
		Lipschitz core under a Krylov-stable metric based on local
		$L^{2(d+1)}$ coefficient errors. A stopped conditional
		Krylov--Melnikov occupation estimate converts convergence in this metric
		into occupation convergence along non-degenerate It\^o--L\'evy
		trajectories.
		
		The key probabilistic ingredient is a set-valued stability theorem: near
		a core coefficient, every strong solution of every nearby solvable
		equation remains close to the unique core solution, without any
		uniqueness assumption on the nearby equation. Together with closedness
		of the strong-solution relation, this permits a Baire-category argument
		on the completed coefficient space. We also give an intrinsic
		spatial-translation criterion showing that the completion contains
		natural classes with essential spatial discontinuities. A projective-limit
		argument yields the corresponding global-time generic well-posedness
		result.
	\end{abstract}
	
	\maketitle

	\section{Introduction}
	\label{sec:introduction}
	
	We consider McKean--Vlasov stochastic differential equations driven by
	Brownian motion and a compensated Poisson random measure,
	\begin{equation}
		\label{eq:introduction-equation}
		\begin{aligned}
			X_t
			={}&x
			+\int_0^t b\bigl(s,X_s,\mathcal L(X_s)\bigr)\,ds
			+\int_0^t \sigma\bigl(s,X_s,\mathcal L(X_s)\bigr)\,dW_s
			\\
			&+\int_0^t\int_U
			\beta\bigl(s,X_{s-},\mathcal L(X_s),z\bigr)\,
			\widetilde N(ds,dz),
			\qquad 0\le t\le T.
		\end{aligned}
	\end{equation}
	Here $W$ is a Brownian motion, $\mathcal L(\xi)$ denotes the law of a
	random variable $\xi$, and $\widetilde N(dt,dz)=N(dt,dz)-dt\,\nu(dz)$
	is a compensated Poisson random measure. McKean--Vlasov equations have
	been studied since the seminal work of McKean \cite{Mckean-1966} and are
	a standard probabilistic model for mean-field interactions and
	propagation of chaos; see, for instance, \cite{Sznitman-1991}.
	
	The coefficients in \eqref{eq:introduction-equation} are assumed to be
	uniformly bounded, the diffusion component is uniformly non-degenerate,
	and the dependence on the probability law is Lipschitz continuous with
	respect to the $2$-Wasserstein distance. In the state variable, however,
	the coefficient space constructed below allows merely measurable and
	genuinely discontinuous dependence. At this level of spatial regularity,
	strong existence and pathwise uniqueness cannot in general be expected
	for every prescribed coefficient. This leads to a question different
	from the usual search for pointwise sufficient conditions:
	
Can one construct a complete coefficient space broad enough to
		contain genuine state discontinuities, yet sufficiently adapted to the
		stochastic dynamics that strong well-posedness holds for a topologically
		large set of coefficients?
	
	Classical well-posedness results for McKean--Vlasov equations impose
	regularity, monotonicity, coercivity, or related structural assumptions on
	each prescribed coefficient. For Brownian equations, strong well-posedness
	under irregular, singular, H\"older, or spatially discontinuous state
	dependence has been established by several different regularization
	mechanisms; see, among others,
	\cite{Bauer-MeyerBrandis-Proske-2018,Chaudru-de-Raynal-2020,
		Huang-Wang-2019,Rockner-Zhang-2021,Leobacher-Reisinger-Stockinger-2022}
	and the more recent rough-coefficient results
	\cite{Pascucci-Rondelli-2025,Pascucci-Rondelli-Veretennikov-2026}.
	For equations with jumps, direct well-posedness results range from
	Lipschitz and locally Lipschitz settings
	\cite{Graham-1992,Jourdain-Meleard-Woyczynski-2008,Hao-Li-2016,
		Erny-2022,Cavallazzi-2025,Chao-Duan-Wei-2023} to the recent
	irregular-coefficient regime considered in \cite{Wang-Ren-Miao-2024}.
	These works identify classes of prescribed coefficients for which the
	corresponding equation is well posed. The question addressed here is
	instead topological: whether strong well-posedness is generic in a complete
	ambient coefficient space that itself contains genuinely discontinuous
	coefficients and coefficients for which pointwise uniqueness criteria are
	unavailable. The distinction is relevant at low regularity, where strong
	existence or pathwise uniqueness may fail even for classical stochastic
	differential equations \cite{BARLOW-1982}, while nonlinear dependence on
	the law may introduce additional uniqueness phenomena
	\cite{Scheutzow-1987}.
	
	Baire-category approaches to generic well-posedness originate in the
	theory of differential equations \cite{Orlicz-1932,Lasota-1973-Yorke}
	and were developed for ordinary stochastic differential equations in
	\cite{Heunis-1986,Bahlali-Mezerdi-1996-Ouknine,
		Bahlali-Mezerdi-1998-Ouknine}. For Brownian McKean--Vlasov equations,
	approximation, stability, prevalence, and genericity for continuous or
	uniformly continuous coefficients were studied in
	\cite{Mezerdi-2019,Bahlali-Mezerdi-Mezerdi-2020,
		Mezerdi-Khelfallah-2021}. More recently, \cite{Mezerdi2025} obtained
	generic pathwise well-posedness and generic convergence of Picard and Euler
	approximations in a space of uniformly continuous coefficients. These
	results provide the closest generic counterparts to the present work.
	
	The contribution of this paper is not a new pointwise regularity
	criterion for a prescribed McKean--Vlasov equation. It is a generic
	well-posedness theory in a rough coefficient space whose topology is
	determined by stochastic occupation. The Krylov-stable completion used
	here goes beyond the uniformly continuous coefficient classes in the
	preceding McKean--Vlasov genericity results and contains coefficients with
	essential spatial discontinuities. At the same time, the jump structure
	forces both the coefficient-space construction and the stability argument
	to be compatible with c\`adl\`ag trajectories and an
	$L^2(U,\nu;\mathbb R^d)$-valued jump coefficient. Outside the Lipschitz
	core the strong-solution relation may be empty or set-valued, so stability
	cannot be formulated through a globally defined single-valued solution
	map. Instead, it must hold uniformly over all strong solutions of every
	nearby solvable equation. This set-valued stability, together with
	closedness of the solution relation, is the mechanism that permits the
	Baire-category argument in the discontinuous coefficient space.
	
	The topology is dictated by a stopped form of the classical
	Krylov--Melnikov occupation estimate. Schematically, for a localized
	non-degenerate It\^o--L\'evy process $Y$ and a stopping time $\tau$
	bounded by its exit time from $B_R$, the estimate applied to the square
	of a coefficient error has the form
	\[
	\mathbb E\int_0^\tau
	\Delta(a,\widetilde a)(t,Y_t)^2\,dt
	\leq
	C\,\|\Delta(a,\widetilde a)\|_{L^{2(d+1)}([0,T]\times B_R)}^2.
	\]
	The coefficient distance $\Delta$ is defined precisely in
	Section~\ref{sec:framework-main}. Thus local $L^{2(d+1)}$ convergence in
	space--time is converted into integrated convergence along stopped jump
	trajectories. We accordingly define $(\mathfrak A_T,d_T)$ as the
	completion of a spatially Lipschitz core
	$\mathfrak A_T^{\mathrm{Lip}}$ under the resulting Krylov-stable metric.
	The occupation estimate is classical in origin; the point here is its
	stopped conditional formulation and its use in constructing a coefficient
	topology for which stochastic stability survives completion. The precise
	historical positioning and the version used in the proof are given in
	Section~\ref{sec:krylov-ito-levy}.
	
	The completion is not simply the class of all measurable coefficient
	triples satisfying the structural assumptions: membership requires
	approximation by the Lipschitz core in $d_T$. We give an intrinsic,
	directly verifiable spatial-translation criterion for this approximation.
	It implies, in particular, that $\mathfrak A_T$ contains coefficients
	pasted across arbitrary fixed finite Borel partitions, finite-rank law
	interactions with bounded measurable spatial profiles, and
	integral-interaction coefficients with uniform spatial-translation
	control. Hence the ambient space contains coefficients with essential
	spatial discontinuities and is strictly larger than the uniformly
	continuous coefficient spaces used in the preceding genericity results.
	
	Our main conclusion is that the well-posed coefficients form a
	topologically large subset of this rough space. For every fixed time
	horizon $T>0$, there exists a dense residual set
	\[
	\mathfrak R_T\subset\mathfrak A_T
	\]
	such that every $a\in\mathfrak R_T$ generates a unique strong solution of
	\eqref{eq:introduction-equation} for every deterministic initial state
	$x\in\mathbb R^d$. A point worth emphasizing is that the residual set is
	chosen once and for all: it is independent of the initial state. Moreover,
	the solution map
	\[
	x\longmapsto X^{a,x}
	\]
	is continuous with values in $\mathcal S_T^2$, and for every
	Lipschitz-core approximation $a_n\to a$,
	\[
	\sup_{|x|\leq m}
	\|X^{a_n,x}-X^{a,x}\|_{\mathcal S_T^2}
	\longrightarrow0,
	\qquad m\geq1.
	\]
	The limit is independent of the approximating sequence. In this sense,
	the classical solution map on the Lipschitz core admits a canonical
	generic extension to the completion. A projective-limit construction
	yields the corresponding global-time result, again on a residual set
	independent of the initial state.
	
	Three ingredients underlie this conclusion.
	\begin{enumerate}
		\item
		A Krylov-stable coefficient space containing genuine
			discontinuities.
		We construct the complete metric space $(\mathfrak A_T,d_T)$ and
		prove that completed elements admit measurable representatives that
		preserve uniform boundedness, $W_2$-Lipschitz continuity in the law,
		and ellipticity. The spatial-translation criterion described above
		provides broad natural subclasses of the completion.
		
		\item
		Set-valued stochastic stability and closedness.
		Near every Lipschitz-core coefficient, every strong solution of every
		nearby solvable equation remains close to the unique core solution;
		no uniqueness assumption is imposed on the nearby equation. We then
		prove closedness of the strong-solution relation along core
		approximations. The compensated Poisson integrands are treated in
		their natural space $L^2(\Omega\times[0,T];H)$, where
		$H=L^2(U,\nu;\mathbb R^d)$.
		
		\item
		A generic canonical extension simultaneous in the initial
			state.
		An abstract Baire-category extension principle for a single-valued
		map on a dense regular core and a possibly set-valued relation on its
		completion yields a dense residual set independent of $x$. Every
		coefficient in this set is strongly well posed for all
		$x\in\mathbb R^d$, and every core approximation converges locally
		uniformly in the initial state to the same solution family.
	\end{enumerate}
	
	The simultaneous nature of the result is stronger than obtaining, for
	each fixed $x$, a residual set that may depend on $x$. It follows from
	stability neighborhoods that are uniform over compact sets of initial
	conditions and a two-parameter Baire-category construction; no additional
	intersection over a countable dense subset of $\mathbb R^d$ is required.
	
	The remainder of the paper is organized as follows.
	Section~\ref{sec:framework-main} introduces the stochastic framework, the
	Krylov-stable coefficient space, and the main results.
	Section~\ref{sec:krylov-ito-levy} proves the stopped conditional
	occupation estimate used in the stability theory.
	Section~\ref{sec:coefficient-space-core} develops the completion and its
	discontinuous subclasses. Sections~\ref{sec:stability-core} and
	\ref{sec:closedness-limits} establish stability and closedness,
	respectively. Section~\ref{sec:generic-strong-wellposedness} proves the
	generic canonical extension and its global-time version. The analytic
	regularization used in the Krylov argument is collected in
	Appendix~\ref{app:aleksandrov-regularization}.
	
	\section{Framework and main results}
	\label{sec:framework-main}
	
	Let $(\Omega,\mathcal F,(\mathcal F_t)_{t\ge0},\mathbb P)$ be a complete filtered probability space satisfying the usual conditions. Let $W=(W_t)_{t\ge0}$ be a $d$-dimensional $(\mathcal F_t)$-Brownian motion, and let $N(dt,dz)$ be a Poisson random measure on $[0,\infty)\times U$ with compensator $dt\,\nu(dz)$, where $\nu$ is a $\sigma$-finite measure on $(U,\mathcal U)$. We assume that $W$ and $N$ are independent.

	Let $\mathcal P(\mathbb R^d)$ denote the set of Borel probability
	measures on $\mathbb R^d$, and set
	\[
	\mathcal P_2(\mathbb R^d)
	:=
	\left\{\mu\in\mathcal P(\mathbb R^d):
	\int_{\mathbb R^d}|x|^2\,\mu(dx)<\infty\right\}.
	\]
	For $\mu,\eta\in\mathcal P_2(\mathbb R^d)$, let $\Pi(\mu,\eta)$
	denote the set of couplings of $\mu$ and $\eta$. For $p\in\{1,2\}$,
	define the $p$-Wasserstein distance by
	\[
	W_p(\mu,\eta)
	:=
	\left(
	\inf_{\pi\in\Pi(\mu,\eta)}
	\int_{\mathbb R^d\times\mathbb R^d}|x-y|^p\,\pi(dx,dy)
	\right)^{1/p}.
	\]
	In particular, whenever $\xi$ and $\eta$ are square-integrable random
	variables defined on the same probability space,
	\[
	W_2^2(\mathcal L(\xi),\mathcal L(\eta))
	\leq
	\mathbb E|\xi-\eta|^2.
	\]
	
	\subsection{The Krylov-Stable Coefficient Space}
	
	We use $a$ to denote a coefficient triple and write
	$\mathcal E_T(a,x)$ for equation \eqref{eq:introduction-equation}
	with coefficient $a$ and initial value $x$. The precise convention for
	the jump component of $a$ is fixed below after introducing its
	Hilbert-space-valued realization.
	
	We identify adapted c\'adl\'ag processes up to
	indistinguishability and define
	\[
	\mathcal S_T^2
	:=
	\left\{
	X:
	\begin{array}{l}
		X \text{ is an }(\mathcal F_t)\text{-adapted c\'adl\'ag process},\\[1mm]
		\displaystyle
		\mathbb E\big[\sup_{0\leq t\leq T}|X_t|^2\big]<\infty
	\end{array}
	\right\}.
	\]
	On this space we set
	\[
	\|X\|_{\mathcal S_T^2}
	:=
	\bigg(
	\mathbb E\big[\sup_{0\leq t\leq T}|X_t|^2\big]
	\bigg)^{1/2}.
	\]
	With this convention, $\mathcal S_T^2$ is a Banach space.

	Set
	\begin{equation}
		\label{eq:H}
		H:=L^2(U,\mathcal U,\nu;\mathbb R^d),
	\end{equation}
	and assume that $H$ is separable. Its norm is denoted by
	\[
	\|h\|_H^2 := \int_U |h(z)|^2 \nu(dz).
	\]
	All measurability statements concerning $\mathcal P_2(\mathbb R^d)$ refer to its Borel $\sigma$-field induced by the $2$-Wasserstein metric.
	
	A pointwise jump coefficient is a jointly measurable map
	\[
	\beta: [0,T]\times\mathbb R^d \times \mathcal P_2(\mathbb R^d)\times U \longrightarrow \mathbb R^d
	\]
	such that
	\[
	\int_U |\beta(t,x,\mu,z)|^2 \nu(dz) < \infty
	\]
	for every $(t,x,\mu)$. Its associated $H$-valued coefficient is defined by
	\begin{equation}
		\label{eq:beta}
		\overline{\beta}(t,x,\mu) := [\beta(t,x,\mu,\cdot)]_H.
	\end{equation}
	Thus
	\[
	\|\overline{\beta}(t,x,\mu)\|_H^2 = \int_U |\beta(t,x,\mu,z)|^2 \nu(dz).
	\]
	Henceforth, by a coefficient triple we mean
	\[
	a=(b,\sigma,\overline\beta),
	\]
	where $\overline\beta$ is the $H$-valued jump coefficient. The symbol
	$\beta$ is reserved for a jointly measurable pointwise representative
	of $\overline\beta$, as in \eqref{eq:beta} and \textnormal{(A1)}, and is
	used when writing the Poisson integral.
	
	Let $\mathcal P_{\mathrm{pred}}$ denote the predictable $\sigma$-field on $[0,T]\times\Omega$. Suppose that
	\[
	\Gamma: [0,T]\times\Omega \longrightarrow H
	\]
	is $\mathcal P_{\mathrm{pred}}/\mathcal B(H)$-measurable and satisfies
	\[
	\mathbb E \int_0^T \|\Gamma_t\|_H^2\,dt < \infty.
	\]
	Then, applying the representation result with
	\[
	(S,\mathcal S) = ([0,T]\times\Omega,\mathcal P_{\mathrm{pred}}),
	\]
	yields a $\mathcal P_{\mathrm{pred}}\otimes\mathcal U$-measurable representative
	\[
	\gamma: [0,T]\times\Omega\times U \longrightarrow \mathbb R^d
	\]
	such that
	\[
	[\gamma(t,\omega,\cdot)]_H = \Gamma_t(\omega).
	\]
	We then define
	\[
	\int_0^t\int_U \Gamma_s(z) \,\widetilde N(ds,dz) := \int_0^t\int_U \gamma(s,z) \,\widetilde N(ds,dz).
	\]
	This definition is independent of the chosen jointly predictable representative. Indeed, two such representatives agree $(dt,d\mathbb P,\nu(dz))$-almost everywhere. Moreover,
	\begin{equation}
		\label{eq:estimate}
		\mathbb E\left[ \sup_{0\leq t\leq T} \left| \int_0^t\int_U \Gamma_s(z) \widetilde N(ds,dz) \right|^2 \right] \leq C\, \mathbb E \int_0^T \|\Gamma_s\|_H^2\,ds.
	\end{equation}
	See, for example, \cite[Chapters~2 and~4]{Applebaum2009}.

	Fix constants
	\[
	\lambda>0,\qquad L_\mu>0,\qquad
	M>\sqrt{2\lambda d}.
	\]
	Uniform ellipticity and the Hilbert--Schmidt bound necessarily imply
	$M\geq\sqrt{2\lambda d}$. We impose the strict inequality above in
	order to retain a positive coefficient budget for non-trivial drift and
	jump terms and for the discontinuous examples constructed below.

	We impose the following assumptions on a coefficient triple  $a=(b,\sigma,\overline\beta)$.
	
	\begin{enumerate}
		\item[\textnormal{(A1)}]
		The functions
		\[
		b: [0,T]\times\mathbb R^d \times\mathcal P_2(\mathbb R^d) \longrightarrow \mathbb R^d
		\]
		and
		\[
		\sigma: [0,T]\times\mathbb R^d \times\mathcal P_2(\mathbb R^d) \longrightarrow \mathbb R^{d\times d}
		\]
		are jointly Borel measurable, and
		\[
		\overline\beta: [0,T]\times\mathbb R^d \times\mathcal P_2(\mathbb R^d) \longrightarrow H
		\]
		is Borel measurable.
		
		We choose a
		\[
		\mathcal B([0,T])\otimes\mathcal B(\mathbb R^d)\otimes\mathcal B(\mathcal P_2(\mathbb R^d))\otimes\mathcal U
		\]
		measurable representative
		\[
		\beta: [0,T]\times\mathbb R^d \times\mathcal P_2(\mathbb R^d)\times U \longrightarrow \mathbb R^d
		\]
		such that
		\begin{equation}
			\label{eq:beta-rep}
			[\beta(t,x,\mu,\cdot)]_H = \overline\beta(t,x,\mu)
		\end{equation}
		for every
		\[
		(t,x,\mu)\in [0,T]\times\mathbb R^d \times\mathcal P_2(\mathbb R^d).
		\]
		The existence of such a representative is guaranteed by Lemma~\ref{lem:measurable-H-representative}.

		\item[\textnormal{(A2)}]
		The coefficients are uniformly bounded:
		\begin{equation}
			\label{eq:uniform-bound-framework}
			|b(t,x,\mu)|+\|\sigma(t,x,\mu)\|_{\mathrm{HS}}+\|\overline\beta(t,x,\mu)\|_H
			\leq M
		\end{equation}
		for every $(t,x,\mu)\in
		[0,T]\times\mathbb R^d\times\mathcal P_2(\mathbb R^d)$.
		
		\item[\textnormal{(A3)}]
		The coefficients are uniformly Lipschitz continuous in the probability-measure
		variable:
		\begin{equation}
			\label{eq:measure-lipschitz-framework}
			\begin{aligned}
				&|b(t,x,\mu)-b(t,x,\eta)|+\|\sigma(t,x,\mu)-\sigma(t,x,\eta)\|_{\mathrm{HS}}
				\\
				&\qquad+\|\overline\beta(t,x,\mu)-\overline\beta(t,x,\eta)\|_H
				\leq L_\mu W_2(\mu,\eta)
			\end{aligned}
		\end{equation}
		for every $(t,x)\in[0,T]\times\mathbb R^d$ and all $\mu,\eta\in\mathcal P_2(\mathbb R^d)$.
		
		\item[\textnormal{(A4)}]
		The continuous covariance matrix is uniformly non-degenerate:
		\begin{equation}
			\label{eq:uniform-ellipticity-framework}
			\frac12\sigma(t,x,\mu)\sigma(t,x,\mu)^\top\geq\lambda I_d,
		\end{equation}
		for every $(t,x,\mu)\in[0,T]\times\mathbb R^d\times\mathcal P_2(\mathbb R^d)$.
	\end{enumerate}

	Throughout the paper, coefficient classes are represented by admissible representatives for which \textnormal{(A2)}--\textnormal{(A4)} hold pointwise. In particular, the constant triple
	\[
	(0,\sqrt{2\lambda}\,I_d,0)
	\]
	satisfies \textnormal{(A1)}--\textnormal{(A4)}, so the Lipschitz core introduced below is nonempty.

	We next introduce a class of coefficients for which strong well-posedness follows from the standard fixed-point argument.
	
	\begin{definition}
		\label{def:lipschitz-core}
		The Lipschitz core $\mathfrak A_T^{\mathrm{Lip}}$ is the collection	of all coefficient triples $a=(b,\sigma,\overline\beta)$ satisfying	\textnormal{(A1)}--\textnormal{(A4)} and such that there exists a	constant $L_x(a)<\infty$ satisfying
		\begin{equation}
			\label{eq:space-lipschitz-core}
			\begin{aligned}
				&|b(t,x,\mu)-b(t,y,\mu)|
				+\|\sigma(t,x,\mu)-\sigma(t,y,\mu)\|_{\mathrm{HS}}	\\
				&\qquad+\|\overline\beta(t,x,\mu)-\overline\beta(t,y,\mu)\|_H
				\leq L_x(a)|x-y|
			\end{aligned}
		\end{equation}
		for all $t\in[0,T]$, all $x,y\in\mathbb R^d$, and all $\mu\in\mathcal P_2(\mathbb R^d)$.
	\end{definition}
	
	The spatial Lipschitz constant $L_x(a)$ is allowed to depend on the coefficient triple $a$. No common spatial Lipschitz constant is imposed on the whole core.

	\begin{lemma}
		\label{lem-Measurability}
		Let \(X\in\mathcal S_T^2\), and define
		\[
		\mu_t^X:=\mathcal L(X_t), \qquad 0\le t\le T.
		\]
		Then the following statements hold.
		
		\begin{enumerate}
			\item[\textnormal{(i)}]
			The map
			\[
			t\longmapsto \mu_t^X
			\]
			is c\`adl\`ag as a map from \([0,T]\) into \((\mathcal P_2(\mathbb R^d),W_2)\), and its left limit at \(t>0\) is
			\[
			\mu_{t-}^X:=\mathcal L(X_{t-}).
			\]
			In particular, \(t\mapsto \mu_t^X\) is Borel measurable.
			
			\item[\textnormal{(ii)}]
			The drift and diffusion coefficient processes
			\[
			(t,\omega)\longmapsto b(t,X_t(\omega),\mu_t^X)
			\]
			and
			\[
			(t,\omega)\longmapsto \sigma(t,X_t(\omega),\mu_t^X)
			\]
			are progressively measurable.
			
			\item[\textnormal{(iii)}]
			With the convention \(X_{0-}=X_0\), the \(H\)-valued process
			\[
			\Gamma_t^X := \overline\beta(t,X_{t-},\mu_t^X)
			\]
			is predictable. Moreover, the chosen representative
			\[
			\gamma^X(t,\omega,z) := \beta(t,X_{t-}(\omega),\mu_t^X,z)
			\]
			is \(\mathcal P_{\mathrm{pred}}\otimes\mathcal U\)-measurable.
			
			If \textnormal{(A2)} holds, then
			\[
			\mathbb E\int_0^T\int_U |\gamma^X(t,z)|^2\,\nu(dz)\,dt
			=\mathbb E\int_0^T \|\Gamma_t^X\|_H^2\,dt
			\le M^2T.
			\]
		\end{enumerate}
	\end{lemma}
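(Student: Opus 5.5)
For (i), I will use the coupling bound $W_2^2(\mathcal L(\xi),\mathcal L(\eta))\le\mathbb E|\xi-\eta|^2$ recalled above, together with dominated convergence. Fix $t\in[0,T)$ and $s\downarrow t$. Right continuity of paths gives $|X_s-X_t|^2\to0$ pointwise. These differences are dominated by $4\sup_{r\le T}|X_r|^2\in L^1$, since $X\in\mathcal S_T^2$. Hence $W_2(\mu^X_s,\mu^X_t)^2\le\mathbb E|X_s-X_t|^2\to0$.

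For $t>0$ and $s\uparrow t$, the same domination applies to $|X_s-X_{t-}|^2\to0$. The limit $X_{t-}$ is square integrable, again by the sup bound, so $\mathcal L(X_{t-})\in\mathcal P_2$ and $W_2(\mu^X_s,\mathcal L(X_{t-}))\to0$. This identifies the left limit.

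A càdlàg map into a metric space is a pointwise limit of the step maps $t\mapsto\mu^X_{\lceil 2^nt/T\rceil T2^{-n}}$ (using right continuity). Each step map is Borel, so $t\mapsto\mu^X_t$ is Borel measurable. Here $(\mathcal P_2,W_2)$ is separable metric, though only metrizability is needed for pointwise limits of measurable maps.

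For (ii), I write the integrand as a composition. The map $(t,\omega)\mapsto(t,X_t(\omega),\mu_t^X)$ restricted to $[0,r]\times\Omega$ is $\mathcal B([0,r])\otimes\mathcal F_r$-measurable. The middle component is measurable because an adapted càdlàg process is progressively measurable. The last component is measurable because it depends on $t$ alone and is Borel by (i). The target is $[0,T]\times\mathbb R^d\times\mathcal P_2$, and its product Borel $\sigma$-field coincides with the Borel $\sigma$-field of the product, since all factors are separable metric. Composing with the jointly Borel maps $b,\sigma$ from (A1) then gives progressive measurability.

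For (iii), the process $X_{t-}$ (with $X_{0-}=X_0$) is left continuous and adapted, hence predictable. The function $t\mapsto\mu_t^X$ is deterministic and Borel, so $(t,\omega)\mapsto\mu^X_t$ is measurable with respect to $\mathcal B([0,T])\otimes\{\emptyset,\Omega\}$. This $\sigma$-field is contained in $\mathcal P_{\mathrm{pred}}$, because the predictable $\sigma$-field contains all deterministic Borel sets in time: it contains $(s,u]\times\Omega$ and $\{0\}\times\Omega$. Note that we use $\mu_t$, not $\mu_{t-}$, and this causes no difficulty for exactly this reason.

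Therefore $(t,\omega)\mapsto(t,X_{t-},\mu^X_t)$ is $\mathcal P_{\mathrm{pred}}$-measurable. Composing with the Borel map $\overline\beta$ gives the predictability of $\Gamma^X$. For $\gamma^X$, the map $(t,\omega,z)\mapsto(t,X_{t-}(\omega),\mu^X_t,z)$ is $\mathcal P_{\mathrm{pred}}\otimes\mathcal U$ to $\mathcal B([0,T])\otimes\mathcal B(\mathbb R^d)\otimes\mathcal B(\mathcal P_2)\otimes\mathcal U$ measurable, and $\beta$ is measurable for the latter $\sigma$-field by (A1).

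The integral identity follows from Tonelli and \eqref{eq:beta-rep}: for each $(t,\omega)$, $\int_U|\gamma^X|^2d\nu=\|\Gamma^X_t\|_H^2$. Bounding this by $M^2$ via (A2) and integrating gives the bound $M^2T$.

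The main obstacle I expect is the $\sigma$-field bookkeeping. One point is the identification $\mathcal B(\mathbb R^d\times\mathcal P_2)=\mathcal B(\mathbb R^d)\otimes\mathcal B(\mathcal P_2)$, which requires separability of $(\mathcal P_2,W_2)$; this holds because rational combinations of Dirac masses are dense. The other is the observation that deterministic time-measurable sets are predictable. Everything else is routine.
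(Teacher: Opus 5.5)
Your proposal is correct and follows the paper's own proof essentially step by step: you use dominated convergence with the coupling bound $W_2^2\le\mathbb E|\xi-\eta|^2$ for (i), and for (ii)--(iii) you compose the progressive map $(t,\omega)\mapsto(t,X_t,\mu^X_t)$, respectively the predictable map $(t,\omega)\mapsto(t,X_{t-},\mu^X_t)$, with the Borel coefficients. Your additional bookkeeping fills in details the paper leaves implicit, namely the step-map approximation behind Borel measurability, the separability of $(\mathcal P_2,W_2)$ identifying product and Borel $\sigma$-fields, and the predictability of deterministic time sets.
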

	
	\begin{proof}
		Let \(t_n\downarrow t\). Since \(X\) is c\`adl\`ag,
		\[
		X_{t_n}\longrightarrow X_t
		\qquad\text{almost surely}.
		\]
		Moreover,
		\[
		|X_{t_n}-X_t|^2
		\leq
		4\sup_{0\leq s\leq T}|X_s|^2,
		\]
		and the right-hand side is integrable. Hence, by dominated
		convergence,
		\[
		\mathbb E|X_{t_n}-X_t|^2\longrightarrow0.
		\]
		Using the coupling \((X_{t_n},X_t)\), we obtain
		\[
		W_2(\mu_{t_n}^X,\mu_t^X)^2
		\leq
		\mathbb E|X_{t_n}-X_t|^2
		\longrightarrow0.
		\]
		
		Similarly, if \(t_n\uparrow t\), \(t>0\), then
		\(X_{t_n}\to X_{t-}\) almost surely, and the same dominated
		convergence argument gives
		\[
		X_{t_n}\longrightarrow X_{t-}
		\qquad\text{in }L^2(\Omega;\mathbb R^d).
		\]
		Consequently,
		\[
		W_2\bigl(\mu_{t_n}^X,\mathcal L(X_{t-})\bigr)
		\longrightarrow0.
		\]
		Thus \(t\mapsto\mu_t^X\) is c\`adl\`ag in
		\((\mathcal P_2(\mathbb R^d),W_2)\), with left limit
		\[
		\mu_{t-}^X=\mathcal L(X_{t-}),
		\]
		and is therefore Borel measurable. This proves
		\textnormal{(i)}.
		
		Since \(X\) is adapted and c\`adl\`ag, it is progressively
		measurable, while \(X_-\) is predictable. Since
		\(t\mapsto\mu_t^X\) is deterministic and Borel measurable, the
		process \((t,\omega)\mapsto\mu_t^X\) is both progressively
		measurable and predictable. Hence
		\[
		(t,\omega)\longmapsto
		(t,X_t(\omega),\mu_t^X)
		\]
		is progressively measurable, whereas
		\[
		(t,\omega)\longmapsto
		(t,X_{t-}(\omega),\mu_t^X)
		\]
		is predictable.
		
		Composing the first map with the Borel coefficient maps \(b\) and
		\(\sigma\) proves \textnormal{(ii)}, while composing the second with
		the Borel map \(\overline\beta\) shows that
		\[
		\Gamma_t^X
		=
		\overline\beta(t,X_{t-},\mu_t^X)
		\]
		is predictable. Since the chosen representative
		\[
		\beta:
		[0,T]\times\mathbb R^d\times
		\mathcal P_2(\mathbb R^d)\times U
		\longrightarrow\mathbb R^d
		\]
		is jointly measurable, it also follows that
		\[
		(t,\omega,z)
		\longmapsto
		\beta(t,X_{t-}(\omega),\mu_t^X,z)
		\]
		is
		\(\mathcal P_{\mathrm{pred}}\otimes\mathcal U\)-measurable.
		This proves \textnormal{(iii)}.
		
		Finally, by \textnormal{(A2)},
		\[
		\mathbb E\int_0^T\int_U
		|\gamma^X(t,z)|^2\,\nu(dz)\,dt
		=
		\mathbb E\int_0^T
		\|\Gamma_t^X\|_H^2\,dt
		\leq M^2T.
		\]
	\end{proof}

	\begin{definition}
		\label{def}
		Let \(a=(b,\sigma,\overline\beta)\) satisfy \textnormal{(A1)}--\textnormal{(A4)}, and let \(\beta\) be the chosen jointly measurable representative from \textnormal{(A1)}. A process \(X\in\mathcal S_T^2\) is called a strong solution of \(\mathcal E_T(a,x)\) if
		\[
		\begin{aligned}
			X_t ={}& x+\int_0^t b(s,X_s,\mathcal L(X_s))\,ds
			+\int_0^t \sigma(s,X_s,\mathcal L(X_s))\,dW_s \\
			&+\int_0^t\int_U \beta(s,X_{s-},\mathcal L(X_s),z)\,\widetilde N(ds,dz)
		\end{aligned}
		\]
		holds as an identity of c\`adl\`ag processes up to indistinguishability.
	\end{definition}
		By Lemma~\ref{lem-Measurability} and \textnormal{(A2)}, all three integrals are well-defined.  Strong well-posedness means the existence of a strong solution together with pathwise uniqueness.

	The following is the classical conclusion; see, for example \cite{Cavallazzi-2025}.
	\begin{proposition}
		\label{prop:wellposed-core}
		For every $a\in\mathfrak A_T^{\mathrm{Lip}}$ and every	$x\in\mathbb R^d$, equation $\mathcal E_T(a,x)$ admits a unique	strong solution
		\[
		X^{a,x}\in\mathcal S_T^2.
		\]
		Moreover,
		\begin{equation}
			\label{eq:core-moment-bound}
			\mathbb E\left[\sup_{0\leq t\leq T}|X_t^{a,x}|^2\right]
			\leq C_T(1+|x|^2),
		\end{equation}
		where $C_T=C_{\mathrm{mom}}(T,d,M)$ depends only on	$T$, $d$, and the common bound $M$.
	\end{proposition}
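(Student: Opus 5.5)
The plan is the standard two-step fixed-point argument: freeze the flow of laws, solve the resulting classical SDE with jumps, and then close the loop with a contraction in the measure variable on a weighted path space. Fix $a\in\mathfrak A_T^{\mathrm{Lip}}$ and $x\in\mathbb R^d$. Let $\mathcal M_T$ denote the set of c\`adl\`ag flows $\mu=(\mu_t)_{0\le t\le T}$ in $(\mathcal P_2(\mathbb R^d),W_2)$ with $\sup_t\int|y|^2\mu_t(dy)<\infty$. Equip it with the weighted metric
\[
\rho_\kappa(\mu,\eta):=\sup_{0\le t\le T}e^{-\kappa t}W_2(\mu_t,\eta_t),
\]
for some $\kappa>0$ to be chosen. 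The space $\mathcal M_T$ is complete under $\rho_\kappa$, since it is a closed subset of bounded c\`adl\`ag maps into the complete space $(\mathcal P_2,W_2)$ with the uniform metric.

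For a fixed flow $\mu\in\mathcal M_T$, the coefficients $b(t,y,\mu_t)$, $\sigma(t,y,\mu_t)$ and $\overline\beta(t,y,\mu_t)$ are bounded and Lipschitz in $y$ uniformly in $t$, with constant $L_x(a)$. Measurability in $(t,y)$ follows from Lemma~\ref{lem-Measurability}(i). The classical theory of SDEs with jumps and Lipschitz coefficients therefore gives a unique strong solution $X^\mu\in\mathcal S_T^2$. This can be reproved by Picard iteration on $\mathcal S_T^2$ using the Burkholder--Davis--Gundy inequality for the Brownian part and \eqref{eq:estimate} for the Poisson part. The predictability of the jump integrands comes from Lemma~\ref{lem-Measurability}(iii).

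Define $\Phi(\mu)_t:=\mathcal L(X^\mu_t)$. By Lemma~\ref{lem-Measurability}(i), $\Phi(\mu)\in\mathcal M_T$. For the moment bound, (A2), Doob's inequality and \eqref{eq:estimate} give
\[
\mathbb E\sup_{t\le T}|X^\mu_t|^2\le 4\bigl(|x|^2+M^2T^2+CM^2T+CM^2T\bigr)\le C_T(1+|x|^2),
\]
with $C_T$ depending only on $T$, $d$ and $M$. The key point is that boundedness, not Lipschitz continuity, controls this bound, which is why the constant is independent of $L_x(a)$ and of the flow. Since any strong solution $X$ of $\mathcal E_T(a,x)$ equals $X^{\mu^X}$, the same bound yields \eqref{eq:core-moment-bound}.

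For the contraction step, take $\mu,\eta\in\mathcal M_T$ and set $D_t:=\mathbb E\sup_{s\le t}|X^\mu_s-X^\eta_s|^2$. Splitting each coefficient difference into a spatial part, controlled by $L_x(a)$, and a measure part, controlled by $L_\mu$ via (A3), and then applying the isometries gives
\[
D_t\le C\int_0^t D_s\,ds+C\int_0^t W_2(\mu_s,\eta_s)^2\,ds,
\]
with $C=C(T,L_x(a),L_\mu)$. Gronwall's inequality then yields $D_t\le C'\int_0^tW_2(\mu_s,\eta_s)^2\,ds$. Since $W_2(\Phi(\mu)_t,\Phi(\eta)_t)^2\le D_t$, we obtain
\[
e^{-2\kappa t}W_2(\Phi(\mu)_t,\Phi(\eta)_t)^2\le C'\int_0^te^{-2\kappa(t-s)}\,ds\,\rho_\kappa(\mu,\eta)^2\le \frac{C'}{2\kappa}\,\rho_\kappa(\mu,\eta)^2.
\]
Choosing $\kappa>C'$ makes $\Phi$ a contraction. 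Its unique fixed point $\mu^*$ yields the solution $X^{a,x}:=X^{\mu^*}$.

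For uniqueness, if $X$ is any strong solution, then $\mu^X:=\mathcal L(X_\cdot)$ lies in $\mathcal M_T$ and $X$ solves the frozen equation with flow $\mu^X$. Uniqueness of the frozen equation gives $X=X^{\mu^X}$, so $\mu^X$ is a fixed point of $\Phi$. Hence $\mu^X=\mu^*$ and $X=X^{a,x}$ up to indistinguishability. The main obstacle is bookkeeping rather than a conceptual difficulty: one must check that the frozen coefficients remain jointly measurable with predictable jump integrands, and that the Poisson term is estimated through its $H$-valued realization. Lemma~\ref{lem-Measurability} and \eqref{eq:estimate} handle both.
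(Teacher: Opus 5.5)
Your argument is correct: the paper gives no proof of this proposition and simply cites it as classical (e.g.\ \cite{Cavallazzi-2025}), and your scheme (freeze the flow, solve the Lipschitz SDE with jumps, then contract in the weighted uniform $W_2$ metric) is the standard argument behind that citation. Your moment bound from \textnormal{(A2)}, Doob's inequality and \eqref{eq:estimate} is the same computation the paper carries out later in Proposition~\ref{prop:moment-localization-estimates}(i), which is why $C_T$ does not depend on $L_x(a)$. One cosmetic correction: measurability and predictability for an arbitrary c\`adl\`ag flow $\mu\in\mathcal M_T$ do not come from Lemma~\ref{lem-Measurability}(i), which is stated for laws of processes; they follow from the fact that c\`adl\`ag maps into the Polish space $(\mathcal P_2(\mathbb R^d),W_2)$ are Borel.
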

	
	Consequently, the solution map
	\begin{equation}
		\label{eq:core-solution-map}
		\Phi_{T,x}:
		\mathfrak A_T^{\mathrm{Lip}}
		\longrightarrow\mathcal S_T^2,
		\qquad
		\Phi_{T,x}(a):=X^{a,x},
	\end{equation}
	is well defined.

	The topology on the coefficient space is chosen so that convergence of coefficients implies convergence of their values along non-degenerate It\^o--L\'evy processes.
	
	Let
	\[
	r_\ast:=d+1, \qquad q_\ast:=2r_\ast=2(d+1).
	\]

	Fix once and for all a countable dense family
	\[
	\{\mu_j:j\geq 1\} \subset\mathcal P_2(\mathbb R^d)
	\]
	with respect to $W_2$. For $R>0$, let
	\[
	B_R:=\{x\in\mathbb R^d:|x|<R\}, \qquad Q_{T,R}:=[0,T]\times B_R.
	\]

	For two coefficients
	\[
	a=(b,\sigma,\overline\beta), \qquad  \widetilde a=(\widetilde b,\widetilde\sigma,\overline{\widetilde\beta})
	\]
	in $\mathfrak A_T^{\mathrm{Lip}}$, define
	\begin{equation}
		\label{eq:pointwise-coefficient-distance}
		\begin{aligned}
			\Delta(a,\widetilde a)(t,x)
			:=&\sup_{j\geq 1}\Bigl\{|b(t,x,\mu_j)-\widetilde b(t,x,\mu_j)|+\|\sigma(t,x,\mu_j)
			-\widetilde\sigma(t,x,\mu_j)\|_{\mathrm{HS}}	\\
			&+	\|\overline\beta(t,x,\mu_j)	-\overline{\widetilde\beta}(t,x,\mu_j)\|_H\Bigr\}.
		\end{aligned}
	\end{equation}
	
	For every real number $R>0$, set
	\begin{equation}
		\label{eq:local-krylov-distance}
		\rho_{T,R}(a,\widetilde a)
		:=\|\Delta(a,\widetilde a)\|_{L^{q_\ast}(Q_{T,R})}.
	\end{equation}
	The Krylov-stable distance is defined by
	\begin{equation}
		\label{eq:krylov-stable-metric}
		d_T(a,\widetilde a)
		:=
		\sum_{m=1}^{\infty}
		2^{-m}
		\bigl(1\wedge\rho_{T,m}(a,\widetilde a)\bigr).
	\end{equation}
	Because all coefficients satisfy the uniform estimate \eqref{eq:measure-lipschitz-framework}, the maps
	\[
	\mu\longmapsto b(t,x,\mu),\qquad
	\mu\longmapsto \sigma(t,x,\mu),\qquad
	\mu\longmapsto\overline\beta(t,x,\mu)
	\]
	are continuous. It follows that the supremum in  \eqref{eq:pointwise-coefficient-distance} agrees with the supremum over the whole Wasserstein space, thus
	\begin{equation}
		\label{eq:dense-measure-supremum}
		\begin{aligned}
			\Delta(a,\widetilde a)(t,x)
			=&\sup_{\mu\in\mathcal P_2(\mathbb R^d)}\Bigl\{	|b(t,x,\mu)-\widetilde b(t,x,\mu)|
			+\|\sigma(t,x,\mu)-\widetilde\sigma(t,x,\mu)\|_{\mathrm{HS}}\\
			&+\|\overline\beta(t,x,\mu)-\overline{\widetilde\beta}(t,x,\mu)\|_H\Bigr\}.
		\end{aligned}
	\end{equation}
	The countable family $\{\mu_j\}$ is used only to guarantee the measurability of $\Delta(a,\widetilde a)$.
	
	The terminology ``Krylov-stable'' is justified by the identity
	\[
	\|\Delta(a,\widetilde a)^2\|_{L^{r_\ast}(Q_{T,m})}
	=\|\Delta(a,\widetilde a)\|_{L^{q_\ast}(Q_{T,m})}^2.
	\]
	Thus, a Krylov estimate with exponent $r_\ast$ controls the occupation integral of the squared coefficient error by the square of the local distance $\rho_{T,m}$.
	
	We identify coefficients that agree for $dt\,dx$-almost every $(t,x)$, simultaneously for all
	$\mu\in\mathcal P_2(\mathbb R^d)$. Under this identification, $d_T$ is a metric on $\mathfrak A_T^{\mathrm{Lip}}$.

	\begin{definition}
		\label{def:complete-coefficient-space}
		The coefficient space $(\mathfrak A_T,d_T)$ is the metric
		completion of the Lipschitz core:
		\begin{equation}
			\label{eq:completion-definition}
			\mathfrak A_T:=\overline{\mathfrak A_T^{\mathrm{Lip}}}^{\,d_T}.
		\end{equation}
	\end{definition}
	
	By construction, $(\mathfrak A_T,d_T)$ is a complete metric space and hence a Baire space. Moreover, $\mathfrak A_T^{\mathrm{Lip}}$ is dense in $\mathfrak A_T$.
	
	For every $m>0$, we write $(\mathfrak A_m,d_m)$ for the coefficient
	space and the Krylov-stable metric corresponding to the time horizon
	$T=m$.
	
	For a coefficient triple $a\in\mathfrak A_T$ and $x\in\mathbb R^d$,
	let
	\[
	\operatorname{Sol}_T(a,x)
	\]
	denote the set of all strong solutions of $\mathcal E_T(a,x)$ belonging
	to $\mathcal S_T^2$. This set may be empty and, outside the Lipschitz
	core, it need not be a singleton.
	
	Although $\mathfrak A_T$ is introduced abstractly as a completion, its elements can still be represented by genuine measurable coefficient triples.
	
	The following proposition  is an immediate consequence of Theorem~\ref{thm:completion-representation}, proved in Section~\ref{sec:coefficient-space-core}.
	\begin{proposition}
		\label{prop:coefficient-representation}
		Every element of $\mathfrak A_T$ admits a representative
		\[
		a=(b,\sigma,\overline\beta)
		\]
		satisfying \textnormal{(A1)}--\textnormal{(A4)}. More precisely, if	$(a_n)_{n\geq 1}\subset\mathfrak A_T^{\mathrm{Lip}}$ is	$d_T$-Cauchy, then there exist jointly measurable maps	$b$, $\sigma$, and the $H$-valued map $\overline\beta$, together with a jointly measurable pointwise representative $\beta$, such that, after passing to a subsequence,
		\[
		b_n(t,x,\mu)\longrightarrow b(t,x,\mu),
		\]
		\[
		\sigma_n(t,x,\mu)\longrightarrow\sigma(t,x,\mu),
		\]
		and
		\[
		\overline\beta_n(t,x,\mu)
		\longrightarrow\overline\beta(t,x,\mu)
		\quad\text{in }H
		\]
		for almost every $(t,x)$ and every	$\mu\in\mathcal P_2(\mathbb R^d)$.
		
		The limit preserves the uniform bound	\eqref{eq:uniform-bound-framework}, the Wasserstein Lipschitz condition \eqref{eq:measure-lipschitz-framework}, and the	ellipticity condition \eqref{eq:uniform-ellipticity-framework}.	The resulting representative is unique up to the equivalence	relation induced by $d_T$.
	\end{proposition}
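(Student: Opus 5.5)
We start from a $d_T$-Cauchy sequence $(a_n)\subset\mathfrak A_T^{\mathrm{Lip}}$ and build a pointwise limit in three steps: extract an a.e.\ convergent subsequence on the countable family $\{\mu_j\}$, extend the limit to all of $\mathcal P_2(\mathbb R^d)$ using the uniform $W_2$-Lipschitz bound, and then pass the structural bounds to the limit.

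\textbf{Step 1 (subsequence).} Since $d_T(a_n,a_k)\to0$, we have $\rho_{T,m}(a_n,a_k)\to0$ for every $m$. By a diagonal argument we choose $n_1<n_2<\cdots$ with $\rho_{T,m}(a_{n_k},a_{n_{k+1}})\le 2^{-k}$ for all $m\le k$. For each fixed $m$, the function
\[
G_m:=\sum_{k\ge m}\Delta(a_{n_k},a_{n_{k+1}})
\]
lies in $L^{q_\ast}(Q_{T,m})$, hence is finite $dt\,dx$-a.e.\ on $Q_{T,m}$. Taking the union over $m$ of the exceptional sets gives a null set $\mathcal N\subset[0,T]\times\mathbb R^d$. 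For every $(t,x)\notin\mathcal N$, the supremum over $j$ in the definition of $\Delta$ shows that $(b_{n_k}(t,x,\mu_j))_k$, $(\sigma_{n_k}(t,x,\mu_j))_k$ and $(\overline\beta_{n_k}(t,x,\mu_j))_k$ are Cauchy, uniformly in $j$. By \eqref{eq:dense-measure-supremum} the same holds uniformly in all $\mu\in\mathcal P_2(\mathbb R^d)$.

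\textbf{Step 2 (limit and structural bounds).} Off $\mathcal N$ we define $b,\sigma,\overline\beta$ as the pointwise limits, convergence of $\overline\beta$ being in $H$. On $\mathcal N$ we set $a:=(0,\sqrt{2\lambda}I_d,0)$.
\begin{itemize}
\item \emph{Measurability.} Joint measurability follows because each limit is a pointwise limit of Borel maps, composed with the indicator of the Borel set $\mathcal N^c$. We may replace $\mathcal N$ by a Borel null superset, and $H$ is separable, so the $H$-valued limit is Borel.
\item \emph{Uniform bound and $W_2$-Lipschitz property.} The bound \eqref{eq:uniform-bound-framework} and the Lipschitz condition \eqref{eq:measure-lipschitz-framework} are closed under pointwise limits, and they also hold for the constant triple used on $\mathcal N$.
\item \emph{Ellipticity.} The set $\{\sigma:\tfrac12\sigma\sigma^\top\ge\lambda I_d\}$ is closed, since $\sigma\mapsto\langle\sigma\sigma^\top\xi,\xi\rangle$ is continuous. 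Hence (A4) passes to the limit.
\item \emph{Pointwise jump representative.} We obtain $\beta$ from Lemma~\ref{lem:measurable-H-representative}, applied with $S=[0,T]\times\mathbb R^d\times\mathcal P_2(\mathbb R^d)$, as referred to in (A1).
\end{itemize}

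\textbf{Step 3 (identification and uniqueness).} We must check that the triple represents the abstract limit, in the sense that $\rho_{T,m}(a_{n_k},a)\to0$. The quantity $\Delta(a_{n_k},a)$ is well defined for the measurable triple $a$, with measurability again coming from the countable family $\{\mu_j\}$. By Step 1, $\Delta(a_{n_k},a)\to0$ a.e., and it is bounded by $3M$, which is where (A2) enters. Dominated convergence on the bounded cylinder $Q_{T,m}$ then gives $\rho_{T,m}(a_{n_k},a)\to0$. Consequently $d_T$ extends to such triples, and the triple $a$ represents the class of $(a_n)$.

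For uniqueness, suppose two representatives $a,a'$ arise from equivalent Cauchy sequences. The triangle inequality for $\|\Delta\|_{L^{q_\ast}}$, which holds because $\Delta$ itself satisfies a pointwise triangle inequality, gives $\rho_{T,m}(a,a')=0$ for all $m$. Hence $\Delta(a,a')=0$ a.e., and by \eqref{eq:dense-measure-supremum} this means agreement for a.e.\ $(t,x)$ simultaneously for all $\mu$, which is exactly the identification used to define $d_T$.

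The main delicate point is Step 1. The convergence must hold for all $\mu$ simultaneously, so we must not pick a null set for each $\mu$. The supremum structure of $\Delta$, combined with the uniform $W_2$-Lipschitz bound, is what reduces the problem to the single exceptional set $\mathcal N$.
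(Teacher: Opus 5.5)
Your argument is correct and follows the paper's proof of Theorem~\ref{thm:completion-representation} in all essentials. The shared steps are:
\begin{itemize}
\item a rapidly Cauchy subsequence;
\item a.e.\ summability of the increments $\Delta(a_{n_k},a_{n_{k+1}})$ via Minkowski and monotone convergence;
\item the default triple $(0,\sqrt{2\lambda}I_d,0)$ on the null set;
\item closedness of the bound, Lipschitz and ellipticity conditions;
\item dominated convergence for $\rho_{T,m}$;
\item uniqueness via the dense family.
\end{itemize}

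The one difference is a simplification. You use \eqref{eq:dense-measure-supremum} for the core approximants to get Cauchy-ness uniformly in all $\mu$, and then define the limit directly for every $\mu$. This makes joint measurability just a pointwise limit of Borel maps, and gives the ``every $\mu$'' convergence immediately. The paper instead defines the limit only on $\{\mu_j\}$, extends it by the $W_2$-Lipschitz bound, and proves measurability through the selection $j_r(\mu)$.
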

	
	In general, elements of $\mathfrak A_T$ need not be continuous in the state variable. Thus, the completed space contains coefficients with genuine spatial singularities. The role of the non-degenerate diffusion is to control such singularities through the Krylov occupation estimate rather than through pointwise spatial regularity.
	
	\subsection{Main Results}
	
	For a topological space $E$, a subset $\mathfrak R\subset E$ is called  residual if it contains a countable intersection of open dense subsets of $E$. A property is called generic in $E$ if it holds on a residual subset of $E$.
	
	The following theorem is the main result of the paper.

	\begin{theorem}
		\label{thm:generic-main-fixed-horizon}
		Fix $T>0$. There exists a dense residual subset
		\[
		\mathfrak R_T\subset\mathfrak A_T
		\]
		that is independent of the initial state and has the following
		properties. For every $a\in\mathfrak R_T$:
		
		\begin{enumerate}
			\item[\textnormal{(i)}]
			For every $x\in\mathbb R^d$, the jump-type McKean--Vlasov
			equation $\mathcal E_T(a,x)$ admits a unique strong solution
			\[
			X^{a,x}\in\mathcal S_T^2.
			\]
			
			\item[\textnormal{(ii)}]
			The solution map
			\[
			\Phi_T(a):\mathbb R^d\longrightarrow\mathcal S_T^2,
			\qquad
			\Phi_T(a)(x):=X^{a,x},
			\]
			is continuous.
			
			\item[\textnormal{(iii)}]
			If
			\[
			a_k\in\mathfrak A_T^{\mathrm{Lip}},
			\qquad
			d_T(a_k,a)\longrightarrow0,
			\]
			then, for every $m\in\mathbb N$,
			\begin{equation}
				\label{eq:generic-uniform-initial-state-convergence}
				\sup_{x\in\overline B_m}
				\|X^{a_k,x}-X^{a,x}\|_{\mathcal S_T^2}
				\longrightarrow0,
			\end{equation}
			where
			\[
			\overline B_m:=\{x\in\mathbb R^d:|x|\leq m\}.
			\]
			
			\item[\textnormal{(iv)}]
			The limit in
			\eqref{eq:generic-uniform-initial-state-convergence}
			is independent of the approximating sequence. More precisely, if
			\[
			a_k,\widetilde a_k\in\mathfrak A_T^{\mathrm{Lip}},
			\qquad
			d_T(a_k,a)+d_T(\widetilde a_k,a)\longrightarrow0,
			\]
			then, for every $m\in\mathbb N$,
			\begin{equation}
				\label{eq:generic-two-uniform-approximations}
				\sup_{x\in\overline B_m}
				\|X^{a_k,x}-X^{\widetilde a_k,x}\|_{\mathcal S_T^2}
				\longrightarrow0.
			\end{equation}
		\end{enumerate}
	\end{theorem}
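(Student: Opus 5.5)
We prove the theorem by a Baire-category argument built on two results from later sections, used as black boxes. The first is the set-valued stability theorem of Section~\ref{sec:stability-core}. Given a core coefficient $a_0\in\mathfrak A_T^{\mathrm{Lip}}$, $m\in\mathbb N$ and $\varepsilon>0$, there is $\delta=\delta(a_0,m,\varepsilon)>0$ such that
\[
\|X-X^{a_0,x}\|_{\mathcal S_T^2}<\varepsilon
\]
for every $a\in\mathfrak A_T$ with $d_T(a,a_0)<\delta$, every $x\in\overline B_m$ and every $X\in\operatorname{Sol}_T(a,x)$. The second is closedness of the strong-solution relation from Section~\ref{sec:closedness-limits}. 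If $a_k\in\mathfrak A_T^{\mathrm{Lip}}$, $d_T(a_k,a)\to0$, $x_k\to x$, and $X^{a_k,x_k}\to X$ in $\mathcal S_T^2$, then $X\in\operatorname{Sol}_T(a,x)$. The Krylov-stable metric is designed so that this limit passage works: the stopped conditional Krylov--Melnikov estimate converts $\rho_{T,R}(a_k,a)\to0$ into convergence of $b_k(s,X^k_s,\cdot)-b(s,X^k_s,\cdot)$ along the trajectories, and the $W_2$-Lipschitz bound (A3) handles the law argument.

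With these two results in hand, we define for $m,n\in\mathbb N$ the set
\[
\mathcal O_{m,n}:=\bigcup_{a_0\in\mathfrak A_T^{\mathrm{Lip}}}B_{d_T}\bigl(a_0,\delta(a_0,m,1/n)\bigr).
\]
This set is open and contains the dense core, so it is open and dense. We set $\mathfrak R_T:=\bigcap_{m,n}\mathcal O_{m,n}$, which is residual. It is dense because $\mathfrak A_T$ is complete, and it does not depend on $x$ because each $\delta$ is uniform over $\overline B_m$.

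Now fix $a\in\mathfrak R_T$. For each $(m,n)$ choose a core coefficient $a_{m,n}$ whose stability ball contains $a$.

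\emph{Step 1 (Cauchy property).} Let $a_k\in\mathfrak A_T^{\mathrm{Lip}}$ with $d_T(a_k,a)\to0$. Since balls are open, $a_k$ lies in the ball around $a_{m,n}$ for all large $k$. Stability applied to the core equation of $a_k$ gives
\[
\sup_{x\in\overline B_m}\|X^{a_k,x}-X^{a_{m,n},x}\|_{\mathcal S_T^2}<1/n
\]
for all large $k$. Hence $(X^{a_k,\cdot})_k$ is uniformly Cauchy on $\overline B_m$ with values in $\mathcal S_T^2$. Call its limit $X^{a,x}$.

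\emph{Step 2 (properties of the limit).} Each map $x\mapsto X^{a_k,x}$ is continuous by standard Lipschitz-core estimates. A uniform limit of continuous maps is continuous, which gives (ii). Closedness (with $x_k=x$) shows $X^{a,x}\in\operatorname{Sol}_T(a,x)$, which gives existence in (i).

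\emph{Step 3 (uniqueness).} Let $Y\in\operatorname{Sol}_T(a,x)$ with $|x|\le m$. Stability gives
\[
\|Y-X^{a_{m,n},x}\|_{\mathcal S_T^2}<1/n.
\]
The canonical solution satisfies the same bound. Hence $\|Y-X^{a,x}\|_{\mathcal S_T^2}<2/n$ for every $n$, so $Y=X^{a,x}$. This is pathwise uniqueness in $\mathcal S_T^2$.

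\emph{Step 4 (independence of the sequence).} Given two approximating sequences $(a_k)$ and $(\widetilde a_k)$, interleave them into a single sequence converging to $a$. Step 1 applied to this sequence shows that both families converge uniformly on $\overline B_m$ to the same limit. This gives (iii) and (iv).

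The substantive obstacle is the stability theorem itself; the Baire scheme above is formal. The key point is that the perturbed equation may lack uniqueness, so the argument must estimate an arbitrary $X\in\operatorname{Sol}_T(a,x)$ against the core solution. I would write $b(s,X_s,\mu^X_s)-b_0(s,X^0_s,\mu^0_s)$ as the sum of three terms:
\begin{itemize}
\item the coefficient error $\Delta(a,a_0)(s,X_s)$, evaluated along $X$;
\item the spatial Lipschitz difference of the core coefficient, bounded by $L_x(a_0)|X_s-X^0_s|$;
\item the measure difference, bounded by $L_\mu W_2(\mu^X_s,\mu^0_s)$.
\end{itemize}
The same split applies to $\sigma$ and to $\overline\beta$, whose term is controlled in $H$. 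The first term is controlled by the stopped Krylov estimate, since $X$ is a non-degenerate It\^o--L\'evy process by (A2) and (A4). Localization to $B_R$ uses the exit-time bound from the moment estimate~\eqref{eq:core-moment-bound}, uniformly for $x\in\overline B_m$. Gronwall's inequality combined with the BDG inequality and~\eqref{eq:estimate} then yields a bound of the form
\[
C(a_0,m,R)\,\rho_{T,R}(a,a_0)^2+\text{(tail)}.
\]
Choosing $R$ large first and then $\delta$ small gives the required $\varepsilon$.
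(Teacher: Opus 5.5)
Your proposal is correct and is essentially the paper's proof: the paper runs the same Baire scheme through its abstract extension theorem (Theorem~\ref{thm:abstract-generic-canonical-extension}), verifying (G2) by the stability estimate uniform over $\overline B_m$ and (G3) by the closed-graph theorem. Your full-radius balls and your interleaving argument for (iv) are harmless variants of the paper's half-radius balls and its closedness-plus-singleton argument.

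Two small caveats, neither affecting the logic since these steps are black boxes. First, your heuristic for closedness is incomplete: Krylov along $X^k$ only controls $b_k-b$ at $X^k_s$. Passing from $b(s,X^k_s,\cdot)$ to $b(s,X_s,\cdot)$ for merely measurable $b$ needs an intermediate Lipschitz coefficient plus an occupation estimate for the limit process itself (Proposition~\ref{prop:limit-krylov-estimate}). Second, the exit-time bound for a non-core solution comes from Proposition~\ref{prop:moment-localization-estimates}, not from \eqref{eq:core-moment-bound}.
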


	For completeness, we also formulate the corresponding global-time
	statement. For integers $1\leq m<\ell$, let
	\[
	r_{m,\ell}:\mathfrak A_\ell\longrightarrow\mathfrak A_m
	\]
	be the restriction map. Its well-definedness and continuity are proved
	in Proposition~\ref{prop:restriction-projective-completeness}. Define
	the projective-limit coefficient space
	\begin{equation}
		\label{eq:local-coefficient-projective-limit}
		\mathfrak A_{\mathrm{loc}}
		:=
		\left\{
		(a_m)_{m\geq1}:
		a_m\in\mathfrak A_m,\quad
		r_{m,\ell}(a_\ell)=a_m
		\text{ whenever }m<\ell
		\right\},
	\end{equation}
	equipped with
	\begin{equation}
		\label{eq:local-global-metric}
		d_{\mathrm{loc}}(a,\widetilde a)
		:=
		\sum_{m=1}^{\infty}
		2^{-m}
		\left(1\wedge d_m(a_m,\widetilde a_m)\right).
	\end{equation}
	We write $\pi_m(a):=a_m$. The locally Lipschitz core is
	\[
	\mathfrak A_{\mathrm{loc}}^{\mathrm{Lip}}
	:=
	\left\{a\in\mathfrak A_{\mathrm{loc}}:
	\pi_m(a)\in\mathfrak A_m^{\mathrm{Lip}}
	\text{ for every }m\geq1
	\right\}.
	\]
	
	\begin{corollary}
		\label{cor:global-generic-main}
		There exists a dense residual subset
		\[
		\mathfrak R^{\mathrm{loc}}
		\subset
		\mathfrak A_{\mathrm{loc}}
		\]
		that is independent of the initial state and such that, for every
		$a\in\mathfrak R^{\mathrm{loc}}$ and every $x\in\mathbb R^d$, the
		global jump-type McKean--Vlasov equation admits a unique strong
		solution
		\[
		X^{a,x}\in\mathcal S_{\mathrm{loc}}^2,
		\qquad
		\mathcal S_{\mathrm{loc}}^2
		:=
		\left\{X:
		X|_{[0,T]}\in\mathcal S_T^2
		\text{ for every }T>0
		\right\}.
		\]
		For every finite $T>0$, the map
		\[
		x\longmapsto X^{a,x}|_{[0,T]}
		\]
		is continuous from $\mathbb R^d$ to $\mathcal S_T^2$.
		
		Moreover, if
		\[
		a_k\in\mathfrak A_{\mathrm{loc}}^{\mathrm{Lip}},
		\qquad
		d_{\mathrm{loc}}(a_k,a)\longrightarrow0,
		\]
		then, for every finite $T>0$ and every $m\in\mathbb N$,
		\begin{equation}
			\label{eq:global-uniform-initial-state-convergence}
			\sup_{x\in\overline B_m}
			\|X^{a_k,x}-X^{a,x}\|_{\mathcal S_T^2}
			\longrightarrow0,
		\end{equation}
		independently of the approximating sequence.
	\end{corollary}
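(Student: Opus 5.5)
The plan is to deduce Corollary~\ref{cor:global-generic-main} from Theorem~\ref{thm:generic-main-fixed-horizon}, applied at every integer horizon $T=m$, by pulling the residual sets back through the projections $\pi_m$. Let $\mathfrak R_m\subset\mathfrak A_m$ be the dense residual set from Theorem~\ref{thm:generic-main-fixed-horizon} with $T=m$, and write $\mathfrak R_m\supset\bigcap_j G_{m,j}$ with each $G_{m,j}$ open and dense in $\mathfrak A_m$. I would set
\[
\mathfrak R^{\mathrm{loc}}:=\bigcap_{m\geq1}\pi_m^{-1}(\mathfrak R_m)\supset\bigcap_{m,j}\pi_m^{-1}(G_{m,j}).
\]
Each $\pi_m$ is $1$-Lipschitz up to the factor $2^{m}$ for small distances, so it is continuous, and $\pi_m^{-1}(G_{m,j})$ is open. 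The first step, then, is to check that $(\mathfrak A_{\mathrm{loc}},d_{\mathrm{loc}})$ is complete. It is a closed subset of $\prod_m\mathfrak A_m$ with the product metric, because the restrictions $r_{m,\ell}$ are continuous (Proposition~\ref{prop:restriction-projective-completeness}). Baire's theorem then applies, and the remaining task is to show that each $\pi_m^{-1}(G_{m,j})$ is dense.

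This density is the step I expect to be the main obstacle. I would try to prove the following: $\mathfrak A^{\mathrm{Lip}}_{\mathrm{loc}}$ is dense in $\mathfrak A_{\mathrm{loc}}$, and $\pi_m$ is open, or at least has the property that for any $a\in\mathfrak A_{\mathrm{loc}}$ and any $c\in\mathfrak A_m$ close to $a_m$ there is $\widetilde a\in\mathfrak A_{\mathrm{loc}}$ with $\widetilde a_m=c$ and $\widetilde a_\ell$ close to $a_\ell$ for $\ell\le L$. The natural construction is concatenation in time. Take $c$ on $[0,m]$ and $a_\ell$ on $(m,\ell]$. Because $d_\ell$ only sees $dt\,dx$-classes, the time discontinuity at $t=m$ is harmless. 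On Lipschitz cores the concatenation stays in the core, and the $L^{q_\ast}(Q_{\ell,R})$ errors simply add over the two time pieces, so $d_\ell(\widetilde a_\ell,a_\ell)\le C_\ell\, d_m(c,a_m)$-type bounds hold. This is checked on the core and extended by density. With this in hand, given $a$ and $\varepsilon$, I choose $c\in G_{m,j}$ near $a_m$ and concatenate. The perturbation affects $d_\ell$ for $\ell\ge m$ only through the $[0,m]$ piece, and for $\ell<m$ through restriction. Both are small, which gives density. Density of $\mathfrak R^{\mathrm{loc}}$ itself follows from Baire.

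Next, the solution properties. Fix $a\in\mathfrak R^{\mathrm{loc}}$ and $x$, and let $X^{(m)}$ be the unique strong solution on $[0,m]$ for the coefficient $a_m$. The restriction of $X^{(\ell)}$ to $[0,m]$ solves $\mathcal E_m(a_m,x)$: the coefficients agree a.e. in $(t,x)$ on $[0,m]$, and the occupation estimate of Section~\ref{sec:krylov-ito-levy} shows that an a.e. modification does not change the integrals along a nondegenerate solution. Uniqueness then gives consistency, $X^{(\ell)}|_{[0,m]}=X^{(m)}$. Gluing these produces $X^{a,x}\in\mathcal S^2_{\mathrm{loc}}$. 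Uniqueness follows because any global solution restricts to the unique solution on each $[0,m]$.

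Continuity of $x\mapsto X^{a,x}|_{[0,T]}$ comes from part (ii) at horizon $m\ge T$, together with the fact that restriction is contractive in $\mathcal S^2$. For the approximation statement, take $a_k\in\mathfrak A^{\mathrm{Lip}}_{\mathrm{loc}}$ with $d_{\mathrm{loc}}(a_k,a)\to0$. Then $d_m(\pi_m a_k,a_m)\to0$ with $\pi_m a_k\in\mathfrak A^{\mathrm{Lip}}_m$, so parts (iii) and (iv) at horizon $m=\lceil T\rceil$ give locally uniform convergence and independence of the sequence. One must check that the core solution for the global $a_k$ restricted to $[0,m]$ equals $X^{\pi_m a_k,x}$; this holds by Proposition~\ref{prop:wellposed-core}.
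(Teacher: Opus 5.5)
Your proposal is correct and follows essentially the same route as the paper. The paper also pulls back the finite-horizon open dense sets through $\pi_j$, proves their density by your time-concatenation lifting (Proposition~\ref{prop:compatible-global-representatives}, which gives $d_{\mathrm{loc}}(\widehat a,a)\le d_m(\gamma_m,\pi_m(a))$, so your constants $C_\ell$ can be taken equal to $1$), and glues the finite-horizon solutions using uniqueness. The only cosmetic differences are that the paper intersects the preimages of the specific sets $G^{[j]}_{m,n}$ rather than of $\mathfrak R_j$, and that it first builds pointwise-compatible representatives so that restricted solutions solve the shorter-horizon equation verbatim, whereas you obtain the same consistency from a.e.\ agreement of the coefficients via Proposition~\ref{prop:rep-equivalence}.
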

	
	\section{A Stopped Krylov--Melnikov Occupation Estimate for
		It\^o--L\'evy Processes}
	\label{sec:krylov-ito-levy}
	
	The distribution and occupation estimates underlying this section are classical. Krylov proved the bounded-domain exit-time estimate for non-degenerate It\^o processes \cite{Krylov-1969,Krylov-1980}. 	Melnikov and Krylov developed semimartingale versions
	\cite{Melnikov-1983,Krylov-1987}, while jump-process extensions appear	in \cite{Anulova-Pragarauskas-1977}. 
	
	We give a self-contained stopped conditional version in the exact form	used later. The process may have an infinite-activity compensated	Poisson part subject only to a uniform second-moment bound; the time	interval may start and end at random stopping times; the test function
	is a non-negative Borel element of $L^{d+1}$; and the possible jump	overshoot at the exit time is handled directly. The proof also makes the dependence of the constant on the jump second moment explicit. These features allow the estimate to be applied uniformly to coefficient
	approximations and to all candidate solutions in the set-valued	stability argument. The only non-elementary analytic input is the	parabolic Krylov--Aleksandrov construction stated and regularized in	Appendix~\ref{app:aleksandrov-regularization}.

	Whenever a function in $C_c^\infty(0,T)\times B_R)$ is regarded as a function on	$\mathbb R\times\mathbb R^d$, it is extended by zero. Since its support is	compactly contained in $(0,T)\times B_R$, this zero extension is again	smooth.
	
	\subsection{The stopped conditional estimate}
	
	Consider the following $d$-dimensional	It\^o--L\'evy process:
	\begin{equation}
		\label{eq:general-ito-levy-process}
		Y_t	=Y_0+\int_0^t b_s\,ds+\int_0^t\sigma_s\,dW_s+\int_0^t\int_U g_s(z)\,\widetilde N(ds,dz),
		\qquad 0\le t\le T,
	\end{equation}
	where \(b\) and \(\sigma\) are predictable processes with values in	\(\mathbb R^d\) and \(\mathbb R^{d\times d}\), respectively, and
	\[g:[0,T]\times\Omega\times U\longrightarrow\mathbb R^d\]
	is
	\(\mathcal P_{\mathrm{pred}}\otimes\mathcal U\)-measurable. We assume	that
	\[
	\int_U|g_t(z)|^2\,\nu(dz)<\infty
	\]
	for \(dt\,d\mathbb P\)-almost every \((t,\omega)\). Equivalently,
	\[
	\overline g_t:=[g_t(\cdot)]_H
	\]
	is an \(H\)-valued predictable process. Set
	\begin{equation}
		\label{eq:continuous-covariance}
		A_s:=\frac12\sigma_s\sigma_s^\top.
	\end{equation}
	Assume that, for $ds\,d\mathbb P$-almost every $(s,\omega)$,
	\begin{equation}
		\label{eq:ito-levy-uniform-bounds}
		|b_s|\le K_b,\qquad\|\sigma_s\|_{\mathrm{HS}}\le K_\sigma,\qquad	\int_U|g_s(z)|^2\,\nu(dz)\le K_g^2.
	\end{equation}
	Define the exit time
	\[
	\tau_R(Y):=	\inf\{t\in[0,T]:Y_t\notin B_R\}\wedge T.
	\]
	
	We establish a localized conditional version of the classical Aleksandrov–Krylov estimate tailored to the present It\^o–L\'evy setting.
	
	\begin{proposition}
		\label{thm:local-weighted-krylov}
		Assume \eqref{eq:ito-levy-uniform-bounds}. Let $\gamma$ and $\tau$ be	stopping times satisfying
		\[
		0\le\gamma\le\tau\le\tau_R(Y).
		\]
		Then there exists a constant
		\[
		C_{\mathrm K}=C_{\mathrm K}(d,T,R,K_b,K_\sigma,K_g)
		\]
		such that, for every non-negative Borel function
		$f\in L^{d+1}(Q_{T,R})$,
		\begin{equation}
			\label{eq:weighted-conditional-krylov}
			\mathbb E\left[\left.\int_\gamma^\tau(\det A_s)^{1/(d+1)}f(s,Y_s)\,ds\,\right|\mathcal F_\gamma\right]
			\le
			C_{\mathrm K}\|f\|_{L^{d+1}(Q_{T,R})}
		\end{equation}
		almost surely. On the event $\{\gamma=\tau\}$ the left-hand side is understood to be zero.
	\end{proposition}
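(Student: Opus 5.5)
The plan is the classical Krylov--Aleksandrov argument. We build a smooth barrier function, apply It\^o's formula for semimartingales with jumps between $\gamma$ and $\tau$, and control the jump terms using the uniform second-moment bound $K_g$.

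By monotone convergence and density, it suffices to treat $f\in C_c^\infty((0,T)\times B_R)$ with $f\ge0$, extended by zero. A general non-negative Borel $f\in L^{d+1}(Q_{T,R})$ is then handled as follows. Approximate $f$ in $L^{d+1}$ by smooth non-negative $f_n$. Pass to the limit on the left-hand side first for bounded continuous $f$, then by a monotone class argument for indicators of Borel sets. A set of Lebesgue measure zero is then seen to be visited for a null set of times, by the estimate itself applied to open supersets of small measure. Finally take increasing truncations.

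For smooth $f$, I would invoke the parabolic Krylov--Aleksandrov construction of Appendix~\ref{app:aleksandrov-regularization}. It should supply, for each such $f$ and each $\kappa>0$, a function $u=u^{\kappa}$ that is smooth on a neighbourhood of $[0,T]\times\overline B_{R'}$, with $R'>R$ chosen to accommodate jumps. The properties I need are:
\begin{itemize}
\item $u$ is bounded, $|u|\le C\|f\|_{L^{d+1}(Q_{T,R})}$;
\item $u$ is concave in $x$, so $D^2u\le0$;
\item $u$ is decreasing in $t$, and $|D_xu|\le C\|f\|_{L^{d+1}(Q_{T,R})}$;
\item $u$ satisfies the differential inequality
\[
\partial_tu+\operatorname{tr}(AD^2u)-\kappa|D_xu|\le-(\det A)^{1/(d+1)}f
\]
for every symmetric $A\ge0$.
\end{itemize}
Here we use the arithmetic--geometric mean inequality $\operatorname{tr}(AD)\ge (d+1)^{\ldots}(\det A)^{1/(d+1)}(\det D)^{\ldots}$ in its parabolic form. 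We choose $\kappa=K_b$, so the drift term $b_s\cdot D_xu$ is absorbed.

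Apply It\^o's formula to $u(s,Y_s)$ on $[\gamma,\tau]$ and take conditional expectation given $\mathcal F_\gamma$. The stochastic integrals are genuine martingale increments, since the integrands are bounded up to $\tau$ (using $|D_xu|$ bounded and $\|\overline g\|_H\le K_g$), so they vanish under the conditional expectation. This yields
\[
\mathbb E\Big[\int_\gamma^\tau(\det A_s)^{1/(d+1)}f(s,Y_s)\,ds\,\Big|\,\mathcal F_\gamma\Big]\le 2\sup|u|+\mathbb E\Big[\int_\gamma^\tau\!\!\int_U J_s(z)\,\nu(dz)\,ds\,\Big|\,\mathcal F_\gamma\Big],
\]
where $J_s(z)=u(s,Y_{s-}+g_s(z))-u(s,Y_{s-})-D_xu(s,Y_{s-})\cdot g_s(z)$.

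Concavity in $x$ gives $J\le0$ wherever $u$ is concave along the whole segment. So the main obstacle is jumps that leave the region where the barrier is defined, together with the overshoot at $\tau_R$. Since $Y_{s-}\in B_R$ for $s\le\tau$ but $Y_s$ may jump far away, I would first try to extend $u$ to all of $\mathbb R^d$ as a globally concave, Lipschitz function in $x$. Concave Aleksandrov-type solutions naturally extend through the infimum of their supporting affine functions, with slope bounded by $C\|f\|_{L^{d+1}}$. This gives $J\le0$ everywhere, with no dependence on $K_g$ except through integrability.

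If global concavity fails after the regularization in time, the fallback is to bound $J_s(z)$ by $C\|f\|_{L^{d+1}}\min(|g_s(z)|,|g_s(z)|^2)$ using the Lipschitz bound and a local $C^{1,1}$ bound. This contributes $C(T,K_g)\|f\|_{L^{d+1}}$ after integrating in $\nu$ and $ds$, which explains the dependence of $C_{\mathrm K}$ on $K_g$. The value $u(\tau,Y_\tau)$ at the possibly overshot point is controlled by the global bound on the extension.

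On $\{\gamma=\tau\}$ both sides are trivial. Collecting the constants, which depend only on $d,T,R,K_b,K_\sigma,K_g$, gives \eqref{eq:weighted-conditional-krylov}.
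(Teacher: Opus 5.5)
\textbf{Overall approach.} Your strategy matches the paper's: a spatially convex barrier from the appendix (concave in your sign convention), the It\^o--L\'evy formula on $[\gamma,\tau]$, and discarding the jump compensator by its sign.

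\textbf{The gap: the endpoint term $u(\tau,Y_\tau)$ at an overshoot.} Your display bounds this term by $2\sup|u|$. At the same time, you get $J\le 0$ for arbitrarily large jumps by extending $u(t,\cdot)$ to a globally concave Lipschitz function on $\mathbb R^d$. These two requirements are incompatible.
\begin{itemize}
\item A concave function on all of $\mathbb R^d$ that is bounded below is constant.
\item A spatially constant $u$ cannot satisfy $\partial_tu+\operatorname{tr}(AD^2u)\le-(\det A)^{1/(d+1)}f$ for every $A\ge0$ unless $f=0$: replace $A$ by $sA$ and let $s\to\infty$.
\end{itemize}
So after the global extension you only have
\[
|u(t,x)|\le C\|f\|_{L^{d+1}(Q_{T,R})}(1+|x|).
\]
Since the jump $\Delta Y_\tau$ can be arbitrarily large, ``the global bound on the extension'' does not control $u(\tau,Y_\tau)$.

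\textbf{The missing ingredient} is a conditional moment bound for the exit position. On $\{\gamma<\tau\}$ one has $|Y_\gamma|<R$. Conditional Cauchy--Schwarz together with the It\^o isometries for the Brownian and compensated Poisson integrals then gives
\[
\mathbf 1_{\{\gamma<\tau\}}\,\mathbb E\bigl[|Y_\tau|\,\big|\,\mathcal F_\gamma\bigr]\le\mathbf 1_{\{\gamma<\tau\}}\bigl(R+K_bT+(K_\sigma+K_g)T^{1/2}\bigr).
\]
Combined with linear growth, this bounds the endpoint term by $C\|f\|_{L^{d+1}(Q_{T,R})}$. This is exactly how the paper closes the smooth case. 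It is also where $K_g$ genuinely enters $C_{\mathrm K}$, not only through integrability of $J$ as you suggest.

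\textbf{Your fallback does not rescue this.} The Krylov potential is only convex and Lipschitz; its Hessian is a measure. The $C^{1,1}$ bound of its regularizations therefore blows up as the regularization parameter goes to zero. Hence a bound $|J|\le C\|f\|_{L^{d+1}(Q_{T,R})}\min(|g|,|g|^2)$ with $C$ independent of the regularization is not available. The paper uses the $n$-dependent Hessian bound $H_{n,R}$ only to make the compensator finite for fixed $n$, with a cutoff $\chi_m$ to justify the formula, and then drops the compensator by convexity.

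\textbf{Smaller points.}
\begin{itemize}
\item For the drift to be absorbed, your inequality would need $+\kappa|D_xu|$, not $-\kappa|D_xu|$. The absorption is unnecessary in any case: $|b_s\cdot D_xu|\le K_b C\|f\|_{L^{d+1}(Q_{T,R})}$ just adds $K_bTC\|f\|_{L^{d+1}(Q_{T,R})}$.
\item In the extension to general $f$, bounds on indicators of Borel sets followed by truncation only yield a weak-type (Lorentz) estimate, not the $L^{d+1}$ norm. Assemble it instead as follows: absolute continuity of the occupation measure (your open-superset argument), then Fatou's lemma along an a.e.-convergent smooth approximating sequence.
\item Alternatively, use the paper's duality argument. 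It produces a density $h_A\in L^{(d+1)/d}$ for the occupation measure $\Gamma_A$ and concludes by H\"older.
\end{itemize}
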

	
\begin{proof}
	We first prove the estimate for smooth functions and then pass to
	arbitrary \(L^{d+1}\)-functions by identifying the stopped occupation
	measure.

	\textbf{Step 1: smooth non-negative functions and the jump operator.}
	
	Let
	\[
	f\in C_c^\infty((0,T)\times B_R),
	\qquad
	f\geq0.
	\]
	By Lemma~\ref{lem:global-aleksandrov-regularization}, there exist
	\(u^n\in C^{1,2}([0,T]\times\mathbb R^d)\) and non-negative smooth
	functions \(f^n\) such that
	\begin{equation}
		\label{eq:regularized-f-convergence}
		\|f^n-f\|_{L^\infty(\mathbb R\times\mathbb R^d)}
		\longrightarrow0,
	\end{equation}
	\(x\mapsto u^n(t,x)\) is convex, and, for every symmetric
	non-negative definite matrix \(A\),
	\begin{equation}
		\label{eq:regularized-aleksandrov-inequality}
		\partial_tu^n(t,x)
		+\operatorname{tr}\!\left(A D^2u^n(t,x)\right)
		\geq
		\kappa_d(\det A)^{1/(d+1)}f^n(t,x)
	\end{equation}
	on \([0,T]\times B_R\), here \(D^2u\) denotes the Hessian with respect to the spatial variable. Moreover, with
	\[
	L_f
	:=
	C_{d,T,R}\|f\|_{L^{d+1}(Q_{T,R})},
	\]
	one has, uniformly in \(n\),
	\begin{equation}
		\label{eq:regularized-growth-gradient-bounds}
		|u^n(t,x)|
		\leq
		L_f(1+|x|),
		\qquad
		|\nabla u^n(t,x)|
		\leq
		L_f.
	\end{equation}
	
	For fixed \(n\), set
	\begin{equation}
		\label{eq:jump-operator}
		\mathcal J_su^n(t,x)
		:=
		\int_U r_n(t,x,g_s(z))\,\nu(dz),
	\end{equation}
	where
	\[
	r_n(t,x,h)
	:=
	u^n(t,x+h)-u^n(t,x)-\nabla u^n(t,x)\cdot h.
	\]
	Convexity gives
	\begin{equation}
		\label{eq:jump-operator-positive}
		r_n(t,x,h)\geq0,
		\qquad
		\mathcal J_su^n(t,x)\geq0.
	\end{equation}
	
	We next verify that the integral in
	\eqref{eq:jump-operator} is finite along the trajectory before the
	exit time. Put
	\[
	H_{n,R}
	:=
	\sup_{(t,x)\in[0,T]\times\overline B_{R+1}}
	\|D^2u^n(t,x)\|
	<\infty.
	\]
	If \(x\in\overline B_R\) and \(|h|\leq1\), Taylor's formula and
	convexity imply
	\begin{equation}
		\label{eq:small-jump-remainder}
		0
		\leq
		r_n(t,x,h)
		\leq
		\frac12 H_{n,R}|h|^2.
	\end{equation}
	If \(|h|>1\), the global gradient bound in
	\eqref{eq:regularized-growth-gradient-bounds} gives
	\begin{equation}
		\label{eq:large-jump-remainder}
		0
		\leq
		r_n(t,x,h)
		\leq
		2L_f|h|
		\leq
		2L_f|h|^2.
	\end{equation}
	Consequently, for \(x\in\overline B_R\),
	\begin{equation}
		\label{eq:jump-operator-integrability}
		0
		\leq
		\mathcal J_su^n(t,x)
		\leq
		\left(
		\frac12 H_{n,R}+2L_f
		\right)
		\int_U|g_s(z)|^2\,\nu(dz)
		<\infty.
	\end{equation}
	This small-jump/large-jump decomposition is the point at which the
	second-moment assumption on \(g\) is used to control the compensator
	term.

	\textbf{Step 2: the stochastic integrals are true martingales.}
	
	Define
	\[
	M_t^{W,n}
	:=
	\int_0^t
	\nabla u^n(s,Y_{s-})\sigma_s\,dW_s
	\]
	and
	\[
	M_t^{N,n}
	:=
	\int_0^t\int_U
	\bigl[
	u^n(s,Y_{s-}+g_s(z))
	-u^n(s,Y_{s-})
	\bigr]
	\,\widetilde N(ds,dz).
	\]
	By \eqref{eq:regularized-growth-gradient-bounds} and
	\eqref{eq:ito-levy-uniform-bounds},
	\begin{equation}
		\label{eq:brownian-martingale-L2}
		\mathbb E\int_0^T
		|\nabla u^n(s,Y_{s-})\sigma_s|^2\,ds
		\leq
		L_f^2K_\sigma^2T
		<\infty.
	\end{equation}
	Since \(u^n(t,\cdot)\) is globally \(L_f\)-Lipschitz,
	\begin{equation}
		\label{eq:poisson-martingale-L2}
		\begin{aligned}
			&\mathbb E\int_0^T\int_U
			\bigl|
			u^n(s,Y_{s-}+g_s(z))
			-u^n(s,Y_{s-})
			\bigr|^2
			\,\nu(dz)\,ds
			\\
			&\qquad
			\leq
			L_f^2K_g^2T
			<\infty.
		\end{aligned}
	\end{equation}
	Thus \(M^{W,n}\) and \(M^{N,n}\) are square-integrable martingales,
	not merely local martingales. Since \(\gamma\) and \(\tau\) are
	bounded, optional sampling yields
	\begin{equation}
		\label{eq:conditional-martingale-increments-zero}
		\mathbb E\left[
		M_\tau^{W,n}-M_\gamma^{W,n}
		+
		M_\tau^{N,n}-M_\gamma^{N,n}
		\,\middle|\,
		\mathcal F_\gamma
		\right]
		=0.
	\end{equation}

	\textbf{Step 3: It\^o--L\'evy formula and the smooth estimate.}
	
	To justify the It\^o--L\'evy formula without assuming a global bound
	on \(D^2u^n\), choose
	\[
	\chi\in C_c^\infty(\mathbb R^d),
	\qquad
	0\leq\chi\leq1,
	\qquad
	\chi=1\ \text{on }B_1,
	\qquad
	\operatorname{supp}\chi\subset B_2,
	\]
	and set
	\[
	\chi_m(x):=\chi(x/m),
	\qquad
	u_m^n(t,x):=\chi_m(x)u^n(t,x).
	\]
	Then
	\[
	\nabla u_m^n
	=
	\chi_m\nabla u^n
	+
	u^n\nabla\chi_m.
	\]
	Since
	\[
	\nabla\chi_m(x)
	=
	\frac1m\nabla\chi(x/m)
	\]
	and \(\nabla\chi_m\) is supported in
	\(\{m\leq|x|\leq2m\}\), the growth estimate
	\eqref{eq:regularized-growth-gradient-bounds} gives
	\[
	\begin{aligned}
		|u^n(t,x)|\,|\nabla\chi_m(x)|
		&\leq
		L_f(1+2m)
		\frac{\|\nabla\chi\|_\infty}{m}
		\\
		&\leq
		C_\chi L_f
	\end{aligned}
	\]
	on the support of \(\nabla\chi_m\). Hence
	\begin{equation}
		\label{eq:truncated-uniform-gradient}
		\sup_{m\geq1}
		\|\nabla u_m^n\|_{L^\infty(\mathbb R\times\mathbb R^d)}
		\leq
		C_\chi L_f.
	\end{equation}
	
	For
	\[
	r_{m,n}(t,x,h)
	:=
	u_m^n(t,x+h)
	-u_m^n(t,x)
	-\nabla u_m^n(t,x)\cdot h,
	\]
	\eqref{eq:truncated-uniform-gradient} implies
	\[
	|r_{m,n}(t,x,h)|
	\leq
	C_\chi L_f|h|
	\leq
	C_\chi L_f|h|^2,
	\qquad |h|>1.
	\]
	If \(m>R+1\), \(x\in B_R\), and \(|h|\leq1\), then
	\(x,x+h\in B_{R+1}\subset B_m\), and therefore
	\[
	u_m^n(t,x)=u^n(t,x),
	\qquad
	u_m^n(t,x+h)=u^n(t,x+h),
	\qquad
	\nabla u_m^n(t,x)=\nabla u^n(t,x).
	\]
	Consequently,
	\[
	0
	\leq
	r_{m,n}(t,x,h)
	=
	r_n(t,x,h)
	\leq
	\frac12H_{n,R}|h|^2.
	\]
	Thus, for \(m>R+1\) and \(x\in B_R\),
	\begin{equation}
		\label{eq:truncated-jump-dominator}
		|r_{m,n}(t,x,h)|
		\leq
		\frac12H_{n,R}|h|^2
		\mathbf1_{\{|h|\leq1\}}
		+
		C_\chi L_f|h|^2
		\mathbf1_{\{|h|>1\}}.
	\end{equation}
	After substituting \(h=g_s(z)\), the right-hand side is integrable
	with respect to
	\(\nu(dz)\,ds\,d\mathbb P\) by
	\eqref{eq:ito-levy-uniform-bounds}.
	
Apply the It\^o--L\'evy formula to \(u_m^n(t,Y_t)\) on
\([\gamma,\tau]\) and take conditional expectation with respect to
\(\mathcal F_\gamma\). By
\eqref{eq:conditional-martingale-increments-zero}, the Brownian and
compensated Poisson martingale increments have conditional expectation
zero. It remains to pass to the limit \(m\to\infty\) in the remaining
terms.

Since \(\tau\leq\tau_R(Y)\), we have
\[
Y_{s-}\in\overline B_R,
\qquad
\gamma<s<\tau.
\]
If \(m>R+1\), then \(\chi_m\equiv1\) on a neighborhood of
\(\overline B_R\). Hence, on \(\{\gamma<s<\tau\}\),
\[
\partial_su_m^n(s,Y_{s-})
=
\partial_su^n(s,Y_{s-}),
\]
and likewise
\[
\nabla u_m^n(s,Y_{s-})
=
\nabla u^n(s,Y_{s-}),
\qquad
D^2u_m^n(s,Y_{s-})
=
D^2u^n(s,Y_{s-}).
\]
Thus the time-derivative, drift, and continuous second-order terms
already coincide with those for \(u^n\) once \(m>R+1\).

For the jump compensator, for every fixed \((s,\omega,z)\),
\[
r_{m,n}\bigl(s,Y_{s-},g_s(z)\bigr)
\longrightarrow
r_n\bigl(s,Y_{s-},g_s(z)\bigr)
\qquad\text{as }m\to\infty.
\]
Together with the uniform bound
\eqref{eq:truncated-jump-dominator}, dominated convergence yields
\[
\mathcal J_su_m^n(s,Y_{s-})
\longrightarrow
\mathcal J_su^n(s,Y_{s-})
\]
in the corresponding integrable sense, and hence also after integration
over \([\gamma,\tau]\) and conditional expectation.

Finally,
\[
|u_m^n(t,x)|
\leq
|u^n(t,x)|
\leq
L_f(1+|x|)
\]
for all \(m\). Since the boundedness assumptions imply
\(Y_\gamma,Y_\tau\in L^1\), while
\[
u_m^n(\gamma,Y_\gamma)\to u^n(\gamma,Y_\gamma),
\qquad
u_m^n(\tau,Y_\tau)\to u^n(\tau,Y_\tau)
\]
almost surely, conditional dominated convergence applies to the endpoint
terms.

Letting \(m\to\infty\) therefore gives
\begin{equation}
	\label{eq:ito-formula-krylov-revised}
	\begin{aligned}
		&\mathbb E\left[
		u^n(\tau,Y_\tau)
		-u^n(\gamma,Y_\gamma)
		\,\middle|\,
		\mathcal F_\gamma
		\right]
		\\
		=&
		\mathbb E\left[
		\left.
		\int_\gamma^\tau
		\Bigl[
		\partial_su^n(s,Y_{s-})
		+b_s\cdot\nabla u^n(s,Y_{s-})
		\right.\right.
		\\
		&\left.\left.
		+\operatorname{tr}\!\left(
		A_sD^2u^n(s,Y_{s-})
		\right)
		+\mathcal J_su^n(s,Y_{s-})
		\Bigr]\,ds
		\,\right|\,
		\mathcal F_\gamma
		\right].
	\end{aligned}
\end{equation}

	For \(ds\,d\mathbb P\)-almost every \((s,\omega)\) with
	\(\gamma<s<\tau\), one has \(Y_{s-}\in B_R\). Therefore, by
	\eqref{eq:regularized-aleksandrov-inequality},
	\eqref{eq:jump-operator-positive}, and \(|b_s|\leq K_b\),
	\begin{equation}
		\label{eq:generator-lower-bound-revised}
		\begin{aligned}
			&\partial_su^n(s,Y_{s-})
			+b_s\cdot\nabla u^n(s,Y_{s-})
			+\operatorname{tr}\!\left(
			A_sD^2u^n(s,Y_{s-})
			\right)
			+\mathcal J_su^n(s,Y_{s-})
			\\
			\geq&
			\kappa_d(\det A_s)^{1/(d+1)}
			f^n(s,Y_{s-})
			-K_bL_f.
		\end{aligned}
	\end{equation}
	Since a c\`adl\`ag path has at most countably many jumps,
	\(Y_s=Y_{s-}\) for Lebesgue-almost every \(s\). Combining
	\eqref{eq:ito-formula-krylov-revised} and
	\eqref{eq:generator-lower-bound-revised} gives
	\begin{equation}
		\label{eq:preliminary-weighted-bound-revised}
		\begin{aligned}
			&\kappa_d
			\mathbb E\left[
			\left.
			\int_\gamma^\tau
			(\det A_s)^{1/(d+1)}
			f^n(s,Y_s)\,ds
			\,\right|\,
			\mathcal F_\gamma
			\right]
			\\
			\leq&
			\mathbb E\left[
			u^n(\tau,Y_\tau)
			-u^n(\gamma,Y_\gamma)
			\,\middle|\,
			\mathcal F_\gamma
			\right]
			+K_bTL_f.
		\end{aligned}
	\end{equation}
	
	Let
	\[
	E:=\{\gamma<\tau\}\in\mathcal F_\gamma.
	\]
	On \(E\), one has \(\gamma<\tau_R(Y)\), and hence
	\(Y_\gamma\in B_R\), so \(|Y_\gamma|\leq R\).
	Moreover, conditional Cauchy--Schwarz and the isometries for the
	Brownian and compensated Poisson integrals imply
	\begin{equation}
		\label{eq:conditional-terminal-moment-revised}
		\begin{aligned}
			\mathbf1_E
			\mathbb E\left[
			|Y_\tau|
			\,\middle|\,
			\mathcal F_\gamma
			\right]
			&\leq
			\mathbf1_E
			\left(
			|Y_\gamma|
			+K_bT
			+K_\sigma T^{1/2}
			+K_gT^{1/2}
			\right)
			\\
			&\leq
			\mathbf1_E
			\left[
			R+K_bT
			+(K_\sigma+K_g)T^{1/2}
			\right].
		\end{aligned}
	\end{equation}
	In particular,
	\eqref{eq:regularized-growth-gradient-bounds} yields
	\begin{equation}
		\label{eq:terminal-test-function-bound-revised}
		\mathbf1_E
		\left|
		\mathbb E\left[
		u^n(\tau,Y_\tau)
		-u^n(\gamma,Y_\gamma)
		\,\middle|\,
		\mathcal F_\gamma
		\right]
		\right|
		\leq
		C\mathbf1_E
		\|f\|_{L^{d+1}(Q_{T,R})},
	\end{equation}
	where
	\[
	C=C(d,T,R,K_b,K_\sigma,K_g)
	\]
	is independent of \(n\), \(\gamma\), \(\tau\), and \(f\).
	
	On \(E^c=\{\gamma=\tau\}\), the occupation integral is zero.
	Hence \eqref{eq:preliminary-weighted-bound-revised} and
	\eqref{eq:terminal-test-function-bound-revised} imply
	\begin{equation}
		\label{eq:weighted-krylov-regularized}
		\mathbb E\left[
		\left.
		\int_\gamma^\tau
		(\det A_s)^{1/(d+1)}
		f^n(s,Y_s)\,ds
		\,\right|\,
		\mathcal F_\gamma
		\right]
		\leq
		C_{\mathrm K}
		\|f\|_{L^{d+1}(Q_{T,R})}.
	\end{equation}
	The upper bound on \(\sigma\) gives
	\begin{equation}
		\label{eq:determinant-upper-bound}
		(\det A_s)^{1/(d+1)}
		\leq
		C(d,K_\sigma).
	\end{equation}
	Therefore \eqref{eq:regularized-f-convergence} permits passage to the
	limit in \eqref{eq:weighted-krylov-regularized}, and we obtain
	\begin{equation}
		\label{eq:weighted-krylov-smooth-revised}
		\mathbb E\left[
		\left.
		\int_\gamma^\tau
		(\det A_s)^{1/(d+1)}
		f(s,Y_s)\,ds
		\,\right|\,
		\mathcal F_\gamma
		\right]
		\leq
		C_{\mathrm K}
		\|f\|_{L^{d+1}(Q_{T,R})}
	\end{equation}
	for every non-negative
	\(f\in C_c^\infty((0,T)\times B_R)\).

	\textbf{Step 4: extension from smooth functions to	\(L^{d+1}(Q_{T,R})\).}
	
	For \(A\in\mathcal F_\gamma\), define a finite Borel measure	\(\Gamma_A\) on \(Q_{T,R}\) by
	\begin{equation}
		\label{eq:conditional-occupation-measure}
		\Gamma_A(B)	:=
		\mathbb E\left[
		\mathbf1_A
		\int_\gamma^\tau
		(\det A_s)^{1/(d+1)}
		\mathbf1_B(s,Y_s)\,ds
		\right],
		\qquad
		B\in\mathcal B(Q_{T,R}).
	\end{equation}
	Finiteness follows from
	\eqref{eq:determinant-upper-bound}. Moreover,
	\[
	\Gamma_A\bigl(\{0,T\}\times B_R\bigr)=0.
	\]
	Set
	\[
	D_{T,R}:=(0,T)\times B_R.
	\]
	Thus \(\Gamma_A\) is determined by its restriction to \(D_{T,R}\),
	and
	\[
	L^{d+1}(D_{T,R})
	=
	L^{d+1}(Q_{T,R})
	\]
	up to the usual identification of functions on Lebesgue-null sets.
	
	Multiplying \eqref{eq:weighted-krylov-smooth-revised} by
	\(\mathbf1_A\) and taking expectations gives
	\begin{equation}
		\label{eq:occupation-measure-smooth-bound}
		\int_{D_{T,R}}\varphi\,d\Gamma_A
		\leq
		C_{\mathrm K}\mathbb P(A)
		\|\varphi\|_{L^{d+1}(D_{T,R})}
	\end{equation}
	for every non-negative
	\(\varphi\in C_c^\infty(D_{T,R})\).
	
	Since \(D_{T,R}\) is open, every non-negative
	\(\varphi\in C_c(D_{T,R})\) can be uniformly approximated by
	non-negative functions in \(C_c^\infty(D_{T,R})\).
	Consequently, \eqref{eq:occupation-measure-smooth-bound} also holds
	for every non-negative \(\varphi\in C_c(D_{T,R})\).
	Applying this estimate to \(\varphi^+\) and \(\varphi^-\) gives
	\[
	\left|
	\int_{D_{T,R}}\varphi\,d\Gamma_A
	\right|
	\leq
	C_{\mathrm K}\mathbb P(A)
	\|\varphi\|_{L^{d+1}(D_{T,R})},
	\qquad
	\varphi\in C_c(D_{T,R}).
	\]
	
	Since \(C_c^\infty(D_{T,R})\) is dense in
	\(L^{d+1}(D_{T,R})\), the functional
	\[
	\varphi
	\longmapsto
	\int_{D_{T,R}}\varphi\,d\Gamma_A
	\]
	extends uniquely to a bounded linear functional on
	\(L^{d+1}(D_{T,R})\). By
	\(L^p\)--\(L^{p'}\) duality, there exists
	\[
	h_A
	\in
	L^{(d+1)/d}(D_{T,R})
	\]
	such that
	\[
	\int_{D_{T,R}}\varphi\,d\Gamma_A
	=
	\int_{D_{T,R}}
	\varphi(t,x)h_A(t,x)\,dt\,dx
	\]
	for \(\varphi\in C_c(D_{T,R})\), and
	\[
	\|h_A\|_{L^{(d+1)/d}(D_{T,R})}
	\leq
	C_{\mathrm K}\mathbb P(A).
	\]
	The finite Radon measures
	\(\Gamma_A|_{D_{T,R}}\) and
	\(h_A(t,x)\,dt\,dx\) agree on \(C_c(D_{T,R})\).
	By uniqueness in the Riesz representation theorem,
	\[
	\Gamma_A|_{D_{T,R}}(dt,dx)
	=
	h_A(t,x)\,dt\,dx.
	\]
	Since \(\Gamma_A\) gives no mass to
	\(\{0,T\}\times B_R\), we may equivalently write, after extending
	\(h_A\) arbitrarily on this Lebesgue-null set,
	\begin{equation}
		\label{eq:occupation-density}
		\Gamma_A(dt,dx)
		=
		h_A(t,x)\,dt\,dx
		\qquad\text{on }Q_{T,R}.
	\end{equation}
	
	Therefore, for every non-negative Borel function
	\(f\in L^{d+1}(Q_{T,R})\), H\"older's inequality gives
	\[
	\begin{aligned}
		\int_{Q_{T,R}}f\,d\Gamma_A
		&=
		\int_{Q_{T,R}}
		f(t,x)h_A(t,x)\,dt\,dx
		\\
		&\leq
		\|f\|_{L^{d+1}(Q_{T,R})}
		\|h_A\|_{L^{(d+1)/d}(Q_{T,R})}
		\\
		&\leq
		C_{\mathrm K}\mathbb P(A)
		\|f\|_{L^{d+1}(Q_{T,R})}.
	\end{aligned}
	\]
	By the definition of \(\Gamma_A\), first for indicator functions
	and then for non-negative Borel functions by monotone convergence,
	this is precisely
	\begin{equation}
		\label{eq:occupation-measure-general-bound}
		\mathbb E\left[
		\mathbf1_A
		\int_\gamma^\tau
		(\det A_s)^{1/(d+1)}
		f(s,Y_s)\,ds
		\right]
		\leq
		C_{\mathrm K}\mathbb P(A)
		\|f\|_{L^{d+1}(Q_{T,R})}.
	\end{equation}
	
	Since \eqref{eq:occupation-measure-general-bound} holds for every
	\(A\in\mathcal F_\gamma\), the defining property of conditional
	expectation gives
	\eqref{eq:weighted-conditional-krylov}. Indeed, if
	\[
	\mathbb E\left[
	\left.
	\int_\gamma^\tau
	(\det A_s)^{1/(d+1)}
	f(s,Y_s)\,ds
	\,\right|\,
	\mathcal F_\gamma
	\right]
	>
	C_{\mathrm K}
	\|f\|_{L^{d+1}(Q_{T,R})}
	\]
	on some set \(A\in\mathcal F_\gamma\) of positive probability, then
	integrating over \(A\) would contradict
	\eqref{eq:occupation-measure-general-bound}.
\end{proof}

	The weighted estimate becomes an ordinary occupation estimate under	uniform ellipticity.
	
	\begin{corollary}
		\label{cor:unweighted-local-krylov}
		In addition to the assumptions of
		Theorem~\ref{thm:local-weighted-krylov}, suppose that
		\begin{equation}
			\label{eq:krylov-uniform-ellipticity}
			A_s\ge\lambda I_d
		\end{equation}
		for some $\lambda>0$, for $ds\,d\mathbb P$-almost every
		$(s,\omega)$. Then, for every non-negative Borel function
		$f\in L^{d+1}(Q_{T,R})$,
		\begin{equation}
			\label{eq:unweighted-conditional-krylov}
			\mathbb E\left[
			\left.
			\int_\gamma^\tau f(s,Y_s)\,ds
			\,\right|\mathcal F_\gamma
			\right]
			\le
			C_{\mathrm K}^{0}
			\|f\|_{L^{d+1}(Q_{T,R})},
		\end{equation}
		where 
		\[
		C_{\mathrm K}^{0}=	C_{\mathrm K}\lambda^{-d/(d+1)}.
		\]
	\end{corollary}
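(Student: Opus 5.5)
The plan is to reduce the unweighted estimate to the weighted one in Proposition~\ref{thm:local-weighted-krylov} through a pointwise lower bound on the determinant weight. The uniform ellipticity assumption \eqref{eq:krylov-uniform-ellipticity} says $A_s\ge\lambda I_d$ in the sense of symmetric matrices. Every eigenvalue of the symmetric non-negative definite matrix $A_s$ is therefore at least $\lambda$. Hence $\det A_s\ge\lambda^d$ for $ds\,d\mathbb P$-almost every $(s,\omega)$, and consequently
\[
(\det A_s)^{1/(d+1)}\ge\lambda^{d/(d+1)}.
\]
This yields the pointwise inequality
\[
f(s,Y_s)\le\lambda^{-d/(d+1)}(\det A_s)^{1/(d+1)}f(s,Y_s)
\]
for almost every $(s,\omega)$, using that $f\ge0$.

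Next, I would integrate this inequality over $[\gamma,\tau]$. The exceptional $ds\,d\mathbb P$-null set does not affect the $ds$-integral for almost every $\omega$, by Fubini. Both sides are non-negative, so conditional expectations with respect to $\mathcal F_\gamma$ are well defined, possibly infinite a priori. Monotonicity of conditional expectation then gives
\[
\mathbb E\Bigl[\int_\gamma^\tau f(s,Y_s)\,ds\,\Big|\,\mathcal F_\gamma\Bigr]
\le\lambda^{-d/(d+1)}\,
\mathbb E\Bigl[\int_\gamma^\tau(\det A_s)^{1/(d+1)}f(s,Y_s)\,ds\,\Big|\,\mathcal F_\gamma\Bigr].
\]
Applying \eqref{eq:weighted-conditional-krylov} to the right-hand side gives \eqref{eq:unweighted-conditional-krylov} with $C_{\mathrm K}^0=C_{\mathrm K}\lambda^{-d/(d+1)}$. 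On $\{\gamma=\tau\}$ both sides vanish by the convention of Proposition~\ref{thm:local-weighted-krylov}.

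There is no genuine obstacle. The only points needing care are the matrix inequality $\det A\ge\lambda^d$, which follows by diagonalizing $A_s$, and the fact that the almost-everywhere qualifier in \eqref{eq:krylov-uniform-ellipticity} survives integration and conditioning. Both are routine.
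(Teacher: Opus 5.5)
Your proposal is correct and is exactly the argument the paper has in mind (the corollary is stated without a separate proof). Uniform ellipticity gives $\det A_s\ge\lambda^d$, hence $(\det A_s)^{1/(d+1)}\ge\lambda^{d/(d+1)}$; monotonicity of conditional expectation combined with \eqref{eq:weighted-conditional-krylov} then yields the bound with $C_{\mathrm K}^{0}=C_{\mathrm K}\lambda^{-d/(d+1)}$, and your handling of the $ds\,d\mathbb P$-null exceptional set and of possibly infinite conditional expectations is adequate.
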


	\subsection{Occupation estimates for coefficient errors}
	
	We now formulate the consequence needed in the stability arguments. Recall from Section~\ref{sec:framework-main} that	\(q_\ast=2(d+1).\)
	
	Let
	\[
	a=(b,\sigma,\overline\beta),
	\qquad
	\widetilde a
	=(\widetilde b,\widetilde\sigma,\overline{\widetilde\beta})
	\]
	be two coefficient triples satisfying the common structural bounds	of Section~\ref{sec:framework-main}. Recall the pointwise distance
	\[
	\Delta(a,\widetilde a)(t,x)
	\]
	defined in
	\eqref{eq:pointwise-coefficient-distance}. For a measurable	probability-measure-valued process
	\[
	\mu_t\in\mathcal P_2(\mathbb R^d),
	\]
	define
	\begin{equation}
		\label{eq:total-coefficient-error}
		\begin{aligned}
			\mathfrak e_{a,\widetilde a}(t,x,\mu)
			:={}&
			|b(t,x,\mu)-\widetilde b(t,x,\mu)|^2+	\|\sigma(t,x,\mu)-\widetilde\sigma(t,x,\mu)\|_{\mathrm{HS}}^2\\
			&+	\|\overline\beta(t,x,\mu)-\overline{\widetilde\beta}(t,x,\mu)\|_H^2.
		\end{aligned}
	\end{equation}
	By the definition of $\Delta$,
	\begin{equation}
		\label{eq:error-controlled-by-delta}
		\mathfrak e_{a,\widetilde a}(t,x,\mu)
		\leq
		(\Delta(a,\widetilde a)(t,x))^2. 	
	\end{equation}
	By applying Corollary~\ref{cor:unweighted-local-krylov} to
	\[
	f(t,x)
	=
	(\Delta(a,\widetilde a)(t,x))^2,
	\]
	we have 
	\begin{corollary}
		\label{cor:coefficient-error-occupation}
		Assume that $Y$ satisfies
		\eqref{eq:general-ito-levy-process},
		\eqref{eq:ito-levy-uniform-bounds}, and
		\eqref{eq:krylov-uniform-ellipticity}. Let
		\[
		0\leq\gamma\leq\tau\leq\tau_R(Y)
		\]
		be stopping times. Then, for every measurable process	$\mu_t\in\mathcal P_2(\mathbb R^d)$,  one has
		\begin{equation}
			\label{eq:combined-coefficient-error-estimate}
			\begin{aligned}
				\mathbb E\left[
				\left.
				\int_\gamma^\tau
				\mathfrak e_{a,\widetilde a}
				(s,Y_s,\mu_s)\,ds
				\,\right|\,\mathcal F_\gamma
				\right]
				\leq
				C_{\mathrm K}^{0}
				\left\|
				\Delta(a,\widetilde a)
				\right\|_{L^{2d+2}(Q_{T,R})}^2.
			\end{aligned}
		\end{equation}
		Equivalently, in terms of the local metric introduced in
		\eqref{eq:local-krylov-distance},
		\begin{equation}
			\label{eq:combined-error-local-metric}
			\mathbb E\left[
			\left.
			\int_\gamma^\tau
			\mathfrak e_{a,\widetilde a}
			(s,Y_s,\mu_s)\,ds
			\,\right|\,\mathcal F_\gamma
			\right]
			\leq
			C_{\mathrm K}^{0}
			\rho_{T,R}(a,\widetilde a)^2.
		\end{equation}
	\end{corollary}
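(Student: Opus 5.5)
The plan is to deduce the estimate directly from Corollary~\ref{cor:unweighted-local-krylov}, with the test function
\[
f(t,x):=\bigl(\Delta(a,\widetilde a)(t,x)\bigr)^2 .
\]

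\textbf{Step 1: $f$ is an admissible test function.} First I check that $f$ is non-negative and Borel on $Q_{T,R}$. Each map $(t,x)\mapsto |b(t,x,\mu_j)-\widetilde b(t,x,\mu_j)|$, and the analogous $\sigma$ and $\overline\beta$ terms, is Borel by (A1). Hence $\Delta(a,\widetilde a)$ is Borel as a countable supremum of Borel functions, and so is its square. By (A2), $\Delta\le 2M$, so $f$ is bounded and belongs to $L^{d+1}(Q_{T,R})$. Its norm is
\[
\|f\|_{L^{d+1}(Q_{T,R})}=\Bigl(\int_{Q_{T,R}}\Delta^{2(d+1)}\Bigr)^{1/(d+1)}=\|\Delta(a,\widetilde a)\|_{L^{2d+2}(Q_{T,R})}^2 ,
\]
which is the identity $\|\Delta^2\|_{L^{r_\ast}}=\|\Delta\|_{L^{q_\ast}}^2$ already noted in Section~\ref{sec:framework-main}.

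\textbf{Step 2: pointwise comparison along the path.} Next I compare the integrand with $f$. By \eqref{eq:error-controlled-by-delta}, combined with the identification \eqref{eq:dense-measure-supremum} of the supremum over $\{\mu_j\}$ with the supremum over all of $\mathcal P_2(\mathbb R^d)$, we have
\[
\mathfrak e_{a,\widetilde a}(s,Y_s(\omega),\mu_s(\omega))\le f(s,Y_s(\omega))
\]
for every $(s,\omega)$, whatever the measure-valued process $\mu$ is. The process $(s,\omega)\mapsto \mathfrak e_{a,\widetilde a}(s,Y_s,\mu_s)$ is jointly measurable, because it is a composition of Borel maps with the measurable process $(s,Y_s,\mu_s)$. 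Its time integral is therefore a well-defined non-negative random variable, and the conditional expectation makes sense, possibly a priori infinite.

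\textbf{Step 3: apply the occupation estimate.} By monotonicity of conditional expectation, Step 2 gives
\[
\mathbb E\Bigl[\int_\gamma^\tau \mathfrak e_{a,\widetilde a}(s,Y_s,\mu_s)\,ds\,\Big|\,\mathcal F_\gamma\Bigr]\le \mathbb E\Bigl[\int_\gamma^\tau f(s,Y_s)\,ds\,\Big|\,\mathcal F_\gamma\Bigr].
\]
Corollary~\ref{cor:unweighted-local-krylov} bounds the right-hand side by $C_{\mathrm K}^0\|f\|_{L^{d+1}(Q_{T,R})}$. Inserting the identity from Step 1 yields \eqref{eq:combined-coefficient-error-estimate}. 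The form \eqref{eq:combined-error-local-metric} is the same bound written with $\rho_{T,R}=\|\Delta\|_{L^{q_\ast}(Q_{T,R})}$.

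\textbf{Main obstacle.} The only delicate point is measurability. The comparison in Step 2 must hold for arbitrary, possibly non-dense, values $\mu_s$. This is exactly where continuity in $\mu$, coming from (A3), is needed: it ensures that the supremum over the countable family $\{\mu_j\}$ dominates the error at every measure. Everything else is a direct substitution into the corollary.
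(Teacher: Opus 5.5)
Your proposal is correct and follows the paper's argument. The paper likewise applies Corollary~\ref{cor:unweighted-local-krylov} to $f=\Delta(a,\widetilde a)^2$, using the pointwise bound $\mathfrak e_{a,\widetilde a}(t,x,\mu)\le\Delta(a,\widetilde a)(t,x)^2$ from \eqref{eq:error-controlled-by-delta} and the identity $\|\Delta^2\|_{L^{d+1}(Q_{T,R})}=\|\Delta\|_{L^{2d+2}(Q_{T,R})}^2$; your checks on the Borel measurability and boundedness of $\Delta$, and on the role of (A3) in letting the countable supremum dominate the error at arbitrary $\mu_s$, fill in details the paper leaves implicit.
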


	In particular, each coefficient component satisfies a separate occupation estimate:
	\begin{equation}
		\label{eq:drift-error-occupation}
		\begin{aligned}
			\mathbb E\left[
			\left.
			\int_\gamma^\tau
			|b(s,Y_s,\mu_s)
			-\widetilde b(s,Y_s,\mu_s)|^2\,ds
			\,\right|\,\mathcal F_\gamma
			\right]
			\leq
			C_{\mathrm K}^{0}
			\|
			\sup_{\mu\in\mathcal P_2(\mathbb R^d)}
			|b-\widetilde b|(\cdot,\cdot,\mu)
			\|_{L^{2d+2}(Q_{T,R})}^2,
		\end{aligned}
	\end{equation}
	\begin{equation}
		\label{eq:diffusion-error-occupation}
		\begin{aligned}
			\mathbb E\left[
			\left.
			\int_\gamma^\tau
			\|\sigma(s,Y_s,\mu_s)
			-\widetilde\sigma(s,Y_s,\mu_s)\|_{\mathrm{HS}}^2\,ds
			\,\right|\,\mathcal F_\gamma
			\right]
			\leq
			C_{\mathrm K}^{0}
			\|
			\sup_{\mu\in\mathcal P_2(\mathbb R^d)}
			\|\sigma-\widetilde\sigma\|_{\mathrm{HS}}
			(\cdot,\cdot,\mu)
			\|_{L^{2d+2}(Q_{T,R})}^2,
		\end{aligned}
	\end{equation}
	and
	\begin{equation}
		\label{eq:jump-error-occupation}
		\begin{aligned}
			\mathbb E\left[
			\left.
			\int_\gamma^\tau
			\|\overline\beta(s,Y_s,\mu_s)
			-\overline{\widetilde\beta}(s,Y_s,\mu_s)\|_H^2\,ds
			\,\right|\,\mathcal F_\gamma
			\right]
			\leq
			C_{\mathrm K}^{0}
			\|
			\sup_{\mu\in\mathcal P_2(\mathbb R^d)}
			\|\overline\beta-\overline{\widetilde\beta}\|_H
			(\cdot,\cdot,\mu)
			\|_{L^{2d+2}(Q_{T,R})}^2.
		\end{aligned}
	\end{equation}
	
	\begin{remark}
		The constant in
		\eqref{eq:combined-coefficient-error-estimate} is uniform over all
		processes satisfying the same bounds
		\[
		K_b,\quad K_\sigma,\quad K_g,\quad\lambda.
		\]
		In particular, it is independent of the spatial regularity of the
		coefficients and of any approximation index. This uniformity is
		essential when the estimate is applied to sequences of equations in
		the complete coefficient space $\mathfrak A_T$.
	\end{remark}
	
	\section{Coefficient space and approximation core}
	\label{sec:coefficient-space-core}
	
	In Section~\ref{sec:framework-main}, the coefficient space
	$(\mathfrak A_T)$ was defined as the completion of the Lipschitz core
	$(\mathfrak A_T^{\mathrm{Lip}})$ under the Krylov-stable metric
	$d_T$. The purpose of the present section is to show that this
	abstract completion can be identified with equivalence classes of
	genuine measurable coefficient triples. We also verify that the
	uniform structural conditions, including ellipticity, are preserved
	under completion.
	
	\subsection{Measurable Representatives and Realization of the Completion}
	
	Throughout this section, all coefficient triples in $\mathfrak A_T^{\mathrm{Lip}}$ satisfy  \textnormal{(A1)}--\textnormal{(A4)} with these same constants.
	
	For notational convenience, introduce the separable Banach space
	\begin{equation}
		\label{eq:coefficient-value-space}
		\mathbb V
		:=
		\mathbb R^d
		\times\mathbb R^{d\times d}
		\times H
	\end{equation}
	with norm
	\begin{equation}
		\label{eq:coefficient-value-norm}
		|(u,S,h)|_{\mathbb V}
		:=
		|u|+\|S\|_{\mathrm{HS}}+\|h\|_H.
	\end{equation}
	For a coefficient triple
	\[
	a=(b,\sigma,\overline\beta),
	\]
	we write
	\[
	\mathbf a(t,x,\mu)
	:=
	\bigl(
	b(t,x,\mu),
	\sigma(t,x,\mu),
	\overline\beta(t,x,\mu)
	\bigr)
	\in\mathbb V.
	\]
	
	All almost-everywhere statements below are understood with respect
	to Lebesgue measure $dt\,dx$ on
	$[0,T]\times\mathbb R^d$. Whenever structural conditions are
	asserted for every
	$\mu\in\mathcal P_2(\mathbb R^d)$, the exceptional
	$dt\,dx$-null set is independent of $\mu$.

	Recall that
	\[
	\{\mu_j:j\geq1\}
	\]
	is a fixed countable dense subset of
	$\mathcal P_2(\mathbb R^d)$ under $W_2$.
	
	\begin{lemma}
		\label{lem:measurable-coefficient-supremum}
		Let $a$ and $\widetilde a$ be two coefficient triples satisfying
		\textnormal{(A1)} and \textnormal{(A3)}. Define
		\begin{equation}
			\label{eq:dense-supremum-section4}
			\Delta_{\mathbb Q}(a,\widetilde a)(t,x)
			:=
			\sup_{j\geq1}
			\left|
			\mathbf a(t,x,\mu_j)
			-
			\widetilde{\mathbf a}(t,x,\mu_j)
			\right|_{\mathbb V}.
		\end{equation}
		Then $\Delta_{\mathbb Q}(a,\widetilde a)$ is Borel measurable and,
		outside a $dt\,dx$-null set,
		\begin{equation}
			\label{eq:full-supremum-section4}
			\Delta_{\mathbb Q}(a,\widetilde a)(t,x)
			=
			\sup_{\mu\in\mathcal P_2(\mathbb R^d)}
			\left|
			\mathbf a(t,x,\mu)
			-
			\widetilde{\mathbf a}(t,x,\mu)
			\right|_{\mathbb V}.
		\end{equation}
		Consequently, the metric $d_T$ does not depend on the particular
		choice of the countable dense family
		$\{\mu_j\}_{j\geq1}$.
	\end{lemma}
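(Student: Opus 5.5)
Each map $(t,x)\mapsto \mathbf a(t,x,\mu_j)-\widetilde{\mathbf a}(t,x,\mu_j)$ is Borel measurable with values in $\mathbb V$, being the section at $\mu=\mu_j$ of the jointly Borel maps in (A1). Composing with the continuous norm $|\cdot|_{\mathbb V}$ gives a nonnegative Borel function of $(t,x)$. A countable supremum of Borel functions with values in $[0,\infty]$ is Borel, so $\Delta_{\mathbb Q}(a,\widetilde a)$ is Borel measurable.

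For \eqref{eq:full-supremum-section4}, the inequality $\le$ is trivial, since each $\mu_j\in\mathcal P_2(\mathbb R^d)$. For the reverse inequality I will use (A3) pointwise. Fix $(t,x)$ at which the Lipschitz bound \eqref{eq:measure-lipschitz-framework} holds for both $a$ and $\widetilde a$ for all measures. Then
\[
\bigl|\mathbf a(t,x,\mu)-\widetilde{\mathbf a}(t,x,\mu)\bigr|_{\mathbb V}
\le
\bigl|\mathbf a(t,x,\mu_j)-\widetilde{\mathbf a}(t,x,\mu_j)\bigr|_{\mathbb V}
+2L_\mu W_2(\mu,\mu_j).
\]
Choose $\mu_{j_k}\to\mu$ in $W_2$ by density. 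This gives the bound by $\Delta_{\mathbb Q}(a,\widetilde a)(t,x)$. Taking the supremum over $\mu$ yields equality at every such $(t,x)$.

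If (A3) is only known for a.e.\ $(t,x)$, as for representatives of completion elements where the exceptional null set is independent of $\mu$, equality holds off that null set. In the pointwise setting of Section~\ref{sec:framework-main} it holds everywhere. This step is the only place where some care is needed. The point is that the exceptional set must not depend on $\mu$, which the paper's convention guarantees. No uncountable union of null sets arises, because the Lipschitz estimate is applied at a fixed $(t,x)$ uniformly in $\mu$.

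Finally, the right-hand side of \eqref{eq:full-supremum-section4} makes no reference to $\{\mu_j\}$. Consequently, for two countable dense families the functions $\Delta_{\mathbb Q}$ agree $dt\,dx$-a.e. Hence all $L^{q_\ast}(Q_{T,m})$ norms $\rho_{T,m}$ coincide, and so does $d_T$ by \eqref{eq:krylov-stable-metric}.
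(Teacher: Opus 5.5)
Your proof is correct and follows essentially the same route as the paper. You get Borel measurability from a countable supremum of the sections at $\mu=\mu_j$, and equality of the suprema from the continuity in $\mu$ supplied by (A3) together with density of $\{\mu_j\}$; your explicit bound with $2L_\mu W_2(\mu,\mu_j)$ makes the paper's continuity step quantitative, and your last paragraph spells out the independence of $d_T$ that the paper leaves implicit.
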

	
	\begin{proof}
		For each fixed \(j\), the function
		\[
		(t,x)\longmapsto
		\left|
		\mathbf a(t,x,\mu_j)
		-
		\widetilde{\mathbf a}(t,x,\mu_j)
		\right|_{\mathbb V}
		\]
		is Borel measurable. Hence
		\(\Delta_{\mathbb Q}(a,\widetilde a)\), being the supremum of a
		countable family of Borel functions, is Borel measurable.
		
		For almost every \((t,x)\), assumption \textnormal{(A3)} implies
		that
		\[
		\mu\longmapsto\mathbf a(t,x,\mu),
		\qquad
		\mu\longmapsto\widetilde{\mathbf a}(t,x,\mu)
		\]
		are Lipschitz continuous from
		\((\mathcal P_2(\mathbb R^d),W_2)\) into \(\mathbb V\). Therefore
		\[
		\mu\longmapsto
		\left|
		\mathbf a(t,x,\mu)
		-
		\widetilde{\mathbf a}(t,x,\mu)
		\right|_{\mathbb V}
		\]
		is continuous. Since \(\{\mu_j:j\geq1\}\) is dense in
		\((\mathcal P_2(\mathbb R^d),W_2)\), its supremum over
		\(\{\mu_j\}\) agrees with its supremum over all of
		\(\mathcal P_2(\mathbb R^d)\).
	\end{proof}
	
	In view of Lemma~\ref{lem:measurable-coefficient-supremum}, we
	henceforth write simply $ \Delta(a,\widetilde a)$ for either side of \eqref{eq:full-supremum-section4}.

	The following elementary fact will be used to recover a jointly
	measurable jump coefficient from its $H$-valued version.

	\begin{lemma}
		\label{lem:measurable-H-representative}

		Let \((S,\mathcal S)\) be a measurable space, and let
		\((U,\mathcal U,\nu)\) be a measure space such that
		\[
		H:=L^2(U,\mathcal U,\nu;\mathbb R^d)
		\]
		is separable. If
		\[
		\Phi:(S,\mathcal S)\longrightarrow(H,\mathcal B(H))
		\]
		is measurable, then there exists an
		\(\mathcal S\otimes\mathcal U\)-measurable function
		\[
		\phi:S\times U\longrightarrow\mathbb R^d
		\]
		such that
		\[
		[\phi(s,\cdot)]_H=\Phi(s),
		\qquad s\in S.
		\]
		
	\end{lemma}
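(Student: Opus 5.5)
The plan is to exploit separability of $H$ through a countable orthonormal basis, and to build $\phi$ as a pointwise limit of jointly measurable partial sums. Let $\{e_k\}_{k\geq1}$ be an orthonormal basis of $H$; it is countable by separability, and if $H$ is finite-dimensional the sums below are finite and the argument simplifies. For each $k$ I fix once and for all a genuinely $\mathcal U$-measurable pointwise representative $e_k:U\to\mathbb R^d$. The scalar maps $c_k(s):=\langle\Phi(s),e_k\rangle_H$ are $\mathcal S$-measurable, since they are compositions of $\Phi$ with continuous linear functionals. Hence the partial sums
\[
\phi_n(s,z):=\sum_{k=1}^n c_k(s)e_k(z)
\]
are $\mathcal S\otimes\mathcal U$-measurable, being finite sums of products of measurable functions of $s$ and of $z$. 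Moreover $[\phi_n(s,\cdot)]_H\to\Phi(s)$ in $H$ for every $s$.

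The main obstacle is that $L^2$ convergence does not give pointwise convergence, and the subsequence along which pointwise a.e. convergence holds must be chosen measurably in $s$. To handle this, I will use an $s$-dependent but measurable subsequence. Define
\[
n_j(s):=\min\Big\{n:\ \sum_{k>n}c_k(s)^2\leq 4^{-j}\Big\}.
\]
This is $\mathcal S$-measurable, because $\sum_{k>n}c_k(s)^2=\|\Phi(s)\|_H^2-\sum_{k\le n}c_k(s)^2$ is measurable. We then set $\psi_j(s,z):=\phi_{n_j(s)}(s,z)=\sum_n \mathbf 1_{\{n_j(s)=n\}}\phi_n(s,z)$, which is jointly measurable. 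Then $\|\psi_{j+1}(s,\cdot)-\psi_j(s,\cdot)\|_H\leq 2\cdot 2^{-j}$, so for each fixed $s$ the series $\sum_j|\psi_{j+1}-\psi_j|(s,z)$ lies in $L^2(\nu)$ by the triangle inequality (monotone convergence in $z$). It is therefore finite for $\nu$-a.e. $z$, and $\psi_j(s,z)$ converges $\nu$-a.e. to an $L^2$ limit that must equal $\Phi(s)$.

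Finally, set $\phi(s,z):=\lim_j\psi_j(s,z)$ on the set $C:=\{(s,z):\lim_j\psi_j(s,z)\text{ exists}\}$ and $\phi:=0$ off $C$. The set $C$ belongs to $\mathcal S\otimes\mathcal U$, since it is defined by countably many $\limsup/\liminf$ conditions on jointly measurable functions, so $\phi$ is jointly measurable. For each $s$, the section $C_s$ has $\nu$-null complement and $\phi(s,\cdot)=\lim_j\psi_j(s,\cdot)$ $\nu$-a.e. with limit $\Phi(s)$ in $H$. Hence $[\phi(s,\cdot)]_H=\Phi(s)$ for every $s\in S$, with no exceptional set in $s$, as required.

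The only care points are the measurable selection of $n_j(s)$ (handled by the explicit minimum formula) and the convention when $\Phi(s)$ has finite expansion, in which case $n_j(s)$ is simply bounded. This is the step I expect to need the most checking.
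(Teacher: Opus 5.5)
Your proof is correct and follows essentially the paper's route: jointly measurable approximants whose $H$-distance to $\Phi(s)$ is at most $2^{-j}$ for every $s$, chosen through a measurable minimal index; a.e.\ convergence of the telescoping series via Minkowski and monotone convergence; and the limit defined on a measurable convergence set. The only difference is how the approximants are produced. You take orthonormal-basis partial sums with the measurable cutoff $n_j(s)$, whereas the paper picks, as $\Phi_n(s)$, the first element of a fixed countable dense sequence in $H$ lying within $2^{-n}$ of $\Phi(s)$, which avoids any use of the Hilbert structure.
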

	\begin{proof}
		Since \(H\) is separable, choose a dense sequence
		\[
		(h_j)_{j\geq1}\subset H,
		\]
		and for each \(j\) fix an \(\mathcal U\)-measurable representative
		\[
		\widehat h_j:U\longrightarrow\mathbb R^d
		\]
		of \(h_j\).
		
		For \(n\geq1\) and \(s\in S\), define
		\[
		k_n(s)
		:=
		\min\left\{
		j\geq1:
		\|\Phi(s)-h_j\|_H<2^{-n}
		\right\}.
		\]
		By density, \(k_n(s)\) is well defined. Moreover, \(k_n\) is
		\(\mathcal S\)-measurable, since
		\[
		\{k_n=j\}
		=
		\{\|\Phi-h_j\|_H<2^{-n}\}
		\cap
		\cap_{i<j}
		\{\|\Phi-h_i\|_H\geq2^{-n}\}.
		\]
		Set
		\[
		\Phi_n(s):=h_{k_n(s)},
		\qquad
		\phi_n(s,z):=\widehat h_{k_n(s)}(z).
		\]
		Then \(\phi_n\) is
		\(\mathcal S\otimes\mathcal U\)-measurable,
		\[
		[\phi_n(s,\cdot)]_H=\Phi_n(s),
		\]
		and
		\[
		\|\Phi_n(s)-\Phi(s)\|_H<2^{-n}.
		\]
		Hence, for every \(s\in S\),
		\[
		\sum_{n=1}^\infty
		\|\Phi_{n+1}(s)-\Phi_n(s)\|_H
		<\infty.
		\]
		
		Let
		\[
		d_n(s,z):=\phi_{n+1}(s,z)-\phi_n(s,z).
		\]
		For fixed \(s\), Minkowski's inequality gives
		\[
		\big\|
		\sum_{n=1}^N |d_n(s,\cdot)|
		\big\|_{L^2(U,\nu)}
		\leq
		\sum_{n=1}^N
		\|\Phi_{n+1}(s)-\Phi_n(s)\|_H.
		\]
		The right-hand side is bounded uniformly in \(N\). Therefore,
		by monotone convergence,
		\[
		\sum_{n=1}^\infty |d_n(s,z)|<\infty
		\qquad
		\text{for \(\nu\)-almost every }z.
		\]
		
		Define
		\[
		D
		:=
		\left\{
		(s,z)\in S\times U:
		\sum_{n=1}^\infty |d_n(s,z)|<\infty
		\right\}.
		\]
		Since each \(d_n\) is
		\(\mathcal S\otimes\mathcal U\)-measurable, \(D\) is measurable.
		Now set
		\[
		\phi(s,z)
		:=
		\mathbf1_D(s,z)
		\left(
		\phi_1(s,z)
		+
		\sum_{n=1}^\infty d_n(s,z)
		\right).
		\]
		Then \(\phi\) is
		\(\mathcal S\otimes\mathcal U\)-measurable.
		
		For every fixed \(s\), one has
		\[
		\phi_n(s,z)\longrightarrow\phi(s,z)
		\qquad
		\text{for \(\nu\)-almost every }z.
		\]
		On the other hand,
		\[
		\|\Phi_n(s)-\Phi(s)\|_H\longrightarrow0.
		\]
		Since \([\phi_n(s,\cdot)]_H=\Phi_n(s)\), the \(L^2\)-limit is unique,
		and therefore
		\[
		[\phi(s,\cdot)]_H=\Phi(s),
		\qquad s\in S.
		\]
		This proves the assertion.
	\end{proof}

	We now prove the representation statement announced in
	Proposition~\ref{prop:coefficient-representation}.

	\begin{theorem}
		\label{thm:completion-representation}
		Let
		\[
		(a_n)_{n\geq 1}\subset \mathfrak A_T^{\mathrm{Lip}}
		\]
		be a $d_T$-Cauchy sequence. Then there exists a coefficient triple
		\[
		a=(b,\sigma,\overline\beta),
		\qquad
		\overline\beta:
		[0,T]\times\mathbb R^d\times
		\mathcal P_2(\mathbb R^d)\longrightarrow H,
		\]
		such that the following assertions hold.
		
		\begin{enumerate}
			\item[\textnormal{(i)}]
			The map
			\[
			\mathbf a:=(b,\sigma,\overline\beta):
			[0,T]\times\mathbb R^d\times
			\mathcal P_2(\mathbb R^d)
			\longrightarrow\mathbb V
			\]
			is jointly Borel measurable. Moreover, $\overline\beta$ admits an
			$\mathcal B([0,T])\otimes\mathcal B(\mathbb R^d)
			\otimes\mathcal B(\mathcal P_2(\mathbb R^d))
			\otimes\mathcal U$-measurable representative
			\[
			\beta:
			[0,T]\times\mathbb R^d\times
			\mathcal P_2(\mathbb R^d)\times U
			\longrightarrow\mathbb R^d
			\]
			satisfying
			\[
			[\beta(t,x,\mu,\cdot)]_H
			=
			\overline\beta(t,x,\mu).
			\]
			
			\item[\textnormal{(ii)}]
			For every $(t,x,\mu)$,
			\begin{equation}
				\label{eq:completion-uniform-bound}
				|b(t,x,\mu)|
				+
				\|\sigma(t,x,\mu)\|_{\mathrm{HS}}
				+
				\|\overline\beta(t,x,\mu)\|_H
				\leq M.
			\end{equation}
			
			\item[\textnormal{(iii)}]
			For every $(t,x,\mu,\eta)$,
			\begin{equation}
				\label{eq:completion-measure-lipschitz}
				\left|
				\mathbf a(t,x,\mu)-\mathbf a(t,x,\eta)
				\right|_{\mathbb V}
				\leq
				L_\mu W_2(\mu,\eta).
			\end{equation}
			
			\item[\textnormal{(iv)}]
			For every $(t,x,\mu)$,
			\begin{equation}
				\label{eq:completion-ellipticity}
				\frac12
				\sigma(t,x,\mu)\sigma(t,x,\mu)^\top
				\geq
				\lambda I_d.
			\end{equation}
			
			\item[\textnormal{(v)}]
			The full sequence converges to $a$:
			\begin{equation}
				\label{eq:completion-metric-convergence}
				d_T(a_n,a)\longrightarrow 0.
			\end{equation}
		\end{enumerate}
		
		The limit is unique modulo the equivalence relation
		\[
		a\sim\widetilde a
		\quad\Longleftrightarrow\quad
		d_T(a,\widetilde a)=0.
		\]
	\end{theorem}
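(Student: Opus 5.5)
We plan a Riesz--Fischer type argument carried out on the countable family $\{\mu_j\}$ and then extended to all of $\mathcal P_2(\mathbb R^d)$ using the uniform $W_2$-Lipschitz bound.

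Since $(a_n)$ is $d_T$-Cauchy, for every $m$ the sequence is Cauchy in $\rho_{T,m}$. A diagonal choice gives a subsequence $(a_{n_k})$ with $\rho_{T,k}(a_{n_{k+1}},a_{n_k})\le 2^{-k}$; then for each fixed $m$ we have $\sum_{k\ge m}\|\Delta(a_{n_{k+1}},a_{n_k})\|_{L^{q_\ast}(Q_{T,m})}<\infty$. By monotone convergence, $\sum_k\Delta(a_{n_{k+1}},a_{n_k})(t,x)<\infty$ for a.e.\ $(t,x)\in Q_{T,m}$, and hence for a.e.\ $(t,x)\in[0,T]\times\mathbb R^d$. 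Let $N$ be the exceptional null set; it is Borel by Lemma~\ref{lem:measurable-coefficient-supremum}. Because $\Delta$ is a supremum over all $\mu$ (identity \eqref{eq:dense-measure-supremum}), for $(t,x)\notin N$ the sequence $\mathbf a_{n_k}(t,x,\mu)$ is Cauchy in $\mathbb V$ uniformly in $\mu$. This uniformity is exactly what makes the null set independent of $\mu$. We define $\mathbf a(t,x,\mu):=\lim_k\mathbf a_{n_k}(t,x,\mu)$ off $N$, and $\mathbf a:=(0,\sqrt{2\lambda}I_d,0)$ on $N$.

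Joint Borel measurability of $\mathbf a$ on $[0,T]\times\mathbb R^d\times\mathcal P_2$ follows because each $\mathbf a_{n_k}$ is jointly Borel, so the pointwise limit on the Borel set $N^c\times\mathcal P_2$ is Borel. Lemma~\ref{lem:measurable-H-representative}, applied with $S=[0,T]\times\mathbb R^d\times\mathcal P_2(\mathbb R^d)$, produces the pointwise representative $\beta$, which gives (i).

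The structural properties (ii)--(iv) are closed conditions, and we pass to the limit pointwise off $N$. The bound \eqref{eq:completion-uniform-bound} and the Lipschitz inequality \eqref{eq:completion-measure-lipschitz} pass directly, since norms are continuous. For ellipticity, $\sigma_{n_k}\to\sigma$ in HS norm implies $\sigma_{n_k}\sigma_{n_k}^\top\to\sigma\sigma^\top$, and the cone $\{S:\tfrac12 S\ge\lambda I\}$ is closed. On $N$ the constant triple satisfies all three conditions, since $|\sqrt{2\lambda}I_d|_{\mathrm{HS}}=\sqrt{2\lambda d}<M$. Hence (ii)--(iv) hold for every $(t,x,\mu)$, as required. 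Now $\Delta(a,\cdot)$ and $\rho_{T,m}(a,\cdot)$ make sense for this $a$, because (A1) and (A3) are all that Lemma~\ref{lem:measurable-coefficient-supremum} needs.

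For (v), fix $m$ and $\varepsilon>0$, and choose $n_0$ with $\rho_{T,m}(a_n,a_{n'})<\varepsilon$ for $n,n'\ge n_0$. For $(t,x)\notin N$ and every $\mu$, the convergence gives $|\mathbf a_n-\mathbf a|_{\mathbb V}(t,x,\mu)=\lim_k|\mathbf a_n-\mathbf a_{n_k}|_{\mathbb V}(t,x,\mu)\le\liminf_k\Delta(a_n,a_{n_k})(t,x)$. Taking the supremum over $\mu$ yields $\Delta(a_n,a)\le\liminf_k\Delta(a_n,a_{n_k})$ a.e. Fatou's lemma in $L^{q_\ast}(Q_{T,m})$ then gives $\rho_{T,m}(a_n,a)\le\varepsilon$ for $n\ge n_0$. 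Summing over $m$ with the weights $2^{-m}$ and using dominated convergence for the series gives $d_T(a_n,a)\to0$ along the full sequence.

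Uniqueness is immediate: if $a$ and $\widetilde a$ are both limits, then $d_T(a,\widetilde a)\le d_T(a,a_n)+d_T(a_n,\widetilde a)\to0$. Here we need $d_T$ to satisfy the triangle inequality on measurable triples, which follows from the pointwise triangle inequality for $\Delta$ together with Minkowski's inequality. The main delicate point is the one handled in the first step: the exceptional null set must be independent of $\mu$. This holds because $\Delta$ controls the supremum over all $\mu$, and not merely each fixed $\mu_j$.
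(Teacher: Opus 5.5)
Your proof is correct and follows the paper's Riesz--Fischer argument: a rapid Cauchy subsequence, a.e.\ summability of $\sum_k\Delta(a_{n_{k+1}},a_{n_k})$ via Minkowski and monotone convergence, the reference triple $(0,\sqrt{2\lambda}\,I_d,0)$ on the exceptional set, and Lemma~\ref{lem:measurable-H-representative} for $\beta$. The main difference is a streamlining: because (A3) holds everywhere for the core representatives, $\Delta$ controls the supremum over all laws, so you get convergence uniformly in $\mu$ and define the limit for every $\mu$ at once, which removes the need for the paper's Step~2 (limits on $\{\mu_j\}$ only, Lipschitz extension, the $j_r(\mu)$ measurability device) and for the density arguments in Step~3. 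Two smaller differences: you obtain (v) along the full sequence by Fatou rather than by tail sums plus the Cauchy property, and the paper's Step~5 additionally shows that $d_T(a,\widetilde a)=0$ forces equality off one null set simultaneously for all $\mu$, which is used later but not required by the statement.
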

	\begin{proof}
		Set
		\[
		q_\ast:=2(d+1),
		\qquad
		\sigma_\ast:=\sqrt{2\lambda}\,I_d,
		\qquad
		\mathbf a_\ast:=(0,\sigma_\ast,0).
		\]
		By the compatibility condition
		\[
		M>\sqrt{2\lambda d},
		\]
		the reference coefficient \(\mathbf a_\ast\) satisfies both the
		uniform bound and the ellipticity condition.
		
		Changing the representatives of the \(a_n\) on a common Borel
		\(dt\,dx\)-null set, if necessary, does not affect any of the
		quantities \(\rho_{T,m}\) or \(d_T\). We may therefore assume
		throughout the proof that the uniform bound, the measure-Lipschitz
		condition, and ellipticity hold for every \((t,x,\mu)\).

		\textbf{Step 1. Construction of pointwise limits on a dense family of laws.}
		
		Let \((\mu_j)_{j\geq1}\) be the fixed dense sequence in
		\(\mathcal P_2(\mathbb R^d)\) used in the definition of \(d_T\).
		Since \((a_n)\) is \(d_T\)-Cauchy, it is
		\(\rho_{T,m}\)-Cauchy for every \(m\). Hence we may choose an
		increasing sequence \((n_k)_{k\geq1}\) such that
		\begin{equation}
			\label{eq:rapid-cauchy-subsequence}
			\rho_{T,k}(a_{n_{k+1}},a_{n_k})
			\leq
			2^{-k},
			\qquad k\geq1.
		\end{equation}
		
		Define
		\[
		\delta_k(t,x)
		:=
		\sup_{j\geq1}
		\left|
		\mathbf a_{n_{k+1}}(t,x,\mu_j)
		-
		\mathbf a_{n_k}(t,x,\mu_j)
		\right|_{\mathbb V}.
		\]
		Then \(\delta_k\) is Borel measurable and
		\[
		\|\delta_k\|_{L^{q_\ast}(Q_{T,k})}
		\leq
		2^{-k}.
		\]
		Thus, for every fixed \(m\) and every \(k\geq m\),
		\[
		\|\delta_k\|_{L^{q_\ast}(Q_{T,m})}
		\leq
		2^{-k}.
		\]
		By Minkowski's inequality,
		\[
		\big\|
		\sum_{k=m}^{N}\delta_k
		\big\|_{L^{q_\ast}(Q_{T,m})}
		\leq
		\sum_{k=m}^{N}2^{-k}
		\leq
		2^{1-m}.
		\]
		Letting \(N\to\infty\) and using monotone convergence yields
		\[
		\sum_{k=m}^{\infty}\delta_k(t,x)
		<\infty
		\]
		for almost every \((t,x)\in Q_{T,m}\).
		
		Taking the union of the exceptional sets over \(m\), and then a
		Borel hull, we obtain a Borel \(dt\,dx\)-null set
		\[
		\mathcal N
		\subset
		[0,T]\times\mathbb R^d
		\]
		such that, for every \((t,x)\notin\mathcal N\) and every \(j\),
		the sequence
		\[
		\bigl(\mathbf a_{n_k}(t,x,\mu_j)\bigr)_{k\geq1}
		\]
		is Cauchy in \(\mathbb V\). Define
		\[
		\mathbf a^j(t,x)
		:=
		\begin{cases}
			\displaystyle
			\lim_{k\to\infty}
			\mathbf a_{n_k}(t,x,\mu_j),
			&(t,x)\notin\mathcal N,\\[1mm]
			\mathbf a_\ast,
			&(t,x)\in\mathcal N.
		\end{cases}
		\]
		Each \(\mathbf a^j\) is Borel measurable. Moreover, for every
		\(\ell\geq m\),
		\begin{equation}
			\label{eq:uniform-dense-tail}
			\sup_{j\geq1}
			\left|
			\mathbf a_{n_\ell}(t,x,\mu_j)
			-
			\mathbf a^j(t,x)
			\right|_{\mathbb V}
			\leq
			\sum_{k=\ell}^{\infty}\delta_k(t,x)
		\end{equation}
		for almost every \((t,x)\in Q_{T,m}\).
		
		For every \(k\) and all \(i,j\geq1\), assumption
		\textnormal{(A3)} gives
		\[
		\left|
		\mathbf a_{n_k}(t,x,\mu_i)
		-
		\mathbf a_{n_k}(t,x,\mu_j)
		\right|_{\mathbb V}
		\leq
		L_\mu W_2(\mu_i,\mu_j).
		\]
		For \((t,x)\notin\mathcal N\), letting \(k\to\infty\) yields
		\begin{equation}
			\label{eq:dense-limit-lipschitz}
			\left|
			\mathbf a^i(t,x)-\mathbf a^j(t,x)
			\right|_{\mathbb V}
			\leq
			L_\mu W_2(\mu_i,\mu_j).
		\end{equation}

		\textbf{Step 2. Extension to all probability measures and joint measurability.}
		
		Fix \((t,x)\notin\mathcal N\) and
		\(\mu\in\mathcal P_2(\mathbb R^d)\). Choose a sequence
		\((\mu_{j_r})_{r\geq1}\) from the dense family such that
		\[
		W_2(\mu_{j_r},\mu)\longrightarrow0.
		\]
		By \eqref{eq:dense-limit-lipschitz},
		\(\bigl(\mathbf a^{j_r}(t,x)\bigr)_{r\geq1}\) is Cauchy in
		\(\mathbb V\). Define
		\[
		\mathbf a(t,x,\mu)
		:=
		\lim_{r\to\infty}
		\mathbf a^{j_r}(t,x).
		\]
		
		This definition is independent of the approximating sequence.
		Indeed, if
		\[
		\mu_{j_r}\longrightarrow\mu,
		\qquad
		\mu_{i_r}\longrightarrow\mu
		\]
		in \(W_2\), then
		\[
		\begin{aligned}
			\left|
			\mathbf a^{j_r}(t,x)
			-
			\mathbf a^{i_r}(t,x)
			\right|_{\mathbb V}
			&\leq
			L_\mu W_2(\mu_{j_r},\mu_{i_r})
			\\
			&\leq
			L_\mu
			\bigl(
			W_2(\mu_{j_r},\mu)
			+
			W_2(\mu,\mu_{i_r})
			\bigr)
			\longrightarrow0.
		\end{aligned}
		\]
		Thus \(\mathbf a(t,x,\mu)\) is well defined.
		
		Similarly, if
		\(\mu_{j_r}\to\mu\) and \(\mu_{i_r}\to\eta\), then
		\eqref{eq:dense-limit-lipschitz} gives
		\[
		\left|
		\mathbf a^{j_r}(t,x)
		-
		\mathbf a^{i_r}(t,x)
		\right|_{\mathbb V}
		\leq
		L_\mu W_2(\mu_{j_r},\mu_{i_r}).
		\]
		Letting \(r\to\infty\) yields
		\begin{equation}
			\label{eq:completion-measure-lipschitz}
			\left|
			\mathbf a(t,x,\mu)
			-
			\mathbf a(t,x,\eta)
			\right|_{\mathbb V}
			\leq
			L_\mu W_2(\mu,\eta).
		\end{equation}
		On \(\mathcal N\), define
		\[
		\mathbf a(t,x,\mu)
		:=
		\mathbf a_\ast
		\qquad
		\text{for every }\mu.
		\]
		Then \eqref{eq:completion-measure-lipschitz} also holds on
		\(\mathcal N\).
		
		It remains to verify joint measurability. For \(r\geq1\), define
		\[
		j_r(\mu)
		:=
		\min
		\left\{
		j\geq1:
		W_2(\mu,\mu_j)<r^{-1}
		\right\}.
		\]
		The map
		\[
		j_r:
		\mathcal P_2(\mathbb R^d)
		\longrightarrow
		\mathbb N
		\]
		is Borel measurable, since
		\[
		\{j_r=j\}
		=
		\{W_2(\mu,\mu_j)<r^{-1}\}
		\cap
		\cap_{i<j}
		\{W_2(\mu,\mu_i)\geq r^{-1}\}.
		\]
		Define
		\[
		\mathbf a^{(r)}(t,x,\mu)
		:=
		\begin{cases}
			\mathbf a^{j_r(\mu)}(t,x),
			&(t,x)\notin\mathcal N,\\
			\mathbf a_\ast,
			&(t,x)\in\mathcal N.
		\end{cases}
		\]
		Each \(\mathbf a^{(r)}\) is jointly Borel measurable. Moreover,
		by \eqref{eq:completion-measure-lipschitz},
		\[
		\left|
		\mathbf a^{(r)}(t,x,\mu)
		-
		\mathbf a(t,x,\mu)
		\right|_{\mathbb V}
		\leq
		L_\mu r^{-1}.
		\]
		Hence
		\[
		\mathbf a^{(r)}(t,x,\mu)
		\longrightarrow
		\mathbf a(t,x,\mu)
		\]
		pointwise, and therefore \(\mathbf a\) is jointly Borel measurable.
		
		In particular, its \(H\)-valued component
		\[
		\overline\beta:
		[0,T]\times\mathbb R^d\times
		\mathcal P_2(\mathbb R^d)
		\longrightarrow H
		\]
		is Borel measurable. Applying
		Lemma~\ref{lem:measurable-H-representative} with
		\[
		S
		=
		[0,T]\times\mathbb R^d\times
		\mathcal P_2(\mathbb R^d)
		\]
		yields a jointly measurable representative
		\[
		\beta(t,x,\mu,z)
		\]
		satisfying
		\[
		[\beta(t,x,\mu,\cdot)]_H
		=
		\overline\beta(t,x,\mu).
		\]

		\textbf{Step 3. Preservation of the uniform bound and ellipticity.}
		
		For every \(j\) and every
		\((t,x)\notin\mathcal N\), passage to the limit in the uniform
		bound gives
		\[
		|\mathbf a^j(t,x)|_{\mathbb V}
		\leq
		M.
		\]
		Approximating an arbitrary
		\(\mu\in\mathcal P_2(\mathbb R^d)\) by the dense family and using
		the continuity implied by
		\eqref{eq:completion-measure-lipschitz}, we obtain
		\begin{equation}
			\label{eq:completion-uniform-bound}
			|\mathbf a(t,x,\mu)|_{\mathbb V}
			\leq
			M.
		\end{equation}
		On \(\mathcal N\), the same inequality holds because
		\[
		|\mathbf a_\ast|_{\mathbb V}
		=
		\sqrt{2\lambda d}
		<
		M.
		\]
		
		To prove ellipticity, fix
		\((t,x)\notin\mathcal N\) and \(j\geq1\).
		Since
		\[
		\sigma_{n_k}(t,x,\mu_j)
		\longrightarrow
		\sigma(t,x,\mu_j)
		\]
		in Hilbert--Schmidt norm, while
		\[
		\frac12
		\sigma_{n_k}(t,x,\mu_j)
		\sigma_{n_k}(t,x,\mu_j)^\top
		\geq
		\lambda I_d
		\]
		for every \(k\), and since the set
		\[
		\left\{
		S\in\mathbb R^{d\times d}:
		\frac12SS^\top\geq\lambda I_d
		\right\}
		\]
		is closed in the Hilbert--Schmidt topology, we obtain
		\[
		\frac12
		\sigma(t,x,\mu_j)
		\sigma(t,x,\mu_j)^\top
		\geq
		\lambda I_d.
		\]
		Approximating an arbitrary \(\mu\) by the dense family and using
		\eqref{eq:completion-measure-lipschitz} yields
		\begin{equation}
			\label{eq:completion-ellipticity}
			\frac12
			\sigma(t,x,\mu)\sigma(t,x,\mu)^\top
			\geq
			\lambda I_d.
		\end{equation}
		On \(\mathcal N\), the same inequality follows from
		\[
		\frac12
		\sigma_\ast\sigma_\ast^\top
		=
		\lambda I_d.
		\]

		\textbf{Step 4. Convergence in the Krylov-stable metric.}
		
		Fix \(m\geq1\) and \(\ell\geq m\). By
		\eqref{eq:uniform-dense-tail},
		\[
		\begin{aligned}
			\rho_{T,m}(a_{n_\ell},a)
			\leq	\big\|
			\sum_{k=\ell}^{\infty}\delta_k\big\|_{L^{q_\ast}(Q_{T,m})}	
			\leq\sum_{k=\ell}^{\infty}\|\delta_k\|_{L^{q_\ast}(Q_{T,m})}
			\leq\sum_{k=\ell}^{\infty}2^{-k}.
		\end{aligned}
		\]
		Therefore
		\[
		\rho_{T,m}(a_{n_\ell},a)
		\longrightarrow0
		\qquad
		\text{for every }m.
		\]
		By the definition of \(d_T\) and dominated convergence for its
		defining series,
		\[
		d_T(a_{n_\ell},a)
		\longrightarrow0.
		\]
		
		Since the original sequence \((a_n)\) is \(d_T\)-Cauchy, this
		convergence extends to the full sequence. Indeed, given
		\(\varepsilon>0\), choose \(\ell\) sufficiently large so that
		\[
		d_T(a_{n_\ell},a)<\frac{\varepsilon}{2}.
		\]
		Then, since \((a_n)\) is \(d_T\)-Cauchy, for all sufficiently large
		\(n\),
		\[
		d_T(a_n,a_{n_\ell})<\frac{\varepsilon}{2}.
		\]
		Hence
		\[
		d_T(a_n,a)
		\leq
		d_T(a_n,a_{n_\ell})
		+
		d_T(a_{n_\ell},a)
		<
		\varepsilon.
		\]
		Thus
		\begin{equation}
			\label{eq:completion-full-sequence-convergence}
			d_T(a_n,a)
			\longrightarrow0.
		\end{equation}
		
		\textbf{Step 5. Uniqueness of the representative modulo \(d_T\).}
		
		Let \(\widetilde a\) be another admissible coefficient triple such
		that
		\[
		d_T(a_n,\widetilde a)
		\longrightarrow0.
		\]
		By the triangle inequality and
		\eqref{eq:completion-full-sequence-convergence},
		\[
		d_T(a,\widetilde a)=0.
		\]
		Hence, for every \(m\),
		\[
		\rho_{T,m}(a,\widetilde a)=0,
		\]
		and therefore
		\[
		\sup_{j\geq1}
		\left|
		\mathbf a(t,x,\mu_j)
		-
		\widetilde{\mathbf a}(t,x,\mu_j)
		\right|_{\mathbb V}
		=0
		\]
		for almost every \((t,x)\in Q_{T,m}\).
		
		Taking the countable union of the exceptional sets over \(m\), we
		obtain a single \(dt\,dx\)-null set outside which
		\[
		\mathbf a(t,x,\mu_j)
		=
		\widetilde{\mathbf a}(t,x,\mu_j)
		\qquad
		\text{for every }j\geq1.
		\]
		Since both representatives are Lipschitz continuous in the law
		variable with respect to \(W_2\), density of
		\(\{\mu_j:j\geq1\}\) implies
		\[
		\mathbf a(t,x,\mu)
		=
		\widetilde{\mathbf a}(t,x,\mu)
		\]
		for every
		\(\mu\in\mathcal P_2(\mathbb R^d)\) outside that null set.
		Thus the representative \(a\) is unique modulo the equivalence
		relation induced by \(d_T\).
	\end{proof}

	Theorem~\ref{thm:completion-representation} identifies the abstract
	metric completion $\mathfrak A_T$ with a space of measurable
	coefficient triples. We shall use this identification without
	further comment.

	\begin{corollary}
		\label{cor}
		Every element of $\mathfrak A_T$ admits a jointly measurable
		representative satisfying \textnormal{(A1)}--\textnormal{(A4)} at
		every
		\[
		(t,x,\mu)\in
		[0,T]\times\mathbb R^d\times\mathcal P_2(\mathbb R^d).
		\]
		In particular, if $a\in\mathfrak A_T$ and $X$ is an adapted
		c\'adl\'ag process, then, for any fixed  c\'adl\'ag version of $X$,
		\[
		\frac12
		\sigma\bigl(t,X_t(\omega),\mathcal L(X_t)\bigr)
		\sigma\bigl(t,X_t(\omega),\mathcal L(X_t)\bigr)^\top
		\geq
		\lambda I_d
		\]
		for every $(t,\omega)\in[0,T]\times\Omega$.
	\end{corollary}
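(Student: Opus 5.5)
The corollary follows almost directly from Theorem~\ref{thm:completion-representation}; the plan has two steps.

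\textbf{Step 1: existence of a representative.} Take an element of $\mathfrak A_T$. It is the $d_T$-limit of a $d_T$-Cauchy sequence $(a_n)\subset\mathfrak A_T^{\mathrm{Lip}}$. I would apply Theorem~\ref{thm:completion-representation} to this sequence. Item (i) gives joint Borel measurability of $\mathbf a$, together with a jointly measurable pointwise representative $\beta$ of $\overline\beta$; this is (A1). Items (ii), (iii) and (iv) give (A2), (A3) and (A4).

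The one point to check is that these hold at \emph{every} $(t,x,\mu)$, not merely almost everywhere. This is built into the construction in Step~2 of that proof. On the exceptional Borel null set $\mathcal N$, the coefficient is set to the reference triple $\mathbf a_\ast=(0,\sqrt{2\lambda}I_d,0)$. This triple is constant in $\mu$, so (A3) holds trivially. It satisfies the bound because $\sqrt{2\lambda d}<M$, and it satisfies ellipticity with equality. Off $\mathcal N$, the bounds are obtained pointwise through the density and Lipschitz-extension argument. By item (v) and uniqueness modulo $d_T$, this representative corresponds to the given element of the completion.

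\textbf{Step 2: the pathwise ellipticity statement.} Fix a c\`adl\`ag version of $X$ and a pair $(t,\omega)$. The measure $\mathcal L(X_t)$ is a fixed element of $\mathcal P(\mathbb R^d)$. The only subtle point is that the coefficients are defined on $\mathcal P_2(\mathbb R^d)$. I would therefore restrict to the setting where the expression makes sense, namely $X_t\in L^2$; this is implicit in the statement and holds in particular for $X\in\mathcal S_T^2$. Then $\mathcal L(X_t)\in\mathcal P_2(\mathbb R^d)$, and $X_t(\omega)\in\mathbb R^d$. Evaluating the everywhere-valid inequality (iv) at the point $(t,X_t(\omega),\mathcal L(X_t))$ gives the claim for every $(t,\omega)$, with no exceptional null set.

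There is no real obstacle here. The only care needed is to emphasise that the representative chosen in Step~1 satisfies (A4) everywhere, including on $\mathcal N$. This matters because a trajectory $(t,X_t(\omega))$ may spend positive time in a Lebesgue-null set of $(t,x)$, so an almost-everywhere statement would not suffice pathwise.
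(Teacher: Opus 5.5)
Your argument is correct and matches the paper's: the corollary has no separate proof there and is treated as an immediate consequence of Theorem~\ref{thm:completion-representation}. Items (i)--(iv) of that theorem hold at every $(t,x,\mu)$ because the construction sets $\mathbf a=\mathbf a_\ast$ on the Borel null set $\mathcal N$, and the pathwise ellipticity is then just evaluation at $(t,X_t(\omega),\mathcal L(X_t))$. Your caveat that this requires $\mathcal L(X_t)\in\mathcal P_2(\mathbb R^d)$ (for instance $X\in\mathcal S_T^2$) is a legitimate tacit assumption of the statement, and it is exactly the setting in which the paper uses the corollary.
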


	\begin{proposition}
		\label{prop:rep-equivalence}
		Let $a$ and $\widetilde a$ be two admissible representatives of the
		same element of $\mathfrak A_T$. Then, for every $x\in\mathbb R^d$,
		\[
		\operatorname{Sol}_T(a,x)
		=
		\operatorname{Sol}_T(\widetilde a,x).
		\]
		Consequently, the strong-solution relation is well defined on
		$\mathfrak A_T$ and is independent of the chosen admissible
		representative.
	\end{proposition}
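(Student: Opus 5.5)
The plan is to show that a single strong solution $X$ of $\mathcal E_T(a,x)$ spends Lebesgue-almost no time in the null set where $a$ and $\widetilde a$ differ. The three integrals in Definition~\ref{def} will then be the same for $a$ and $\widetilde a$, so $X$ also solves $\mathcal E_T(\widetilde a,x)$. Since the situation is symmetric, this gives $\operatorname{Sol}_T(a,x)\subset\operatorname{Sol}_T(\widetilde a,x)$, and the reverse inclusion follows by exchanging the roles of $a$ and $\widetilde a$.

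\textbf{Step 1: a common null set.} Since $a$ and $\widetilde a$ represent the same element, $d_T(a,\widetilde a)=0$, so $\rho_{T,m}(a,\widetilde a)=0$ for every $m$. By Lemma~\ref{lem:measurable-coefficient-supremum} and the argument of Step~5 in Theorem~\ref{thm:completion-representation}, there is a Borel $dt\,dx$-null set $\mathcal N\subset[0,T]\times\mathbb R^d$ such that $\mathbf a(t,x,\mu)=\widetilde{\mathbf a}(t,x,\mu)$ for all $(t,x)\notin\mathcal N$ and all $\mu$. In particular $\Delta(a,\widetilde a)\le 2M\,\mathbf 1_{\mathcal N}$ everywhere.

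\textbf{Step 2: occupation of $\mathcal N$ (the main point).} Let $X\in\operatorname{Sol}_T(a,x)$. Then $X$ is an It\^o--L\'evy process of the form \eqref{eq:general-ito-levy-process} with the following coefficients:
\begin{itemize}
\item[] $b_s=b(s,X_s,\mathcal L(X_s))$, which is progressively measurable; since $X_s=X_{s-}$ for a.e.\ $s$, it may be replaced by a predictable version without changing the integrals;
\item[] $\sigma_s$ defined in the same way;
\item[] $g_s(z)=\beta(s,X_{s-},\mathcal L(X_s),z)$.
\end{itemize}
By (A2) these satisfy \eqref{eq:ito-levy-uniform-bounds} with $K_b=K_\sigma=K_g=M$. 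By Corollary~\ref{cor}, which holds because $a$ is admissible, the diffusion part satisfies the ellipticity condition \eqref{eq:krylov-uniform-ellipticity} everywhere. Apply Corollary~\ref{cor:unweighted-local-krylov} with $\gamma=0$, $\tau=\tau_R(X)$ and $f=\mathbf 1_{\mathcal N}$ restricted to $Q_{T,R}$; this $f$ lies in $L^{d+1}$ with norm $0$. This gives
\[
\mathbb E\int_0^{\tau_R(X)}\mathbf 1_{\mathcal N}(s,X_s)\,ds=0 .
\]
Because c\`adl\`ag paths on $[0,T]$ are bounded, $\tau_R(X)=T$ for all $R$ large enough, almost surely. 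Letting $R\to\infty$ along the integers therefore yields $\int_0^T\mathbf 1_{\mathcal N}(s,X_s)\,ds=0$ a.s. Since $X_{s-}=X_s$ except at countably many $s$, the same holds with $X_{s-}$ in place of $X_s$.

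\textbf{Step 3: identification of the integrals.} The law flow $\mu_s=\mathcal L(X_s)$ is the same object for both equations. By Step~2,
\[
\int_0^T\mathfrak e_{a,\widetilde a}(s,X_s,\mu_s)\,ds=0 \quad\text{a.s.},
\]
and likewise with $X_{s-}$. Consequently:
\begin{itemize}
\item[] the drift integrals coincide for all $t$ almost surely;
\item[] by the It\^o isometry and Doob's inequality, $\mathbb E\sup_t\bigl|\int_0^t(\sigma-\widetilde\sigma)\,dW\bigr|^2=0$;
\item[] by \eqref{eq:estimate} applied to $\Gamma_s=\overline\beta(s,X_{s-},\mu_s)-\overline{\widetilde\beta}(s,X_{s-},\mu_s)$, the compensated Poisson integrals are indistinguishable. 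Here the Poisson integral does not depend on which jointly measurable representative $\beta$ or $\widetilde\beta$ is chosen, as noted after \eqref{eq:estimate}.
\end{itemize}
Hence $X$ satisfies the identity of Definition~\ref{def} for $\widetilde a$ up to indistinguishability, and $X\in\operatorname{Sol}_T(\widetilde a,x)$.

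The only delicate point is Step~2. Uniform ellipticity must hold everywhere, and not merely almost everywhere in $(t,x)$, for the process actually driving $X$; Corollary~\ref{cor} provides exactly this for admissible representatives. The localization through $\tau_R$ is needed because the Krylov estimate is only local, and it is removed by the a.s.\ boundedness of c\`adl\`ag paths.
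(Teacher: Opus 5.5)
Your proposal is correct and follows essentially the same route as the paper. The paper also takes a Borel null set $\mathcal N$ independent of $\mu$, uses the stopped occupation estimate to show that $X$ spends zero time in $\mathcal N$, identifies the drift, Brownian and compensated Poisson integrals via the isometries, and gets the reverse inclusion by symmetry. The differences are minor. The paper identifies the equation stopped at $\tau_R$ and only lets $R\to\infty$ at the end, whereas you remove the localization earlier, at the level of the occupation integral. You are also somewhat more explicit than the paper about using predictable versions of the integrands and about needing pointwise ellipticity of the admissible representative.
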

	
	\begin{proof}
		Let \(X\in\operatorname{Sol}_T(a,x)\) and set
		\[
		\mu_t:=\mathcal L(X_t),
		\qquad
		\tau_R:=\inf\{t\in[0,T]:|X_t|\geq R\}\wedge T,
		\qquad R>0.
		\]
		Since \(a\) and \(\widetilde a\) are representatives of the same element
		of \(\mathfrak A_T\), Theorem~\ref{thm:completion-representation}
		yields a \((dt,dx)\)-null set \(\mathcal N\), independent of
		\(\mu\), such that
		\[
		\mathbf a(t,y,\mu)
		=
		\widetilde{\mathbf a}(t,y,\mu)
		\]
		for every \(\mu\in\mathcal P_2(\mathbb R^d)\) and every
		\((t,y)\notin\mathcal N\).
		
		By the local occupation estimate applied to the non-degenerate
		solution \(X\),
		\[
		\int_0^{\tau_R}
		\mathbf 1_{\mathcal N}(t,X_t)\,dt
		=0
		\qquad\text{almost surely}.
		\]
		Since a c\`adl\`ag path has at most countably many jump times,
		\[
		X_t=X_{t-}
		\qquad
		\text{for }dt\,d\mathbb P\text{-almost every }(t,\omega).
		\]
		Consequently,
		\[
		\mathbf 1_{\{t\leq\tau_R\}}
		\bigl|b(t,X_t,\mu_t)-\widetilde b(t,X_t,\mu_t)\bigr|
		=0,
		\]
		\[
		\mathbf 1_{\{t\leq\tau_R\}}
		\bigl\|\sigma(t,X_t,\mu_t)
		-\widetilde\sigma(t,X_t,\mu_t)\bigr\|_{\mathrm{HS}}
		=0,
		\]
		and
		\[
		\mathbf 1_{\{t\leq\tau_R\}}
		\bigl\|
		\overline\beta(t,X_{t-},\mu_t)
		-\overline{\widetilde\beta}(t,X_{t-},\mu_t)
		\bigr\|_H
		=0
		\]
		for \(dt\,d\mathbb P\)-almost every \((t,\omega)\).
		
		It follows that the stopped drift integrals coincide. Moreover, by
		the It\^o isometry,
		\[
		\int_0^{\,\cdot\wedge\tau_R}
		\sigma(s,X_s,\mu_s)\,dW_s
		=
		\int_0^{\,\cdot\wedge\tau_R}
		\widetilde\sigma(s,X_s,\mu_s)\,dW_s
		\]
		up to indistinguishability.
		
		For the jump terms, the preceding \(H\)-valued equality implies
		\[
		\begin{aligned}
			&\int_0^T
			\mathbf 1_{\{s\leq\tau_R\}}
			\int_U
			\bigl|
			\beta(s,X_{s-},\mu_s,z)
			-\widetilde\beta(s,X_{s-},\mu_s,z)
			\bigr|^2
			\,\nu(dz)\,ds
			\\
			=&
			\int_0^T
			\mathbf 1_{\{s\leq\tau_R\}}
			\bigl\|
			\overline\beta(s,X_{s-},\mu_s)
			-\overline{\widetilde\beta}(s,X_{s-},\mu_s)
			\bigr\|_H^2
			\,ds
			=0
		\end{aligned}
		\]
		almost surely. Hence, by the isometry for compensated Poisson
		integrals,
		\[
		\int_0^{\,\cdot\wedge\tau_R}\int_U
		\beta(s,X_{s-},\mu_s,z)\,\widetilde N(ds,dz)
		=
		\int_0^{\,\cdot\wedge\tau_R}\int_U
		\widetilde\beta(s,X_{s-},\mu_s,z)\,\widetilde N(ds,dz)
		\]
		up to indistinguishability.
		
		Substituting these identities into the equation satisfied by \(X\)
		gives, for every \(t\in[0,T]\),
		\[
		\begin{aligned}
			X_{t\wedge\tau_R}
			={}&x
			+\int_0^{t\wedge\tau_R}
			\widetilde b(s,X_s,\mu_s)\,ds
			\\
			&+\int_0^{t\wedge\tau_R}
			\widetilde\sigma(s,X_s,\mu_s)\,dW_s
			\\
			&+\int_0^{t\wedge\tau_R}\int_U
			\widetilde\beta(s,X_{s-},\mu_s,z)\,
			\widetilde N(ds,dz),
		\end{aligned}
		\]
		where
		\[
		\mu_s=\mathcal L(X_s)
		\]
		is the marginal law of the original, unstopped process.
		
		Finally, take \(R\in\mathbb N\). Outside a single null set, the
		preceding stopped identity holds simultaneously for every
		\(R\in\mathbb N\). Since every c\`adl\`ag path is bounded on
		\([0,T]\), for each such sample path there exists \(R\in\mathbb N\)
		large enough such that
		\[
		\tau_R=T.
		\]
		Hence \(X\) satisfies the full McKean--Vlasov equation with coefficient
		\(\widetilde a\), and therefore
		\[
		X\in\operatorname{Sol}_T(\widetilde a,x).
		\]
		Thus
		\[
		\operatorname{Sol}_T(a,x)
		\subset
		\operatorname{Sol}_T(\widetilde a,x).
		\]
		The reverse inclusion follows by symmetry.
	\end{proof}

	\begin{remark}
		After Proposition~\ref{prop:rep-equivalence}, all
		expressions involving
		\[
		\mathcal E_T(a,x)
		\quad\text{or}\quad
		\operatorname{Sol}_T(a,x),
		\qquad a\in\mathfrak A_T,
		\]
		are understood at the level of the metric-space element rather than
		at the level of a particular coefficient representative.
	\end{remark}
	
	\subsection{Approximation by the Lipschitz Core}

	By definition,
	\[
	\mathfrak A_T
	=
	\overline{\mathfrak A_T^{\mathrm{Lip}}}^{d_T}.
	\]
	Hence, for every $a\in\mathfrak A_T$, there exists
	$(a_n)_{n\geq 1}\subset\mathfrak A_T^{\mathrm{Lip}}$ such that
	\[
	d_T(a_n,a)<2^{-n},
	\qquad n\geq 1.
	\]
	Conversely, an admissible measurable coefficient triple
	\[
	a=(b,\sigma,\overline\beta)
	\]
	represents an element of $\mathfrak A_T$ whenever there exists
	$(a_n)_{n\geq 1}\subset\mathfrak A_T^{\mathrm{Lip}}$ such that, for
	every $m\geq 1$,
	\[
	\|
	\sup_{\mu\in\mathcal P_2(\mathbb R^d)}
	\left|
	\mathbf a_n(\cdot,\cdot,\mu)
	-
	\mathbf a(\cdot,\cdot,\mu)
	\right|_{\mathbb V}
	\|_{L^{q_\ast}(Q_{T,m})}
	\longrightarrow 0.
	\]
	Indeed, this convergence implies
	$\rho_{T,m}(a_n,a)\to 0$ for every $m$, and hence
	$d_T(a_n,a)\to 0$. Uniform ellipticity is preserved because
	\[
	\left\{
	S\in\mathbb R^{d\times d}:
	\frac12SS^\top\geq\lambda I_d
	\right\}
	\]
	is closed in the Hilbert--Schmidt topology.
	
	Thus $\mathfrak A_T$ is not the class of all measurable coefficients
	satisfying \textnormal{(A1)}--\textnormal{(A4)}; membership also
	requires approximation by the Lipschitz core in the Krylov-stable
	metric.

	\subsection{Natural Subclasses of the Krylov-Stable Completion}
	\label{subsec:natural-subclasses-completion}
	
	The definition
	\[
	\mathfrak A_T
	=
	\overline{\mathfrak A_T^{\mathrm{Lip}}}^{\,d_T}
	\]
	is abstract. We next give an intrinsic sufficient condition for membership
	in \(\mathfrak A_T\) and derive several natural subclasses. The condition is
	formulated as uniform local translation continuity, with respect to the law
	variable, in the exponent dictated by the Krylov estimate. 
	
	The formulas defining \(\Delta\), \(\rho_{T,R}\), and \(d_T\) extend
	 to any two coefficient triples satisfying \textnormal{(A1)} and
	\textnormal{(A3)}. In the arguments below, this extended pseudodistance is
	used to identify an external measurable coefficient with the element of the
	metric completion generated by its Lipschitz approximations.
	
	For a coefficient triple
	\[
	a=(b,\sigma,\overline\beta)
	\]
	satisfying \textnormal{(A1)} and \textnormal{(A3)}, let
	\[
	\mathbf a(t,x,\mu)
	:=
	\bigl(
	b(t,x,\mu),
	\sigma(t,x,\mu),
	\overline\beta(t,x,\mu)
	\bigr)
	\in\mathbb V.
	\]
	For \(h\in\mathbb R^d\), define
	\begin{equation}
		\label{eq}
		D_{a,h}(t,x)
		:=
		\sup_{j\geq1}
		\left|
		\mathbf a(t,x+h,\mu_j)
		-
		\mathbf a(t,x,\mu_j)
		\right|_{\mathbb V}.
	\end{equation}
	By \textnormal{(A3)} and the density of
	\(\{\mu_j:j\geq1\}\) in \((\mathcal P_2(\mathbb R^d),W_2)\),
	\begin{equation}
		\label{eq:spatial-translation-error-full-supremum}
		D_{a,h}(t,x)
		=
		\sup_{\mu\in\mathcal P_2(\mathbb R^d)}
		\left|
		\mathbf a(t,x+h,\mu)
		-
		\mathbf a(t,x,\mu)
		\right|_{\mathbb V}.
	\end{equation}
	In particular, \(D_{a,h}\) is Borel measurable. For \(R>0\), set
	\begin{equation}
		\label{eq:spatial-translation-modulus}
		\omega_{a,R}(h)
		:=
		\|D_{a,h}\|_{L^{q_\ast}(Q_{T,R})}.
	\end{equation}
	
	We first record the elementary translation fact used repeatedly below.
	
	\begin{lemma}
		\label{lem:local-bochner-translation}
		Let \(E\) be a separable Banach space, let \(1\leq p<\infty\), and let
		\[
		f\in L^p_{\mathrm{loc}}
		\bigl([0,T]\times\mathbb R^d;E\bigr).
		\]
		Then, for every \(R>0\),
		\begin{equation}
			\label{eq:local-bochner-translation}
			\|f(t,x+h)-f(t,x)\|_{L^p([0,T]\times B_R;E)}
			\longrightarrow0
			\qquad\text{as }h\longrightarrow0.
		\end{equation}
	\end{lemma}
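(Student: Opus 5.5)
The plan is the standard density argument: prove the estimate for continuous compactly supported $E$-valued functions, then pass to general $f$ by an approximation that is uniform in $h$. Fix $R>0$ and restrict attention to $|h|\le 1$. Then only the values of $f$ on the bounded set $K:=[0,T]\times\overline B_{R+1}$ are relevant, since both $(t,x)$ and $(t,x+h)$ lie in $K$ whenever $x\in B_R$. Replacing $f$ by $f\mathbf 1_K$, we may therefore assume $f\in L^p([0,T]\times\mathbb R^d;E)$ with support in $K$. For $1\le p<\infty$ and separable $E$, finite-valued simple functions $\sum_i e_i\mathbf 1_{A_i}$ with $A_i\subset K$ Borel are dense in this Bochner space.

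Next, approximate each indicator $\mathbf 1_{A_i}$ in $L^p$ by a continuous function. This follows from regularity of Lebesgue measure on the bounded set $K$ together with Urysohn's lemma. Consequently the functions $g\in C_c(\mathbb R\times\mathbb R^d;E)$, restricted to $[0,T]\times\mathbb R^d$, are dense in the relevant space. Such functions are finite sums $\sum_i e_i\varphi_i$ with scalar $\varphi_i\in C_c$, and we may take their supports inside a fixed compact neighbourhood of $K$.

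Given $\varepsilon>0$, choose such a $g$ with
\[
\|f-g\|_{L^p([0,T]\times B_{R+1};E)}<\varepsilon .
\]
The translation invariance of Lebesgue measure in $x$ gives, for $|h|\le1$,
\[
\|f(\cdot,\cdot+h)-g(\cdot,\cdot+h)\|_{L^p([0,T]\times B_R;E)}\le\|f-g\|_{L^p([0,T]\times B_{R+1};E)}<\varepsilon .
\]
The triangle inequality then yields
\[
\|f(\cdot,\cdot+h)-f\|_{L^p([0,T]\times B_R;E)}\le 2\varepsilon+\|g(\cdot,\cdot+h)-g\|_{L^p([0,T]\times B_R;E)} .
\]
Because $g$ is uniformly continuous with compact support, $\sup_{t,x}|g(t,x+h)-g(t,x)|_E\to0$ as $h\to0$. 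The last term is therefore at most $|Q_{T,R}|^{1/p}$ times this supremum, and it tends to zero. Since $\varepsilon$ was arbitrary, \eqref{eq:local-bochner-translation} follows.

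The only step needing care is the density of continuous compactly supported $E$-valued functions in the Bochner space. Here separability of $E$ (strong measurability) ensures that simple functions are dense. Once that density is granted, the rest is the classical scalar argument verbatim.
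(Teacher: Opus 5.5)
Your proof is correct and follows the same route as the paper. Both arguments localize $f$ to a bounded neighbourhood of $[0,T]\times B_R$ (the paper uses a smooth cutoff equal to $1$ on $B_{R+1}$, you use $\mathbf 1_K$) and then rely on continuity of spatial translations in the Bochner space $L^p$; the only difference is that the paper cites this strong continuity, whereas you prove it directly via density of $C_c(\cdot;E)$ and uniform continuity.
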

	
	\begin{proof}
		Fix \(R>0\) and choose
		\(\chi\in C_c^\infty(\mathbb R^d)\) such that
		\(\chi=1\) on \(B_{R+1}\). Set
		\[
		g(t,x):=\chi(x)f(t,x).
		\]
		Then
		\[
		g\in L^p([0,T]\times\mathbb R^d;E).
		\]
		For \(|h|<1\), one has, on \([0,T]\times B_R\),
		\[
		f(t,x+h)-f(t,x)=g(t,x+h)-g(t,x).
		\]
		Translations are strongly continuous on the Bochner space
		\(L^p([0,T]\times\mathbb R^d;E)\), see, for example, \cite{Hytonen-2003}. Hence
		\[
		\|g(t,x+h)-g(t,x)\|_{L^p([0,T]\times\mathbb R^d;E)}
		\longrightarrow0,
		\]
		which proves \eqref{eq:local-bochner-translation}.
	\end{proof}
	
	\begin{proposition}
		\label{prop:translation-continuity-completion}
		Let
		\[
		a=(b,\sigma,\overline\beta)
		\]
		satisfy \textnormal{(A1)}--\textnormal{(A3)} with the constants
		\(M\) and \(L_\mu\). Let
		\(\mathcal C_\lambda\subset\mathbb R^{d\times d}\) be a nonempty closed
		convex set such that
		\begin{equation}
			\label{eq:convex-elliptic-matrix-class}
			S\in\mathcal C_\lambda
			\quad\Longrightarrow\quad
			\frac12SS^\top\geq\lambda I_d.
		\end{equation}
		Assume that
		\begin{equation}
			\label{eq:diffusion-in-convex-elliptic-class}
			\sigma(t,x,\mu)\in\mathcal C_\lambda
		\end{equation}
		for every \((t,x,\mu)\), and that, for every \(R>0\),
		\begin{equation}
			\label{eq:translation-continuity-assumption}
			\omega_{a,R}(h)\longrightarrow0
			\qquad\text{as }h\longrightarrow0.
		\end{equation}
		Then \(a\) represents an element of \(\mathfrak A_T\). More precisely,
		there exists
		\[
		a^\varepsilon\in\mathfrak A_T^{\mathrm{Lip}},
		\qquad
		\varepsilon\in(0,1),
		\]
		such that
		\begin{equation}
			\label{eq:mollified-coefficient-convergence}
			d_T(a^\varepsilon,a)\longrightarrow0
			\qquad\text{as }\varepsilon\downarrow0.
		\end{equation}
	\end{proposition}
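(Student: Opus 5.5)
The plan is to approximate $a$ by spatial mollification. Fix a nonnegative mollifier $\theta\in C_c^\infty(\mathbb R^d)$ with $\operatorname{supp}\theta\subset B_1$ and $\int\theta=1$, and set $\theta_\varepsilon(y):=\varepsilon^{-d}\theta(y/\varepsilon)$. Define, for every $(t,x,\mu)$,
\[
\mathbf a^\varepsilon(t,x,\mu):=\int_{\mathbb R^d}\theta_\varepsilon(y)\,\mathbf a(t,x-y,\mu)\,dy ,
\]
understood as a Bochner integral in the separable Banach space $\mathbb V$. Joint Borel measurability of $\mathbf a^\varepsilon$ follows from Fubini, or from approximating the integrand by simple functions using the measurability of $\mathbf a$ in (A1). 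A jointly measurable pointwise representative $\beta^\varepsilon$ is then supplied by Lemma~\ref{lem:measurable-H-representative}. Mollification only in $x$ is enough, since the core requires spatial Lipschitz continuity only.

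\textbf{Membership of $a^\varepsilon$ in the core.} I will check (A2)--(A4) and \eqref{eq:space-lipschitz-core} in turn.
\begin{enumerate}
\item[(a)] The norm bound (A2) is preserved because $|\cdot|_{\mathbb V}$ is convex and $\theta_\varepsilon$ is a probability density.
\item[(b)] The $W_2$-Lipschitz bound (A3) passes under the integral for the same reason.
\item[(c)] For ellipticity, $\sigma^\varepsilon(t,x,\mu)$ is an average of points of the closed convex set $\mathcal C_\lambda$, so it lies in $\mathcal C_\lambda$. This is exactly why hypothesis \eqref{eq:diffusion-in-convex-elliptic-class} is imposed: the elliptic set itself is not convex.
\item[(d)] For spatial Lipschitz continuity,
\[
|\mathbf a^\varepsilon(t,x,\mu)-\mathbf a^\varepsilon(t,x',\mu)|_{\mathbb V}\le M\|\theta_\varepsilon(x-\cdot)-\theta_\varepsilon(x'-\cdot)\|_{L^1}\le M\|\nabla\theta_\varepsilon\|_{L^1}|x-x'|,
\]
uniformly in $(t,\mu)$, so $L_x(a^\varepsilon)\le M\varepsilon^{-1}\|\nabla\theta\|_{L^1}$.
\end{enumerate}
Hence $a^\varepsilon\in\mathfrak A_T^{\mathrm{Lip}}$.

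\textbf{Convergence $d_T(a^\varepsilon,a)\to0$.} Pointwise, using $\int\theta_\varepsilon=1$,
\[
\Delta(a^\varepsilon,a)(t,x)\le\sup_j\int\theta_\varepsilon(y)\,|\mathbf a(t,x-y,\mu_j)-\mathbf a(t,x,\mu_j)|_{\mathbb V}\,dy\le\int\theta_\varepsilon(y)\,D_{a,-y}(t,x)\,dy .
\]
Minkowski's integral inequality in $L^{q_\ast}(Q_{T,R})$ then gives
\[
\rho_{T,R}(a^\varepsilon,a)\le\int\theta_\varepsilon(y)\,\omega_{a,R}(-y)\,dy\le\sup_{|h|\le\varepsilon}\omega_{a,R}(h),
\]
which tends to $0$ by \eqref{eq:translation-continuity-assumption}. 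Dominated convergence in the series \eqref{eq:krylov-stable-metric} yields \eqref{eq:mollified-coefficient-convergence}.

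\textbf{Identification with an element of $\mathfrak A_T$.} The family $(a^\varepsilon)$ is $d_T$-Cauchy by the triangle inequality for the extended pseudodistance. By Theorem~\ref{thm:completion-representation}, a sequence $a^{\varepsilon_n}$ with $\varepsilon_n\downarrow0$ converges to some element $\hat a$ of $\mathfrak A_T$. The pseudodistance satisfies $d_T(a,\hat a)\le d_T(a,a^{\varepsilon_n})+d_T(a^{\varepsilon_n},\hat a)\to0$. Both $a$ and $\hat a$ are $W_2$-Lipschitz in the law, so the argument of Step~5 of that theorem shows $\mathbf a=\hat{\mathbf a}$ off a $dt\,dx$-null set simultaneously for all $\mu$. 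Thus $a$ represents $\hat a$.

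\textbf{Main obstacle.} The delicate step is the measurability bookkeeping in the Minkowski step. The function $(t,x,y)\mapsto D_{a,-y}(t,x)$ must be jointly Borel, so that the $L^{q_\ast}$-norm may be exchanged with the $dy$-integral. This holds because $D_{a,-y}$ is a countable supremum over $j$ of Borel functions of $(t,x,y)$. A second point is that the bound in (d) must be uniform in $\mu$, as the core definition demands; this follows from the uniform bound (A2).
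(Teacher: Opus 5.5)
Your proposal is correct and follows essentially the same route as the paper: mollification in $x$ only, preservation of (A2)--(A3) by averaging and of ellipticity through the closed convex set $\mathcal C_\lambda$, the Lipschitz bound $M\varepsilon^{-1}\|\nabla\theta\|_{L^1}$, and Minkowski's inequality reducing $\rho_{T,R}(a^\varepsilon,a)$ to an average of $\omega_{a,R}$. The differences are cosmetic: you bound by $\sup_{|h|\le\varepsilon}\omega_{a,R}(h)$ where the paper uses dominated convergence, and you spell out the identification of $a$ with the completion limit via Step~5 of Theorem~\ref{thm:completion-representation}, which the paper leaves implicit.
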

	
\begin{proof}
	Choose a standard mollifier
	\[
	\rho\in C_c^\infty(\mathbb R^d),
	\qquad
	\rho\geq0,
	\qquad
	\operatorname{supp}\rho\subset B_1,
	\qquad
	\int_{\mathbb R^d}\rho(h)\,dh=1,
	\]
	and set
	\[
	\rho_\varepsilon(h)
	:=
	\varepsilon^{-d}\rho(h/\varepsilon).
	\]
	Define the \(\mathbb V\)-valued spatial convolution
	\begin{equation}
		\label{eq:spatially-mollified-coefficient}
		\mathbf a^\varepsilon(t,x,\mu)
		:=
		\int_{\mathbb R^d}
		\rho_\varepsilon(h)
		\mathbf a(t,x-h,\mu)\,dh.
	\end{equation}
	Since \(\mathbb V\) is separable and \(\mathbf a\) is bounded and
	jointly Borel measurable, the Bochner integral in
	\eqref{eq:spatially-mollified-coefficient} is well defined and
	\(\mathbf a^\varepsilon\) is jointly Borel measurable. Write
	\[
	\mathbf a^\varepsilon
	=
	\bigl(
	b^\varepsilon,
	\sigma^\varepsilon,
	\overline\beta^\varepsilon
	\bigr).
	\]
	Applying Lemma~\ref{lem:measurable-H-representative} to
	\(\overline\beta^\varepsilon\) yields a jointly measurable pointwise
	representative. Thus \textnormal{(A1)} holds.
	
	By the triangle inequality for Bochner integrals and
	\textnormal{(A2)},
	\begin{equation}
		\label{eq:mollification-preserves-uniform-bound}
		\left|
		\mathbf a^\varepsilon(t,x,\mu)
		\right|_{\mathbb V}
		\leq
		\int_{\mathbb R^d}
		\rho_\varepsilon(h)
		\left|
		\mathbf a(t,x-h,\mu)
		\right|_{\mathbb V}\,dh
		\leq
		M.
	\end{equation}
	Similarly, by \textnormal{(A3)},
	\begin{equation}
		\label{eq:mollification-preserves-law-lipschitz}
		\left|
		\mathbf a^\varepsilon(t,x,\mu)
		-
		\mathbf a^\varepsilon(t,x,\eta)
		\right|_{\mathbb V}
		\leq
		L_\mu W_2(\mu,\eta).
	\end{equation}
	
	Since \(\mathcal C_\lambda\) is closed and convex, the Bochner average
	of an \(\mathcal C_\lambda\)-valued function again belongs to
	\(\mathcal C_\lambda\). Therefore
	\[
	\sigma^\varepsilon(t,x,\mu)
	\in
	\mathcal C_\lambda,
	\]
	and \eqref{eq:convex-elliptic-matrix-class} yields
	\begin{equation}
		\label{eq:mollification-preserves-ellipticity}
		\frac12
		\sigma^\varepsilon(t,x,\mu)
		\sigma^\varepsilon(t,x,\mu)^\top
		\geq
		\lambda I_d.
	\end{equation}
	
	To prove spatial Lipschitz continuity, rewrite
	\eqref{eq:spatially-mollified-coefficient} as
	\[
	\mathbf a^\varepsilon(t,x,\mu)
	=
	\int_{\mathbb R^d}
	\rho_\varepsilon(x-z)
	\mathbf a(t,z,\mu)\,dz.
	\]
	Then, for \(x,y\in\mathbb R^d\),
	\begin{align}
		\left|
		\mathbf a^\varepsilon(t,x,\mu)
		-
		\mathbf a^\varepsilon(t,y,\mu)
		\right|_{\mathbb V}
		&\leq
		M
		\int_{\mathbb R^d}
		\left|
		\rho_\varepsilon(x-z)
		-
		\rho_\varepsilon(y-z)
		\right|\,dz
		\nonumber\\
		&\leq
		M
		\|\nabla\rho_\varepsilon\|_{L^1}
		|x-y|
		\nonumber\\
		&=
		M\varepsilon^{-1}
		\|\nabla\rho\|_{L^1}
		|x-y|.
		\label{eq:mollified-spatial-lipschitz}
	\end{align}
	Hence
	\[
	a^\varepsilon
	\in
	\mathfrak A_T^{\mathrm{Lip}}.
	\]
	
	It remains to prove convergence. For every \((t,x)\),
	\begin{align}
		\Delta(a^\varepsilon,a)(t,x)
		&=
		\sup_{\mu\in\mathcal P_2(\mathbb R^d)}
		\left|
		\int_{\mathbb R^d}
		\rho_\varepsilon(h)
		\bigl[
		\mathbf a(t,x-h,\mu)
		-
		\mathbf a(t,x,\mu)
		\bigr]\,dh
		\right|_{\mathbb V}
		\nonumber\\
		&\leq
		\int_{\mathbb R^d}
		\rho_\varepsilon(h)
		D_{a,-h}(t,x)\,dh.
		\label{eq:mollified-error-pointwise}
	\end{align}
	Minkowski's integral inequality therefore gives
	\begin{align}
		\rho_{T,R}(a^\varepsilon,a)
		&\leq
		\int_{\mathbb R^d}
		\rho_\varepsilon(h)
		\omega_{a,R}(-h)\,dh
		\nonumber\\
		&=
		\int_{B_1}
		\rho(u)
		\omega_{a,R}(-\varepsilon u)\,du.
		\label{eq:mollified-error-minkowski}
	\end{align}
	By \textnormal{(A2)},
	\[
	0
	\leq
	\omega_{a,R}(h)
	\leq
	2M(T|B_R|)^{1/q_\ast}.
	\]
	Hence
	\eqref{eq:translation-continuity-assumption} and dominated
	convergence imply
	\[
	\rho_{T,R}(a^\varepsilon,a)
	\longrightarrow0
	\qquad
	\text{for every }R>0
	\]
	as \(\varepsilon\downarrow0\).
	Dominated convergence in the defining series for \(d_T\) then yields
	\begin{equation}
		\label{eq:mollified-coefficient-convergence}
		d_T(a^\varepsilon,a)
		\longrightarrow0
		\qquad
		\text{as }\varepsilon\downarrow0,
	\end{equation}
	where \(d_T(a^\varepsilon,a)\) is understood in the extended
	pseudodistance between admissible measurable coefficient triples.
	
	In particular, for any sequence
	\(\varepsilon_n\downarrow0\),
	\((a^{\varepsilon_n})_{n\geq1}\) is \(d_T\)-Cauchy in
	\(\mathfrak A_T^{\mathrm{Lip}}\). Indeed,
	\[
	d_T(a^{\varepsilon_n},a^{\varepsilon_m})
	\leq
	d_T(a^{\varepsilon_n},a)
	+
	d_T(a,a^{\varepsilon_m})
	\longrightarrow0
	\qquad
	\text{as }n,m\to\infty.
	\]
	Thus \((a^{\varepsilon_n})_{n\geq1}\) defines an element of the
	completion \(\mathfrak A_T\). Since
	\[
	d_T(a^{\varepsilon_n},a)
	\longrightarrow0,
	\]
	the measurable coefficient triple \(a\) is an admissible
	representative of this element. In particular,
	\(a\) represents an element of \(\mathfrak A_T\).
\end{proof}
	
\begin{remark}
	\label{rem:nonconvex-general-diffusion}
	The closed convex-set assumption ensures that spatial convolution
	preserves ellipticity. A basic example is
	\[
	\mathcal C_\lambda^+
	:=
	\left\{
	S\in\mathbb S^d:
	S\geq\sqrt{2\lambda}\,I_d
	\right\},
	\]
	but Proposition~\ref{prop:translation-continuity-completion} applies
	to any closed convex subset of
	\[
	\left\{
	S\in\mathbb R^{d\times d}:
	\frac12SS^\top\geq\lambda I_d
	\right\}.
	\]
	Since the full uniformly elliptic set is not convex, convolution of
	a general matrix-valued diffusion need not preserve
	\textnormal{(A4)}. One may instead assume directly that the
	mollified coefficients satisfy \textnormal{(A4)}, but this condition
	is less intrinsic.
\end{remark}
	
	We next show that coefficients may be pasted across completely arbitrary
	fixed Borel interfaces. No finite-perimeter assumption is needed for
	membership in the completion; finite perimeter only provides a quantitative
	rate.
	
	\begin{proposition}
		\label{prop:finite-patching-completion}
		Let
		\[
		\mathbb R^d
		=
		D_1\mathbin{\dot\cup}\cdots\mathbin{\dot\cup}D_N
		\]
		be a finite Borel partition. For every \(i\), let
		\[
		a_i=(b_i,\sigma_i,\overline\beta_i)
		\]
		satisfy \textnormal{(A1)}--\textnormal{(A3)} with the same constants
		\((M,L_\mu)\), assume
		\[
		\sigma_i(t,x,\mu)\in\mathcal C_\lambda,
		\]
		where \(\mathcal C_\lambda\) is as in
		Proposition~\ref{prop:translation-continuity-completion}, and suppose that
		\begin{equation}
			\label{eq:branch-translation-continuity}
			\omega_{a_i,R}(h)\longrightarrow0
			\qquad\text{as }h\longrightarrow0
		\end{equation}
		for every \(R>0\) and every \(i\). Define
		\begin{equation}
			\label{eq:piecewise-lipschitz-coefficient}
			\mathbf a(t,x,\mu)
			:=
			\sum_{i=1}^N
			\mathbf1_{D_i}(x)\mathbf a_i(t,x,\mu).
		\end{equation}
		Then \(a\) represents an element of \(\mathfrak A_T\).
	\end{proposition}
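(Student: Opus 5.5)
The plan is to reduce the statement to Proposition~\ref{prop:translation-continuity-completion}. That means checking three things for the pasted triple $a$ defined by \eqref{eq:piecewise-lipschitz-coefficient}: the structural conditions (A1)--(A3) with the same constants $(M,L_\mu)$, the convex ellipticity constraint $\sigma\in\mathcal C_\lambda$, and the translation continuity $\omega_{a,R}(h)\to0$ for every $R>0$. Once these hold, Proposition~\ref{prop:translation-continuity-completion} supplies mollified approximants $a^\varepsilon\in\mathfrak A_T^{\mathrm{Lip}}$ with $d_T(a^\varepsilon,a)\to0$, and hence $a$ represents an element of $\mathfrak A_T$.

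\textbf{Pointwise conditions.} These are immediate because, for each fixed $x$, exactly one indicator $\mathbf 1_{D_i}(x)$ equals one.
\begin{itemize}
\item Joint Borel measurability holds because each $\mathbf 1_{D_i}(x)$ and each $\mathbf a_i$ is Borel. A jointly measurable pointwise representative of $\overline\beta$ comes from Lemma~\ref{lem:measurable-H-representative}, or directly as $\sum_i\mathbf 1_{D_i}(x)\beta_i$.
\item At each $(t,x)$, the maps $\mu\mapsto\mathbf a(t,x,\mu)$ and $\sigma(t,x,\mu)$ coincide with $\mathbf a_i(t,x,\mu)$ and $\sigma_i(t,x,\mu)$ for the unique $i$ with $x\in D_i$. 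So the bound $M$, the $W_2$-Lipschitz constant $L_\mu$, and the membership $\sigma(t,x,\mu)\in\mathcal C_\lambda$ are inherited directly.
\end{itemize}

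\textbf{Translation modulus.} For $h\in\mathbb R^d$ I split according to whether $x$ and $x+h$ lie in the same piece:
\[
D_{a,h}(t,x)\le\sum_{i=1}^N\mathbf 1_{D_i}(x)\mathbf 1_{D_i}(x+h)\,D_{a_i,h}(t,x)
+2M\sum_{i=1}^N\bigl|\mathbf 1_{D_i}(x+h)-\mathbf 1_{D_i}(x)\bigr|.
\]
On the diagonal event the difference is exactly the $a_i$-translation error. Off the diagonal, the two points lie in different pieces, so some $i$ has $|\mathbf 1_{D_i}(x+h)-\mathbf 1_{D_i}(x)|=1$, and the difference is at most $2M$ by (A2). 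Taking the supremum over $\mu_j$ inside preserves this inequality. Minkowski's inequality in $L^{q_\ast}(Q_{T,R})$ then gives
\[
\omega_{a,R}(h)\le\sum_i\omega_{a_i,R}(h)+2M\,T^{1/q_\ast}\sum_i\bigl\|\mathbf 1_{D_i}(\cdot+h)-\mathbf 1_{D_i}\bigr\|_{L^{q_\ast}(B_R)}.
\]
The first sum tends to zero by hypothesis \eqref{eq:branch-translation-continuity}. Each indicator $\mathbf 1_{D_i}$ is bounded and Borel, so it lies in $L^{q_\ast}_{\mathrm{loc}}$, and Lemma~\ref{lem:local-bochner-translation} (scalar case, $p=q_\ast$) makes each term of the second sum tend to zero as $h\to0$. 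Hence $\omega_{a,R}(h)\to0$.

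\textbf{Expected obstacle.} The only delicate point is that the interfaces $\partial D_i$ can be wild: not of finite perimeter, and possibly of positive measure. No geometric control is needed, however, because the indicator discrepancy is controlled purely by $L^p$-continuity of translations, which holds for every locally integrable function. This gives membership in $\mathfrak A_T$ without any rate; finite perimeter would only add a quantitative bound such as $\|\mathbf 1_{D_i}(\cdot+h)-\mathbf 1_{D_i}\|_{L^1(B_R)}\lesssim|h|$.
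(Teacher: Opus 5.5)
Your proposal is correct and follows essentially the same route as the paper: check (A1)--(A3) and $\sigma\in\mathcal C_\lambda$ branchwise, then bound $D_{a,h}$ by a diagonal term plus an indicator-mismatch term. The indicator term is handled by $L^{q_\ast}$-continuity of translations (Lemma~\ref{lem:local-bochner-translation}), after which Proposition~\ref{prop:translation-continuity-completion} concludes. Your case split gives the constant $2M$ and the weight $\mathbf 1_{D_i}(x)\mathbf 1_{D_i}(x+h)$ where the paper's algebraic splitting gives $M$ and $\mathbf 1_{D_i}(x+h)$, which is an immaterial difference.
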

	
	\begin{proof}
		Since the partition is finite and Borel, the resulting piecewise-defined
		function is jointly Borel measurable.  Since exactly one
		branch is selected at every \(x\), conditions \textnormal{(A2)} and
		\textnormal{(A3)} hold with the same constants, and
		\[
		\sigma(t,x,\mu)\in\mathcal C_\lambda.
		\]
		
		For \(h\in\mathbb R^d\), set
		\[
		I_h(x)
		:=
		\sum_{i=1}^N
		|\mathbf1_{D_i}(x+h)-\mathbf1_{D_i}(x)|.
		\]
		A direct decomposition gives
		\begin{equation}
			\label{eq:finite-patching-translation-bound}
			D_{a,h}(t,x)
			\leq
			\sum_{i=1}^N
			\mathbf1_{D_i}(x+h)D_{a_i,h}(t,x)
			+
			M I_h(x).
		\end{equation}
		Consequently,
		\begin{align}
			\omega_{a,R}(h)
			&\leq
			\sum_{i=1}^N\omega_{a_i,R}(h)
			+
			MT^{1/q_\ast}
			\sum_{i=1}^N
			\|\mathbf1_{D_i}(\cdot+h)-\mathbf1_{D_i}\|_{L^{q_\ast}(B_R)}.
			\label{eq:finite-patching-modulus}
		\end{align}
		For every Borel set \(D_i\), the indicator
		\(\mathbf1_{D_i}\) belongs to \(L^{q_\ast}_{\mathrm{loc}}(\mathbb R^d)\).
		Lemma~\ref{lem:local-bochner-translation}, applied with \(E=\mathbb R\),
		therefore gives
		\[
		\|\mathbf1_{D_i}(\cdot+h)-\mathbf1_{D_i}\|_{L^{q_\ast}(B_R)}
		\longrightarrow0.
		\]
		Together with \eqref{eq:branch-translation-continuity}, this proves
		\(\omega_{a,R}(h)\to0\). The assertion follows from
		Proposition~\ref{prop:translation-continuity-completion}.
	\end{proof}
	
	\begin{corollary}
		\label{cor:piecewise-lipschitz-completion}
		Let \(D_1,\ldots,D_N\) be an arbitrary finite Borel partition of
		\(\mathbb R^d\). For each \(i\), let
		\[
		a_i\in\mathfrak A_T^{\mathrm{Lip}}
		\]
		have the common structural constants \((M,L_\mu,\lambda)\), and assume
		that the diffusion matrices of all branches take values in one fixed closed
		convex set \(\mathcal C_\lambda\) satisfying
		\eqref{eq:convex-elliptic-matrix-class}. Then the pasted coefficient
		\[
		\mathbf a(t,x,\mu)
		=
		\sum_{i=1}^N
		\mathbf1_{D_i}(x)\mathbf a_i(t,x,\mu)
		\]
		belongs to \(\mathfrak A_T\).
	\end{corollary}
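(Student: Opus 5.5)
The plan is to reduce the corollary directly to Proposition~\ref{prop:finite-patching-completion}. There are two things to check: that every Lipschitz-core branch $a_i$ satisfies the hypotheses imposed on the branches there, and that the pasted triple then represents an element of $\mathfrak A_T$.

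First, each $a_i\in\mathfrak A_T^{\mathrm{Lip}}$ satisfies \textnormal{(A1)}--\textnormal{(A3)} with the common constants $(M,L_\mu)$ by assumption. The diffusion matrices take values in the fixed closed convex set $\mathcal C_\lambda$, which is also assumed. The only remaining hypothesis is the translation continuity \eqref{eq:branch-translation-continuity}. For this I use the spatial Lipschitz bound \eqref{eq:space-lipschitz-core}: for every $(t,x,\mu)$,
\[
\left|\mathbf a_i(t,x+h,\mu)-\mathbf a_i(t,x,\mu)\right|_{\mathbb V}\leq L_x(a_i)|h|.
\]
Taking the supremum over $\mu$ (equivalently over the dense family $\{\mu_j\}$) gives $D_{a_i,h}(t,x)\leq L_x(a_i)|h|$ pointwise. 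Hence
\[
\omega_{a_i,R}(h)\leq L_x(a_i)|h|\,(T|B_R|)^{1/q_\ast}\longrightarrow0
\]
as $h\to0$, for every $R>0$. The constant $L_x(a_i)$ may depend on $i$. This is harmless because there are only finitely many branches and no uniformity in $i$ is needed.

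Second, apply Proposition~\ref{prop:finite-patching-completion} with the partition $D_1,\dots,D_N$ and the branches $a_i$. It yields that $\mathbf a=\sum_i\mathbf1_{D_i}\mathbf a_i$ represents an element of $\mathfrak A_T$. Ellipticity \textnormal{(A4)} of the pasted coefficient holds pointwise, because at each $x$ exactly one branch is selected and its diffusion lies in $\mathcal C_\lambda$. By \eqref{eq:convex-elliptic-matrix-class}, membership in $\mathcal C_\lambda$ gives $\tfrac12\sigma\sigma^\top\geq\lambda I_d$.

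There is no real obstacle. The only point needing care is that the Lipschitz constants $L_x(a_i)$ differ between branches, and finiteness of $N$ disposes of this.
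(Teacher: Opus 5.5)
Your proposal is correct and matches the paper's proof: the spatial Lipschitz bound gives $\omega_{a_i,R}(h)\leq L_x(a_i)|h|(T|B_R|)^{1/q_\ast}$, and then Proposition~\ref{prop:finite-patching-completion} applies. Your separate check of (A4) is redundant but harmless, since that proposition already verifies ellipticity of the pasted coefficient.
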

	
	\begin{proof}
		If \(L_i=L_x(a_i)\), then
		\[
		\omega_{a_i,R}(h)
		\leq
		L_i|h|(T|B_R|)^{1/q_\ast}.
		\]
		Hence Proposition~\ref{prop:finite-patching-completion} applies.
	\end{proof}
	
	\begin{remark}
		\label{rem:finite-perimeter-rate}
		If, in addition, every \(D_i\) has locally finite perimeter, write
		\(P(D_i;B)\) for the perimeter of \(D_i\) relative to the Borel set \(B\).
		Then, for
		\(|h|\leq1\),
		\[
		\int_{B_R}
		|\mathbf1_{D_i}(x+h)-\mathbf1_{D_i}(x)|\,dx
		\leq
		C_d|h|P(D_i;B_{R+1}).
		\]
		Since the indicator difference takes values in \(\{0,1\}\), this gives an
		explicit \(O(|h|^{1/q_\ast})\) contribution to
		\(\omega_{a,R}(h)\). Thus finite perimeter is useful for rates, but is not
		required for membership in \(\mathfrak A_T\).
	\end{remark}
	
The next result shows that the bounded-variation (BV) assumption suggested by the preceding examples is also unnecessary when the dependence on the law is through finitely many Lipschitz functionals.

		Let \(F_0\equiv1\), and let
	\[
	F_j:\mathcal P_2(\mathbb R^d)\longrightarrow\mathbb R,
	\qquad j=1,\ldots,N,
	\]
	be bounded \(W_2\)-Lipschitz functions. Set
	\[
	K_j:=\|F_j\|_\infty,
	\qquad
	\ell_j:=\sup_{\mu\neq\eta}	\frac{|F_j(\mu)-F_j(\eta)|}	{W_2(\mu,\eta)},
	\qquad
	K_0:=1.
	\]
	For \(j=0,\ldots,N\), let
	\[
	\mathbf v_j:[0,T]\times\mathbb R^d\longrightarrow\mathbb V
	\]
	be jointly Borel measurable and satisfy
	\begin{equation}
		\label{eq:finite-rank-profile-bound}
		\sup_{(t,x)}|\mathbf v_j(t,x)|_{\mathbb V}
		\leq M_j<\infty.
	\end{equation}
	Define
	\begin{equation}
		\label{eq:finite-rank-law-coefficient}
		\mathbf a(t,x,\mu):=\sum_{j=0}^N F_j(\mu)\mathbf v_j(t,x).
	\end{equation}
	
	\begin{proposition}
		\label{prop:finite-rank-measurable-completion}
		Assume that
		\begin{equation}
			\label{eq:finite-rank-structural-bounds}
			\sum_{j=0}^N K_jM_j\leq M,
			\qquad
			\sum_{j=1}^N\ell_jM_j\leq L_\mu,
		\end{equation}
		and that the diffusion component of \(\mathbf a\) takes values in a fixed
		closed convex set \(\mathcal C_\lambda\) satisfying
		\eqref{eq:convex-elliptic-matrix-class}. Then the coefficient triple  \(a=(b,\sigma,\overline\beta)\) represents an element
		of \(\mathfrak A_T\).
	\end{proposition}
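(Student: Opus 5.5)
The plan is to verify the hypotheses of Proposition~\ref{prop:translation-continuity-completion} for the coefficient \eqref{eq:finite-rank-law-coefficient}. Proving the translation condition \eqref{eq:translation-continuity-assumption} is the substantive step. The other hypotheses are bookkeeping.

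\textbf{Structural hypotheses.} Joint Borel measurability of \(\mathbf a\) holds because each \(F_j\) is continuous on \((\mathcal P_2(\mathbb R^d),W_2)\), hence Borel, and each \(\mathbf v_j\) is Borel. Products and finite sums of such maps remain jointly Borel. The jointly measurable representative of the \(H\)-component then comes from Lemma~\ref{lem:measurable-H-representative}, which gives (A1). For (A2), the triangle inequality in \(\mathbb V\) gives
\[
|\mathbf a(t,x,\mu)|_{\mathbb V}\le\sum_{j=0}^N |F_j(\mu)|\,|\mathbf v_j(t,x)|_{\mathbb V}\le\sum_{j=0}^N K_jM_j\le M.
\]
For (A3), the term \(j=0\) cancels because \(F_0\equiv1\), so
\[
|\mathbf a(t,x,\mu)-\mathbf a(t,x,\eta)|_{\mathbb V}\le\sum_{j=1}^N\ell_jM_j\,W_2(\mu,\eta)\le L_\mu W_2(\mu,\eta).
\]
The diffusion component takes values in \(\mathcal C_\lambda\) by hypothesis, which is exactly \eqref{eq:diffusion-in-convex-elliptic-class}.

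\textbf{Translation continuity.} For \(h\in\mathbb R^d\) and any \(\mu\),
\[
|\mathbf a(t,x+h,\mu)-\mathbf a(t,x,\mu)|_{\mathbb V}\le\sum_{j=0}^N K_j\,|\mathbf v_j(t,x+h)-\mathbf v_j(t,x)|_{\mathbb V}.
\]
The right-hand side does not depend on \(\mu\), so it also bounds the supremum \(D_{a,h}(t,x)\). Minkowski's inequality in \(L^{q_\ast}(Q_{T,R})\) then yields
\[
\omega_{a,R}(h)\le\sum_{j=0}^N K_j\,\|\mathbf v_j(\cdot,\cdot+h)-\mathbf v_j\|_{L^{q_\ast}(Q_{T,R};\mathbb V)}.
\]
Each \(\mathbf v_j\) is bounded and Borel with values in the separable Banach space \(\mathbb V\), so \(\mathbf v_j\in L^{q_\ast}_{\mathrm{loc}}([0,T]\times\mathbb R^d;\mathbb V)\). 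Lemma~\ref{lem:local-bochner-translation} with \(E=\mathbb V\) and \(p=q_\ast\) therefore shows that each summand tends to \(0\) as \(h\to0\). Hence \(\omega_{a,R}(h)\to0\) for every \(R>0\).

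\textbf{Conclusion and main obstacle.} With all hypotheses verified, Proposition~\ref{prop:translation-continuity-completion} shows that \(a\) represents an element of \(\mathfrak A_T\). The only delicate point is the membership \(\mathbf v_j\in L^{q_\ast}_{\mathrm{loc}}(\,\cdot\,;\mathbb V)\) in the Bochner sense, which requires strong measurability. This follows from Borel measurability together with separability of \(\mathbb V\), so that Lemma~\ref{lem:local-bochner-translation} applies. Beyond this, no spatial regularity (such as BV) of the profiles is needed. The separated finite-rank structure is what reduces the supremum over \(\mu\) to finitely many \(\mu\)-free translation errors.
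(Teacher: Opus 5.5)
Your proposal is correct and follows essentially the same route as the paper. You verify \textnormal{(A1)}--\textnormal{(A3)} and the convex-class condition directly, bound $D_{a,h}$ by the $\mu$-free sum $\sum_{j} K_j|\mathbf v_j(t,x+h)-\mathbf v_j(t,x)|_{\mathbb V}$, and conclude via Lemma~\ref{lem:local-bochner-translation}, Minkowski's inequality and Proposition~\ref{prop:translation-continuity-completion}; your remark that Borel measurability plus separability of $\mathbb V$ gives Bochner local integrability just makes explicit a point the paper uses tacitly.
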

	
	\begin{proof}
		The map \(\mathbf a\) is jointly Borel measurable, and its \(H\)-valued
		component admits a jointly measurable pointwise representative by
		Lemma~\ref{lem:measurable-H-representative}. The first inequality in
		\eqref{eq:finite-rank-structural-bounds} gives \textnormal{(A2)}, while
		\begin{align*}
			|\mathbf a(t,x,\mu)-\mathbf a(t,x,\eta)|_{\mathbb V}
			&\leq
			\sum_{j=1}^N
			|F_j(\mu)-F_j(\eta)|\,|\mathbf v_j(t,x)|_{\mathbb V}
			\\
			&\leq
			\left(\sum_{j=1}^N\ell_jM_j\right)W_2(\mu,\eta)
			\leq
			L_\mu W_2(\mu,\eta).
		\end{align*}
		Thus \textnormal{(A3)} holds, and the convex matrix-class assumption gives
		\textnormal{(A4)}.
		
		For every \(h\in\mathbb R^d\),
		\begin{equation}
			\label{eq:finite-rank-translation-pointwise}
			D_{a,h}(t,x)
			\leq
			\sum_{j=0}^N
			K_j
			|\mathbf v_j(t,x+h)-\mathbf v_j(t,x)|_{\mathbb V}.
		\end{equation}
		Each bounded measurable profile \(\mathbf v_j\) belongs to
		\(L^{q_\ast}_{\mathrm{loc}}([0,T]\times\mathbb R^d;\mathbb V)\).
		Lemma~\ref{lem:local-bochner-translation} and Minkowski's inequality imply
		\[
		\omega_{a,R}(h)\longrightarrow0
		\qquad\text{for every }R>0.
		\]
		Proposition~\ref{prop:translation-continuity-completion} completes the
		proof.
	\end{proof}

	\begin{corollary}
		\label{cor:finite-rank-bv-rate}
		Under the assumptions of
		Proposition~\ref{prop:finite-rank-measurable-completion}, suppose in addition
		that, for almost every \(t\),
		\[
		x\longmapsto\mathbf v_j(t,x)
		\in BV_{\mathrm{loc}}(\mathbb R^d;\mathbb V),
		\]
		where Banach-valued BV is understood in the distributional sense and
		\(|D_x\mathbf v_j(t,\cdot)|\) denotes the total-variation measure,
		and that, for every \(R>0\),
		\[
		V_{j,R}
		:=
		\int_0^T
		|D_x\mathbf v_j(t,\cdot)|(B_R)\,dt
		<\infty.
		\]
		Then, for \(|h|\leq1\),
		\begin{equation}
			\label{eq:finite-rank-bv-translation-bound}
			\omega_{a,R}(h)^{q_\ast}
			\leq
			C_d(N+1)^{q_\ast-1}|h|
			\sum_{j=0}^N
			K_j^{q_\ast}(2M_j)^{q_\ast-1}V_{j,R+1}.
		\end{equation}
	\end{corollary}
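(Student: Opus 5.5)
The plan is to combine the pointwise translation bound \eqref{eq:finite-rank-translation-pointwise} with the $L^1$ translation estimate for BV functions, and then use boundedness of the profiles to move from $L^1$ to $L^{q_\ast}$.

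\textbf{Step 1: reduce to single profiles.} From \eqref{eq:finite-rank-translation-pointwise} and the elementary convexity inequality $(\sum_{j=0}^N c_j)^{q_\ast}\leq (N+1)^{q_\ast-1}\sum_{j=0}^N c_j^{q_\ast}$, we get
\[
\omega_{a,R}(h)^{q_\ast}
\leq
(N+1)^{q_\ast-1}\sum_{j=0}^N K_j^{q_\ast}
\int_0^T\!\!\int_{B_R}|\mathbf v_j(t,x+h)-\mathbf v_j(t,x)|_{\mathbb V}^{q_\ast}\,dx\,dt .
\]
By \eqref{eq:finite-rank-profile-bound}, every increment is bounded by $2M_j$. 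Hence $|\cdot|^{q_\ast}\leq (2M_j)^{q_\ast-1}|\cdot|$, which reduces each term to an $L^1$ translation quantity.

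\textbf{Step 2: the $L^1$ translation bound.} The remaining claim is that, for a.e.\ $t$ and $|h|\leq1$,
\[
\int_{B_R}|\mathbf v_j(t,x+h)-\mathbf v_j(t,x)|_{\mathbb V}\,dx\leq C_d|h|\,|D_x\mathbf v_j(t,\cdot)|(B_{R+1}).
\]
Integrating this in $t$ yields $V_{j,R+1}$ and hence \eqref{eq:finite-rank-bv-translation-bound}.

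For the proof, mollify in $x$: set $v^\delta=\rho_\delta*\mathbf v_j(t,\cdot)$, which is smooth with values in the separable space $\mathbb V$. Write $v^\delta(x+h)-v^\delta(x)=\int_0^1\nabla v^\delta(x+sh)h\,ds$. Integrating over $B_R$ and applying Fubini gives the bound $|h|\int_{B_{R+1/2}}|\nabla v^\delta|\,dx$ for $\delta<1/2$. In the Banach-valued setting this is at most $|h|\,|D_x\mathbf v_j|(B_{R+1})$, using $|\nabla v^\delta|\leq\rho_\delta*|D\mathbf v_j|$ as measures. Finally let $\delta\to0$, using $v^\delta\to\mathbf v_j$ in $L^1_{\mathrm{loc}}(\mathbb V)$ (Bochner mollification) together with Fatou's lemma.

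\textbf{Main obstacle.} The delicate point is the vector-valued BV step. One must justify $|\nabla v^\delta|_{\mathbb V}\leq\rho_\delta*|D\mathbf v_j|$ when $D\mathbf v_j$ is only a distributional $\mathbb V$-valued measure. The approach is to pair the derivative with a countable norming family $\{\ell_k\}\subset\mathbb V^*$, which exists because $\mathbb V$ is separable, and to apply the scalar BV inequality to each $\ell_k\circ\mathbf v_j$. Taking the supremum over $k$ pointwise for the smooth mollified functions then gives the estimate. This is where the constant $C_d$, coming from the norm conventions for $|D_x\cdot|$, enters.

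Joint measurability in $t$ of $t\mapsto|D_x\mathbf v_j(t,\cdot)|(B_{R+1})$ is implicit in the hypothesis that $V_{j,R}$ is finite, so it is taken as given.
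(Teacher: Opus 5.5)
Your proposal is correct and follows the paper's proof essentially step for step: the convexity inequality over the $N+1$ profiles, the reduction $|\cdot|_{\mathbb V}^{q_\ast}\leq(2M_j)^{q_\ast-1}|\cdot|_{\mathbb V}$, the $L^1$ translation estimate for $\mathbb V$-valued BV maps, and integration in $t$; the only difference is that you sketch the vector-valued BV estimate via mollification and a countable norming family, whereas the paper simply invokes it as standard. One small bookkeeping fix: for $x\in B_R$ and $|h|\leq1$ the points $x+sh$ range over $B_{R+1}$, not $B_{R+1/2}$, so after mollification the $s$-integrand is bounded by $|D_x\mathbf v_j(t,\cdot)|(\{y:|y-sh|<R+\delta\})$, and you recover $B_{R+1}$ by letting $\delta\to0$ for each fixed $s<1$ (the limiting set $\{y:|y-sh|\leq R\}$ lies in $B_{R+1}$) and then using dominated convergence in $s$.
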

	
	\begin{proof}
		From \eqref{eq:finite-rank-translation-pointwise}, convexity gives
		\[
		D_{a,h}^{q_\ast}
		\leq
		(N+1)^{q_\ast-1}
		\sum_{j=0}^N
		K_j^{q_\ast}
		|\mathbf v_j(t,x+h)-\mathbf v_j(t,x)|_{\mathbb V}^{q_\ast}.
		\]
		By \eqref{eq:finite-rank-profile-bound},
		\[
		|\mathbf v_j(t,x+h)-\mathbf v_j(t,x)|_{\mathbb V}^{q_\ast}
		\leq
		(2M_j)^{q_\ast-1}
		|\mathbf v_j(t,x+h)-\mathbf v_j(t,x)|_{\mathbb V}.
		\]
		Since \(\mathbb V\) is a finite product of finite-dimensional spaces and the Hilbert space \(H\), the standard Bochner integration and \(L^p\)-duality results used below apply to \(\mathbb V\)-valued functions.
	For almost every \(t\), the standard local translation estimate for
	\(\mathbb V\)-valued BV maps gives, for \(|h|\leq 1\),
	\[
	\int_{B_R}
	\left|\mathbf v_j(t,x+h)-\mathbf v_j(t,x)\right|_{\mathbb V}\,dx
	\leq
	C_d |h| \, |D_x\mathbf v_j(t,\cdot)|(B_{R+1}).
	\]
	Here \(B_{R+1}\) appears because
	\(x+\theta h\in B_{R+1}\) for \(x\in B_R\), \(0\leq\theta\leq1\).

	Combining this with the preceding two estimates and integrating over
	\([0,T]\times B_R\), we obtain
	\[
	\begin{aligned}
		\omega_{a,R}(h)^{q_\ast}
		&=
		\int_0^T\int_{B_R}D_{a,h}(t,x)^{q_\ast}\,dx\,dt
		\\
		&\leq
		C_d(N+1)^{q_\ast-1}|h|
		\sum_{j=0}^N
		K_j^{q_\ast}(2M_j)^{q_\ast-1}
		\int_0^T
		|D_x\mathbf v_j(t,\cdot)|(B_{R+1})\,dt
		\\
		&=
		C_d(N+1)^{q_\ast-1}|h|
		\sum_{j=0}^N
		K_j^{q_\ast}(2M_j)^{q_\ast-1}V_{j,R+1},
	\end{aligned}
	\]
	which is \eqref{eq:finite-rank-bv-translation-bound}.
	\end{proof}
	
	We finally treat coefficients defined by integration against the law. The
	following measurability fact is included to make the construction explicit.
	
	\begin{lemma}
		\label{lem:parameterized-bochner-integration}
		Let \(S\) be a standard Borel space, let \(Y\) be a Polish space, let
		\(E\) be a separable Banach space, and let
		\[
		G:S\times Y\longrightarrow E
		\]
		be bounded and Borel measurable. Then
		\[
		(s,\mu)
		\longmapsto
		\int_Y G(s,y)\,\mu(dy)
		\]
		is Borel measurable from \(S\times\mathcal P(Y)\) into \(E\), where
		\(\mathcal P(Y)\) carries the Borel \(\sigma\)-field of the weak topology.
	\end{lemma}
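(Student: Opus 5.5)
We will use a monotone class (functional form) argument on the class of bounded Borel integrands $G$, combined with the standard fact that for a bounded Borel real function $f$ on $Y$, the map $\mu\mapsto\int_Y f\,d\mu$ is Borel on $\mathcal P(Y)$ with the weak topology. The first step reduces the $E$-valued statement to scalar ones. Since $E$ is separable, the Pettis measurability theorem gives that a map into $E$ is Borel as soon as $\ell\circ\Phi$ is measurable for every $\ell$ in a countable norming subset of $E^\ast$. Moreover $\ell\bigl(\int G(s,y)\,\mu(dy)\bigr)=\int \ell(G(s,y))\,\mu(dy)$ for the Bochner integral. The Bochner integral is well defined because $G(s,\cdot)$ is bounded, Borel, and separably valued. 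It therefore suffices to prove the claim for bounded Borel $g:S\times Y\to\mathbb R$.

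For the scalar case, let $\mathcal H$ be the class of bounded Borel $g$ for which $(s,\mu)\mapsto\int g(s,y)\,\mu(dy)$ is Borel on $S\times\mathcal P(Y)$. First, $\mathcal H$ contains all products $g(s,y)=\mathbf 1_A(s)\mathbf 1_B(y)$ with $A\in\mathcal B(S)$ and $B\in\mathcal B(Y)$. Indeed, the resulting map is $\mathbf 1_A(s)\mu(B)$. The map $\mu\mapsto\mu(B)$ is Borel on $\mathcal P(Y)$: for open $B$ it is lower semicontinuous by the portmanteau theorem, and a Dynkin argument extends this to all Borel $B$. Since $Y$ is Polish, $\mathcal P(Y)$ is Polish and its Borel $\sigma$-field is generated by the maps $\mu\mapsto\mu(B)$. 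A product of measurable maps is measurable on the product $\sigma$-field, which here coincides with $\mathcal B(S\times\mathcal P(Y))$ because both factors are standard Borel, hence second countable.

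Next, $\mathcal H$ is a vector space containing constants. It is also closed under bounded monotone limits: if $g_n\uparrow g$ with $g$ bounded, then $\int g_n(s,\cdot)\,d\mu\uparrow\int g(s,\cdot)\,d\mu$ for each $(s,\mu)$ by monotone convergence, and a pointwise limit of Borel maps is Borel. The rectangles form a $\pi$-system generating $\mathcal B(S)\otimes\mathcal B(Y)=\mathcal B(S\times Y)$, again by second countability. The functional monotone class theorem therefore shows that $\mathcal H$ contains all bounded Borel functions on $S\times Y$.

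The only delicate point is the identification of $\sigma$-fields: Borel sets of the products versus product $\sigma$-fields, and weak-topology Borel sets on $\mathcal P(Y)$ versus the evaluation $\sigma$-field. Both identifications follow from separability and metrizability, since $\mathcal P(Y)$ is Polish when $Y$ is. This is where the standard-Borel and Polish hypotheses are used. The reduction from $E$ to scalars also needs the countable norming family, which is what separability of $E$ provides.
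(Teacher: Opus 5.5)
Your proof is correct. For bounded real-valued integrands it follows the paper's route: the paper also applies the monotone class theorem starting from indicators of Borel rectangles. Your portmanteau/Dynkin argument for the Borel measurability of $\mu\mapsto\mu(B)$ supplies details the paper leaves implicit.

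The genuine difference is the passage from scalar to $E$-valued integrands.

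\textbf{Your route.} You reduce to scalars through a countable norming family $(\ell_k)\subset E^\ast$. You use the commutation $\ell\bigl(\int G(s,y)\,\mu(dy)\bigr)=\int \ell(G(s,y))\,\mu(dy)$, together with the fact that these functionals generate $\mathcal B(E)$. Strictly, that last fact is not Pettis's theorem. It follows from the identity $\|x-a\|_E=\sup_k|\ell_k(x)-\ell_k(a)|$, which makes preimages of balls measurable, combined with separability, which makes every open set a countable union of balls.

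\textbf{The paper's route.} It avoids the dual space altogether. With a dense sequence $(e_k)\subset E$, it approximates $G$ within $1/n$ by countably valued Borel maps $G_n=\sum_k e_k\mathbf 1_{A_{n,k}}$, where $(A_{n,k})_k$ is a Borel partition of $S\times Y$. Then $\int G_n(s,y)\,\mu(dy)=\sum_k e_k\,\mu\bigl((A_{n,k})_s\bigr)$, with $(A_{n,k})_s$ the $s$-section. Each term is Borel by the scalar case applied to indicators. The series converges because the selected $e_k$ are uniformly bounded and the masses $\mu\bigl((A_{n,k})_s\bigr)$ sum to one. The parameterized integral is then the uniform limit in $n$ of these Borel maps.

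\textbf{Comparison.} Your route is shorter and is purely a statement about generating $\sigma$-fields. The paper's is more constructive: it mirrors the definition of the Bochner integral through countably valued approximants and needs the scalar result only for indicators. The price is that it must justify convergence of an $E$-valued series, which is where boundedness of $G$ is used a second time.
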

	
	\begin{proof}
		For bounded scalar Borel functions, the assertion follows from the monotone
		class theorem, starting with indicators of Borel rectangles. Since \(E\) is
		separable, fix a dense sequence \((e_k)_{k\geq1}\subset E\). Since \(G\)
		is bounded, the approximants below may be chosen with a common uniform
		bound. For each \(n\), choose Borel sets
		\((A_{n,k})_{k\geq1}\) forming a partition of \(S\times Y\) such that the
		countably valued Borel map
		\[
		G_n:=\sum_{k=1}^{\infty}e_k\mathbf1_{A_{n,k}}
		\]
		satisfies \(\|G_n-G\|_E\leq n^{-1}\) pointwise. For each finite partial
		sum, the parameterized integral is Borel measurable by the scalar result.
		Because the selected values of \(G_n\) are uniformly bounded, the partial
		sums of the integral converge in \(E\). Hence
		\[
		(s,\mu)\longmapsto\int_YG_n(s,y)\,\mu(dy)
		\]
		is Borel measurable. Finally,
		\[
		\big\|
		\int_YG_n(s,y)\,\mu(dy)
		-
		\int_YG(s,y)\,\mu(dy)
		\big\|_E
		\leq n^{-1},
		\]
		uniformly in (s). Hence the desired map is the uniform limit of
		Borel measurable \(E\)-valued maps, and is therefore Borel measurable.
	\end{proof}
	
	\begin{corollary}
		\label{cor:integral-interaction-completion}
		Let
		\[
		\mathbf A:
		[0,T]\times\mathbb R^d\times\mathbb R^d
		\longrightarrow\mathbb V
		\]
		be jointly Borel measurable, and write
		\[
		\mathbf A=(A_b,A_\sigma,A_\beta)
		\]
		for its drift, matrix, and $H$-valued components, respectively. Assume that
		\begin{equation}
			\label{eq:integral-interaction-uniform-bound}
			\sup_{y\in\mathbb R^d}
			|\mathbf A(t,x,y)|_{\mathbb V}
			\leq M
		\end{equation}
		for every \((t,x)\), and that
		\begin{equation}
			\label{eq:kernel-law-lipschitz}
			|\mathbf A(t,x,y)-\mathbf A(t,x,y')|_{\mathbb V}
			\leq
			L_\mu|y-y'|
		\end{equation}
		for all \(t,x,y,y'\). Assume also that the matrix component
		\(A_\sigma(t,x,y)\) belongs to a fixed closed convex set
		\(\mathcal C_\lambda\) satisfying
		\eqref{eq:convex-elliptic-matrix-class}.
		
		Fix a countable dense sequence \((y_r)_{r\geq1}\) in \(\mathbb R^d\), and
		suppose that, for every \(R>0\),
		\begin{equation}
			\label{eq:kernel-uniform-spatial-translation}
			\Theta_R(h)
			:=
			\big\|
			\sup_{r\geq1}
			|\mathbf A(t,x+h,y_r)-\mathbf A(t,x,y_r)|_{\mathbb V}
			\big\|_{L^{q_\ast}(Q_{T,R})}
			\longrightarrow0
		\end{equation}
		as \(h\to0\). Define
		\begin{equation}
			\label{eq:integral-interaction-coefficient}
			\mathbf a(t,x,\mu)	:=	\int_{\mathbb R^d}\mathbf A(t,x,y)\,\mu(dy).
		\end{equation}
		Then \(a=(b,\sigma,\overline\beta)\) represents an element of \(\mathfrak A_T\).
	\end{corollary}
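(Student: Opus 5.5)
The plan is to verify the hypotheses of Proposition~\ref{prop:translation-continuity-completion} for the integral-interaction coefficient \eqref{eq:integral-interaction-coefficient}, and then conclude from that proposition.

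\textbf{Admissibility.} First I check (A1)--(A4).

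For measurability, I apply Lemma~\ref{lem:parameterized-bochner-integration} with $S=[0,T]\times\mathbb R^d$, $Y=\mathbb R^d$ and $E=\mathbb V$. Since $\mathbf A$ is bounded and Borel, the map $(t,x,\mu)\mapsto\mathbf a(t,x,\mu)$ is Borel when $\mathcal P(\mathbb R^d)$ carries the weak topology. The $W_2$-Borel $\sigma$-field on $\mathcal P_2(\mathbb R^d)$ contains the trace of the weak Borel $\sigma$-field, because the inclusion of $(\mathcal P_2,W_2)$ into $(\mathcal P,\text{weak})$ is continuous. Hence $\mathbf a$ is jointly Borel on $[0,T]\times\mathbb R^d\times\mathcal P_2(\mathbb R^d)$. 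Lemma~\ref{lem:measurable-H-representative} then gives a pointwise representative of $\overline\beta$, which completes (A1).

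The bound (A2) follows from the triangle inequality for Bochner integrals together with \eqref{eq:integral-interaction-uniform-bound}.

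For (A3), I take any coupling $\pi$ of $\mu$ and $\eta$ and write
\[
\mathbf a(t,x,\mu)-\mathbf a(t,x,\eta)=\int\bigl(\mathbf A(t,x,y)-\mathbf A(t,x,y')\bigr)\,\pi(dy,dy').
\]
By \eqref{eq:kernel-law-lipschitz}, its norm is at most $L_\mu\int|y-y'|\,d\pi$. Taking the infimum over couplings gives the bound $L_\mu W_1(\mu,\eta)\le L_\mu W_2(\mu,\eta)$.

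For (A4), the $\mu$-average of a $\mathcal C_\lambda$-valued bounded measurable function stays in $\mathcal C_\lambda$, since $\mathcal C_\lambda$ is closed and convex. So $\sigma(t,x,\mu)\in\mathcal C_\lambda$, and ellipticity follows from \eqref{eq:convex-elliptic-matrix-class}.

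\textbf{Translation continuity.} This is the only real step. For fixed $(t,x,h)$,
\[
|\mathbf a(t,x+h,\mu)-\mathbf a(t,x,\mu)|_{\mathbb V}\le\int\bigl|\mathbf A(t,x+h,y)-\mathbf A(t,x,y)\bigr|_{\mathbb V}\,\mu(dy)\le\sup_{y\in\mathbb R^d}\bigl|\mathbf A(t,x+h,y)-\mathbf A(t,x,y)\bigr|_{\mathbb V}.
\]
I then replace the full supremum over $y$ by the supremum over the dense sequence $(y_r)$. The map $y\mapsto\mathbf A(t,x+h,y)-\mathbf A(t,x,y)$ is $2L_\mu$-Lipschitz by \eqref{eq:kernel-law-lipschitz}, hence continuous, so the two suprema agree. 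This holds for every $(t,x)$, with no null set involved.

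Combining this with \eqref{eq:spatial-translation-error-full-supremum}, I get the pointwise bound
\[
D_{a,h}(t,x)\le\sup_{r}\bigl|\mathbf A(t,x+h,y_r)-\mathbf A(t,x,y_r)\bigr|_{\mathbb V}.
\]
Taking $L^{q_\ast}(Q_{T,R})$ norms gives $\omega_{a,R}(h)\le\Theta_R(h)$, and $\Theta_R(h)\to0$ as $h\to0$ by \eqref{eq:kernel-uniform-spatial-translation}.

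\textbf{Conclusion and main obstacle.} With (A1)--(A3), $\sigma\in\mathcal C_\lambda$, and \eqref{eq:translation-continuity-assumption} all established, Proposition~\ref{prop:translation-continuity-completion} shows that $a$ represents an element of $\mathfrak A_T$.

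I expect the only delicate point to be the measurability of $\mathbf a$ in the $\mu$-variable, namely the comparison between the weak and $W_2$ Borel structures. As noted above, this is handled by the continuity of the embedding of $(\mathcal P_2,W_2)$ into $(\mathcal P,\text{weak})$. The remaining steps are direct estimates.
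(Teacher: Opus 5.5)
Your proposal is correct and follows essentially the same route as the paper's proof. Both arguments obtain measurability from the parameterized Bochner-integration lemma together with the continuous embedding of $(\mathcal P_2(\mathbb R^d),W_2)$ into the weak topology, get (A3) from couplings and $W_1\le W_2$, and get (A4) from closed convexity of $\mathcal C_\lambda$; they then bound $\omega_{a,R}(h)\le\Theta_R(h)$, using the $2L_\mu$-Lipschitz continuity in $y$ to replace the full supremum by the supremum over $(y_r)$, and conclude with Proposition~\ref{prop:translation-continuity-completion}.
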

	
\begin{proof}
	Lemma~\ref{lem:parameterized-bochner-integration}, applied with
	\(E=\mathbb V\), shows that \(\mathbf a\) is jointly Borel measurable on
	\[
	[0,T]\times\mathbb R^d\times\mathcal P(\mathbb R^d).
	\]
	Since the embedding
	\[
	(\mathcal P_2(\mathbb R^d),W_2)
	\hookrightarrow
	\mathcal P(\mathbb R^d)
	\]
	into the space of probability measures endowed with the weak topology
	is continuous, the restriction of \(\mathbf a\) to
	\[
	[0,T]\times\mathbb R^d\times\mathcal P_2(\mathbb R^d)
	\]
	is jointly Borel measurable. Lemma~\ref{lem:measurable-H-representative}
	then provides a jointly measurable pointwise representative of the
	\(H\)-valued component. Thus \textnormal{(A1)} holds, while
	\eqref{eq:integral-interaction-uniform-bound} gives
	\textnormal{(A2)}.
	
	Let \(\mu,\eta\in\mathcal P_2(\mathbb R^d)\) and
	\(\pi\in\Pi(\mu,\eta)\). By
	\eqref{eq:kernel-law-lipschitz},
	\begin{align*}
		\left|
		\mathbf a(t,x,\mu)
		-
		\mathbf a(t,x,\eta)
		\right|_{\mathbb V}
		&=
		\left|
		\int
		\bigl[
		\mathbf A(t,x,y)
		-
		\mathbf A(t,x,y')
		\bigr]
		\,\pi(dy,dy')
		\right|_{\mathbb V}
		\\
		&\leq
		L_\mu
		\int
		|y-y'|
		\,\pi(dy,dy').
	\end{align*}
	Taking the infimum over
	\(\pi\in\Pi(\mu,\eta)\) gives
	\[
	\left|
	\mathbf a(t,x,\mu)
	-
	\mathbf a(t,x,\eta)
	\right|_{\mathbb V}
	\leq
	L_\mu W_1(\mu,\eta)
	\leq
	L_\mu W_2(\mu,\eta).
	\]
	Thus \textnormal{(A3)} holds.
	
	Since \(\mathcal C_\lambda\) is closed and convex,
	\[
	\sigma(t,x,\mu)
	=
	\int
	A_\sigma(t,x,y)\,\mu(dy)
	\in
	\mathcal C_\lambda.
	\]
	Hence \textnormal{(A4)} follows from
	\eqref{eq:convex-elliptic-matrix-class}.
	
	Finally, by \eqref{eq:kernel-law-lipschitz}, the map
	\[
	y
	\longmapsto
	\mathbf A(t,x+h,y)
	-
	\mathbf A(t,x,y)
	\]
	is \(2L_\mu\)-Lipschitz as a \(\mathbb V\)-valued map. Therefore,
	by the density of \((y_r)_{r\geq1}\),
	\[
	\sup_{r\geq1}
	\left|
	\mathbf A(t,x+h,y_r)
	-
	\mathbf A(t,x,y_r)
	\right|_{\mathbb V}
	=
	\sup_{y\in\mathbb R^d}
	\left|
	\mathbf A(t,x+h,y)
	-
	\mathbf A(t,x,y)
	\right|_{\mathbb V}.
	\]
	Moreover, for every \((t,x)\),
	\begin{align*}
		D_{a,h}(t,x)
		&\leq
		\sup_{\mu\in\mathcal P_2(\mathbb R^d)}
		\int
		\left|
		\mathbf A(t,x+h,y)
		-
		\mathbf A(t,x,y)
		\right|_{\mathbb V}
		\,\mu(dy)
		\\
		&\leq
		\sup_{y\in\mathbb R^d}
		\left|
		\mathbf A(t,x+h,y)
		-
		\mathbf A(t,x,y)
		\right|_{\mathbb V}.
	\end{align*}
	Consequently,
	\[
	\omega_{a,R}(h)
	\leq
	\Theta_R(h)
	\longrightarrow0
	\qquad
	\text{as }h\to0.
	\]
	Proposition~\ref{prop:translation-continuity-completion} therefore
	yields the claim.
\end{proof}
	
\begin{remark}
	\label{rem:interaction-kernel-examples}
	For coefficients of the form
	\[
	\mathbf a(t,x,\mu)
	=
	\mathbf a_0(t,x)
	+
	\int_{\mathbb R^d}
	\mathbf K(t,x,y)\,\mu(dy),
	\]
	Corollary~\ref{cor:integral-interaction-completion} applies with
	\[
	\mathbf A(t,x,y)
	:=
	\mathbf a_0(t,x)+\mathbf K(t,x,y).
	\]
	In particular,
	\[
	\sup_{y\in\mathbb R^d}
	|\mathbf a_0(t,x)+\mathbf K(t,x,y)|_{\mathbb V}
	\leq M,
	\]
	and \eqref{eq:kernel-law-lipschitz} is necessary as well as sufficient
	for the law-Lipschitz estimate, as seen by testing on Dirac masses.
	
	Condition \eqref{eq:kernel-uniform-spatial-translation} holds, for example,
	when \(\mathbf A\) is uniformly continuous in \(x\), uniformly in
	\((t,y)\), or when it is piecewise defined on a fixed finite Borel
	partition in \(x\) with uniformly translation-continuous branches. It
	also covers finite-rank kernels of the form
	\[
	\mathbf A(t,x,y)
	=
	\sum_{r=1}^J
	\psi_r(y)\mathbf v_r(t,x),
	\]
	with bounded Lipschitz \(\psi_r\) and bounded Borel
	\(\mathbb V\)-valued profiles \(\mathbf v_r\).
	
	For convolution kernels
	\[
	\mathbf A(t,x,y)
	=
	\boldsymbol\kappa(t,x-y),
	\]
	the condition reduces to
	\[
	\big\|
	\sup_{z\in\mathbb R^d}
	|\boldsymbol\kappa(t,z+h)-\boldsymbol\kappa(t,z)|_{\mathbb V}
	\big\|_{L^{q_\ast}(0,T)}
	\longrightarrow0.
	\]
	Thus bounded spatially uniformly continuous kernels are covered, whereas
	discontinuous kernels with moving interfaces need not be.
\end{remark}
	
	We next give an element of $(\mathfrak A_T)$ with a genuine spatial
	discontinuity. The additional strict inequality imposed below is used
	only for this example and is not required in the main results.
	
	Assume, for this example, that
	\begin{equation}
		\label{eq:strict-bound-margin-example}
		M>\sqrt{2\lambda d}.
	\end{equation}
	Choose a unit vector $e\in\mathbb R^d$, an element $h_0\in H$ with	\(\|h_0\|_H=1\)
	and a $\mathcal U$-measurable representative
	\(\widehat h_0:U\longrightarrow\mathbb R^d\)
	of $h_0$. Choose also a non-zero $1$-Lipschitz function
	\[
	\varphi:\mathbb R^d\longrightarrow[0,1].
	\]
	Set
	\[
	F(\mu)
	:=
	\int_{\mathbb R^d}\varphi(y)\,\mu(dy),
	\qquad
	\chi(x)
	:=
	\mathbf{1}_{\{x_1>0\}}.
	\]
	Then
	\begin{equation}
		\label{eq:F-Wasserstein-Lipschitz}
		|F(\mu)-F(\eta)|
		\leq
		W_1(\mu,\eta)
		\leq
		W_2(\mu,\eta).
	\end{equation}
	
	Let
	\[
	\delta
	:=
	\min\left\{
	M-\sqrt{2\lambda d},
	L_\mu
	\right\}>0.
	\]
	Choose $(c_b,c_\sigma,c_\beta>0)$ such that
	\begin{equation}
		\label{eq:example-small-constant-condition}
		c_b+\sqrt d\,c_\sigma+c_\beta
		\leq
		\delta.
	\end{equation}
	For example, one may take
	\[
	c_b=c_\beta=\frac{\delta}{4},
	\qquad
	c_\sigma=\frac{\delta}{4\sqrt d}.
	\]
	It follows that
	\begin{equation}
		\label{eq:example-constant-conditions}
		c_b+\sqrt d\bigl(\sqrt{2\lambda}+c_\sigma\bigr)+c_\beta
		\leq M,
		\qquad
		c_b+\sqrt d\,c_\sigma+c_\beta
		\leq L_\mu.
	\end{equation}
	
	Define
	\begin{align}
		b(t,x,\mu)
		&:=
		c_b\chi(x)F(\mu)e,
		\label{eq:discontinuous-drift-example}\\
		\sigma(t,x,\mu)
		&:=
		\bigl(
		\sqrt{2\lambda}
		+c_\sigma\chi(x)F(\mu)
		\bigr)I_d,
		\label{eq:discontinuous-diffusion-example}\\
		\overline\beta(t,x,\mu)
		&:=
		c_\beta\chi(x)F(\mu)h_0.
		\label{eq:discontinuous-jump-example}
	\end{align}
	A jointly measurable pointwise representative of the jump coefficient
	is given by
	\begin{equation}
		\label{eq:discontinuous-jump-representative}
		\beta(t,x,\mu,z)
		:=
		c_\beta\chi(x)F(\mu)\widehat h_0(z).
	\end{equation}
	
	\begin{proposition}
		\label{prop:discontinuous-example-in-completion}
		The coefficient triple
		\[
		a=(b,\sigma,\overline\beta)
		\]
		defined by
		\eqref{eq:discontinuous-drift-example}--
		\eqref{eq:discontinuous-jump-example}
		belongs to $(\mathfrak A_T)$ and satisfies
		\textnormal{(A1)}--\textnormal{(A4)}. Moreover, its equivalence
		class contains no admissible representative whose coefficient
		components are continuous in the state variable. In fact, the
		spatial discontinuity of each of the three coefficient components
		cannot be removed by modifying the coefficients on a
		$(dt,dx)$-null set.
	\end{proposition}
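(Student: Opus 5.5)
We first verify (A1)--(A4) directly and then obtain membership in $\mathfrak A_T$ from Proposition~\ref{prop:finite-rank-measurable-completion}, or from Proposition~\ref{prop:finite-patching-completion}. For (A1), $\chi$ is Borel, $F$ is continuous on $(\mathcal P_2,W_2)$ by \eqref{eq:F-Wasserstein-Lipschitz}, and $\beta$ in \eqref{eq:discontinuous-jump-representative} is a product of measurable factors. Hence $\beta$ is jointly measurable and represents $\overline\beta$, since $[\widehat h_0]_H=h_0$.

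For (A2), note $0\le\chi F\le1$, so $|b|+\|\sigma\|_{\mathrm{HS}}+\|\overline\beta\|_H\le c_b+\sqrt d(\sqrt{2\lambda}+c_\sigma)+c_\beta\le M$ by \eqref{eq:example-constant-conditions}. For (A3), the difference at $\mu,\eta$ is bounded by $(c_b+\sqrt d\,c_\sigma+c_\beta)|F(\mu)-F(\eta)|\le L_\mu W_2(\mu,\eta)$. For (A4), $\sigma=sI_d$ with $s\ge\sqrt{2\lambda}$, so $\frac12\sigma\sigma^\top\ge\lambda I_d$.

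For membership, we write $\mathbf a=F_0\mathbf v_0+F_1\mathbf v_1$ with $F_0\equiv1$, $\mathbf v_0=(0,\sqrt{2\lambda}I_d,0)$, $F_1=F$, and $\mathbf v_1(t,x)=\chi(x)(c_be,c_\sigma I_d,c_\beta h_0)$. Then $K_0M_0+K_1M_1\le\sqrt{2\lambda d}+(c_b+\sqrt d c_\sigma+c_\beta)\le M$ and $\ell_1M_1\le L_\mu$. The diffusion takes values in the closed convex set $\mathcal C_\lambda^+$ of Remark~\ref{rem:nonconvex-general-diffusion}. Proposition~\ref{prop:finite-rank-measurable-completion} then applies.

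The substantive part is the claim that the discontinuity cannot be removed. Suppose $\widetilde a$ is an admissible representative of the same class whose components are continuous in $x$. By Theorem~\ref{thm:completion-representation} and Step~5 of its proof, there is a $dt\,dx$-null set $\mathcal N$, independent of $\mu$, such that $\widetilde{\mathbf a}=\mathbf a$ off $\mathcal N$ for all $\mu$.

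Since $\varphi\ne0$ takes values in $[0,1]$, choose $y_0$ with $\varphi(y_0)>0$ and set $\mu_0=\delta_{y_0}$, so that $F(\mu_0)>0$. By Fubini, for a.e.\ $t$ the section $\mathcal N_t$ is Lebesgue-null. Fix such a $t$ and any point $x_0$ with $x_{0,1}=0$. Every ball around $x_0$ meets both half-spaces in sets of positive measure, so we can choose $x^\pm_n\to x_0$ outside $\mathcal N_t$ with $\pm x^\pm_{n,1}>0$. Then continuity of $\widetilde b(t,\cdot,\mu_0)$ gives
\[
\widetilde b(t,x_0,\mu_0)=\lim_n b(t,x^+_n,\mu_0)=c_bF(\mu_0)e
\quad\text{and}\quad
\widetilde b(t,x_0,\mu_0)=\lim_n b(t,x^-_n,\mu_0)=0,
\]
which contradicts $c_bF(\mu_0)>0$.

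The same argument applies componentwise to $\sigma$, with jump $c_\sigma F(\mu_0)I_d\ne0$, and to $\overline\beta$ in $H$, with jump $c_\beta F(\mu_0)h_0\ne0$. So no single component can be made continuous by modification on a null set. The main obstacle is bookkeeping rather than analysis: the null set must be uniform in $\mu$, which Theorem~\ref{thm:completion-representation} provides, and continuity is needed for some fixed $t$, which Fubini handles because continuity "in the state variable" is required for every $t$, in particular a.e.\ $t$.
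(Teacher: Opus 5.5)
Your proof is correct. The verification of (A1)--(A4) and the non-removability argument are essentially the paper's. The paper first uses density of $H_\pm\setminus\mathcal N_{t_0}$ and continuity to extend the two identities to all of $H_-$ and $H_+$, and only then approaches a point of $\{x_1=0\}$; you pick the one-sided sequences directly off the null section, which is the same argument.

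The genuine difference is in the membership step. The paper does not invoke any of the general translation-continuity criteria. Instead it builds explicit core approximants by replacing $\chi$ with the globally $n$-Lipschitz ramp $\chi_n$, and estimates
$\rho_{T,m}(a_n,a)^{q_\ast}\le C_0^{q_\ast}T\,|B_m\cap S_n|\le C_{m,T}/n$ with $S_n=\{0<x_1<n^{-1}\}$.

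Your reduction to Proposition~\ref{prop:finite-rank-measurable-completion} is valid. The bookkeeping checks out: $K_0M_0+K_1M_1\le\sqrt{2\lambda d}+c_b+\sqrt d\,c_\sigma+c_\beta\le M$, $\ell_1\le1$ so $\ell_1M_1\le L_\mu$, and $\sigma$ takes values in the closed convex set $\mathcal C_\lambda^+$. Your alternative via Proposition~\ref{prop:finite-patching-completion}, with two spatially constant branches, would also work. What your route buys is that the example is exhibited as an instance of the general finite-rank or patching classes.

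The cost is that it goes through the mollification argument of Proposition~\ref{prop:translation-continuity-completion} and the convex-class device for ellipticity. It also gives no rate unless you add Corollary~\ref{cor:finite-rank-bv-rate}.

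The paper's ramp construction is self-contained. Ellipticity is preserved trivially, since $\sqrt{2\lambda}+c_\sigma\chi_nF\ge\sqrt{2\lambda}$. It also produces a concrete approximating sequence in the core with the explicit rate $\rho_{T,m}(a_n,a)=O(n^{-1/q_\ast})$.
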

	
	\begin{proof}
		The map $F:\mathcal P_2(\mathbb R^d)\to\mathbb R$ is Lipschitz
		continuous by \eqref{eq:F-Wasserstein-Lipschitz}, and $\chi$ is
		Borel measurable. Hence $b,\sigma$, and $\overline\beta$ are
		jointly Borel measurable. Moreover, the map
		\[
		(t,x,\mu,z)
		\longmapsto
		\beta(t,x,\mu,z)
		=
		c_\beta\chi(x)F(\mu)\widehat h_0(z)
		\]
		is jointly measurable and satisfies
		\[
		[\beta(t,x,\mu,\cdot)]_H
		=
		c_\beta\chi(x)F(\mu)h_0
		=
		\overline\beta(t,x,\mu).
		\]
		Thus \textnormal{(A1)} holds.
		
		Since
		\[
		0\leq\chi\leq1,
		\qquad
		0\leq F\leq1,
		\]
		we have
		\[
		\begin{aligned}
			&|b(t,x,\mu)|
			+
			\|\sigma(t,x,\mu)\|_{\mathrm{HS}}
			+
			\|\overline\beta(t,x,\mu)\|_H
			\\
			&\qquad=
			c_b\chi(x)F(\mu)
			+
			\sqrt d\,
			\bigl(
			\sqrt{2\lambda}
			+c_\sigma\chi(x)F(\mu)
			\bigr)
			+
			c_\beta\chi(x)F(\mu)
			\\
			&\qquad\leq
			c_b
			+
			\sqrt d\bigl(\sqrt{2\lambda}+c_\sigma\bigr)
			+
			c_\beta
			\\
			&\qquad\leq M
		\end{aligned}
		\]
		by \eqref{eq:example-constant-conditions}. Hence
		\textnormal{(A2)} holds.
		
		For every $\mu,\eta\in\mathcal P_2(\mathbb R^d)$,
		\[
		\begin{aligned}
			&|b(t,x,\mu)-b(t,x,\eta)|
			+
			\|\sigma(t,x,\mu)-\sigma(t,x,\eta)\|_{\mathrm{HS}}
			\\
			&\qquad+
			\|\overline\beta(t,x,\mu)
			-\overline\beta(t,x,\eta)\|_H
			\\
			&\leq
			\chi(x)
			\bigl(
			c_b+\sqrt d\,c_\sigma+c_\beta
			\bigr)
			|F(\mu)-F(\eta)|
			\\
			&\leq
			\bigl(
			c_b+\sqrt d\,c_\sigma+c_\beta
			\bigr)
			W_2(\mu,\eta)
			\\
			&\leq
			L_\mu W_2(\mu,\eta).
		\end{aligned}
		\]
		Thus \textnormal{(A3)} holds.
		
		Furthermore,
		\[
		\begin{aligned}
			\frac12
			\sigma(t,x,\mu)\sigma(t,x,\mu)^\top
			&=
			\frac12
			\bigl(
			\sqrt{2\lambda}
			+c_\sigma\chi(x)F(\mu)
			\bigr)^2I_d
			\\
			&\geq
			\lambda I_d,
		\end{aligned}
		\]
		so \textnormal{(A4)} holds.
		
		We next prove that $a$ belongs to the completion of the
		Lipschitz core. For $n\geq1$, define
		\[
		\chi_n(x)
		:=
		\begin{cases}
			0,
			&x_1\leq0,\\[1mm]
			nx_1,
			&0<x_1<n^{-1},\\[1mm]
			1,
			&x_1\geq n^{-1},
		\end{cases}
		\]
		and let
		\[
		a_n=(b_n,\sigma_n,\overline\beta_n)
		\]
		be obtained by replacing $\chi$ with $\chi_n$ in
		\eqref{eq:discontinuous-drift-example}--
		\eqref{eq:discontinuous-jump-example}. More explicitly,
		\[
		\begin{aligned}
			b_n(t,x,\mu)
			&=
			c_b\chi_n(x)F(\mu)e,\\
			\sigma_n(t,x,\mu)
			&=
			\bigl(
			\sqrt{2\lambda}
			+c_\sigma\chi_n(x)F(\mu)
			\bigr)I_d,\\
			\overline\beta_n(t,x,\mu)
			&=
			c_\beta\chi_n(x)F(\mu)h_0.
		\end{aligned}
		\]
		The corresponding pointwise jump representative is
		\[
		\beta_n(t,x,\mu,z)
		=
		c_\beta\chi_n(x)F(\mu)\widehat h_0(z).
		\]
		
		Since $\chi_n$ is globally $n$-Lipschitz and $0\leq F\leq1$,
		\[
		\begin{aligned}
			&|b_n(t,x,\mu)-b_n(t,y,\mu)|
			+
			\|\sigma_n(t,x,\mu)-\sigma_n(t,y,\mu)\|_{\mathrm{HS}}
			\\
			&\qquad+
			\|\overline\beta_n(t,x,\mu)
			-\overline\beta_n(t,y,\mu)\|_H
			\\
			&\leq
			n\bigl(
			c_b+\sqrt d\,c_\sigma+c_\beta
			\bigr)|x-y|.
		\end{aligned}
		\]
		Consequently,
		\[
		a_n\in\mathfrak A_T^{\mathrm{Lip}}.
		\]
		Moreover, each \(a_n\) satisfies \textnormal{(A2)}--\textnormal{(A4)}
		with the same constants \(M\), \(L_\mu\), and \(\lambda\),
		independently of \(n\).

		For every $(t,x)$, the definition of the pointwise coefficient
		distance gives
		\[
		\begin{aligned}
			\Delta(a_n,a)(t,x)
			\leq&
			\sup_{\mu\in\mathcal P_2(\mathbb R^d)}
			\Bigl[
			c_b|\chi_n(x)-\chi(x)|F(\mu)
			+
			\sqrt d\,c_\sigma
			|\chi_n(x)-\chi(x)|F(\mu)
			\\
			&+
			c_\beta|\chi_n(x)-\chi(x)|F(\mu)
			\Bigr]
			\\
			\leq&
			C_0|\chi_n(x)-\chi(x)|,
		\end{aligned}
		\]
		where
		\[
		C_0:=c_b+\sqrt d\,c_\sigma+c_\beta.
		\]
		The right-hand side is bounded by $C_0$ and vanishes outside
		\[
		S_n
		:=
		\{x\in\mathbb R^d:0<x_1<n^{-1}\}.
		\]
		Therefore, for every $m\geq1$,
		\begin{equation}
			\label{eq:example-local-convergence}
			\begin{aligned}
				\rho_{T,m}(a_n,a)^{q_\ast}
				&=
				\int_0^T\int_{B_m}
				\Delta(a_n,a)(t,x)^{q_\ast}\,dx\,dt
				\\
				&\leq
				C_0^{q_\ast}T
				|B_m\cap S_n|
				\\
				&\leq
				\frac{C_{m,T}}{n}.
			\end{aligned}
		\end{equation}
		Hence
		\[
		\rho_{T,m}(a_n,a)\longrightarrow0
		\qquad
		\text{for every }m\geq1.
		\]
		Since the \(m\)-th summand in the definition of \(d_T\) is bounded by
		\(2^{-m}\), and \(\sum_{m\geq1}2^{-m}<\infty\), dominated convergence yields
		\[
		d_T(a_n,a)\longrightarrow0.
		\]
		Thus
		\[
		a\in\mathfrak A_T.
		\]
		
		It remains to prove that the spatial discontinuities cannot be
		removed by choosing another admissible representative. Since
		$\varphi$ is non-zero and non-negative, there exists
		$y_0\in\mathbb R^d$ such that
		\[
		\varphi(y_0)>0.
		\]
		Set
		\[
		\mu_0:=\delta_{y_0},
		\qquad
		F_0:=F(\mu_0)=\varphi(y_0)>0.
		\]
		
		Suppose, for contradiction, that the equivalence class of $a$
		contains an admissible representative
		\[
		\widetilde a
		=
		(\widetilde b,\widetilde\sigma,
		\overline{\widetilde\beta})
		\]
		such that all three maps
		\[
		x\longmapsto\widetilde b(t,x,\mu),
		\qquad
		x\longmapsto\widetilde\sigma(t,x,\mu),
		\qquad
		x\longmapsto
		\overline{\widetilde\beta}(t,x,\mu)
		\]
		are continuous for every
		\[
		(t,\mu)\in[0,T]\times\mathcal P_2(\mathbb R^d).
		\]
		
		Since $a$ and $\widetilde a$ represent the same element of
		$\mathfrak A_T$, there exists a Borel $dt\,dx$-null set
		\[
		\mathcal N\subset[0,T]\times\mathbb R^d
		\]
		such that, for every
		$\mu\in\mathcal P_2(\mathbb R^d)$ and every
		$(t,x)\notin\mathcal N$,
		\[
		\widetilde b(t,x,\mu)=b(t,x,\mu),
		\]
		\[
		\widetilde\sigma(t,x,\mu)=\sigma(t,x,\mu),
		\]
		and
		\[
		\overline{\widetilde\beta}(t,x,\mu)
		=
		\overline\beta(t,x,\mu).
		\]
		
		By Fubini's theorem, there exists $t_0\in[0,T]$ such that
		\[
		\mathcal N_{t_0}
		:=
		\{x\in\mathbb R^d:(t_0,x)\in\mathcal N\}
		\]
		has Lebesgue measure zero. Define
		\[
		H_-:=\{x\in\mathbb R^d:x_1<0\},
		\qquad
		H_+:=\{x\in\mathbb R^d:x_1>0\}.
		\]
		
		For almost every $x\in H_-$,
		\[
		\widetilde b(t_0,x,\mu_0)=0,
		\]
		\[
		\widetilde\sigma(t_0,x,\mu_0)
		=
		\sqrt{2\lambda}\,I_d,
		\]
		and
		\[
		\overline{\widetilde\beta}(t_0,x,\mu_0)=0.
		\]
		Since $H_-\setminus\mathcal N_{t_0}$ is dense in $H_-$,
		continuity implies that these identities hold for every
		$x\in H_-$.
		
		Similarly, for almost every $x\in H_+$,
		\[
		\widetilde b(t_0,x,\mu_0)
		=
		c_bF_0e,
		\]
		\[
		\widetilde\sigma(t_0,x,\mu_0)
		=
		\bigl(
		\sqrt{2\lambda}+c_\sigma F_0
		\bigr)I_d,
		\]
		and
		\[
		\overline{\widetilde\beta}(t_0,x,\mu_0)
		=
		c_\beta F_0h_0.
		\]
		Continuity therefore implies that these identities hold for every
		$x\in H_+$.
		
		Let $x^\ast\in\mathbb R^d$ satisfy $x_1^\ast=0$, and choose
		sequences
		\[
		x_n^-\in H_-,
		\qquad
		x_n^+\in H_+,
		\]
		such that
		\[
		x_n^-\longrightarrow x^\ast,
		\qquad
		x_n^+\longrightarrow x^\ast.
		\]
		Continuity of the drift would imply simultaneously
		\[
		\widetilde b(t_0,x^\ast,\mu_0)=0
		\]
		and
		\[
		\widetilde b(t_0,x^\ast,\mu_0)
		=
		c_bF_0e,
		\]
		which is impossible because $c_b>0$, $F_0>0$, and $|e|=1$.
		
		Likewise, continuity of the diffusion coefficient would imply
		simultaneously
		\[
		\widetilde\sigma(t_0,x^\ast,\mu_0)
		=
		\sqrt{2\lambda}\,I_d
		\]
		and
		\[
		\widetilde\sigma(t_0,x^\ast,\mu_0)
		=
		\bigl(
		\sqrt{2\lambda}+c_\sigma F_0
		\bigr)I_d,
		\]
		which is impossible because $c_\sigma F_0>0$.
		
		Finally, continuity of the $H$-valued jump coefficient would imply
		simultaneously
		\[
		\overline{\widetilde\beta}(t_0,x^\ast,\mu_0)=0
		\]
		and
		\[
		\overline{\widetilde\beta}(t_0,x^\ast,\mu_0)
		=
		c_\beta F_0h_0,
		\]
		which is impossible because $c_\beta F_0>0$ and
		$\|h_0\|_H=1$.
		
		Thus the equivalence class of $a$ contains no admissible
		representative whose coefficient components are continuous in the
		state variable.
	\end{proof}

	\section{Stability near the Lipschitz core}
	\label{sec:stability-core}
	
	In this section, we prove that strong solutions are stable at every
	coefficient in the Lipschitz core. The nearby coefficient is allowed
	to belong to the completed space $\mathfrak A_T$, and the
	corresponding equation is not assumed to be well posed. More
	precisely, every strong solution of a nearby equation must remain
	close to the unique solution associated with the core coefficient.

	\subsection{Moment and Localization Estimates}
	
	For an adapted c\`adl\`ag process $X$ and $R>0$, define
	\[
	\tau_R(X)
	:=
	\inf\{t\in[0,T]:|X_t|\geq R\}\wedge T.
	\]
	For two such processes $X$ and $Y$, set
	\[
	\tau_R(X,Y)
	:=
	\tau_R(X)\wedge\tau_R(Y).
	\]
	
	\begin{proposition}
		\label{prop:moment-localization-estimates}
		The following assertions hold.
		
		\begin{enumerate}
			\item[\textnormal{(i)}]
			Let $a\in\mathfrak A_T$, $x\in\mathbb R^d$, and
			$X\in\operatorname{Sol}_T(a,x)$. Then
			\begin{equation}
				\label{eq:uniform-second-moment}
				\mathbb E\big[
				\sup_{0\leq t\leq T}|X_t|^2
				\big]
				\leq
				C_{\mathrm{mom}}(1+|x|^2),
			\end{equation}
			where $C_{\mathrm{mom}}=C_{\mathrm{mom}}(T,M)$ is independent
			of $a$, $x$, and $X$. Consequently,
			\begin{equation}
				\label{eq:uniform-exit-probability}
				\mathbb P\bigl(\tau_R(X)<T\bigr)
				\leq
				\frac{C_{\mathrm{mom}}(1+|x|^2)}{R^2}.
			\end{equation}
			
			\item[\textnormal{(ii)}]
			Let $a,\widetilde a\in\mathfrak A_T$,
			$y\in\mathbb R^d$, and
			$Y\in\operatorname{Sol}_T(a,y)$. Set
			\[
			\nu_t:=\mathcal L(Y_t).
			\]
			Then, for every stopping time $\theta$ satisfying
			\[
			0\leq\theta\leq\tau_R(Y),
			\]
			we have
			\begin{equation}
				\label{eq:stopped-coefficient-error}
				\mathbb E\int_0^\theta
				\mathfrak e_{a,\widetilde a}
				(s,Y_s,\nu_s)\,ds
				\leq
				C_{\mathrm K}(R)
				\rho_{T,R}(a,\widetilde a)^2,
			\end{equation}
			where
			\[
			C_{\mathrm K}(R)
			=
			C_{\mathrm K}(d,T,R,M,\lambda).
			\]
			In particular, for any adapted c\`adl\`ag process $X$,
			\begin{equation}
				\label{eq:common-stopped-error}
				\mathbb E\int_0^{\tau_R(X,Y)}
				\mathfrak e_{a,\widetilde a}
				(s,Y_s,\nu_s)\,ds
				\leq
				C_{\mathrm K}(R)
				\rho_{T,R}(a,\widetilde a)^2.
			\end{equation}
		\end{enumerate}
	\end{proposition}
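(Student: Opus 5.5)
For part (i) I would work directly from the equation satisfied by $X\in\operatorname{Sol}_T(a,x)$, using only the uniform bound (A2). By Proposition~\ref{prop:coefficient-representation} and Corollary~\ref{cor}, $a$ has an admissible representative with $|b|+\|\sigma\|_{\mathrm{HS}}+\|\overline\beta\|_H\leq M$ everywhere. By Proposition~\ref{prop:rep-equivalence}, the solution set does not depend on this choice.

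Write $X_t=x+\int_0^t b\,ds+\int_0^t\sigma\,dW+\int_0^t\!\int_U\beta\,\widetilde N$ and bound each piece separately, using $(u+v+w+z)^2\le 4(u^2+v^2+w^2+z^2)$. The drift term has supremum at most $MT$. For the Brownian term, Doob's inequality gives
\[
\mathbb E\sup_{t\le T}\Bigl|\int_0^t\sigma\,dW\Bigr|^2\le 4M^2T .
\]
For the Poisson term, \eqref{eq:estimate} together with Lemma~\ref{lem-Measurability}(iii) gives a bound $CM^2T$. Hence
\[
\mathbb E\sup_t|X_t|^2\le 4|x|^2+4M^2T^2+C'M^2T\le C_{\mathrm{mom}}(T,M)(1+|x|^2).
\]
No Gronwall step is needed, because the coefficients are bounded and not merely of linear growth; the constant therefore depends on nothing but $T,M$ and the universal BDG/Doob constants. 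For \eqref{eq:uniform-exit-probability}, note that $\{\tau_R(X)<T\}\subset\{\sup_t|X_t|\ge R\}$, since by right continuity $|X_{\tau_R}|\ge R$ on this event. Markov's inequality then finishes (i).

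For part (ii) I would apply Corollary~\ref{cor:coefficient-error-occupation} to the process $Y$ itself, with $\gamma=0$, $\tau=\theta$ and the deterministic measurable flow $\mu_s=\nu_s$, which is Borel in $s$ by Lemma~\ref{lem-Measurability}(i). The work is in checking the hypotheses \eqref{eq:general-ito-levy-process}, \eqref{eq:ito-levy-uniform-bounds} and \eqref{eq:krylov-uniform-ellipticity}.
\begin{itemize}
\item[$\bullet$] $Y$ is an It\^o--L\'evy process with $b_s=b(s,Y_s,\nu_s)$ and $\sigma_s=\sigma(s,Y_s,\nu_s)$. These are progressive, and may be replaced by their predictable versions $b(s,Y_{s-},\nu_s)$, which agree $ds\,d\mathbb P$-a.e.\ because $Y_s=Y_{s-}$ for a.e.\ $s$.
\item[$\bullet$] The jump integrand is $g_s=\beta(s,Y_{s-},\nu_s,\cdot)$, which is $\mathcal P_{\mathrm{pred}}\otimes\mathcal U$-measurable.
\item[$\bullet$] (A2) gives the bounds $K_b=K_\sigma=K_g=M$.
\item[$\bullet$] Corollary~\ref{cor} gives $A_s\ge\lambda I_d$ for every $(s,\omega)$.
\end{itemize}

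There is one mismatch in the stopping time. Corollary~\ref{cor:coefficient-error-occupation} is stated for exit from the open ball, $\inf\{t:Y_t\notin B_R\}$, while here $\tau_R(Y)$ uses $|Y_t|\ge R$. These two times are the same, since $Y_t\notin B_R$ if and only if $|Y_t|\ge R$. Taking expectations of the conditional estimate \eqref{eq:combined-error-local-metric} gives \eqref{eq:stopped-coefficient-error} with $C_{\mathrm K}(R)=C_{\mathrm K}^0=C_{\mathrm K}(d,T,R,M,M,M)\lambda^{-d/(d+1)}$.

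Finally, \eqref{eq:common-stopped-error} is the special case $\theta=\tau_R(X)\wedge\tau_R(Y)$. This is a stopping time because $X$ is adapted and c\`adl\`ag, so each hitting time of the closed set $\{|z|\ge R\}$ is a stopping time under the usual conditions. It also satisfies $\theta\le\tau_R(Y)$.

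The main obstacle I expect is the bookkeeping that legitimizes applying the Krylov estimate to a solution of an equation whose coefficient lies only in the completion. The estimate needs pointwise ellipticity and boundedness of the integrands along the trajectory, and this holds only after fixing the admissible representative from Theorem~\ref{thm:completion-representation}. The two other points to handle are the progressive-versus-predictable distinction for $b_s,\sigma_s$ and the evaluation of $\mathfrak e_{a,\widetilde a}$ at $Y_s$ rather than $Y_{s-}$. Both are a.e.\ identities in $(s,\omega)$ and do not affect the $ds$-integrals.
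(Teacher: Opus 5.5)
Your proposal is correct and follows the paper's proof. Part (i) is the same direct Doob/BDG bound using only (A2), with no Gronwall step, followed by Markov's inequality. Part (ii) is the same application of Corollary~\ref{cor:coefficient-error-occupation} to $Y$ with $\gamma=0$ and $\tau=\theta$, specialized afterwards to $\theta=\tau_R(X,Y)$. Your extra bookkeeping (fixing an admissible representative, replacing $b_s,\sigma_s$ by predictable versions, and identifying the two exit times) only makes explicit what the paper leaves implicit.
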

	
	\begin{proof}
		Let $X\in\operatorname{Sol}_T(a,x)$ and write $\mu_t:=\mathcal L(X_t)$. By the Burkholder--Davis--Gundy	inequalities,	and the uniform bound \textnormal{(A2)},
		\[
		\begin{aligned}
			\mathbb E\sup_{0\leq t\leq T}|X_t|^2
			&\leq
			4|x|^2
			+
			4T\mathbb E\int_0^T
			|b(s,X_s,\mu_s)|^2\,ds
			\\
			&\quad+
			C\mathbb E\int_0^T
			\left(
			\|\sigma(s,X_s,\mu_s)\|_{\mathrm{HS}}^2
			+
			\|\overline\beta(s,X_s,\mu_s)\|_H^2
			\right)ds
			\\
			&\leq
			4|x|^2+C(T+T^2)M^2.
		\end{aligned}
		\]
		This proves \eqref{eq:uniform-second-moment}. Since
		\[
		\{\tau_R(X)<T\}
		\subset
		\left\{
		\sup_{0\leq t\leq T}|X_t|\geq R
		\right\},
		\]
		\eqref{eq:uniform-exit-probability} follows from Markov's
		inequality.
		
		For \textnormal{(ii)}, the solution $Y$ is an It\^o--L\'evy
		process whose coefficients satisfy the boundedness and uniform
		ellipticity assumptions required by
		Corollary~\ref{cor:coefficient-error-occupation}. Applying that
		corollary with the stopping time $\theta\leq\tau_R(Y)$ gives
		\eqref{eq:stopped-coefficient-error}. Finally,
		\[
		\tau_R(X,Y)\leq\tau_R(Y),
		\]
		so \eqref{eq:common-stopped-error} follows by taking
		$\theta=\tau_R(X,Y)$.
	\end{proof}

	\subsection{Quantitative Stability and Its Consequences}
	
	Fix
	\[
	a^0=(b^0,\sigma^0,\overline\beta^0)
	\in\mathfrak A_T^{\mathrm{Lip}},
	\]
	and let
	\[
	L_0:=L_x(a^0)+L_\mu.
	\]
	Then
	\begin{equation}
		\label{eq:full-core-lipschitz}
		\left|
		\mathbf a^0(t,x,\mu)
		-
		\mathbf a^0(t,y,\eta)
		\right|_{\mathbb V}
		\leq
		L_0\bigl(|x-y|+W_2(\mu,\eta)\bigr).
	\end{equation}
	
	\begin{theorem}
		\label{thm:quantitative-core-stability}
		Let $a\in\mathfrak A_T$, $x,y\in\mathbb R^d$, and let
		\[
		X:=X^{a^0,x}, \qquad	Y\in\operatorname{Sol}_T(a,y).
		\]
		Then, for every $R>|x|\vee|y|$,
		\begin{equation}
			\label{eq:quantitative-core-stability}
			\mathbb E\big[\sup_{0\leq t\leq T}|Y_t-X_t|^2\big]
			\leq	C_0\left[|y-x|^2+C_{\mathrm K}(R)\rho_{T,R}(a,a^0)^2+\frac{1+|x|^2+|y|^2}{R^2}
			\right],
		\end{equation}
		where \(C_0=C_0(T,M,L_0)\) is independent of $a,x,y,Y$, and $R$.
	\end{theorem}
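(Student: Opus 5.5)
The plan is to compare $Y$ with the core solution $X=X^{a^0,x}$ on the common localization interval $[0,\tau]$, with $\tau:=\tau_R(X,Y)$, and to treat the event $\{\tau<T\}$ separately through the uniform moment bounds.

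\textbf{Decomposition.} Write $\mu_t:=\mathcal L(X_t)$ and $\nu_t:=\mathcal L(Y_t)$. Each coefficient difference splits as
\[
\mathbf a(s,Y_s,\nu_s)-\mathbf a^0(s,X_s,\mu_s)
=\bigl[\mathbf a(s,Y_s,\nu_s)-\mathbf a^0(s,Y_s,\nu_s)\bigr]
+\bigl[\mathbf a^0(s,Y_s,\nu_s)-\mathbf a^0(s,X_s,\mu_s)\bigr].
\]
The first bracket is the coefficient error evaluated along $Y$. Its squared norm is bounded by $\mathfrak e_{a,a^0}(s,Y_s,\nu_s)$, up to a factor $3$. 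By \eqref{eq:full-core-lipschitz}, the second bracket is bounded by $L_0(|Y_s-X_s|+W_2(\nu_s,\mu_s))$, and $W_2(\nu_s,\mu_s)^2\le\mathbb E|Y_s-X_s|^2$. For the jump term we use $Y_{s-}$, $X_{s-}$ in the integrand and the fact that $Y_s=Y_{s-}$ for a.e.\ $s$. This lets the occupation estimate be applied with $Y_s$.

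\textbf{Gronwall on the stopped process.} Set
\[
\phi(t):=\mathbb E\sup_{r\le t\wedge\tau}|Y_r-X_r|^2,\qquad
\psi(t):=\sup_{r\le t}\mathbb E|Y_r-X_r|^2 .
\]
By BDG, \eqref{eq:estimate} and the decomposition,
\[
\phi(t)\le C|y-x|^2 + C\,\mathbb E\int_0^{\tau}\mathfrak e_{a,a^0}(s,Y_s,\nu_s)\,ds + CL_0^2\int_0^t\bigl(\phi(s)+\psi(s)\bigr)\,ds .
\]
Proposition~\ref{prop:moment-localization-estimates}(ii), through \eqref{eq:common-stopped-error}, bounds the middle term by $C_{\mathrm K}(R)\rho_{T,R}(a,a^0)^2$.

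\textbf{Main obstacle: the law term.} The McKean--Vlasov coupling involves $\psi$, the unstopped quantity, so the stopped estimate does not close on its own. I handle this by splitting on $\{\tau<T\}$:
\[
\mathbb E\sup_{r\le t}|Y_r-X_r|^2\le \phi(t)+\mathbb E\bigl[\mathbf 1_{\{\tau<T\}}\sup_{r\le T}|Y_r-X_r|^2\bigr].
\]
The second term needs more than a second moment. The coefficients are bounded, so BDG gives a uniform fourth-moment bound $\mathbb E\sup|Z|^4\le C(1+|z|^4)$ for any solution $Z$ started at $z$. Cauchy--Schwarz together with \eqref{eq:uniform-exit-probability} then gives the bound
\[
C(1+|x|^2+|y|^2)\cdot\frac{(1+|x|^2+|y|^2)^{1/2}}{R}.
\]
This does not yet match the stated rate $(1+|x|^2+|y|^2)/R^2$. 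To obtain that rate I would instead use a finer estimate. First note $\mathbb E[\mathbf 1_{\{\tau<T\}}|Y_\tau-X_\tau|^2]\le C\,\mathbb E|Y_\tau-X_\tau|^2$, which is already controlled by $\phi$. Then, after $\tau$, the increments of $Y-X$ are driven by bounded coefficients over the remaining interval. Applying BDG conditionally on $\mathcal F_\tau$ gives
\[
\mathbb E\bigl[\mathbf 1_{\{\tau<T\}}\sup_{r\in[\tau,T]}|(Y-X)_r-(Y-X)_\tau|^2\bigr]\le CM^2\,\mathbb P(\tau<T)\le CM^2\,\frac{1+|x|^2+|y|^2}{R^2},
\]
where the last step uses \eqref{eq:uniform-exit-probability} for both $X$ and $Y$.

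\textbf{Conclusion.} Combining these bounds,
\[
\psi(t)\le\mathbb E\sup_{r\le t}|Y_r-X_r|^2\le C\phi(t)+C\,\frac{1+|x|^2+|y|^2}{R^2}.
\]
Substituting this into the Gronwall inequality and applying Gronwall's lemma to $\phi$ yields \eqref{eq:quantitative-core-stability}. The constant $C_0$ depends only on $T$, $M$ and $L_0$.
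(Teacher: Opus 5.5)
Your argument is correct, but it is organized differently from the paper's. The ``main obstacle'' you identify arises only because you stop the whole difference process.

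The paper never stops the Gronwall quantity. It works with the unstopped $u(t)=\mathbb E\sup_{r\le t}|Y_r-X_r|^2$. The law term then closes immediately through $W_2(\nu_s,\mu_s)^2\le\mathbb E|Y_s-X_s|^2\le u(s)$. The spatial term $L_0^2|Y_s-X_s|^2$ is handled on all of $[0,T]$ by the global Lipschitz bound of $a^0$. The only localized term is $\mathbb E\int_0^t\mathfrak e_{a,a^0}(s,Y_s,\nu_s)\,ds$, which contains no factor $Y-X$ and is bounded pointwise by $CM^2$. Splitting it at $t\wedge\tau_R(X,Y)$ gives $C_{\mathrm K}(R)\rho_{T,R}(a,a^0)^2$ from the Krylov estimate, plus $CM^2T\,\mathbb P(\tau_R(X,Y)<T)$, which is already of order $(1+|x|^2+|y|^2)/R^2$. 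Gronwall then finishes.

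Your route localizes the entire difference process and then has to recover the unstopped moment. You do this with a BDG bound on the post-exit increment, whose integrands are bounded by $2M$. This gives $CM^2\,\mathbb P(\tau<T)$ and hence the correct $R^{-2}$ rate. You were right that a fourth-moment/Cauchy--Schwarz bound would only give $R^{-1}$. Your argument closes with the same constant dependence, but it is an extra layer. Its one structural feature is that all Lipschitz comparisons of $a^0$ take place inside $B_R$, and that is not needed here because $a^0$ is globally Lipschitz.

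One detail must be done at level $t$ rather than $T$. Use
\[
\sup_{r\le t}|Y_r-X_r|^2\le 2\sup_{r\le t\wedge\tau}|Y_r-X_r|^2+2\,\mathbf 1_{\{\tau<t\}}\sup_{\tau\le r\le t}\bigl|(Y-X)_r-(Y-X)_\tau\bigr|^2,
\]
so that the pre-exit contribution is $\phi(t)$. As written, with $\mathbf 1_{\{\tau<T\}}\sup_{r\le T}$ and $\mathbb E|Y_\tau-X_\tau|^2$, you would only get $\phi(T)$ inside the time integral, and the Gronwall argument for $\phi$ would not close. With this correction, $\psi(t)\le 2\phi(t)+C(1+|x|^2+|y|^2)/R^2$, and the rest of your argument goes through.
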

	
	\begin{proof}
		Set
		\[
		\mu_t:=\mathcal L(X_t), \qquad	\nu_t:=\mathcal L(Y_t),
		\qquad	Z_t:=Y_t-X_t,
		\]
		and
		\[
		u(t):=	\mathbb E\big[\sup_{0\leq r\leq t}|Z_r|^2\big].
		\]
		The Cauchy--Schwarz and  Burkholder--Davis--Gundy inequalities give
		\[
		u(t)\leq C|y-x|^2+C\mathbb E\int_0^t\mathfrak d_s\,ds,
		\]
		where
		\[
		\begin{aligned}
			\mathfrak d_s
			:={}&|b(s,Y_s,\nu_s)-b^0(s,X_s,\mu_s)|^2\\
			&+\|\sigma(s,Y_s,\nu_s)-\sigma^0(s,X_s,\mu_s)\|_{\mathrm{HS}}^2\\
			&+\|\overline\beta(s,Y_s,\nu_s)-\overline\beta^0(s,X_s,\mu_s)\|_H^2.
		\end{aligned}
		\]
		Here the left limits in the jump term may be replaced by present
		values in the $ds$-integral, since c\`adl\`ag paths have at most
		countably many jumps.
		
		Inserting $\mathbf a^0(s,Y_s,\nu_s)$ and using
		\eqref{eq:full-core-lipschitz}, we obtain
		\[
		\mathfrak d_s\leq C\mathfrak e_{a,a^0}(s,Y_s,\nu_s)	+CL_0^2\left(|Z_s|^2+W_2(\nu_s,\mu_s)^2\right).
		\]
		Since the joint law of $(X_s,Y_s)$ is a coupling of
		$\mu_s$ and $\nu_s$,
		\[
		W_2(\nu_s,\mu_s)^2\leq\mathbb E|Y_s-X_s|^2\leq u(s).
		\]
		Consequently,
		\begin{equation}
			\label{eq:core-stability-prelocalization}
			u(t)
			\leq
			C|y-x|^2
			+
			C\mathbb E\int_0^t
			\mathfrak e_{a,a^0}(s,Y_s,\nu_s)\,ds
			+
			CL_0^2\int_0^tu(s)\,ds.
		\end{equation}
		
		Let
		\[
		\tau_R:=\tau_R(X,Y).
		\]
		By Proposition~\ref{prop:moment-localization-estimates},
		\[
		\mathbb E\int_0^{t\wedge\tau_R}
		\mathfrak e_{a,a^0}(s,Y_s,\nu_s)\,ds
		\leq
		C_{\mathrm K}(R)\rho_{T,R}(a,a^0)^2.
		\]
		Moreover, the uniform coefficient bound gives
		\[
		\mathfrak e_{a,a^0}(s,x,\mu)\leq CM^2,
		\]
		and hence
		\[
		\begin{aligned}
			\mathbb E\int_{t\wedge\tau_R}^{t}
			\mathfrak e_{a,a^0}(s,Y_s,\nu_s)\,ds
			&\leq
			CM^2T\,\mathbb P(\tau_R<T)
			\\
			&\leq
			\frac{C(T,M)(1+|x|^2+|y|^2)}{R^2},
		\end{aligned}
		\]
		where the last inequality follows from the uniform exit estimate in
		Proposition~\ref{prop:moment-localization-estimates}. Substituting these
		bounds into \eqref{eq:core-stability-prelocalization} and applying
		Gronwall's inequality proves
		\eqref{eq:quantitative-core-stability}.
	\end{proof}
	
	The estimate holds for every
	$Y\in\operatorname{Sol}_T(a,y)$; no uniqueness assumption is imposed
	on the nearby equation.
	
	\begin{corollary}
		\label{cor:core-initial-state-continuity}
		Let
		\[
		c\in\mathfrak A_T^{\mathrm{Lip}}.
		\]
		Then there exists a constant
		\[
		C_{\mathrm{init}}(c)
		=
		C_{\mathrm{init}}
		\bigl(T,M,L_x(c)+L_\mu\bigr)
		<\infty
		\]
		such that
		\begin{equation}
			\label{eq:core-initial-state-continuity}
			\|X^{c,x}-X^{c,y}\|_{\mathcal S_T^2}
			\leq
			C_{\mathrm{init}}(c)|x-y|
		\end{equation}
		for all $x,y\in\mathbb R^d$. Consequently, the map
		\[
		\Phi_T(c):
		\mathbb R^d\longrightarrow\mathcal S_T^2,
		\qquad
		\Phi_T(c)(x):=X^{c,x},
		\]
		is globally Lipschitz continuous. In particular, for every
		$m\in\mathbb N$,
		\[
		\Phi_T(c)|_{\overline B_m}
		\in
		C(\overline B_m;\mathcal S_T^2),
		\qquad
		\overline B_m:=\{x\in\mathbb R^d:|x|\leq m\}.
		\]
	\end{corollary}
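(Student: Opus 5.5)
The plan is to derive Corollary~\ref{cor:core-initial-state-continuity} directly from Theorem~\ref{thm:quantitative-core-stability}, specialized to the case $a=a^0=c$. Fix $c\in\mathfrak A_T^{\mathrm{Lip}}$ and $x,y\in\mathbb R^d$. Take $a^0:=c$, so $L_0=L_x(c)+L_\mu$, $X:=X^{c,x}$, and $Y:=X^{c,y}\in\operatorname{Sol}_T(c,y)$, which exists by Proposition~\ref{prop:wellposed-core}. Since $a=a^0$, we have $\Delta(c,c)\equiv0$ and hence $\rho_{T,R}(c,c)=0$ for every $R$. Theorem~\ref{thm:quantitative-core-stability} then gives, for every $R>|x|\vee|y|$,
\[
\|X^{c,y}-X^{c,x}\|_{\mathcal S_T^2}^2
\leq
C_0\Bigl[|y-x|^2+\frac{1+|x|^2+|y|^2}{R^2}\Bigr].
\]
Here $C_0=C_0(T,M,L_0)$ does not depend on $R$, so letting $R\to\infty$ yields $\|X^{c,x}-X^{c,y}\|_{\mathcal S_T^2}\leq C_0^{1/2}|x-y|$. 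We therefore set $C_{\mathrm{init}}(c):=C_0^{1/2}$.

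As a safeguard, in case one distrusts the localization bookkeeping when the error term vanishes, the same bound can be obtained directly. Subtract the two equations for $X^{c,x}$ and $X^{c,y}$. Then use \eqref{eq:full-core-lipschitz}, the bound $W_2(\mathcal L(X_s),\mathcal L(Y_s))^2\leq\mathbb E|X_s-Y_s|^2$, Burkholder--Davis--Gundy for the Brownian part, and \eqref{eq:estimate} for the Poisson part. These give $u(t)\leq C|x-y|^2+CL_0^2\int_0^tu(s)\,ds$, and Gronwall's inequality finishes the estimate. This is exactly the argument behind \eqref{eq:core-stability-prelocalization} with the $\mathfrak e$-term equal to zero, so no Krylov estimate is needed.

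The remaining claims follow at once. Global Lipschitz continuity of $\Phi_T(c)$ is the displayed inequality \eqref{eq:core-initial-state-continuity}. The restriction of $\Phi_T(c)$ to $\overline B_m$ is then continuous into the Banach space $\mathcal S_T^2$. No step is a real obstacle; the only point needing care is checking that $C_0$ in \eqref{eq:quantitative-core-stability} is independent of $R$, so that the limit $R\to\infty$ is legitimate. The theorem states this explicitly.
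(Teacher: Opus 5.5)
Your proposal is correct and coincides with the paper's proof: specialize Theorem~\ref{thm:quantitative-core-stability} to $a=a^0=c$, use $\rho_{T,R}(c,c)=0$, let $R\to\infty$ (legitimate since $C_0$ does not depend on $R$), and set $C_{\mathrm{init}}(c):=C_0^{1/2}$. Your direct Gronwall safeguard is also valid, being \eqref{eq:core-stability-prelocalization} with the error term equal to zero, but the paper does not need it.
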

	
	\begin{proof}
		Fix $x,y\in\mathbb R^d$. Apply
		Theorem~\ref{thm:quantitative-core-stability} with
		\[
		a^0=c,
		\qquad
		a=c,
		\qquad
		X=X^{c,x},
		\qquad
		Y=X^{c,y}.
		\]
		Since
		\[
		\rho_{T,R}(c,c)=0,
		\]
		the theorem gives, for every $R>|x|\vee|y|$,
		\[
		\|X^{c,y}-X^{c,x}\|_{\mathcal S_T^2}^2
		\leq
		C_0(c)
		\left[
		|x-y|^2
		+
		\frac{1+|x|^2+|y|^2}{R^2}
		\right],
		\]
		where
		\[
		C_0(c)=C_0\bigl(T,M,L_x(c)+L_\mu\bigr)
		\]
		is independent of $x,y$, and $R$. Letting $R\to\infty$ yields
		\[
		\|X^{c,y}-X^{c,x}\|_{\mathcal S_T^2}^2
		\leq
		C_0(c)|x-y|^2.
		\]
		Thus \eqref{eq:core-initial-state-continuity} holds with
		\[
		C_{\mathrm{init}}(c):=C_0(c)^{1/2}.
		\]
	\end{proof}
	
	\begin{corollary}
		\label{cor:uniform-stability-compact-initial-set}
		Let
		\[
		c\in\mathfrak A_T^{\mathrm{Lip}},
		\qquad
		m,n\in\mathbb N,
		\]
		and set
		\begin{equation}
			\label{eq:uniform-generic-epsilon}
			\varepsilon_n:=2^{-n-3}.
		\end{equation}
		Then there exists a radius
		\[
		r_{m,n}(c)>0
		\]
		such that, whenever
		\[
		a\in\mathfrak A_T,
		\qquad
		d_T(a,c)<r_{m,n}(c),
		\]
		one has
		\begin{equation}
			\label{eq:uniform-stability-compact-conclusion}
			\|Y-X^{c,x}\|_{\mathcal S_T^2}
			<
			\varepsilon_n
		\end{equation}
		for every $x\in\overline B_m$ and every
		\[
		Y\in\operatorname{Sol}_T(a,x).
		\]
	\end{corollary}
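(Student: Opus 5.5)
The plan is to obtain the radius directly from the quantitative stability estimate, Theorem~\ref{thm:quantitative-core-stability}, applied with $a^0=c$ and $y=x$. Then the initial-state term $|y-x|^2$ vanishes. For every $x\in\overline B_m$, every $a\in\mathfrak A_T$, every $Y\in\operatorname{Sol}_T(a,x)$ and every $R>m$, the theorem gives
\[
\|Y-X^{c,x}\|_{\mathcal S_T^2}^2
\leq
C_0\Bigl[C_{\mathrm K}(R)\rho_{T,R}(a,c)^2+\frac{1+2m^2}{R^2}\Bigr],
\]
with $C_0=C_0(T,M,L_x(c)+L_\mu)$ independent of $a$, $x$, $Y$ and $R$. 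The key point is that the right-hand side depends on $x$ only through the bound $|x|\leq m$. Any choice of parameters making it small therefore works uniformly over $\overline B_m$ and over all solutions $Y$, with no uniqueness needed for $a$.

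The steps are as follows. First, fix an integer $R=R_{m,n}>m$ so large that $C_0(1+2m^2)/R^2<\varepsilon_n^2/2$. Second, with this $R$ fixed, choose $\eta>0$ such that $C_0C_{\mathrm K}(R)\eta^2<\varepsilon_n^2/2$; we may also take $\eta<1$. Third, convert the smallness requirement $\rho_{T,R}(c,a)<\eta$ into a $d_T$-ball. Since $\rho_{T,R}$ is monotone in $R$, it suffices to control $\rho_{T,R}$ for the integer $R$. By the definition \eqref{eq:krylov-stable-metric},
\[
2^{-R}\bigl(1\wedge\rho_{T,R}(a,c)\bigr)\leq d_T(a,c).
\]
So set $r_{m,n}(c):=2^{-R}\eta$. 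If $d_T(a,c)<r_{m,n}(c)$, then $1\wedge\rho_{T,R}(a,c)<\eta<1$, hence $\rho_{T,R}(a,c)<\eta$. Plugging this in gives $\|Y-X^{c,x}\|_{\mathcal S_T^2}^2<\varepsilon_n^2$, which is \eqref{eq:uniform-stability-compact-conclusion}.

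The only delicate point is the legitimacy of applying Theorem~\ref{thm:quantitative-core-stability} to an arbitrary element $a$ of the completion. This requires choosing an admissible representative of $a$ (Corollary~\ref{cor} and Theorem~\ref{thm:completion-representation}). It also requires checking that $\rho_{T,R}(a,c)$ and $\operatorname{Sol}_T(a,x)$ do not depend on that choice, which follows from Proposition~\ref{prop:rep-equivalence} and Lemma~\ref{lem:measurable-coefficient-supremum}. In addition, the constant $C_{\mathrm K}(R)$ must depend only on $(d,T,R,M,\lambda)$, as stated in Proposition~\ref{prop:moment-localization-estimates}, so that it is uniform in $a$ and $Y$. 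Once this uniformity is granted, the argument above is a direct two-parameter choice: $R$ first, then $\eta$.
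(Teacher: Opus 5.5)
Your proposal is correct and follows essentially the same route as the paper: fix an integer radius $J>m$ to make the exit-tail term $C_0(1+2m^2)/J^2$ small, then choose $\delta\in(0,1)$ to make $C_0C_{\mathrm K}(J)\delta^2$ small, and set $r_{m,n}(c)=2^{-J}\delta$. As in the paper, the $J$-th summand of $d_T$ gives $\rho_{T,J}(a,c)<\delta$, and Theorem~\ref{thm:quantitative-core-stability} with $y=x$ and $R=J$ yields the uniform bound over $\overline B_m$ and over all $Y\in\operatorname{Sol}_T(a,x)$.
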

	
	\begin{proof}
		Let
		\[
		C_0(c)=C_0\bigl(T,M,L_x(c)+L_\mu\bigr)
		\]
		be the constant in
		Theorem~\ref{thm:quantitative-core-stability} when the reference
		coefficient is $a^0=c$. Choose an integer $J>m$ sufficiently large
		that
		\begin{equation}
			\label{eq:uniform-stability-tail-choice}
			C_0(c)\frac{1+2m^2}{J^2}
			<
			\frac{\varepsilon_n^2}{2}.
		\end{equation}
		Next choose $\delta\in(0,1)$ sufficiently small that
		\begin{equation}
			\label{eq:uniform-stability-local-choice}
			C_0(c)C_{\mathrm K}(J)\delta^2
			<
			\frac{\varepsilon_n^2}{2}.
		\end{equation}
		Define
		\begin{equation}
			\label{eq:uniform-stability-radius}
			r_{m,n}(c):=2^{-J}\delta.
		\end{equation}
		
		Suppose that $d_T(a,c)<r_{m,n}(c)$. By the definition of $d_T$,
		the $J$-th summand satisfies
		\[
		2^{-J}\bigl(1\wedge\rho_{T,J}(a,c)\bigr)
		\leq
		d_T(a,c)
		<
		2^{-J}\delta.
		\]
		Consequently,
		\[
		1\wedge\rho_{T,J}(a,c)<\delta.
		\]
		Since $\delta<1$, this implies
		\begin{equation}
			\label{eq:uniform-stability-local-distance}
			\rho_{T,J}(a,c)<\delta.
		\end{equation}
		
		Fix $x\in\overline B_m$ and
		$Y\in\operatorname{Sol}_T(a,x)$. Applying
		Theorem~\ref{thm:quantitative-core-stability} with
		\[
		a^0=c,
		\qquad
		y=x,
		\qquad
		R=J,
		\]
		gives
		\[
		\begin{aligned}
			\|Y-X^{c,x}\|_{\mathcal S_T^2}^2
			&\leq
			C_0(c)
			\left[
			C_{\mathrm K}(J)\rho_{T,J}(a,c)^2
			+
			\frac{1+2|x|^2}{J^2}
			\right]
			\\
			&\leq
			C_0(c)
			\left[
			C_{\mathrm K}(J)\delta^2
			+
			\frac{1+2m^2}{J^2}
			\right]
			<
			\varepsilon_n^2.
		\end{aligned}
		\]
		This proves \eqref{eq:uniform-stability-compact-conclusion}.
	\end{proof}
	
	\begin{theorem}
		\label{thm:sequential-core-stability}
		Let
		\[
		a_n\in\mathfrak A_T,
		\qquad
		d_T(a_n,a^0)\longrightarrow0,
		\qquad
		x_n\longrightarrow x,
		\]
		and choose arbitrary solutions
		\[
		X^n\in\operatorname{Sol}_T(a_n,x_n).
		\]
		Then
		\begin{equation}
			\label{eq:sequential-core-convergence}
			\|X^n-X^{a^0,x}\|_{\mathcal S_T^2}
			\longrightarrow0.
		\end{equation}
	\end{theorem}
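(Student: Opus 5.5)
The plan is to derive \eqref{eq:sequential-core-convergence} directly from the quantitative estimate of Theorem~\ref{thm:quantitative-core-stability}, applied with the fixed core coefficient $a^0$, the perturbed coefficient $a=a_n$, the initial states $x$ and $y=x_n$, and $Y=X^n$. We then combine it with the Lipschitz dependence on the initial state from Corollary~\ref{cor:core-initial-state-continuity}. Nothing about uniqueness for $\mathcal E_T(a_n,x_n)$ is needed, since the quantitative bound holds for every element of $\operatorname{Sol}_T(a_n,x_n)$.

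First, since $x_n\to x$, there is $m\in\mathbb N$ with $|x_n|\vee|x|\le m$ for all $n$. For any $R>m$, Theorem~\ref{thm:quantitative-core-stability} gives
\[
\|X^n-X^{a^0,x}\|_{\mathcal S_T^2}^2
\le C_0\Bigl[|x_n-x|^2+C_{\mathrm K}(R)\rho_{T,R}(a_n,a^0)^2+\frac{1+2m^2}{R^2}\Bigr],
\]
with $C_0=C_0(T,M,L_0)$ independent of $n$ and $R$.

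Second, we convert $d_T(a_n,a^0)\to0$ into $\rho_{T,R}(a_n,a^0)\to0$ for each fixed integer $R$. This uses the argument already used in the proof of Corollary~\ref{cor:uniform-stability-compact-initial-set}. Since $2^{-R}(1\wedge\rho_{T,R})\le d_T$, once $d_T(a_n,a^0)<2^{-R}$ we have $\rho_{T,R}(a_n,a^0)\le 2^R d_T(a_n,a^0)\to0$.

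Third, given $\varepsilon>0$, we first fix an integer $R>m$ so large that $C_0(1+2m^2)/R^2<\varepsilon/2$. With $R$ now fixed, $C_{\mathrm K}(R)$ is a finite constant, so the remaining two terms tend to zero as $n\to\infty$. Hence $\limsup_n\|X^n-X^{a^0,x}\|^2_{\mathcal S_T^2}\le\varepsilon/2$, and since $\varepsilon$ is arbitrary the claim follows.

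The only delicate point is the order of limits. $C_{\mathrm K}(R)$ grows with $R$, so $R$ must be chosen before $n$. This is legitimate precisely because $\rho_{T,R}(a_n,a^0)\to0$ for every fixed $R$. The uniformity of $C_0$ in $a_n,x_n,X^n$ and $R$, asserted in Theorem~\ref{thm:quantitative-core-stability}, is what makes this two-step choice work.
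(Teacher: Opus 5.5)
Your proposal is correct and follows the paper's argument. You apply Theorem~\ref{thm:quantitative-core-stability} with $a=a_n$, $y=x_n$, $Y=X^n$, use that $\rho_{T,R}(a_n,a^0)\to0$ for each fixed $R$, and then let $R\to\infty$ to remove the tail term. The appeal to Corollary~\ref{cor:core-initial-state-continuity} is unnecessary, since $|x_n-x|^2$ already appears in the quantitative estimate; choosing $R>m\geq\sup_n|x_n|\vee|x|$ is a slightly cleaner way to meet the hypothesis $R>|x|\vee|y|$ than the paper's ``for all sufficiently large $n$''.
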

	
	\begin{proof}
		The sequence $(x_n)$ is bounded. For each fixed $R>|x|$ and all
		sufficiently large $n$, Theorem~\ref{thm:quantitative-core-stability}
		gives
		\[
		\begin{aligned}
			\|X^n-X^{a^0,x}\|_{\mathcal S_T^2}^2
			\leq
			C_0\left[
			|x_n-x|^2
			+
			C_{\mathrm K}(R)\rho_{T,R}(a_n,a^0)^2
			+
			\frac{1+|x|^2+|x_n|^2}{R^2}
			\right].
		\end{aligned}
		\]
		For fixed $R$, convergence in $d_T$ implies
		$\rho_{T,R}(a_n,a^0)\to0$. Hence
		\[
		\limsup_{n\to\infty}
		\|X^n-X^{a^0,x}\|_{\mathcal S_T^2}^2
		\leq
		\frac{
			C\bigl(1+|x|^2+\sup_n|x_n|^2\bigr)
		}{R^2}.
		\]
		Letting $R\to\infty$ proves the assertion.
	\end{proof}
	
	\begin{corollary}
		\label{cor:core-solution-map-continuity}
		Fix
		\[
		a^0\in\mathfrak A_T^{\mathrm{Lip}},
		\qquad
		x\in\mathbb R^d,
		\qquad
		X^0:=X^{a^0,x}.
		\]
		Then the following assertions hold.
		
		\begin{enumerate}
			\item[\textnormal{(i)}]
			If
			\[
			a_n\in\mathfrak A_T^{\mathrm{Lip}},
			\qquad
			d_T(a_n,a^0)\to0,
			\qquad
			x_n\to x,
			\]
			then
			\[
			\|X^{a_n,x_n}-X^0\|_{\mathcal S_T^2}
			\longrightarrow0.
			\]
			In particular, the core solution map is continuous in the
			relative $d_T$-topology.
			
			\item[\textnormal{(ii)}]
			For every $\varepsilon>0$, there exists
			$r=r(a^0,x,\varepsilon)>0$ such that
			\[
			d_T(a,a^0)<r,
			\qquad
			|y-x|<r
			\]
			imply
			\begin{equation}
				\label{eq:stable-neighborhood-conclusion}
				\|Y-X^0\|_{\mathcal S_T^2}<\varepsilon
			\end{equation}
			for every
			\[
			Y\in\operatorname{Sol}_T(a,y).
			\]
			
			\item[\textnormal{(iii)}]
			For the same $r$, if $d_T(a,a^0)<r$, then
			\begin{equation}
				\label{eq:solution-set-contained-ball}
				\operatorname{Sol}_T(a,x)
				\subset
				B_{\mathcal S_T^2}(X^0,\varepsilon),
			\end{equation}
		where \(B_{\mathcal S_T^2}(X^0,\varepsilon):=\left\{X\in\mathcal S_T^2:\|X-X^0\|_{\mathcal S_T^2}<\varepsilon\right\}.\)	Consequently, whenever
			$\operatorname{Sol}_T(a,x)\neq\varnothing$,
			\begin{equation}
				\label{eq:nearby-solution-diameter}
				\operatorname{diam}_{\mathcal S_T^2}
				\operatorname{Sol}_T(a,x)
				\leq2\varepsilon,
			\end{equation}
			where 
			\[
			\operatorname{diam}_{\mathcal S_T^2}(A)
			:=
			\sup\left\{ \|X-Y\|_{\mathcal S_T^2} : X,Y\in A \right\}
			\]
			for every nonempty \(A\subset\mathcal S_T^2\).
		\end{enumerate}
	\end{corollary}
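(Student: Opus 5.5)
The corollary should follow from Theorem~\ref{thm:quantitative-core-stability} and Theorem~\ref{thm:sequential-core-stability}, with no new stochastic estimates. For (i), the approximating coefficients $a_n$ lie in the Lipschitz core. Proposition~\ref{prop:wellposed-core} then gives $\operatorname{Sol}_T(a_n,x_n)=\{X^{a_n,x_n}\}$, and since $\mathfrak A_T^{\mathrm{Lip}}\subset\mathfrak A_T$, Theorem~\ref{thm:sequential-core-stability} applies directly with $X^n:=X^{a_n,x_n}$. Continuity of the core solution map in the relative $d_T$-topology is the special case $x_n\equiv x$. Since $d_T$ is a metric, sequential continuity suffices.

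For (ii), I will mimic the proof of Corollary~\ref{cor:uniform-stability-compact-initial-set}, but keep an initial-state discrepancy term. Let $C_0=C_0(T,M,L_0)$ be the constant of Theorem~\ref{thm:quantitative-core-stability} for the reference coefficient $a^0$, and restrict to $r\le 1$, so that $|y|\le|x|+1$. First choose an integer $J>|x|+1$ with
\[
C_0\frac{1+|x|^2+(|x|+1)^2}{J^2}<\frac{\varepsilon^2}{3}.
\]
Then choose $\delta\in(0,1)$ with
\[
C_0\,C_{\mathrm K}(J)\,\delta^2<\frac{\varepsilon^2}{3}
\qquad\text{and}\qquad
C_0\,\delta^2<\frac{\varepsilon^2}{3},
\]
and set $r:=2^{-J}\delta$. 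If $d_T(a,a^0)<r$, then looking at the $J$-th summand of $d_T$ gives $1\wedge\rho_{T,J}(a,a^0)<\delta<1$, hence $\rho_{T,J}(a,a^0)<\delta$. Also $|y-x|<r\le\delta$. Now apply Theorem~\ref{thm:quantitative-core-stability} with $R=J>|x|\vee|y|$ to any $Y\in\operatorname{Sol}_T(a,y)$. Each of the three terms in \eqref{eq:quantitative-core-stability} is then below $\varepsilon^2/3$, which gives \eqref{eq:stable-neighborhood-conclusion}. No uniqueness for $a$ is used, because the quantitative estimate holds for every solution.

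For (iii), take $y=x$ in (ii) to obtain \eqref{eq:solution-set-contained-ball}. The diameter bound follows from the triangle inequality: for $X,Y\in\operatorname{Sol}_T(a,x)$,
\[
\|X-Y\|_{\mathcal S_T^2}\le\|X-X^0\|_{\mathcal S_T^2}+\|X^0-Y\|_{\mathcal S_T^2}<2\varepsilon.
\]

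The only delicate point is bookkeeping. $J$ must be chosen before $\delta$, because $C_{\mathrm K}(J)$ depends on $J$. The radius must also control both the localization radius (through the $2^{-J}$ weight) and the initial-state discrepancy. Beyond that, I expect no real obstacle.
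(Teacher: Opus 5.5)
Your proof is correct. Parts (i) and (iii) coincide with the paper's argument: for (i) you apply Theorem~\ref{thm:sequential-core-stability} to the unique core solutions, and for (iii) you take $y=x$ and use the triangle inequality.

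For (ii) you take a different route. The paper argues by contradiction: if (ii) failed, there would be sequences $a_n\to a^0$, $y_n\to x$ and $Y^n\in\operatorname{Sol}_T(a_n,y_n)$ with $\|Y^n-X^0\|_{\mathcal S_T^2}\geq\varepsilon$. This contradicts Theorem~\ref{thm:sequential-core-stability}. You instead read off an explicit radius $r=2^{-J}\delta$ directly from Theorem~\ref{thm:quantitative-core-stability}, splitting the error into three parts of size $\varepsilon^2/3$.

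The contradiction argument is shorter and uses the sequential theorem as a black box, but it says nothing about the size of $r$. Your version makes the dependence of $r$ on $C_0(T,M,L_0)$, $C_{\mathrm K}(J)$ and $|x|$ explicit. It is the same bookkeeping the paper itself uses later in Corollary~\ref{cor:uniform-stability-compact-initial-set}. It therefore extends verbatim to a radius that is uniform over $x\in\overline B_m$ (replace $|x|$ by $m$), which is the form actually needed for (G2).

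Your ordering is also the right one. You fix the integer $J$ before $\delta$, since $C_{\mathrm K}(J)$ depends on $J$, and you take $J$ to be an integer so that the $J$-th summand of $d_T$ controls $\rho_{T,J}$.
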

	
	\begin{proof}
		Part \textnormal{(i)} follows from
		Theorem~\ref{thm:sequential-core-stability} and the well-posedness of
		core equations.
		
		If \textnormal{(ii)} were false, there would exist
		$a_n\to a^0$, $y_n\to x$, and
		$Y^n\in\operatorname{Sol}_T(a_n,y_n)$ such that
		\[
		\|Y^n-X^0\|_{\mathcal S_T^2}\geq\varepsilon
		\]
		for every $n$, contradicting
		Theorem~\ref{thm:sequential-core-stability}. Part \textnormal{(iii)}
		is obtained from \textnormal{(ii)} by taking $y=x$; the diameter
		bound follows from the triangle inequality.
	\end{proof}
	
	The stable-neighborhood property is the local input used in the Baire
	category argument: near each coefficient in the dense Lipschitz core,
	all solutions of every solvable nearby equation remain close to the
	corresponding core solution.

	\section{Closedness and Identification of Limits}
	\label{sec:closedness-limits}
	
	In this section, we prove that the strong-solution relation is closed
	under simultaneous convergence of coefficients and solutions. The main
	difficulty is that the limiting coefficient may be merely measurable in
	the state variable, so pointwise convergence of the coefficient
	integrands cannot be expected. We overcome this difficulty by combining
	Krylov occupation estimates with approximation by the Lipschitz core.
	
	\subsection{Krylov Control and Convergence of the Integrands}
	
	Let
	\[
	a_n=(b_n,\sigma_n,\overline\beta_n)
	\in\mathfrak A_T^{\mathrm{Lip}},
	\qquad
	a=(b,\sigma,\overline\beta)\in\mathfrak A_T,
	\]
	and assume that
	\begin{equation}
		\label{eq:coefficient-convergence-closedness}
		d_T(a_n,a)\longrightarrow0.
	\end{equation}
	Let $x_n\to x$ in $\mathbb R^d$, and set
	\[
	X^n:=X^{a_n,x_n},
	\qquad
	\mu_t^n:=\mathcal L(X_t^n).
	\]
	Suppose that, for some $X\in\mathcal S_T^2$,
	\begin{equation}
		\label{eq:solution-convergence-closedness}
		\|X^n-X\|_{\mathcal S_T^2}
		\longrightarrow0,
	\end{equation}
	and write
	\[
	\mu_t:=\mathcal L(X_t).
	\]
	Since $X^n$ and $X$ are defined on the same probability space,
	\begin{equation}
		\label{eq:uniform-law-convergence}
		\sup_{0\leq t\leq T}	W_2(\mu_t^n,\mu_t)^2\leq\mathbb E\big[\sup_{0\leq t\leq T}|X_t^n-X_t|^2\big]
		\longrightarrow0.
	\end{equation}
	Moreover, Proposition~\ref{prop:moment-localization-estimates} and
	\eqref{eq:solution-convergence-closedness} imply
	\begin{equation}
		\label{eq:uniform-moment-closedness}
		\sup_{n\geq1}\mathbb E\big[\sup_{0\leq t\leq T}|X_t^n|^2\big]
		+\mathbb E\big[\sup_{0\leq t\leq T}|X_t|^2\big]	<\infty.
	\end{equation}
	
	Although $X$ is not yet known to solve an equation, it inherits the
	Krylov occupation estimate from the approximating solutions.
	
	\begin{proposition}
		\label{prop:limit-krylov-estimate}
		For every $R>0$ and every non-negative Borel function
		$f\in L^{d+1}(Q_{T,R})$,
		\begin{equation}
			\label{eq:limit-krylov-estimate}
			\mathbb E\int_0^{\tau_R(X)}
			f(t,X_t)\,dt
			\leq
			C_{\mathrm K}^{\mathrm{lim}}(R)
			\|f\|_{L^{d+1}(Q_{T,R})},
		\end{equation}
		where
		\[
		C_{\mathrm K}^{\mathrm{lim}}(R)
		=
		C_{\mathrm K}(R+1)
		\]
		depends only on $d,T,R,M$, and $\lambda$.
	\end{proposition}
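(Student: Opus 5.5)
The estimate for $X$ will be obtained from the estimate for the approximating solutions $X^n$. I would first prove \eqref{eq:limit-krylov-estimate} for continuous test functions by passing to the limit from $X^n$. I would then extend it to Borel $f$ by the occupation-measure duality argument of Step~4 in the proof of Proposition~\ref{thm:local-weighted-krylov}.

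\textbf{Step 1: estimate for $X^n$ with a mixed stopping time.} Each $X^n=X^{a_n,x_n}$ is an It\^o--L\'evy process of the form \eqref{eq:general-ito-levy-process}. Its coefficients are bounded by $M$ by \textnormal{(A2)}, and its covariance satisfies $A_s\geq\lambda I_d$ by \textnormal{(A4)}. Corollary~\ref{cor:unweighted-local-krylov} therefore applies on the ball $B_{R+1}$ with constant $C_{\mathrm K}(R+1)$, uniformly in $n$. Set
\[
\theta_n:=\tau_R(X)\wedge\tau_{R+1}(X^n)\leq\tau_{R+1}(X^n),
\]
which is a stopping time because $X$ is adapted and c\`adl\`ag. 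For nonnegative $g\in C_c((0,T)\times B_R)$, extended by zero to $Q_{T,R+1}$, the corollary with $\gamma=0$ and $\tau=\theta_n$ gives
\[
\mathbb E\int_0^{\theta_n}g(t,X^n_t)\,dt\leq C_{\mathrm K}(R+1)\|g\|_{L^{d+1}(Q_{T,R})}.
\]
Extending by zero does not change the norm, so the right-hand side is already in the required form.

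\textbf{Step 2: passage to the limit (main obstacle).} The delicate point is to compare the stopped integrals, since exit times are not continuous functionals of the path. I would use the inclusion
\[
\{\theta_n<\tau_R(X)\}\subset\Bigl\{\sup_{t\leq T}|X^n_t-X_t|\geq1\Bigr\}.
\]
Indeed, if $\tau_{R+1}(X^n)<\tau_R(X)$, then at the time $s=\tau_{R+1}(X^n)$ one has $|X^n_s|\geq R+1$ while $|X_s|<R$. By \eqref{eq:solution-convergence-closedness} the probability of this event tends to $0$.

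On the complementary event, $\theta_n=\tau_R(X)$. Since $g$ is uniformly continuous and bounded and $\sup_t|X^n_t-X_t|\to0$ in probability, the integrals $\int_0^{\theta_n}g(t,X^n_t)\,dt$ converge in probability to $\int_0^{\tau_R(X)}g(t,X_t)\,dt$. These integrals are bounded by $T\|g\|_\infty$, so bounded convergence yields \eqref{eq:limit-krylov-estimate} for all such $g$.

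\textbf{Step 3: extension to Borel $f$.} Define the finite measure
\[
\Gamma(B):=\mathbb E\int_0^{\tau_R(X)}\mathbf 1_B(t,X_t)\,dt
\]
on $Q_{T,R}$. For $t<\tau_R(X)$ one has $X_t\in B_R$, and the time slices $\{0,T\}$ are Lebesgue-null in $t$. Hence $\Gamma$ is carried by $D_{T,R}=(0,T)\times B_R$, up to null sets.

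By Step~2, $\varphi\mapsto\int\varphi\,d\Gamma$ is bounded on $C_c(D_{T,R})$ in the $L^{d+1}$ norm, with norm at most $C_{\mathrm K}(R+1)$. Exactly as in Step~4 of the proof of Proposition~\ref{thm:local-weighted-krylov}, Riesz representation and $L^p$ duality give a density $h\in L^{(d+1)/d}$ with $\|h\|\leq C_{\mathrm K}(R+1)$. H\"older's inequality, first for indicators and then for nonnegative Borel $f$ via monotone convergence, then yields \eqref{eq:limit-krylov-estimate} with $C_{\mathrm K}^{\mathrm{lim}}(R)=C_{\mathrm K}(R+1)$.
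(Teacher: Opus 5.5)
Your proposal is correct and follows the paper's proof essentially step by step. It uses the same mixed stopping time $\tau_R(X)\wedge\tau_{R+1}(X^n)$, the same fact that this time equals $\tau_R(X)$ on $\{\sup_{t\le T}|X^n_t-X_t|<1\}$ (which you state as the contrapositive inclusion), bounded convergence for continuous test functions, and the occupation-measure duality extension to non-negative Borel $f\in L^{d+1}(Q_{T,R})$. The differences are only cosmetic: you use $C_c$ rather than $C_c^\infty$ test functions, and you spell out why the exit-time inclusion holds.
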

	
	\begin{proof}
		We first prove the estimate for
		\[
		f\in C_c^\infty((0,T)\times B_R),
		\qquad f\geq0,
		\]
		extended by zero outside $Q_{T,R}$. Set
		\[
		\tau:=\tau_R(X),
		\qquad
		\tau_n:=\tau\wedge\tau_{R+1}(X^n).
		\]
		Since $\tau_n\leq\tau_{R+1}(X^n)$, the stopped Krylov estimate for
		$X^n$ gives
		\begin{equation}
			\label{eq:limit-krylov-approximate}
			\mathbb E\int_0^{\tau_n}
			f(t,X_t^n)\,dt
			\leq
			C_{\mathrm K}(R+1)
			\|f\|_{L^{d+1}(Q_{T,R})}.
		\end{equation}
		
		Let
		\[
		A_n:=
		\left\{
		\sup_{0\leq t\leq T}|X_t^n-X_t|<1
		\right\}.
		\]
		By \eqref{eq:solution-convergence-closedness},
		$\mathbb P(A_n^c)\to0$. On $A_n$, for every $t<\tau$,
		\[
		|X_t|<R
		\quad\Longrightarrow\quad
		|X_t^n|<R+1.
		\]
		Therefore,
		\[
		\tau_{R+1}(X^n)\geq\tau
		\]
		on $A_n$, and consequently
		\begin{equation}
			\tau_n=\tau \quad\text{on }A_n.
		\end{equation}
		Therefore,
		\[
		\begin{aligned}
			&
			\left|
			\mathbb E\int_0^\tau f(t,X_t)\,dt
			-
			\mathbb E\int_0^{\tau_n}f(t,X_t^n)\,dt
			\right|
			\\
			&\quad\leq
			\mathbb E\left[
			\mathbf 1_{A_n}
			\int_0^\tau
			|f(t,X_t)-f(t,X_t^n)|\,dt
			\right]
			+
			2T\|f\|_\infty\mathbb P(A_n^c).
		\end{aligned}
		\]
		The right-hand side tends to zero because $f$ is uniformly continuous,
		$X^n\to X$ uniformly in probability, and the integrands are uniformly
		bounded. Passing to the limit in
		\eqref{eq:limit-krylov-approximate} yields
		\[
		\mathbb E\int_0^{\tau_R(X)}
		f(t,X_t)\,dt
		\leq
		C_{\mathrm K}(R+1)
		\|f\|_{L^{d+1}(Q_{T,R})}.
		\]
		
		Define the occupation measure
		\[
		\Gamma_R(B)
		:=
		\mathbb E\int_0^{\tau_R(X)}
		\mathbf 1_B(t,X_t)\,dt,
		\qquad
		B\in\mathcal B(Q_{T,R}).
		\]
		The preceding inequality shows that
		\[
		f\longmapsto\int_{Q_{T,R}}f\,d\Gamma_R
		\]
		is bounded on $C_c^\infty((0,T)\times B_R)$ with respect to the
		$L^{d+1}$ norm. By density and $L^p$ duality, there exists
		\[
		g_R\in L^{(d+1)/d}(Q_{T,R})
		\]
		such that
		\[
		\int_{Q_{T,R}}f\,d\Gamma_R
		=
		\int_{Q_{T,R}}f(t,x)g_R(t,x)\,dt\,dx
		\]
		for every $f\in C_c^\infty((0,T)\times B_R)$, with
		\[
		\|g_R\|_{L^{(d+1)/d}(Q_{T,R})}
		\leq
		C_{\mathrm K}(R+1).
		\]
		By uniqueness of Radon measures,
		\[
		d\Gamma_R=g_R\,dt\,dx.
		\]
		Consequently, \eqref{eq:limit-krylov-estimate} holds for every
		non-negative Borel function
		$f\in L^{d+1}(Q_{T,R})$.
	\end{proof}
	
	The preceding estimate immediately gives a coefficient-error bound
	along the limiting process.
	
	\begin{corollary}
		\label{cor:limit-process-coefficient-error}
		For every $c,\widetilde c\in\mathfrak A_T$,
		\begin{equation}
			\label{eq:limit-process-error-estimate}
			\mathbb E\int_0^{\tau_R(X)}
			\mathfrak e_{c,\widetilde c}
			(t,X_t,\mu_t)\,dt
			\leq
			C_{\mathrm K}^{\mathrm{lim}}(R)
			\rho_{T,R}(c,\widetilde c)^2.
		\end{equation}
	\end{corollary}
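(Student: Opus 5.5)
The plan is to apply Proposition~\ref{prop:limit-krylov-estimate} directly to the non-negative Borel function
\[
f(t,y):=\Delta(c,\widetilde c)(t,y)^2,
\qquad (t,y)\in Q_{T,R},
\]
after bounding the integrand pointwise.

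First we check that $f$ is admissible. By Lemma~\ref{lem:measurable-coefficient-supremum}, $\Delta(c,\widetilde c)$ is Borel measurable once it is realised through the countable dense family $\{\mu_j\}$. It is bounded by $2M$ by (A2), so $f\in L^{d+1}(Q_{T,R})$. Moreover,
\[
\|f\|_{L^{d+1}(Q_{T,R})}=\|\Delta(c,\widetilde c)\|_{L^{q_\ast}(Q_{T,R})}^2=\rho_{T,R}(c,\widetilde c)^2 .
\]
Here we use the identity $\|\Delta^2\|_{L^{r_\ast}}=\|\Delta\|_{L^{q_\ast}}^2$ recorded in Section~\ref{sec:framework-main}.

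Next we bound the integrand pointwise, using fixed admissible representatives of $c$ and $\widetilde c$ from Theorem~\ref{thm:completion-representation}. The process $t\mapsto\mu_t=\mathcal L(X_t)$ is deterministic and Borel by Lemma~\ref{lem-Measurability}(i), so $\mathfrak e_{c,\widetilde c}(t,X_t,\mu_t)$ is jointly measurable. By \eqref{eq:error-controlled-by-delta},
\[
\mathfrak e_{c,\widetilde c}(t,X_t,\mu_t)\le f(t,X_t).
\]
This inequality uses the full-supremum identity \eqref{eq:full-supremum-section4}. That identity holds for every $(t,x)$, since the chosen representatives are $W_2$-Lipschitz in $\mu$ at every point. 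For $t<\tau_R(X)$ we have $X_t\in B_R$, so $(t,X_t)\in Q_{T,R}$ and $f$ is evaluated inside its domain.

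We then integrate and apply Proposition~\ref{prop:limit-krylov-estimate}:
\[
\mathbb E\int_0^{\tau_R(X)}\mathfrak e_{c,\widetilde c}(t,X_t,\mu_t)\,dt
\le\mathbb E\int_0^{\tau_R(X)}f(t,X_t)\,dt
\le C^{\mathrm{lim}}_{\mathrm K}(R)\,\rho_{T,R}(c,\widetilde c)^2 .
\]

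The one point needing care is independence of representatives. Since $X$ is not yet known to solve any equation, we cannot cite Proposition~\ref{prop:rep-equivalence}. Instead we note that the left-hand side does not change if $c$ or $\widetilde c$ is modified on a $dt\,dx$-null set $\mathcal N$ (uniformly in $\mu$). Applying Proposition~\ref{prop:limit-krylov-estimate} to $\mathbf 1_{\mathcal N}$ gives $\int_0^{\tau_R(X)}\mathbf 1_{\mathcal N}(t,X_t)\,dt=0$ almost surely. Hence the statement is well posed at the level of elements of $\mathfrak A_T$. No step here is a serious obstacle; the work was already done in Proposition~\ref{prop:limit-krylov-estimate}.
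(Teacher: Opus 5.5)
Your proposal is correct and follows the paper's proof: you bound $\mathfrak e_{c,\widetilde c}\le\Delta(c,\widetilde c)^2$ pointwise, apply Proposition~\ref{prop:limit-krylov-estimate} to $f=\Delta(c,\widetilde c)^2$, and use $\|\Delta(c,\widetilde c)^2\|_{L^{d+1}(Q_{T,R})}=\rho_{T,R}(c,\widetilde c)^2$. Your added checks (measurability, everywhere validity of the full-supremum identity for admissible representatives, and representative independence via the occupation estimate applied to $\mathbf 1_{\mathcal N}$) are correct details that the paper leaves implicit.
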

	
	\begin{proof}
		By \eqref{eq:error-controlled-by-delta},
		\[
		\mathfrak e_{c,\widetilde c}(t,x,\mu)
		\leq
		\Delta(c,\widetilde c)(t,x)^2.
		\]
		Applying Proposition~\ref{prop:limit-krylov-estimate} with
		\[
		f(t,x)=\Delta(c,\widetilde c)(t,x)^2
		\]
		and using
		\[
		\|\Delta(c,\widetilde c)^2\|_{L^{d+1}(Q_{T,R})}
		=
		\rho_{T,R}(c,\widetilde c)^2
		\]
		proves the assertion.
	\end{proof}
	
	Choose
	\begin{equation}
		\label{eq:intermediate-core-sequence}
		c^k=(b^k,\sigma^k,\overline\beta^k)
		\in\mathfrak A_T^{\mathrm{Lip}}
	\end{equation}
	such that
	\begin{equation}
		\label{eq:intermediate-core-convergence}
		d_T(c^k,a)\longrightarrow0.
	\end{equation}
	For each $k$, let $L_k$ satisfy
	\begin{equation}
		\label{eq:intermediate-core-lipschitz}
		\begin{aligned}
			&
			|b^k(t,x,\mu)-b^k(t,y,\eta)|
			+
			\|\sigma^k(t,x,\mu)-\sigma^k(t,y,\eta)\|_{\mathrm{HS}}
			\\
			&\quad+
			\|\overline\beta^k(t,x,\mu)
			-\overline\beta^k(t,y,\eta)\|_H
			\\
			&\leq
			L_k\bigl(|x-y|+W_2(\mu,\eta)\bigr).
		\end{aligned}
	\end{equation}
	No uniform bound on $L_k$ is required.
	
	Define
	\begin{align}
		\mathfrak E_n(t)
		:={}&
		|b_n(t,X_t^n,\mu_t^n)-b(t,X_t,\mu_t)|^2
		\nonumber\\
		&+
		\|\sigma_n(t,X_t^n,\mu_t^n)
		-\sigma(t,X_t,\mu_t)\|_{\mathrm{HS}}^2
		\nonumber\\
		&+
		\|\overline\beta_n(t,X_{t-}^n,\mu_t^n)
		-\overline\beta(t,X_{t-},\mu_t)\|_H^2.
		\label{eq:full-integrand-error}
	\end{align}
	Since c\`adl\`ag paths have at most countably many discontinuities,
	\[
	X_{t-}^n=X_t^n,
	\qquad
	X_{t-}=X_t
	\]
	for $dt\,d\mathbb P$-almost every $(t,\omega)$. Thus left limits may
	be replaced by present values in all time-integrated $H$-norm
	estimates.
	
	\begin{proposition}
		\label{prop:coefficient-integrand-convergence}
		Under
		\eqref{eq:coefficient-convergence-closedness} and
		\eqref{eq:solution-convergence-closedness},
		\begin{equation}
			\label{eq:coefficient-integrand-L2-convergence}
			\mathbb E\int_0^T\mathfrak E_n(t)\,dt
			\longrightarrow0.
		\end{equation}
		In particular,
		\begin{equation}
			\label{eq:drift-integrand-L2}
			\mathbb E\int_0^T
			|b_n(t,X_t^n,\mu_t^n)-b(t,X_t,\mu_t)|^2\,dt
			\longrightarrow0,
		\end{equation}
		\begin{equation}
			\label{eq:diffusion-integrand-L2}
			\mathbb E\int_0^T
			\|\sigma_n(t,X_t^n,\mu_t^n)
			-\sigma(t,X_t,\mu_t)\|_{\mathrm{HS}}^2\,dt
			\longrightarrow0,
		\end{equation}
		and
		\begin{equation}
			\label{eq:jump-integrand-L2}
			\mathbb E\int_0^T
			\|\overline\beta_n(t,X_{t-}^n,\mu_t^n)
			-\overline\beta(t,X_{t-},\mu_t)\|_H^2\,dt
			\longrightarrow0.
		\end{equation}
	\end{proposition}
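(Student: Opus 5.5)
The plan is a three-$\varepsilon$ argument built around the intermediate core sequence $c^k$ from \eqref{eq:intermediate-core-sequence}. The spatial Lipschitz continuity of $c^k$ controls the state mismatch $X^n$ versus $X$, while the Krylov estimates control the coefficient mismatches $a_n$ versus $c^k$ along $X^n$ and $c^k$ versus $a$ along $X$. Since left limits may be replaced by present values, it is enough to treat the $\mathbb V$-valued error $|\mathbf a_n(t,X^n_t,\mu^n_t)-\mathbf a(t,X_t,\mu_t)|_{\mathbb V}^2$, which dominates $\mathfrak E_n(t)$ up to a constant. For fixed $k$ I insert the two intermediate terms $\mathbf c^k(t,X^n_t,\mu^n_t)$ and $\mathbf c^k(t,X_t,\mu_t)$, which gives
\[
\mathfrak E_n(t)\le 3\bigl(\mathfrak e_{a_n,c^k}(t,X^n_t,\mu^n_t)+|\mathbf c^k(t,X^n_t,\mu^n_t)-\mathbf c^k(t,X_t,\mu_t)|_{\mathbb V}^2+\mathfrak e_{c^k,a}(t,X_t,\mu_t)\bigr)
\]
up to a harmless constant coming from comparing $|\cdot|_{\mathbb V}^2$ with $\mathfrak e$.

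\textbf{Localization.} Fix $R>0$. Every integrand is bounded by $CM^2$, so the contribution of $[\tau_R(X)\wedge\tau_{R+1}(X^n),T]$ is at most $CM^2T\,\mathbb P(\tau_R(X)\wedge\tau_{R+1}(X^n)<T)$. This probability is at most $C(1+\sup_n|x_n|^2)/R^2$ by Proposition~\ref{prop:moment-localization-estimates}(i) together with \eqref{eq:uniform-moment-closedness} and Markov's inequality, uniformly in $n$. On the stopped interval I bound the three terms separately.

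\textbf{The three terms.} For the first term, Proposition~\ref{prop:moment-localization-estimates}(ii), applied with $\theta=\tau_{R+1}(X^n)$, gives the bound $C_{\mathrm K}(R+1)\rho_{T,R+1}(a_n,c^k)^2$. Here $X^n$ solves the core equation with coefficients satisfying (A2)–(A4) with the common constants. The triangle inequality gives $\rho_{T,R+1}(a_n,c^k)\le\rho_{T,R+1}(a_n,a)+\rho_{T,R+1}(a,c^k)$, and $\rho_{T,R+1}(a_n,a)\to0$ because $d_T$-convergence gives $\rho_{T,m}$-convergence for each integer $m\ge R+1$. For the third term, Corollary~\ref{cor:limit-process-coefficient-error} bounds it by $C^{\mathrm{lim}}_{\mathrm K}(R)\rho_{T,R}(c^k,a)^2$. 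For the middle term I use \eqref{eq:intermediate-core-lipschitz} together with \eqref{eq:uniform-law-convergence}, which yield the bound $2L_k^2\bigl(\mathbb E\int_0^T|X^n_t-X_t|^2dt+T\sup_tW_2(\mu^n_t,\mu_t)^2\bigr)\le 4L_k^2T\|X^n-X\|^2_{\mathcal S^2_T}$. This tends to $0$ as $n\to\infty$ for each fixed $k$, so no uniformity in $L_k$ is needed.

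\textbf{Order of limits.} Taking $\limsup_{n\to\infty}$ gives
\[
\limsup_n\mathbb E\int_0^T\mathfrak E_n\,dt\le C\bigl(C_{\mathrm K}(R+1)+C^{\mathrm{lim}}_{\mathrm K}(R)\bigr)\rho_{T,R+1}(a,c^k)^2+C/R^2,
\]
using that $\rho_{T,R}\le\rho_{T,R+1}$. I then let $k\to\infty$, which kills the first term by \eqref{eq:intermediate-core-convergence}, and finally let $R\to\infty$. The only delicate point is this order of limits: $n$ first, with $k$ and $R$ fixed, so that $L_k$ is harmless, then $k$, then $R$. The other point needing care is that the Krylov constant for $X^n$ uses radius $R+1$, which is uniform in $n$ because all $a_n$ share $M$ and $\lambda$. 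The componentwise statements \eqref{eq:drift-integrand-L2}–\eqref{eq:jump-integrand-L2} follow immediately since each is dominated by $\mathfrak E_n$.
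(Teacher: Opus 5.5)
Your proposal is correct and follows essentially the same route as the paper. Both use the three-term decomposition through $c^k$, Krylov bounds for the outer terms (along $X^n$ via Proposition~\ref{prop:moment-localization-estimates}(ii), along $X$ via Corollary~\ref{cor:limit-process-coefficient-error}), the $L_k$-Lipschitz bound for the middle term, the $C/R^2$ exit-probability tail, and the limit order $n\to\infty$, then $k\to\infty$, then $R\to\infty$. The only cosmetic difference is that you stop $X^n$ at radius $R+1$, whereas the paper uses $\theta_R^n=\tau_R(X^n)\wedge\tau_R(X)$; this is harmless but not needed.
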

	
	\begin{proof}
		Fix $R>0$ and define
		\[
		\theta_R^n
		:=
		\tau_R(X^n)\wedge\tau_R(X).
		\]
		We first estimate the error on $[0,\theta_R^n]$.
		
		For each coefficient component, insert the intermediate coefficient
		$c^k$. For example,
		\[
		\begin{aligned}
			&
			b_n(t,X_t^n,\mu_t^n)-b(t,X_t,\mu_t)\\
			=&[b_n-b^k](t,X_t^n,\mu_t^n)+\bigl[	b^k(t,X_t^n,\mu_t^n)
			-b^k(t,X_t,\mu_t)\bigr]
			+[b^k-b](t,X_t,\mu_t),
		\end{aligned}
		\]
		and the same decomposition is used for $\sigma$ and
		$\overline\beta$. Hence,
		\begin{align}
			\mathbb E\int_0^{\theta_R^n}
			\mathfrak E_n(t)\,dt\leq&
			C\mathbb E\int_0^{\theta_R^n}
			\mathfrak e_{a_n,c^k}(t,X_t^n,\mu_t^n)\,dt
			+
			C L_k^2
			\int_0^T
			\left[
			\mathbb E|X_t^n-X_t|^2
			+
			W_2(\mu_t^n,\mu_t)^2
			\right]dt
			\nonumber\\
			&+
			C\mathbb E\int_0^{\theta_R^n}
			\mathfrak e_{c^k,a}(t,X_t,\mu_t)\,dt.
			\label{eq:stopped-three-error}
		\end{align}
		
		By Proposition~\ref{prop:moment-localization-estimates},
		\[
		\mathbb E\int_0^{\theta_R^n}
		\mathfrak e_{a_n,c^k}(t,X_t^n,\mu_t^n)\,dt
		\leq
		C_{\mathrm K}(R)
		\rho_{T,R}(a_n,c^k)^2.
		\]
		By Corollary~\ref{cor:limit-process-coefficient-error},
		\[
		\mathbb E\int_0^{\theta_R^n}
		\mathfrak e_{c^k,a}(t,X_t,\mu_t)\,dt
		\leq
		C_{\mathrm K}^{\mathrm{lim}}(R)
		\rho_{T,R}(c^k,a)^2.
		\]
		Furthermore, \eqref{eq:uniform-law-convergence} gives
		\[
		\begin{aligned}
			&
			L_k^2
			\int_0^T
			\left[
			\mathbb E|X_t^n-X_t|^2
			+
			W_2(\mu_t^n,\mu_t)^2
			\right]dt
			\\
			\leq&
			2TL_k^2
			\|X^n-X\|_{\mathcal S_T^2}^2
			\longrightarrow0
		\end{aligned}
		\]
		for every fixed $k$.
		
		Since
		\[
		\rho_{T,R}(a_n,c^k)
		\leq
		\rho_{T,R}(a_n,a)
		+
		\rho_{T,R}(a,c^k),
		\]
		we obtain, for fixed $k$ and $R$,
		\[
		\limsup_{n\to\infty}
		\rho_{T,R}(a_n,c^k)
		\leq
		\rho_{T,R}(a,c^k).
		\]
		It follows from \eqref{eq:stopped-three-error} that
		\[
		\begin{aligned}
			\limsup_{n\to\infty}
			\mathbb E\int_0^{\theta_R^n}
			\mathfrak E_n(t)\,dt
			\leq
			C\left[
			C_{\mathrm K}(R)
			+
			C_{\mathrm K}^{\mathrm{lim}}(R)
			\right]
			\rho_{T,R}(c^k,a)^2.
		\end{aligned}
		\]
		Since $c^k\to a$ in $d_T$,
		\[
		\rho_{T,R}(c^k,a)\longrightarrow0
		\]
		for every fixed $R$. Letting $k\to\infty$ gives
		\begin{equation}
			\label{eq:stopped-integrand-convergence}
			\mathbb E\int_0^{\theta_R^n}
			\mathfrak E_n(t)\,dt
			\longrightarrow0
		\end{equation}
		for every fixed $R$.
		
		It remains to remove the stopping time. Assumption
		\textnormal{(A2)} implies
		\[
		\mathfrak E_n(t)\leq CM^2
		\]
		for $dt\,d\mathbb P$-almost every $(t,\omega)$. Therefore,
		\[
		\begin{aligned}
			\mathbb E\int_{\theta_R^n}^T
			\mathfrak E_n(t)\,dt
			&\leq
			CM^2T\,\mathbb P(\theta_R^n<T)
			\\
			&\leq
			CM^2T
			\left[
			\mathbb P(\tau_R(X^n)<T)
			+
			\mathbb P(\tau_R(X)<T)
			\right].
		\end{aligned}
		\]
		By \eqref{eq:uniform-moment-closedness} and Markov's inequality,
		\[
		\sup_{n\geq1}
		\mathbb P(\tau_R(X^n)<T)
		+
		\mathbb P(\tau_R(X)<T)
		\leq
		\frac{C}{R^2}.
		\]
		Combining this estimate with
		\eqref{eq:stopped-integrand-convergence} yields
		\[
		\limsup_{n\to\infty}
		\mathbb E\int_0^T\mathfrak E_n(t)\,dt
		\leq
		\frac{C}{R^2}.
		\]
		Letting $R\to\infty$ proves
		\eqref{eq:coefficient-integrand-L2-convergence}. The componentwise
		convergences
		\eqref{eq:drift-integrand-L2}--
		\eqref{eq:jump-integrand-L2}
		follow immediately.
	\end{proof}

	\subsection{Identification of the Limiting Equation}
	
	\begin{proposition}
		\label{prop:integral-term-convergence}
		Under the assumptions of	Proposition~\ref{prop:coefficient-integrand-convergence}, the drift,	Brownian, and compensated Poisson integral processes converge in	$\mathcal S_T^2$. More precisely,
		\begin{equation}
			\label{eq:drift-integral-convergence}
			\mathbb E\left[
			\sup_{0\leq t\leq T}
			\left|
			\int_0^t
			\bigl[
			b_n(s,X_s^n,\mu_s^n)
			-
			b(s,X_s,\mu_s)
			\bigr]\,ds
			\right|^2
			\right]
			\longrightarrow0,
		\end{equation}
		\begin{equation}
			\label{eq:brownian-integral-convergence}
			\mathbb E\left[
			\sup_{0\leq t\leq T}
			\left|
			\int_0^t
			\bigl[
			\sigma_n(s,X_s^n,\mu_s^n)
			-
			\sigma(s,X_s,\mu_s)
			\bigr]\,dW_s
			\right|^2
			\right]
			\longrightarrow0,
		\end{equation}
		and
		\begin{equation}
			\label{eq:poisson-integral-convergence}
			\mathbb E\left[
			\sup_{0\leq t\leq T}
			\left|
			\int_0^t\int_U
			\Gamma_s^n(z)\,
			\widetilde N(ds,dz)
			\right|^2
			\right]
			\longrightarrow0,
		\end{equation}
		where
		\[
		\Gamma_s^n(z)
		:=
		\beta_n(s,X_{s-}^n,\mu_s^n,z)
		-
		\beta(s,X_{s-},\mu_s,z).
		\]
	\end{proposition}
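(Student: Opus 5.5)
The plan is to reduce each of the three convergences to the integrand convergence already established in Proposition~\ref{prop:coefficient-integrand-convergence}, via one elementary inequality per integral type. All integrands are well defined: by Lemma~\ref{lem-Measurability}, applied to $X^n$ with coefficient $a_n$ and to $X$ with coefficient $a$, the drift and diffusion integrands are progressively measurable and the jump integrands are predictable. All are bounded by (A2), which also holds for the representative of $a$ by Theorem~\ref{thm:completion-representation}.

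For the drift term \eqref{eq:drift-integral-convergence}, apply Cauchy--Schwarz in time:
\[
\sup_{0\le t\le T}\left|\int_0^t[b_n(s,X_s^n,\mu_s^n)-b(s,X_s,\mu_s)]\,ds\right|^2\le T\int_0^T|b_n(s,X_s^n,\mu_s^n)-b(s,X_s,\mu_s)|^2\,ds.
\]
Taking expectations and invoking \eqref{eq:drift-integrand-L2} gives the claim. For the Brownian term \eqref{eq:brownian-integral-convergence}, use Doob's $L^2$ maximal inequality (equivalently, BDG with $p=2$) together with the It\^o isometry. This bounds the left-hand side by $4\,\mathbb E\int_0^T\|\sigma_n(s,X_s^n,\mu_s^n)-\sigma(s,X_s,\mu_s)\|_{\mathrm{HS}}^2\,ds$, which tends to zero by \eqref{eq:diffusion-integrand-L2}. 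The difference of the two stochastic integrals equals the stochastic integral of the difference, by linearity, because both integrands are square integrable.

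For the Poisson term \eqref{eq:poisson-integral-convergence}, the one point needing care is that $\Gamma^n_s(z)$ is built from the chosen pointwise representatives $\beta_n$ and $\beta$. First I will note that
\[
(s,\omega,z)\mapsto\beta_n(s,X^n_{s-},\mu^n_s,z)-\beta(s,X_{s-},\mu_s,z)
\]
is $\mathcal P_{\mathrm{pred}}\otimes\mathcal U$-measurable, by Lemma~\ref{lem-Measurability}(iii) applied twice. Next, for every $(s,\omega)$ its $H$-class equals $\overline\beta_n(s,X^n_{s-},\mu^n_s)-\overline\beta(s,X_{s-},\mu_s)$, so $\int_U|\Gamma_s^n(z)|^2\nu(dz)$ is exactly the $H$-norm squared appearing in \eqref{eq:jump-integrand-L2}. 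By linearity of the compensated Poisson integral for square-integrable predictable integrands, the difference of the two jump integrals is the integral of $\Gamma^n$. The maximal estimate \eqref{eq:estimate} then bounds the left-hand side by $C\,\mathbb E\int_0^T\|\overline\beta_n(s,X^n_{s-},\mu^n_s)-\overline\beta(s,X_{s-},\mu_s)\|_H^2\,ds$, which vanishes by \eqref{eq:jump-integrand-L2}.

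I expect no serious obstacle here. The only delicate step is the representative/measurability bookkeeping for the jump integrand, namely checking that the $H$-valued convergence controls the $\nu$-integral of the pointwise difference. This follows directly from the identity $[\beta(t,x,\mu,\cdot)]_H=\overline\beta(t,x,\mu)$ holding for every $(t,x,\mu)$, as in (A1) and Theorem~\ref{thm:completion-representation}(i).
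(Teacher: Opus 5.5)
Your proposal is correct and matches the paper's proof essentially step by step. The drift term is handled by Cauchy--Schwarz, the Brownian term by BDG (your Doob plus It\^o isometry gives the same bound), and the Poisson term by the $\mathcal P_{\mathrm{pred}}\otimes\mathcal U$-measurability of $\Gamma^n$ from Lemma~\ref{lem-Measurability}. For the Poisson term you then use the identity $\int_U|\Gamma^n_s(z)|^2\,\nu(dz)=\|\overline\beta_n(s,X^n_{s-},\mu^n_s)-\overline\beta(s,X_{s-},\mu_s)\|_H^2$ and the Poisson maximal inequality, reducing all three terms to Proposition~\ref{prop:coefficient-integrand-convergence}.
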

	
	\begin{proof}
		By the Cauchy--Schwarz inequality,
		\[
		\begin{aligned}
			&
			\mathbb E\sup_{0\leq t\leq T}
			\left|
			\int_0^t
			\bigl[
			b_n(s,X_s^n,\mu_s^n)
			-
			b(s,X_s,\mu_s)
			\bigr]\,ds
			\right|^2
			\\
			\leq&
			T\mathbb E\int_0^T
			|b_n(s,X_s^n,\mu_s^n)
			-b(s,X_s,\mu_s)|^2\,ds,
		\end{aligned}
		\]
		so \eqref{eq:drift-integral-convergence} follows from
		\eqref{eq:drift-integrand-L2}.
		
		Similarly, the BDG inequality gives
		\[
		\begin{aligned}
			&
			\mathbb E\sup_{0\leq t\leq T}
			\left|
			\int_0^t
			\bigl[
			\sigma_n(s,X_s^n,\mu_s^n)
			-
			\sigma(s,X_s,\mu_s)
			\bigr]\,dW_s
			\right|^2
			\\
			\leq&
			C\mathbb E\int_0^T
			\|\sigma_n(s,X_s^n,\mu_s^n)
			-\sigma(s,X_s,\mu_s)\|_{\mathrm{HS}}^2\,ds,
		\end{aligned}
		\]
		which proves \eqref{eq:brownian-integral-convergence} by
		\eqref{eq:diffusion-integrand-L2}.
		
		By Lemma~\ref{lem-Measurability},
		\(\Gamma^n\) is
		\(\mathcal P_{\mathrm{pred}}\otimes\mathcal U\)-measurable.
		Moreover,
		\[
		\begin{aligned}
			&\mathbb E\int_0^T\int_U
			|\Gamma_s^n(z)|^2\,\nu(dz)\,ds
			\\
			=&
			\mathbb E\int_0^T
			\|
			\overline\beta_n(s,X_{s-}^n,\mu_s^n)
			-
			\overline\beta(s,X_{s-},\mu_s)
			\|_H^2\,ds
			<\infty.
		\end{aligned}
		\]
		Therefore the BDG inequality for compensated Poisson integrals yields
		\[
		\begin{aligned}
			&
			\mathbb E\sup_{0\leq t\leq T}
			\left|
			\int_0^t\int_U
			\Gamma_s^n(z)\,
			\widetilde N(ds,dz)
			\right|^2
			\\
			\leq&
			C\mathbb E\int_0^T
			\|\overline\beta_n(s,X_{s-}^n,\mu_s^n)
			-\overline\beta(s,X_{s-},\mu_s)\|_H^2\,ds.
		\end{aligned}
		\]
		This proves \eqref{eq:poisson-integral-convergence} by
		\eqref{eq:jump-integrand-L2}.
	\end{proof}

	We can now identify the limiting process.
	
	\begin{theorem}
		\label{thm:closed-graph-solution-relation}
		Let
		\[
		a_n\in\mathfrak A_T^{\mathrm{Lip}},
		\qquad
		a\in\mathfrak A_T,
		\qquad
		x_n,x\in\mathbb R^d.
		\]
		Assume that
		\[
		d_T(a_n,a)\longrightarrow0,
		\qquad
		x_n\longrightarrow x,
		\]
		and that
		\[
		X^{a_n,x_n}\longrightarrow X
		\quad\text{in }\mathcal S_T^2
		\]
		for some $X\in\mathcal S_T^2$. Then
		\[
		X\in\operatorname{Sol}_T(a,x).
		\]
		Equivalently, with $\mu_s=\mathcal L(X_s)$,
		\[
		\begin{aligned}
			X_t
			={}&
			x
			+\int_0^t b(s,X_s,\mu_s)\,ds
			+\int_0^t\sigma(s,X_s,\mu_s)\,dW_s
			\\
			&+
			\int_0^t\int_U
			\beta(s,X_{s-},\mu_s,z)\,
			\widetilde N(ds,dz).
		\end{aligned}
		\]
	\end{theorem}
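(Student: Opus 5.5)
The plan is to pass to the limit term by term in the equation satisfied by $X^n:=X^{a_n,x_n}$, using Proposition~\ref{prop:integral-term-convergence}, and then identify the limit equation as an identity of c\`adl\`ag processes. For each $n$, Proposition~\ref{prop:wellposed-core} gives, up to indistinguishability,
\[
X_t^n=x_n+\int_0^t b_n(s,X_s^n,\mu_s^n)\,ds+\int_0^t\sigma_n(s,X_s^n,\mu_s^n)\,dW_s+\int_0^t\int_U\beta_n(s,X_{s-}^n,\mu_s^n,z)\,\widetilde N(ds,dz).
\]
Denote by $I_t$ the right-hand side of the claimed limit equation, built from $a$ and $X$ with $\mu_s=\mathcal L(X_s)$. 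We first check that $I$ is a well-defined element of $\mathcal S_T^2$. The drift and Brownian integrands are progressively measurable and bounded by Lemma~\ref{lem-Measurability}(ii) and (A2). The jump integrand $\beta(s,X_{s-},\mu_s,z)$ is $\mathcal P_{\mathrm{pred}}\otimes\mathcal U$-measurable with $H$-norm at most $M$ by Lemma~\ref{lem-Measurability}(iii). Here $a$ is replaced by an admissible representative from Corollary~\ref{cor}, which is legitimate by Proposition~\ref{prop:rep-equivalence}.

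The key step is to show that $\|X^n-I\|_{\mathcal S_T^2}\to0$. Subtracting the two representations, $\mathbb E\sup_t|X^n_t-I_t|^2$ is bounded by four times the sum of $|x_n-x|^2$ and the three quantities in \eqref{eq:drift-integral-convergence}, \eqref{eq:brownian-integral-convergence}, \eqref{eq:poisson-integral-convergence}. These are exactly the squared $\mathcal S_T^2$-distances between the stochastic integrals for $X^n$ and those for $X$; for the Poisson term we use linearity of the compensated integral in the integrand, which is well defined through \eqref{eq:estimate}. Each quantity tends to zero by Proposition~\ref{prop:integral-term-convergence}, which in turn rests on Proposition~\ref{prop:coefficient-integrand-convergence}. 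Since also $X^n\to X$ in $\mathcal S_T^2$, uniqueness of limits in the Banach space $\mathcal S_T^2$ (processes identified up to indistinguishability) gives $X=I$. Hence $X$ satisfies the equation of Definition~\ref{def} with coefficient $a$ and initial value $x$, and $X\in\operatorname{Sol}_T(a,x)$.

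The main difficulty is already absorbed into Proposition~\ref{prop:coefficient-integrand-convergence}: identifying $b(s,X_s,\mu_s)$ etc.\ as the limit of the integrands when $a$ is merely measurable in space. That identification uses the limiting Krylov estimate of Proposition~\ref{prop:limit-krylov-estimate} together with the intermediate Lipschitz approximants $c^k$.

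What remains delicate here is measure-theoretic. First, the jump integrand for the limit must use $X_{s-}$ and the chosen representative $\beta$; this is handled by Lemma~\ref{lem-Measurability}, and the fact that $X_{s-}=X_s$ for $ds\,d\mathbb P$-a.e.\ $(s,\omega)$ makes the $H$-norm estimates insensitive to left limits. Second, representative-independence of the solution set must be invoked so the conclusion holds at the level of the element $a\in\mathfrak A_T$. Finally, we note that $X$ is adapted and c\`adl\`ag as an $\mathcal S_T^2$-limit, so it qualifies as a candidate strong solution.
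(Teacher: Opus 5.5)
Your proposal is correct and follows essentially the same route as the paper: check via Lemma~\ref{lem-Measurability} that the limiting integrals are well defined, use Proposition~\ref{prop:integral-term-convergence} to pass to the limit term by term in the approximating equations, and identify $X$ with the right-hand side by uniqueness of $\mathcal S_T^2$-limits up to indistinguishability. Your explicit appeal to Proposition~\ref{prop:rep-equivalence} for representative-independence is a harmless addition that the paper leaves implicit.
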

	
	\begin{proof}
		Set
		\[
		X^n:=X^{a_n,x_n},
		\qquad
		\mu_t^n:=\mathcal L(X_t^n),
		\qquad
		\mu_t:=\mathcal L(X_t).
		\]
		For every $n$,
		\begin{align}
			X_t^n
			={}&
			x_n
			+
			\int_0^t b_n(s,X_s^n,\mu_s^n)\,ds
			+
			\int_0^t\sigma_n(s,X_s^n,\mu_s^n)\,dW_s
			\nonumber\\
			&+
			\int_0^t\int_U
			\beta_n(s,X_{s-}^n,\mu_s^n,z)\,
			\widetilde N(ds,dz).
			\label{eq:approximating-equation-closedness}
		\end{align}

		Because $X$ is adapted and c\`adl\`ag, $X$ is progressively
		measurable and $X_{-}$ is predictable. Moreover,
		\[
		W_2\bigl(\mathcal L(X_t),\mathcal L(X_s)\bigr)^2
		\leq
		\mathbb E|X_t-X_s|^2.
		\]
		Hence $t\mapsto\mu_t$ is Borel measurable.	By Lemma~\ref{lem-Measurability}\textnormal{(ii)--(iii)}, the maps
		\[
		(t,\omega)
		\longmapsto
		b(t,X_t(\omega),\mu_t)
		\]
		and
		\[
		(t,\omega)
		\longmapsto
		\sigma(t,X_t(\omega),\mu_t)
		\]
		are progressively measurable. Moreover,
		\[
		(t,\omega,z)
		\longmapsto
		\beta(t,X_{t-}(\omega),\mu_t,z)
		\]
		is
		\(\mathcal P_{\mathrm{pred}}\otimes\mathcal U\)-measurable and
		satisfies
		\[
		\mathbb E\int_0^T\int_U
		|\beta(t,X_{t-},\mu_t,z)|^2\,\nu(dz)\,dt
		\leq M^2T.
		\]
		Thus all limiting integrals are well defined.

		By Proposition~\ref{prop:integral-term-convergence}, the three
		integral processes in
		\eqref{eq:approximating-equation-closedness} converge in
		$\mathcal S_T^2$ to their counterparts associated with
		$(a,X,\mu)$. Together with
		\[
		X^n\longrightarrow X
		\quad\text{in }\mathcal S_T^2,
		\qquad
		x_n\longrightarrow x,
		\]
		this yields
		\begin{equation}
			\label{eq:limit-equation-S2}
			\begin{aligned}
				X_\cdot
				={}&
				x
				+\int_0^\cdot b(s,X_s,\mu_s)\,ds
				+\int_0^\cdot\sigma(s,X_s,\mu_s)\,dW_s
				\\
				&+
				\int_0^\cdot\int_U
				\beta(s,X_{s-},\mu_s,z)\,
				\widetilde N(ds,dz)
			\end{aligned}
		\end{equation}
		in $\mathcal S_T^2$.
		
		Since both sides admit c\`adl\`ag versions, the identity holds up to
		indistinguishability, simultaneously for all $t\in[0,T]$. Therefore,
		\[
		X\in\operatorname{Sol}_T(a,x).
		\]
	\end{proof}
	
	\begin{corollary}
		\label{cor:closed-core-solution-map}
		For fixed $x\in\mathbb R^d$, let
		\[
		\Phi_{T,x}(c)=X^{c,x},
		\qquad
		c\in\mathfrak A_T^{\mathrm{Lip}}.
		\]
		If
		\[
		c_n\in\mathfrak A_T^{\mathrm{Lip}},
		\qquad
		c_n\longrightarrow a\in\mathfrak A_T
		\quad\text{in }d_T,
		\]
		and
		\[
		\Phi_{T,x}(c_n)\longrightarrow X
		\quad\text{in }\mathcal S_T^2,
		\]
		then
		\[
		X\in\operatorname{Sol}_T(a,x).
		\]
		Thus the closure of the graph of $\Phi_{T,x}$ is contained in the
		graph of the strong-solution relation
		\[
		a\longmapsto\operatorname{Sol}_T(a,x).
		\]
	\end{corollary}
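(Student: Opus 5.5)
This is a direct specialization of Theorem~\ref{thm:closed-graph-solution-relation}, so I plan to invoke that theorem rather than redo any analysis. Take the constant sequence of initial points $x_n:=x$ for every $n$. Then $x_n\to x$ holds trivially.

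Set $a_n:=c_n\in\mathfrak A_T^{\mathrm{Lip}}$. The hypotheses give $d_T(a_n,a)\to0$ with $a\in\mathfrak A_T$. Moreover, $X^{a_n,x_n}=X^{c_n,x}=\Phi_{T,x}(c_n)$ is exactly the unique core solution supplied by Proposition~\ref{prop:wellposed-core}, and by assumption it converges to $X$ in $\mathcal S_T^2$. Every hypothesis of Theorem~\ref{thm:closed-graph-solution-relation} is therefore met, and it yields $X\in\operatorname{Sol}_T(a,x)$.

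For the graph formulation, I regard the graph of $\Phi_{T,x}$ as a subset of $\mathfrak A_T\times\mathcal S_T^2$ with the product metric. Take a point $(a,X)$ in its closure. Since both factors are metric spaces, there is a sequence $(c_n,\Phi_{T,x}(c_n))$ converging to $(a,X)$. This means $c_n\to a$ in $d_T$ and $\Phi_{T,x}(c_n)\to X$ in $\mathcal S_T^2$, so the first part applies and gives $X\in\operatorname{Sol}_T(a,x)$. Hence the closure of the graph is contained in the graph of $a\mapsto\operatorname{Sol}_T(a,x)$.

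One consistency point needs checking. By Proposition~\ref{prop:rep-equivalence}, $\operatorname{Sol}_T(a,x)$ does not depend on the admissible representative of $a$, so the conclusion is well defined at the level of elements of $\mathfrak A_T$. Beyond this, the argument involves no real obstacle.
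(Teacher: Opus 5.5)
Your proposal is correct and matches the paper: the corollary is stated as an immediate consequence of Theorem~\ref{thm:closed-graph-solution-relation} with the constant initial sequence $x_n=x$. This is exactly how the paper later invokes it when verifying (G3). Your sequential description of the graph closure in the metric space $\mathfrak A_T\times\mathcal S_T^2$, and the representative-independence remark via Proposition~\ref{prop:rep-equivalence}, are both sound.
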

	
	\begin{remark}
		The closed-graph property yields existence once a sequence of core
		solutions is shown to converge. It does not imply uniqueness of the
		limiting equation, which will instead follow from the
		stable-neighborhood property and the Baire category argument.
	\end{remark}

	\section{Generic Strong Well-Posedness}
	\label{sec:generic-strong-wellposedness}
	
	We first isolate the abstract Baire-category mechanism used in the
	proof. The result is included to distinguish the general topological
	extension argument from the stochastic estimates needed to verify its
	hypotheses.
	
	\subsection{A generic canonical-extension principle for the McKean--Vlasov solution relation}
	
	\begin{theorem}
		\label{thm:abstract-generic-canonical-extension}
		Let $(E,d_E)$ and $(Z,d_Z)$ be complete metric spaces, let
		$D\subset E$ be dense, and let $\mathsf X$ be a metric space admitting
		a proper compact exhaustion
		\[
		K_1\subset K_2\subset\cdots,
		\qquad
		K_m\subset\operatorname{int}_{\mathsf X}K_{m+1}
		\quad(m\geq1),
		\qquad
		\cup_{m\geq1}K_m=\mathsf X.
		\]
		Here $C_{\mathrm{loc}}(\mathsf X;Z)$ denotes the space of continuous
		maps from $\mathsf X$ to $Z$, endowed with the topology of uniform
		convergence on compact subsets (equivalently, on each $K_m$ of the
		fixed exhaustion). Suppose that
		\[
		\Phi:D\longrightarrow C_{\mathrm{loc}}(\mathsf X;Z)
		\]
		is a single-valued map and that
		\[
		\mathfrak S:E\times\mathsf X\rightrightarrows Z
		\]
		is a possibly empty and possibly set-valued relation. Let
		$(\varepsilon_n)_{n\geq1}$ be a positive sequence with
		$\varepsilon_n\downarrow0$. Assume the following.
		
		\begin{enumerate}
			\item[\textnormal{(G1)}]
			For every $c\in D$ and $x\in\mathsf X$,
			\[
			\mathfrak S(c,x)=\big\{\Phi(c)(x)\big\}.
			\]
			
			\item[\textnormal{(G2)}]
			For every $c\in D$ and $m,n\in\mathbb N$, there exists
			$r_{m,n}(c)>0$ such that, for every $a\in E$, $x\in K_m$, and
			$z\in\mathfrak S(a,x)$,
			\[
			d_E(a,c)<r_{m,n}(c)
			\]
			imply
			\[
			d_Z\bigl(z,\Phi(c)(x)\bigr)<\varepsilon_n.
			\]
			
			\item[\textnormal{(G3)}]
			If $c_k\in D$, $c_k\to a$ in $E$, $x\in\mathsf X$, and
			\[
			\Phi(c_k)(x)\longrightarrow z
			\qquad\text{in }Z,
			\]
			then
			\[
			z\in\mathfrak S(a,x).
			\]
		\end{enumerate}
		
		For $m,n\in\mathbb N$, define
		\begin{equation}
			\label{eq:abstract-generic-open-dense-set}
			\mathcal G_{m,n}
			:=
			\cup_{c\in D}
			B_{d_E}\left(c,\frac12r_{m,n}(c)\right),
		\end{equation}
		and set
		\begin{equation}
			\label{eq:abstract-generic-residual-set}
			\mathcal R
			:=
			\cap_{m=1}^{\infty}
			\cap_{n=1}^{\infty}
			\mathcal G_{m,n}.
		\end{equation}
		Then $\mathcal R$ is dense and residual in $E$. Moreover, for every
		$a\in\mathcal R$ there exists a unique map
		\[
		\overline\Phi(a)
		\in
		C_{\mathrm{loc}}(\mathsf X;Z)
		\]
		such that the following statements hold.
		
		\begin{enumerate}
			\item[\textnormal{(i)}]
			For every sequence $c_k\in D$ with $c_k\to a$ in $E$ and every
			$m\in\mathbb N$,
			\[
			\sup_{x\in K_m}
			d_Z\bigl(\Phi(c_k)(x),\overline\Phi(a)(x)\bigr)
			\longrightarrow0.
			\]
			
			\item[\textnormal{(ii)}]
			For every $x\in\mathsf X$,
			\[
			\mathfrak S(a,x)=\big\{\overline\Phi(a)(x)\big\}.
			\]
		\end{enumerate}
		Thus $\Phi$ has a unique sequential extension at every point of
		$\mathcal R$, locally uniformly in the parameter $x$.
	\end{theorem}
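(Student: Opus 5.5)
The argument has three parts: the sets $\mathcal G_{m,n}$ are open and dense, so the Baire category theorem makes $\mathcal R$ dense and residual; membership in every $\mathcal G_{m,n}$ makes $(\Phi(c_k))$ uniformly Cauchy on each $K_m$ along any approximating sequence; and (G2)–(G3) identify the limit as the unique element of $\mathfrak S(a,x)$.

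\textbf{Openness, density, residuality.} Each $\mathcal G_{m,n}$ is a union of open balls, so it is open. It contains $D$, since every $c\in D$ lies in its own ball, and $D$ is dense, so $\mathcal G_{m,n}$ is dense. As $E$ is complete, the Baire category theorem shows that the countable intersection $\mathcal R$ is dense and residual.

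\textbf{Key estimate.} Fix $a\in\mathcal R$ and $m,n$, and choose $c=c_{m,n}\in D$ with $d_E(a,c)<\tfrac12 r_{m,n}(c)$. Let $c_k\in D$ with $c_k\to a$. For $k$ large we have $d_E(c_k,c)<r_{m,n}(c)$. By (G1), $\Phi(c_k)(x)\in\mathfrak S(c_k,x)$, so (G2) applied to the pair $(c_k,c)$ gives
\[
\sup_{x\in K_m}d_Z\bigl(\Phi(c_k)(x),\Phi(c)(x)\bigr)\le\varepsilon_n .
\]
Hence for large $k,l$ the triangle inequality yields $\sup_{K_m}d_Z(\Phi(c_k)(x),\Phi(c_l)(x))\le 2\varepsilon_n$. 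Since $n$ is arbitrary, the sequence $(\Phi(c_k)|_{K_m})$ is uniformly Cauchy. By completeness of $Z$, it converges uniformly on each $K_m$ to a limit, and this limit is continuous as a uniform limit of continuous maps. The exhaustion property $K_m\subset\operatorname{int}K_{m+1}$ makes it continuous on all of $\mathsf X$: every point has a neighbourhood inside some $K_{m+1}$, on which the limit is continuous. Call the limit $\overline\Phi(a)$.

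\textbf{Independence of the sequence.} If $(c_k)$ and $(\tilde c_k)$ are two approximating sequences, then for large $k$ both are within $\varepsilon_n$ of $\Phi(c_{m,n})$ uniformly on $K_m$. So the two limits differ by at most $2\varepsilon_n$ on $K_m$, for every $n$, and therefore coincide. This proves (i) and the uniqueness of $\overline\Phi(a)$. A sequence $c_k\to a$ in $D$ always exists because $D$ is dense.

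\textbf{Identification of $\mathfrak S(a,x)$.} For (ii), (G3) gives $\overline\Phi(a)(x)\in\mathfrak S(a,x)$. Conversely, let $z\in\mathfrak S(a,x)$ and take $m$ with $x\in K_m$. Apply (G2) with the point $a$ itself: since $d_E(a,c_{m,n})<\tfrac12 r_{m,n}(c_{m,n})<r_{m,n}(c_{m,n})$, we get $d_Z(z,\Phi(c_{m,n})(x))<\varepsilon_n$. We also need $d_Z(\overline\Phi(a)(x),\Phi(c_{m,n})(x))\le\varepsilon_n$. This follows from the key estimate by passing to the limit along any approximating sequence $c_k\to a$. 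Combining the two bounds gives $d_Z(z,\overline\Phi(a)(x))\le 2\varepsilon_n\to0$, so $z=\overline\Phi(a)(x)$.

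The only delicate point is the bookkeeping that makes the radii work. The factor $\tfrac12$ in the definition of $\mathcal G_{m,n}$ guarantees that both $a$ and all sufficiently late $c_k$ lie inside the (G2) ball of the same centre $c_{m,n}$; this single centre serves both the uniform Cauchy argument and the uniqueness comparison.
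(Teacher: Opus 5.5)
Your proof is correct and follows the paper's argument. It uses Baire category for the open dense sets $\mathcal G_{m,n}$, a uniform Cauchy estimate on $K_m$ through a single centre $c_{m,n}$, (G3) for existence, and (G2) applied at $a$ for uniqueness. The one minor difference is that you get independence of the approximating sequence directly from the common-centre $2\varepsilon_n$ bound, whereas the paper derives it from (G3) together with the singleton property of $\mathfrak S(a,x)$; both routes work.
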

	
\begin{proof}
	Each \(\mathcal G_{m,n}\) is open and contains \(D\), hence is dense.
	Since \(E\) is complete, the Baire category theorem implies that
	\(\mathcal R\) is dense and residual.
	
	Fix \(a\in\mathcal R\) and a sequence \(c_k\in D\) with
	\(c_k\to a\). Fix \(m\in\mathbb N\). We first show that
	\((\Phi(c_k)|_{K_m})_{k\geq1}\) is Cauchy in \(C(K_m;Z)\).
	Let \(\eta>0\) and choose \(n\) so large that
	\(2\varepsilon_n<\eta\). Since \(a\in\mathcal G_{m,n}\), there exists
	\(c_{m,n}\in D\) such that
	\[
	d_E(a,c_{m,n})
	<
	\frac12 r_{m,n}(c_{m,n}).
	\]
	For all sufficiently large \(k\),
	\[
	d_E(c_k,c_{m,n})
	<
	r_{m,n}(c_{m,n}).
	\]
	By \textnormal{(G1)},
	\(\Phi(c_k)(x)\in\mathfrak S(c_k,x)\), and hence
	\textnormal{(G2)} gives
	\[
	\sup_{x\in K_m}
	d_Z\bigl(\Phi(c_k)(x),\Phi(c_{m,n})(x)\bigr)
	<
	\varepsilon_n
	\]
	for all sufficiently large \(k\). Therefore, for sufficiently large
	\(k,\ell\),
	\[
	\sup_{x\in K_m}
	d_Z\bigl(\Phi(c_k)(x),\Phi(c_\ell)(x)\bigr)
	<
	2\varepsilon_n
	<
	\eta.
	\]
	
	Since \(Z\) is complete, \(C(K_m;Z)\) is complete under the uniform
	metric. Thus there exists
	\[
	\Phi_m^a\in C(K_m;Z)
	\]
	such that
	\[
	\Phi(c_k)|_{K_m}
	\longrightarrow
	\Phi_m^a
	\]
	uniformly. If \(m\leq\ell\), then
	\(\Phi_m^a=\Phi_\ell^a|_{K_m}\), since both are limits of
	\(\Phi(c_k)|_{K_m}\). Hence the family
	\((\Phi_m^a)_{m\geq1}\) defines a map
	\[
	\overline\Phi(a):\mathsf X\longrightarrow Z.
	\]
	
	This map is continuous. Indeed, if \(x\in\mathsf X\), choose \(j\)
	with \(x\in K_j\). By the proper-exhaustion condition,
	\[
	x\in\operatorname{int}_{\mathsf X}K_{j+1},
	\]
	and on this neighborhood
	\(\overline\Phi(a)=\Phi_{j+1}^a\). Therefore
	\[
	\overline\Phi(a)\in C_{\mathrm{loc}}(\mathsf X;Z).
	\]
	
	Fix \(x\in\mathsf X\) and choose \(m\) with \(x\in K_m\). Then
	\[
	\Phi(c_k)(x)
	\longrightarrow
	\overline\Phi(a)(x).
	\]
	By \textnormal{(G3)},
	\[
	\overline\Phi(a)(x)\in\mathfrak S(a,x),
	\]
	so \(\mathfrak S(a,x)\) is nonempty.
	
	To prove uniqueness, let
	\(y,z\in\mathfrak S(a,x)\), and fix \(m\) such that \(x\in K_m\).
	For every \(n\), since \(a\in\mathcal G_{m,n}\), choose
	\(c_{m,n}\in D\) satisfying
	\[
	d_E(a,c_{m,n})
	<
	\frac12r_{m,n}(c_{m,n}).
	\]
	Assumption \textnormal{(G2)} then yields
	\[
	d_Z\bigl(y,\Phi(c_{m,n})(x)\bigr)
	<
	\varepsilon_n,
	\qquad
	d_Z\bigl(z,\Phi(c_{m,n})(x)\bigr)
	<
	\varepsilon_n.
	\]
	Hence
	\[
	d_Z(y,z)<2\varepsilon_n.
	\]
	Letting \(n\to\infty\) gives \(y=z\). Thus
	\[
	\mathfrak S(a,x)
	=
	\{\overline\Phi(a)(x)\}.
	\]
	
	Finally, let \((\widetilde c_k)\subset D\) be any other sequence with
	\(\widetilde c_k\to a\). Repeating the preceding Cauchy argument shows
	that \(\Phi(\widetilde c_k)\) converges locally uniformly to some
	\(\widetilde\Phi\in C_{\mathrm{loc}}(\mathsf X;Z)\). By
	\textnormal{(G3)},
	\[
	\widetilde\Phi(x)\in\mathfrak S(a,x)
	\qquad
	\text{for every }x\in\mathsf X.
	\]
	The singleton property therefore gives
	\[
	\widetilde\Phi=\overline\Phi(a).
	\]
	This proves the sequence-independent convergence in
	\textnormal{(i)}.
\end{proof}
	
	\begin{remark}
		\label{rem:abstract-extension-role}
		Theorem~\ref{thm:abstract-generic-canonical-extension} is used only as
		a topological extension mechanism. The substantive stochastic work in
		the present paper is the verification of \textnormal{(G2)} by the
		occupation-stable, set-valued estimate and of \textnormal{(G3)} by the
		closedness theorem for low-regularity jump equations.
	\end{remark}

	\begin{proof}[Proof of Theorem~\ref{thm:generic-main-fixed-horizon}]
	We apply Theorem~\ref{thm:abstract-generic-canonical-extension} with
	\[
	E=\mathfrak A_T,
	\qquad
	D=\mathfrak A_T^{\mathrm{Lip}},
	\qquad
	\mathsf X=\mathbb R^d,
	\qquad
	K_m=\overline B_m,
	\qquad
	Z=\mathcal S_T^2.
	\]
	The closed balls form a proper compact exhaustion because
	\[
	\overline B_m\subset B_{m+1}
	=\operatorname{int}_{\mathbb R^d}\overline B_{m+1}.
	\]
	Define also
	\[
	\Phi(c)(x):=X^{c,x},
	\qquad
	\mathcal S(a,x):=\operatorname{Sol}_T(a,x).
	\]
	Corollary~\ref{cor:core-initial-state-continuity} gives
	$\Phi(c)\in C_{\mathrm{loc}}(\mathbb R^d;\mathcal S_T^2)$.
	Proposition~\ref{prop:wellposed-core} verifies
	\textnormal{(G1)}. Corollary~\ref{cor:uniform-stability-compact-initial-set}
	verifies \textnormal{(G2)} with the sequence
	$\varepsilon_n$ from \eqref{eq:uniform-generic-epsilon}. Finally,
	Theorem~\ref{thm:closed-graph-solution-relation}, applied with a
	constant initial-value sequence, verifies \textnormal{(G3)}.
	
	For later use, write the open dense sets in this application as
	\begin{equation}
		\label{eq:uniform-Gmn-definition}
		G_{m,n}
		:=
		\cup_{c\in\mathfrak A_T^{\mathrm{Lip}}}
		B_{d_T}\left(c,\frac12r_{m,n}(c)\right),
	\end{equation}
	and define
	\begin{equation}
		\label{eq:initial-state-independent-residual-set}
		\mathfrak R_T
		:=
		\cap_{m=1}^{\infty}
		\cap_{n=1}^{\infty}
		G_{m,n}.
	\end{equation}
	The abstract theorem shows that $\mathfrak R_T$ is dense and residual
	and supplies a canonical map
	\[
	\Phi_T(a)\in C_{\mathrm{loc}}(\mathbb R^d;\mathcal S_T^2),
	\qquad
	\Phi_T(a)(x)=X^{a,x},
	\]
	for every $a\in\mathfrak R_T$. It also gives, for every
	$a_k\in\mathfrak A_T^{\mathrm{Lip}}$ with $d_T(a_k,a)\to0$,
	\[
	\sup_{x\in\overline B_m}
	\|X^{a_k,x}-X^{a,x}\|_{\mathcal S_T^2}
	\longrightarrow0
	\]
	for every $m$, independently of the approximating sequence, and
	\[
	\operatorname{Sol}_T(a,x)=\{X^{a,x}\}
	\qquad
	\text{for every }x\in\mathbb R^d.
	\]
	This proves Theorem~\ref{thm:generic-main-fixed-horizon}.
\end{proof}
	\begin{remark}
		\label{rem:initial-state-independent-generic-set}
		The residual set $\mathfrak R_T$ depends on the time horizon $T$ but
		not on the initial state. Thus no countable intersection over a dense
		subset of $\mathbb R^d$ is required. The conclusion is stronger than
		pointwise convergence in the initial state: every Lipschitz-core
		approximation converges locally uniformly in $x$, with values in
		$\mathcal S_T^2$.
	\end{remark}
	
	\subsection{The global-time extension}
	
	We now pass to the global-time coefficient space. For integers
	$1\leq m<\ell$, define on the Lipschitz core
	\[
	r_{m,\ell}^{\mathrm{Lip}}c
	:=
	c|_{[0,m]}.
	\]
	By the definition of the finite-horizon metrics,
	\begin{equation}
		\label{eq:restriction-metric-contraction}
		d_m\bigl(
		r_{m,\ell}^{\mathrm{Lip}}c,
		r_{m,\ell}^{\mathrm{Lip}}\widetilde c
		\bigr)
		\leq
		d_\ell(c,\widetilde c),
		\qquad
		c,\widetilde c\in\mathfrak A_\ell^{\mathrm{Lip}}.
	\end{equation}
	Hence $r_{m,\ell}^{\mathrm{Lip}}$ extends uniquely to a continuous map
	\[
	r_{m,\ell}:\mathfrak A_\ell\longrightarrow\mathfrak A_m.
	\]
	
	\begin{proposition}
		\label{prop:restriction-projective-completeness}
		The restriction maps satisfy
		\[
		r_{j,m}\circ r_{m,\ell}
		=
		r_{j,\ell},
		\qquad
		1\leq j<m<\ell,
		\]
		and the projective-limit space
		$\mathfrak A_{\mathrm{loc}}$ defined in
		\eqref{eq:local-coefficient-projective-limit} is complete under
		$d_{\mathrm{loc}}$.
	\end{proposition}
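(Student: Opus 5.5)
The plan has two parts: first the cocycle identity for the restriction maps, then completeness of the projective limit, reducing both to properties of the maps $r_{m,\ell}$ and of the complete spaces $(\mathfrak A_m,d_m)$.

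\textbf{Cocycle identity.} On the Lipschitz core the identity is trivial. For $c\in\mathfrak A_\ell^{\mathrm{Lip}}$, restricting $c$ first to $[0,m]$ and then to $[0,j]$ gives the same triple as $c|_{[0,j]}$. Hence
\[
r^{\mathrm{Lip}}_{j,m}\circ r^{\mathrm{Lip}}_{m,\ell}=r^{\mathrm{Lip}}_{j,\ell}
\quad\text{on }\mathfrak A_\ell^{\mathrm{Lip}}.
\]
Note also that $r^{\mathrm{Lip}}_{m,\ell}$ maps $\mathfrak A_\ell^{\mathrm{Lip}}$ into $\mathfrak A_m^{\mathrm{Lip}}$, because (A1)--(A4) and the spatial Lipschitz bound survive restriction in time. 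By \eqref{eq:restriction-metric-contraction}, both $r_{j,m}\circ r_{m,\ell}$ and $r_{j,\ell}$ are $1$-Lipschitz maps $\mathfrak A_\ell\to\mathfrak A_j$. They agree on the dense set $\mathfrak A_\ell^{\mathrm{Lip}}$, so they agree everywhere.

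One small point deserves a line of justification: the contraction \eqref{eq:restriction-metric-contraction} itself. For every $R$ we have $\rho_{m,R}(c|_{[0,m]},\widetilde c|_{[0,m]})\le\rho_{\ell,R}(c,\widetilde c)$, since $Q_{m,R}\subset Q_{\ell,R}$. Summing over $R$ term by term gives the inequality.

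\textbf{Completeness.} Let $(a^k)_{k\ge1}$ be $d_{\mathrm{loc}}$-Cauchy in $\mathfrak A_{\mathrm{loc}}$. For a fixed $m$, the $m$-th summand gives
\[
2^{-m}\bigl(1\wedge d_m(a^k_m,a^{k'}_m)\bigr)\le d_{\mathrm{loc}}(a^k,a^{k'}).
\]
Once the right-hand side is below $2^{-m}$, this forces $d_m(a^k_m,a^{k'}_m)\le 2^m d_{\mathrm{loc}}(a^k,a^{k'})$. So each $(a^k_m)_k$ is $d_m$-Cauchy, and it converges to some $a_m\in\mathfrak A_m$ by completeness of the completion.

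Compatibility of the limit comes from continuity of the restriction maps. For $m<\ell$,
\[
r_{m,\ell}(a_\ell)=\lim_k r_{m,\ell}(a^k_\ell)=\lim_k a^k_m=a_m,
\]
so $a:=(a_m)_m\in\mathfrak A_{\mathrm{loc}}$.

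Finally, $d_{\mathrm{loc}}(a^k,a)\to0$ by dominated convergence in the series defining $d_{\mathrm{loc}}$. Each term $2^{-m}\bigl(1\wedge d_m(a^k_m,a_m)\bigr)$ tends to zero and is bounded by $2^{-m}$.

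\textbf{Where care is needed.} The main subtlety, and it is a mild one, is interpretive: ``restriction'' of an abstract completion element is \emph{defined} as the continuous extension. Every identity therefore has to be checked on the core and transported by density and continuity, not argued with representatives. If one instead works with the measurable representatives of Theorem~\ref{thm:completion-representation}, one should check that the extension coincides with literal restriction of a representative. That holds because $d_m(c|_{[0,m]},a|_{[0,m]})\le d_\ell(c,a)$ also holds for the extended pseudodistance. This consistency check is not needed for the proposition itself.
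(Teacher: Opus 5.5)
Your proof is correct and follows essentially the same route as the paper. You check the compatibility identity on the Lipschitz core and extend it by density and continuity, and you get completeness from componentwise limits in the complete spaces $\mathfrak A_m$, with compatibility of the limit from continuity of $r_{m,\ell}$ and convergence of the series for $d_{\mathrm{loc}}$ (your dominated-convergence step is the paper's split into leading terms and a small tail). You also supply two short justifications the paper leaves implicit: the contraction inequality via $Q_{m,R}\subset Q_{\ell,R}$, and the factor $2^m$ showing that each component sequence is $d_m$-Cauchy.
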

	
	\begin{proof}
		The compatibility identity holds on the Lipschitz core and therefore
		on the completions by continuity.
		
		Let $(a^k)$ be $d_{\mathrm{loc}}$-Cauchy. For every $m$,
		$(\pi_m(a^k))$ is Cauchy in the complete space $\mathfrak A_m$;
		write
		\[
		\pi_m(a^k)\longrightarrow a_m.
		\]
		For $m<\ell$, continuity of the restriction maps gives
		\[
		r_{m,\ell}(a_\ell)
		=
		\lim_{k\to\infty}
		r_{m,\ell}\bigl(\pi_\ell(a^k)\bigr)
		=
		\lim_{k\to\infty}\pi_m(a^k)
		=
		a_m.
		\]
		Thus $a=(a_m)_{m\geq1}\in\mathfrak A_{\mathrm{loc}}$.
		For each fixed  \(m\), we have
		\[
		d_m(a^k,a)\longrightarrow0.
		\]
		Splitting the defining series for \(d_{\mathrm{loc}}\) into finitely
		many leading terms and a uniformly small summable tail then yields
		\[
		d_{\mathrm{loc}}(a^k,a)\longrightarrow0.
		\]
		
	\end{proof}

	\begin{proposition}
		\label{prop:global-lipschitz-core-density}
		The set
		\[
		\mathfrak A_{\mathrm{loc}}^{\mathrm{Lip}}
		\]
		is non-empty and dense in
		\((\mathfrak A_{\mathrm{loc}},d_{\mathrm{loc}})\).
		More precisely, for every
		\(a\in\mathfrak A_{\mathrm{loc}}\) there exists a sequence
		\[
		c^{(N)}\in\mathfrak A_{\mathrm{loc}}^{\mathrm{Lip}}
		\]
		such that
		\[
		d_{\mathrm{loc}}(c^{(N)},a)\longrightarrow0.
		\]
	\end{proposition}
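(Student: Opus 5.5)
The plan is a diagonal construction: approximate the $N$-th component $a_N$ by a Lipschitz-core coefficient on $[0,N]$, then extend it to all of $[0,\infty)$ by freezing time, which keeps it in the core. Non-emptiness is immediate. The constant triple $(0,\sqrt{2\lambda}\,I_d,0)$, regarded on $[0,\infty)$, restricts on each $[0,m]$ to an element of $\mathfrak A_m^{\mathrm{Lip}}$. These restrictions are compatible under the $r_{m,\ell}^{\mathrm{Lip}}$, so the family is an element of $\mathfrak A_{\mathrm{loc}}^{\mathrm{Lip}}$.

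For density, fix $a=(a_m)_{m\geq1}\in\mathfrak A_{\mathrm{loc}}$ and $N\in\mathbb N$.

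\textbf{Step 1 (approximation on $[0,N]$).} Since $\mathfrak A_N^{\mathrm{Lip}}$ is dense in $\mathfrak A_N$, choose $c_N=(b,\sigma,\overline\beta)\in\mathfrak A_N^{\mathrm{Lip}}$ with $d_N(c_N,a_N)<2^{-N}$.

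\textbf{Step 2 (time-frozen extension).} Define the global triple
\[
c^{(N)}(t,x,\mu):=\mathbf c_N\bigl(t\wedge N,x,\mu\bigr),\qquad t\geq0.
\]
The map $(t,x,\mu)\mapsto(t\wedge N,x,\mu)$ is Borel, so joint measurability is preserved. The jointly measurable jump representative is composed in the same way. The bound \eqref{eq:uniform-bound-framework}, the law-Lipschitz bound \eqref{eq:measure-lipschitz-framework}, ellipticity \eqref{eq:uniform-ellipticity-framework} and the spatial Lipschitz bound with constant $L_x(c_N)$ all hold pointwise, since they hold at every time in $[0,N]$. Hence the restriction of $c^{(N)}$ to $[0,m]$ lies in $\mathfrak A_m^{\mathrm{Lip}}$ for every $m$. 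These restrictions are compatible under $r^{\mathrm{Lip}}_{m,\ell}$, and $r_{m,\ell}$ extends $r^{\mathrm{Lip}}_{m,\ell}$. Therefore $c^{(N)}\in\mathfrak A_{\mathrm{loc}}^{\mathrm{Lip}}$, with $\pi_m(c^{(N)})=r^{\mathrm{Lip}}_{m,N}c_N$ for $m<N$ and $\pi_N(c^{(N)})=c_N$.

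\textbf{Step 3 (estimate).} The contraction \eqref{eq:restriction-metric-contraction} passes by continuity to the completions: $d_m(r_{m,N}u,r_{m,N}v)\leq d_N(u,v)$ for $u,v\in\mathfrak A_N$. Using $a_m=r_{m,N}(a_N)$, we get for $m\leq N$
\[
d_m\bigl(\pi_m(c^{(N)}),a_m\bigr)=d_m\bigl(r_{m,N}c_N,\,r_{m,N}a_N\bigr)\leq d_N(c_N,a_N)<2^{-N}.
\]
For $m>N$ we only use the trivial bound $1\wedge d_m\leq1$. Then
\[
d_{\mathrm{loc}}(c^{(N)},a)\leq\sum_{m\leq N}2^{-m}2^{-N}+\sum_{m>N}2^{-m}\leq 2^{1-N}\longrightarrow0 .
\]

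The only step I expect to need care is the identification of $\pi_m(c^{(N)})$ with $r_{m,N}(c_N)$ inside the completion. This comes down to the fact that $r_{m,N}$ agrees with literal restriction on the core, and that inequality \eqref{eq:restriction-metric-contraction} survives completion by density and continuity of both sides.
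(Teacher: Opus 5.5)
Your proof is correct and follows the same argument as the paper. You approximate $a_N$ by an element of $\mathfrak A_N^{\mathrm{Lip}}$, extend it beyond time $N$ inside the core, use the restriction contraction for $m\le N$, and use the $2^{-N}$ tail bound for $m>N$. The one difference is cosmetic: the paper uses the constant coefficient $(0,\sqrt{2\lambda}\,I_d,0)$ for $t>N$, while you freeze time at $N$, and both work because no regularity in time is required.
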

	
	\begin{proof}
		The constant coefficient
		\[
		a_\ast:=(0,\sqrt{2\lambda}\,I_d,0)
		\]
		defines an element of
		\(\mathfrak A_{\mathrm{loc}}^{\mathrm{Lip}}\), so the locally
		Lipschitz core is non-empty.
		
		Fix \(a=(a_m)_{m\geq1}\in\mathfrak A_{\mathrm{loc}}\) and
		\(\varepsilon>0\). Choose \(N\) so large that
		\[
		\sum_{m>N}2^{-m}=2^{-N}<\frac{\varepsilon}{2}.
		\]
		Since \(\mathfrak A_N^{\mathrm{Lip}}\) is dense in
		\(\mathfrak A_N\), choose
		\[
		\widetilde a_N\in\mathfrak A_N^{\mathrm{Lip}}
		\]
		such that
		\[
		d_N(\widetilde a_N,a_N)<\frac{\varepsilon}{2}.
		\]
		Choose an admissible Lipschitz representative of \(\widetilde a_N\), and define
		a coefficient on \([0,\infty)\) by
		\[
		c^{(N)}(t,x,\mu)
		:=
		\begin{cases}
			\widetilde a_N(t,x,\mu),&0\leq t\leq N,\\
			a_\ast(t,x,\mu),&t>N.
		\end{cases}
		\]
		No continuity in the time variable is required. Therefore, for every
		\(m\), the restriction of \(c^{(N)}\) to \([0,m]\) belongs to
		\(\mathfrak A_m^{\mathrm{Lip}}\), and hence
		\[
		c^{(N)}\in\mathfrak A_{\mathrm{loc}}^{\mathrm{Lip}}.
		\]
		
		For \(m\leq N\), the contraction property of the restriction maps
		gives
		\[
		d_m\bigl(\pi_m(c^{(N)}),a_m\bigr)
		\leq
		d_N(\widetilde a_N,a_N).
		\]
		Consequently,
		\[
		\begin{aligned}
			d_{\mathrm{loc}}(c^{(N)},a)
			&\leq
			\sum_{m=1}^{N}
			2^{-m}
			\left(
			1\wedge d_N(\widetilde a_N,a_N)
			\right)
			+
			\sum_{m>N}2^{-m}
			\\
			&\leq
			d_N(\widetilde a_N,a_N)+2^{-N}
			\\
			&<\varepsilon.
		\end{aligned}
		\]
		This proves density.
	\end{proof}
	
	\begin{proposition}
		\label{prop:global-core-wellposedness}
		Let
		\[
		c\in\mathfrak A_{\mathrm{loc}}^{\mathrm{Lip}}
		\]
		and \(x\in\mathbb R^d\). Then there exists a unique global strong
		solution
		\[
		X^{c,x}\in\mathcal S_{\mathrm{loc}}^2.
		\]
		For every integer \(m\geq1\),
		\[
		X^{c,x}|_{[0,m]}
		=
		X^{\pi_m(c),x}
		\]
		up to indistinguishability.
	\end{proposition}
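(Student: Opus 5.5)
The plan is to build the global solution by gluing the finite-horizon solutions of the restricted equations. Consistency on overlaps will come from pathwise uniqueness on each finite horizon. The one subtlety is that the McKean--Vlasov nonlinearity enters only through the marginal laws $\mathcal L(X_t)$ at times $t\le m$, so restriction does not interfere with the law dependence.

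\textbf{Step 1: finite-horizon solutions.} For each integer $m\geq1$, $\pi_m(c)\in\mathfrak A_m^{\mathrm{Lip}}$. Proposition~\ref{prop:wellposed-core}, applied with $T=m$, therefore yields a unique strong solution $X^{(m)}:=X^{\pi_m(c),x}\in\mathcal S_m^2$. Here we fix the pointwise Lipschitz representative of $c$ on $[0,\infty)$, so that $\pi_m(c)$ is represented by its restriction to $[0,m]$. On the core, $r_{m,\ell}=r^{\mathrm{Lip}}_{m,\ell}$ is genuine restriction.

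\textbf{Step 2: consistency.} For $m<\ell$, consider $X^{(\ell)}|_{[0,m]}$. It is adapted, càdlàg and in $\mathcal S_m^2$. For $t\le m$, its marginal $\mathcal L(X^{(\ell)}_t)$ is the same whether or not the process is viewed on $[0,m]$ or on $[0,\ell]$. The equation on $[0,\ell]$, read for $t\in[0,m]$, involves only coefficient values at times $s\le m$. Hence $X^{(\ell)}|_{[0,m]}$ solves $\mathcal E_m(\pi_m(c),x)$. By Proposition~\ref{prop:rep-equivalence}, the choice of representative of $\pi_m(c)$ is immaterial. Uniqueness on $[0,m]$ then gives $X^{(\ell)}|_{[0,m]}=X^{(m)}$ up to indistinguishability.

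\textbf{Step 3: gluing.} Take a single null set outside which all the countably many identities from Step 2 hold simultaneously. On its complement, define $X_t:=X^{(m)}_t$ for any $m\ge t$. This is well defined, adapted and càdlàg, and $X|_{[0,T]}\in\mathcal S_T^2$ for every $T$, taking $m\ge T$. Hence $X\in\mathcal S^2_{\mathrm{loc}}$. For each $t$, the global equation at time $t$ coincides with the $m$-horizon equation for $m\ge t$, so $X$ is a global strong solution. The restriction identity $X|_{[0,m]}=X^{\pi_m(c),x}$ holds by construction.

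\textbf{Step 4: uniqueness.} If $\widetilde X\in\mathcal S^2_{\mathrm{loc}}$ is another global solution, the argument of Step 2 shows that $\widetilde X|_{[0,m]}$ solves $\mathcal E_m(\pi_m(c),x)$. Hence $\widetilde X|_{[0,m]}=X^{(m)}$ for every $m$, and intersecting these countably many events gives indistinguishability on $[0,\infty)$.

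\textbf{Main obstacle.} The only delicate point is the bookkeeping in Step 2. One must check that restricting a horizon-$\ell$ solution really produces a horizon-$m$ solution, which holds because the law argument is the time-$s$ marginal with $s\le m$ and the stochastic integrals are adapted. One must also check that the restriction map on the completion agrees with pointwise restriction of representatives on the core. Both follow directly from the definitions. Everything else reduces to Proposition~\ref{prop:wellposed-core}.
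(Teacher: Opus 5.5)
Your proposal is correct and follows essentially the same route as the paper: you take the finite-horizon solutions from Proposition~\ref{prop:wellposed-core}, get consistency on overlaps via Proposition~\ref{prop:rep-equivalence} and pathwise uniqueness on the core, glue outside a single null set, and obtain uniqueness by restricting to each $[0,m]$. One point you gloss over is that a single pointwise Lipschitz representative of $c$ on $[0,\infty)$ has to be assembled from Lipschitz representatives of the $\pi_m(c)$, which agree only up to null sets; since you also invoke Proposition~\ref{prop:rep-equivalence}, this does not affect the argument.
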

	
	\begin{proof}
		For each \(m\geq1\), Proposition~\ref{prop:wellposed-core} gives a
		unique strong solution
		\[
		X^{(m)}:=X^{\pi_m(c),x}\in\mathcal S_m^2.
		\]
		If \(m<\ell\), then \(X^{(\ell)}|_{[0,m]}\) is a strong solution
		associated with
		\[
		r_{m,\ell}(\pi_\ell(c))=\pi_m(c).
		\]
		By Proposition~\ref{prop:rep-equivalence} and pathwise uniqueness on
		the Lipschitz core,
		\[
		X^{(\ell)}|_{[0,m]}=X^{(m)}
		\]
		up to indistinguishability. Choosing versions that agree on overlapping time intervals gives an adapted c\`adl\`ag process
		\[
		X^{c,x}\in\mathcal S_{\mathrm{loc}}^2.
		\]
		It satisfies the equation on every finite time interval. Any other
		global solution agrees with \(X^{c,x}\) on every interval
		\([0,m]\), which proves global pathwise uniqueness.
	\end{proof}

	\begin{proposition}
		\label{prop:compatible-global-representatives}
		Every
		\[
		a=(a_m)_{m\geq1}\in\mathfrak A_{\mathrm{loc}}
		\]
		admits admissible representatives $\widehat a_m$ satisfying
		\[
		\widehat a_\ell(t,x,\mu)
		=
		\widehat a_m(t,x,\mu),
		\qquad
		0\leq t\leq m<\ell.
		\]
		They therefore define a jointly measurable coefficient on
		$[0,\infty)$, unique up to local null-set equivalence.
		
		Moreover, for every $m\geq1$ and $\gamma_m\in\mathfrak A_m$, there exists
		$\widehat a\in\mathfrak A_{\mathrm{loc}}$ such that
		\[
		\pi_m(\widehat a)=\gamma_m,
		\qquad
		d_{\mathrm{loc}}(\widehat a,a)
		\leq
		d_m\bigl(\gamma_m,\pi_m(a)\bigr).
		\]
	\end{proposition}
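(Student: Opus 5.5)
The plan is to realise each $a_m$ by a concrete measurable representative via Theorem~\ref{thm:completion-representation}, then glue these representatives into a single coefficient. Two facts drive the argument. First, for a core coefficient $c\in\mathfrak A_\ell^{\mathrm{Lip}}$ the restriction $r_{m,\ell}c$ is literally $c|_{[0,m]}$. Second, $\rho_{m,R}(\gamma|_{[0,m]},\widetilde\gamma|_{[0,m]})\le\rho_{\ell,R}(\gamma,\widetilde\gamma)$ holds for arbitrary admissible triples, not only for core ones.

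\textbf{Compatible representatives.} Fix admissible representatives $\widetilde a_\ell$ of $a_\ell$ for every $\ell$. Choose core sequences $c^{\ell,k}\to a_\ell$ in $d_\ell$. Then $d_\ell(c^{\ell,k},\widetilde a_\ell)\to0$ by Theorem~\ref{thm:completion-representation}(v), and by the contraction property $d_m(c^{\ell,k}|_{[0,m]},\widetilde a_\ell|_{[0,m]})\to0$. On the other hand, $c^{\ell,k}|_{[0,m]}=r_{m,\ell}c^{\ell,k}\to r_{m,\ell}a_\ell=a_m$ in $d_m$. Hence $d_m(\widetilde a_\ell|_{[0,m]},\widetilde a_m)=0$. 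By Step~5 of Theorem~\ref{thm:completion-representation}, $\widetilde a_\ell$ and $\widetilde a_m$ therefore coincide, for all $\mu$, outside a $dt\,dx$-null set $\mathcal N_{m,\ell}\subset[0,m]\times\mathbb R^d$.

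Now define
\[
\widehat a(t,x,\mu):=\widetilde a_{\lceil t\rceil\vee1}(t,x,\mu)\quad\text{off }\mathcal N:=\cup_{m<\ell}\mathcal N_{m,\ell},\qquad \widehat a:=a_\ast\ \text{on }\mathcal N,
\]
where $a_\ast=(0,\sqrt{2\lambda}I_d,0)$, and set $\widehat a_m:=\widehat a|_{[0,m]}$. This gives exact equality $\widehat a_\ell=\widehat a_m$ on $[0,m]$, and $\widehat a_m$ differs from $\widetilde a_m$ only on a null set, so it still represents $a_m$. Measurability holds because $\widehat a$ is a countable Borel pasting. (A1)--(A4) are preserved pointwise because $a_\ast$ is admissible, and the jump representative comes from Lemma~\ref{lem:measurable-H-representative}. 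Uniqueness up to local null sets follows once more from Step~5.

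\textbf{Extension with prescribed $m$-th component.} Given $\gamma_m\in\mathfrak A_m$ with representative $g$, define $\widehat a(t)=g(t)$ for $t\le m$ and $\widehat a(t)=$ the glued coefficient of $a$ for $t>m$. We must show that for each $\ell>m$ this time-pasted triple lies in $\mathfrak A_\ell$. Take core $g_k\to g$ in $d_m$ and core $c_k\to a_\ell$ in $d_\ell$, and paste them in time: $g_k$ on $[0,m]$ and $c_k$ on $(m,\ell]$. The pasted triple remains spatially Lipschitz, since no time continuity is needed, exactly as in Proposition~\ref{prop:global-lipschitz-core-density}. Because $\rho^{q_\ast}$ is additive over time intervals, the pasted sequence converges in every $\rho_{\ell,R}$ to the pasted limit.

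Compatibility of the resulting family is immediate from the construction. For $j\le m$ one has $\pi_j(\widehat a)=r_{j,m}\gamma_m$. For $j>m$, additivity gives
\[
\rho_{j,R}(\widehat a,a)^{q_\ast}=\rho_{m,R}(\gamma_m,a_m)^{q_\ast},
\]
so $d_j(\pi_j\widehat a,a_j)\le d_m(\gamma_m,a_m)$ for all $j$, and also for $j\le m$ by the contraction property. Summing with the weights $2^{-j}$ yields $d_{\mathrm{loc}}(\widehat a,a)\le d_m(\gamma_m,\pi_m a)$.

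\textbf{Main obstacle.} The delicate point is the first step: identifying $r_{m,\ell}$ on completed elements with literal time restriction of representatives. The contraction estimate \eqref{eq:restriction-metric-contraction} is stated only on the core. I would extend it to admissible triples directly from the definition of $\rho$, which is a pure monotonicity in the time domain, before running the argument above.
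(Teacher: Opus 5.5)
Your proposal is correct and follows essentially the same route as the paper. You obtain compatible representatives by altering the given representatives on a countable union of Borel null sets, directly via $t\mapsto\widetilde a_{\lceil t\rceil\vee1}(t)$ rather than by the paper's induction. You then splice $\gamma_m$ with the tail of $a$, verify membership in each $\mathfrak A_j$ by pasting core approximants in time, and derive the distance bound from the contraction property for $j\le m$ and from coincidence on $(m,j]$ for $j>m$, exactly as the paper does. Your explicit check that $r_{m,\ell}$ acts on completed elements as literal time restriction of representatives is a step the paper uses only tacitly when it invokes Theorem~\ref{thm:completion-representation}, so it is a welcome addition rather than a deviation.
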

	
\begin{proof}
	Choose an admissible representative \(\widehat a_1\) of \(a_1\).
	Suppose that \(\widehat a_m\) has been chosen, and let
	\(\widetilde a_{m+1}\) be an admissible representative of \(a_{m+1}\).
	Since
	\[
	r_{m,m+1}(a_{m+1})=a_m,
	\]
	Theorem~\ref{thm:completion-representation} implies that
	\(\widetilde a_{m+1}|_{[0,m]}\) and \(\widehat a_m\) agree, for every
	\(\mu\in\mathcal P_2(\mathbb R^d)\), outside a Borel
	\((dt,dx)\)-null set. Replacing
	\(\widetilde a_{m+1}\) on this null set by \(\widehat a_m\) therefore
	produces an admissible representative \(\widehat a_{m+1}\) satisfying
	\[
	\widehat a_{m+1}|_{[0,m]}=\widehat a_m.
	\]
	Induction yields the required compatible family. Any two such families
	represent the same element \(a_m\) on every finite interval \([0,m]\).
	Hence Theorem~\ref{thm:completion-representation} also gives uniqueness
	up to local \((dt,dx)\)-null-set equivalence.
	
	We now prove the second assertion. Choose compatible representatives of
	\(a\), denoted again by \(a(t,x,\mu)\), and let
	\(\widetilde\gamma_m\) be an admissible representative of
	\(\gamma_m\). Define
	\[
	c(t,x,\mu)
	:=
	\begin{cases}
		\widetilde\gamma_m(t,x,\mu), & 0\leq t\leq m,\\
		a(t,x,\mu), & t>m.
	\end{cases}
	\]
	
	We claim that \(c|_{[0,j]}\) represents an element of
	\(\mathfrak A_j\) for every \(j\geq1\). If \(j\leq m\), this follows
	directly from
	\[
	c|_{[0,j]}
	=
	r_{j,m}(\gamma_m).
	\]
	If \(j>m\), choose
	\[
	\gamma_m^{(k)}\in\mathfrak A_m^{\mathrm{Lip}},
	\qquad
	\widehat{\gamma}_j^{(k)}\in\mathfrak A_j^{\mathrm{Lip}},
	\]
	such that
	\[
	d_m(\gamma_m^{(k)},\gamma_m)\to0,
	\qquad
	d_j(\widehat{\gamma}_j^{(k)},a_j)\to0.
	\]
	Define
	\[
	c_j^{(k)}(t,x,\mu)
	:=
	\begin{cases}
		\gamma_m^{(k)}(t,x,\mu), & 0\leq t\leq m,\\
		\widehat{\gamma}_j^{(k)}(t,x,\mu), & m<t\leq j.
	\end{cases}
	\]
	Since no continuity in the time variable is required,
	\(c_j^{(k)}\in\mathfrak A_j^{\mathrm{Lip}}\). Moreover, from the
	definition of the metrics,
	\[
	d_j\bigl(c_j^{(k)},c|_{[0,j]}\bigr)
	\leq
	d_m(\gamma_m^{(k)},\gamma_m)
	+
	d_j(\widehat{\gamma}_j^{(k)},a_j)
	\longrightarrow0.
	\]
	Thus \(c|_{[0,j]}\) represents an element of \(\mathfrak A_j\).
	
	The family
	\[
	\bigl(c|_{[0,j]}\bigr)_{j\geq1}
	\]
	is compatible under restriction and therefore defines an element
	\(\widehat a\in\mathfrak A_{\mathrm{loc}}\). By construction,
	\[
	\pi_m(\widehat a)=\gamma_m.
	\]
	Furthermore, for every \(j\geq1\),
	\[
	d_j\bigl(\pi_j(\widehat a),\pi_j(a)\bigr)
	\leq
	d_m\bigl(\gamma_m,\pi_m(a)\bigr):
	\]
	for \(j\leq m\), this follows from the contraction property of the
	restriction maps, while for \(j>m\) the two representatives coincide
	on \((m,j]\). Hence
	\[
	\begin{aligned}
		d_{\mathrm{loc}}(\widehat a,a)
		&=
		\sum_{j=1}^{\infty}
		2^{-j}
		\left(
		1\wedge
		d_j\bigl(\pi_j(\widehat a),\pi_j(a)\bigr)
		\right)
		\\
		&\leq
		\sum_{j=1}^{\infty}
		2^{-j}
		\left(
		1\wedge
		d_m\bigl(\gamma_m,\pi_m(a)\bigr)
		\right)
		\\
		&\leq
		d_m\bigl(\gamma_m,\pi_m(a)\bigr).
	\end{aligned}
	\]
	This proves the claim.
\end{proof}
	
	For every $j,m,n\geq1$, let
	\[
	G_{m,n}^{[j]}\subset\mathfrak A_j
	\]
	denote the open dense set defined by
	\eqref{eq:uniform-Gmn-definition} for the time horizon $T=j$, the
	initial-state ball $\overline B_m$, and the  index $n$.
	Define
	\begin{equation}
		\label{eq:lifted-global-open-set}
		\mathcal G_{j,m,n}
		:=
		\pi_j^{-1}\bigl(G_{m,n}^{[j]}\bigr).
	\end{equation}
	
\begin{lemma}
	\label{lem:global-lifted-open-dense}
	Each \(\mathcal G_{j,m,n}\) is open and dense in
	\(\mathfrak A_{\mathrm{loc}}\).
\end{lemma}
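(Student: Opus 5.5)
Openness is immediate once we know that $\pi_j:\mathfrak A_{\mathrm{loc}}\to\mathfrak A_j$ is continuous, because $G_{m,n}^{[j]}$ is open in $\mathfrak A_j$. The set $G_{m,n}^{[j]}$ is a union of open $d_j$-balls, see \eqref{eq:uniform-Gmn-definition}. For continuity I will use the defining series \eqref{eq:local-global-metric}. If $d_{\mathrm{loc}}(a,\widetilde a)<2^{-j}\eta$ with $\eta<1$, then the $j$-th summand gives
\[
2^{-j}\bigl(1\wedge d_j(\pi_j(a),\pi_j(\widetilde a))\bigr)<2^{-j}\eta,
\]
and hence $d_j(\pi_j(a),\pi_j(\widetilde a))<\eta$. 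So $\pi_j$ is Lipschitz on small scales, in particular continuous. This is the same mechanism used for $r_{m,n}(c)=2^{-J}\delta$ in Corollary~\ref{cor:uniform-stability-compact-initial-set}. Therefore $\mathcal G_{j,m,n}=\pi_j^{-1}(G_{m,n}^{[j]})$ is open.

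For density, fix $a\in\mathfrak A_{\mathrm{loc}}$ and $\varepsilon>0$. We need some $\widehat a\in\mathcal G_{j,m,n}$ with $d_{\mathrm{loc}}(\widehat a,a)<\varepsilon$.
\begin{enumerate}
\item The proof of Theorem~\ref{thm:generic-main-fixed-horizon} shows that $G_{m,n}^{[j]}$ is dense in $\mathfrak A_j$, since it is open and contains $\mathfrak A_j^{\mathrm{Lip}}$. So we may choose $\gamma_j\in G_{m,n}^{[j]}$ with $d_j(\gamma_j,\pi_j(a))<\varepsilon$. The choice $\gamma_j\in\mathfrak A_j^{\mathrm{Lip}}$ also works.
\item Apply the second assertion of Proposition~\ref{prop:compatible-global-representatives} with $m$ replaced by $j$. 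This yields $\widehat a\in\mathfrak A_{\mathrm{loc}}$ with $\pi_j(\widehat a)=\gamma_j\in G_{m,n}^{[j]}$, so $\widehat a\in\mathcal G_{j,m,n}$.
\item The same proposition gives $d_{\mathrm{loc}}(\widehat a,a)\le d_j(\gamma_j,\pi_j(a))<\varepsilon$, which proves density.
\end{enumerate}

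The main point needing care is step 2: the extension must genuinely lie in the projective limit, so that the restrictions are compatible and each $c|_{[0,\ell]}$ belongs to $\mathfrak A_\ell$. This is exactly what Proposition~\ref{prop:compatible-global-representatives} supplies, so here it is only invoked.

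Alternatively, density follows from Proposition~\ref{prop:global-lipschitz-core-density}. It suffices to show $\mathfrak A_{\mathrm{loc}}^{\mathrm{Lip}}\subset\mathcal G_{j,m,n}$. This holds because $\pi_j(c)\in\mathfrak A_j^{\mathrm{Lip}}\subset G_{m,n}^{[j]}$, where the last inclusion uses that each core point is the centre of one of the balls defining $G_{m,n}^{[j]}$. I would present this shorter route and mention the first as a check.
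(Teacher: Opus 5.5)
Your proposal is correct. Your first density argument is exactly the paper's proof: choose $\gamma_j\in G^{[j]}_{m,n}$ close to $\pi_j(a)$, then lift it with the second assertion of Proposition~\ref{prop:compatible-global-representatives}. Your openness argument simply makes explicit the continuity of $\pi_j$, which the paper only asserts.

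The route you say you would actually present is genuinely different. You observe that $\mathfrak A_{\mathrm{loc}}^{\mathrm{Lip}}\subset\mathcal G_{j,m,n}$, since for $c\in\mathfrak A_{\mathrm{loc}}^{\mathrm{Lip}}$ the point $\pi_j(c)\in\mathfrak A_j^{\mathrm{Lip}}$ is the centre of one of the balls in \eqref{eq:uniform-Gmn-definition}. You then invoke the density of the global core from Proposition~\ref{prop:global-lipschitz-core-density}.

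This mirrors the finite-horizon reasoning ``open and contains $D$, hence dense.'' It needs only the easy pasting of a Lipschitz approximant on $[0,N]$ with $a_\ast$ on $(N,\infty)$, which never leaves the Lipschitz core. For this lemma it therefore bypasses the more delicate lifting assertion of Proposition~\ref{prop:compatible-global-representatives}. That assertion requires pasting a representative of an arbitrary element of $\mathfrak A_j$ onto a representative of $a$ and checking that every restriction lies in the corresponding completion.

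What the paper's route buys in exchange is generality. The contractive lifting shows that $\pi_j^{-1}(G)$ is dense for every dense $G\subset\mathfrak A_j$, without using that $G^{[j]}_{m,n}$ contains the core. It is thus a property of $\pi_j$ itself rather than of the particular open sets. Your shorter route is specific to sets containing $\mathfrak A_j^{\mathrm{Lip}}$, which is all that is needed here.
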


\begin{proof}
	Since \(G_{m,n}^{[j]}\) is open in \(\mathfrak A_j\) and
	\[
	\pi_j:\mathfrak A_{\mathrm{loc}}\longrightarrow\mathfrak A_j
	\]
	is continuous, the set
	\[
	\mathcal G_{j,m,n}
	=
	\pi_j^{-1}\bigl(G_{m,n}^{[j]}\bigr)
	\]
	is open in \(\mathfrak A_{\mathrm{loc}}\).
	
	To prove density, let
	\[
	a\in\mathfrak A_{\mathrm{loc}},
	\qquad
	\delta>0.
	\]
	Since \(G_{m,n}^{[j]}\) is dense in \(\mathfrak A_j\), choose
	\[
	\gamma_j\in G_{m,n}^{[j]}
	\]
	such that
	\[
	d_j\bigl(\gamma_j,\pi_j(a)\bigr)<\delta.
	\]
	By Proposition~\ref{prop:compatible-global-representatives}, there exists
	\[
	\widehat a\in\mathfrak A_{\mathrm{loc}}
	\]
	such that
	\[
	\pi_j(\widehat a)=\gamma_j
	\]
	and
	\[
	d_{\mathrm{loc}}(\widehat a,a)
	\leq
	d_j\bigl(\gamma_j,\pi_j(a)\bigr)
	<
	\delta.
	\]
	Since
	\[
	\pi_j(\widehat a)
	=
	\gamma_j
	\in
	G_{m,n}^{[j]},
	\]
	we have
	\[
	\widehat a
	\in
	\pi_j^{-1}\bigl(G_{m,n}^{[j]}\bigr)
	=
	\mathcal G_{j,m,n}.
	\]
Since \(a\in\mathfrak A_{\mathrm{loc}}\) and \(\delta>0\) were arbitrary, \(\mathcal G_{j,m,n}\) is dense in \(\mathfrak A_{\mathrm{loc}}\).
\end{proof}
	
	Define
	\begin{equation}
		\label{eq:global-residual-set}
		\mathfrak R^{\mathrm{loc}}
		:=
		\cap_{j=1}^{\infty}
		\cap_{m=1}^{\infty}
		\cap_{n=1}^{\infty}
		\mathcal G_{j,m,n}.
	\end{equation}
	
	\begin{proof}[Proof of Corollary~\ref{cor:global-generic-main}]
		By Proposition~\ref{prop:restriction-projective-completeness},
		$\mathfrak A_{\mathrm{loc}}$ is complete. Hence
		Lemma~\ref{lem:global-lifted-open-dense} and the Baire category
		theorem show that $\mathfrak R^{\mathrm{loc}}$ is dense and
		residual.
		
		Fix
		\[
		a\in\mathfrak R^{\mathrm{loc}}.
		\]
		By Proposition~\ref{prop:compatible-global-representatives}, choose
		admissible representatives
		\[
		\widehat a_j
		\quad\text{of}\quad
		\pi_j(a),
		\qquad j\geq1,
		\]
		such that, whenever $1\leq j<\ell$,
		\begin{equation}
			\label{eq:compatible-global-coefficient-representatives}
			\widehat a_\ell|_{[0,j]}=\widehat a_j
		\end{equation}
		pointwise on
		\[
		[0,j]\times\mathbb R^d\times\mathcal P_2(\mathbb R^d).
		\]
		
		For every fixed $j\geq1$, the definition
		\eqref{eq:global-residual-set} gives
		\[
		a\in\mathcal G_{j,m,n}
		\qquad
		\text{for all }m,n\geq1.
		\]
		Using \eqref{eq:lifted-global-open-set}, we obtain
		\[
		\pi_j(a)\in G_{m,n}^{[j]}
		\qquad
		\text{for all }m,n\geq1.
		\]
		Consequently, by the finite-horizon definition
		\eqref{eq:initial-state-independent-residual-set}, applied with
		$T=j$,
		\begin{equation}
			\label{eq:global-projection-in-finite-residual-set}
			\pi_j(a)
			\in
			\cap_{m=1}^{\infty}
			\cap_{n=1}^{\infty}
			G_{m,n}^{[j]}
			=
			\mathfrak R_j.
		\end{equation}
		
		Fix an arbitrary $x\in\mathbb R^d$. By
		Theorem~\ref{thm:generic-main-fixed-horizon}, for every $j\geq1$
		there exists a unique strong solution
		\[
		X^{(j),x}
		\in
		\operatorname{Sol}_j(\widehat a_j,x)
		\subset
		\mathcal S_j^2.
		\]
		By Proposition~\ref{prop:rep-equivalence}, this solution set depends
		only on $\pi_j(a)\in\mathfrak A_j$ and not on the particular
		admissible representative $\widehat a_j$.
		
		Let $1\leq j<\ell$. Restricting the equation satisfied by
		$X^{(\ell),x}$ to $[0,j]$ and using
		\eqref{eq:compatible-global-coefficient-representatives}, we obtain
		\[
		X^{(\ell),x}|_{[0,j]}
		\in
		\operatorname{Sol}_j(\widehat a_j,x).
		\]
		Finite-horizon pathwise uniqueness from
		Theorem~\ref{thm:generic-main-fixed-horizon} therefore gives
		\begin{equation}
			\label{eq:compatible-global-solutions-all-initial}
			X^{(\ell),x}|_{[0,j]}=X^{(j),x}
		\end{equation}
		up to indistinguishability.
		
		Since the collection of pairs $(j,\ell)$ with $1\leq j<\ell$ is
		countable, we may choose versions such that
		\eqref{eq:compatible-global-solutions-all-initial} holds
		simultaneously outside a single $\mathbb P$-null set. Define
		\[
		X_t^{a,x}:=X_t^{(j),x}
		\qquad
		\text{for any integer }j\geq t.
		\]
		The compatibility shows that this definition is independent of $j$.
		The resulting process is adapted and c\`adl\`ag, and
		\[
		X^{a,x}|_{[0,j]}=X^{(j),x}\in\mathcal S_j^2
		\qquad
		\text{for every }j\geq1.
		\]
		Thus $X^{a,x}\in\mathcal S_{\mathrm{loc}}^2$, and the compatible
		representatives $(\widehat a_j)_{j\geq1}$ show that it satisfies the
		global equation on every finite time interval.
		
		If $Y\in\mathcal S_{\mathrm{loc}}^2$ is any other global strong
		solution associated with $a$ and initial value $x$, then for every
		$j\geq1$,
		\[
		Y|_{[0,j]}\in\operatorname{Sol}_j(\widehat a_j,x).
		\]
		Theorem~\ref{thm:generic-main-fixed-horizon} gives
		\[
		Y|_{[0,j]}=X^{a,x}|_{[0,j]}
		\]
		up to indistinguishability. Taking the countable intersection of
		the corresponding probability-one events proves global pathwise
		uniqueness. Since $x$ was arbitrary and
		$\mathfrak R^{\mathrm{loc}}$ is independent of $x$, existence and
		uniqueness hold simultaneously for all deterministic initial states.
		
		We next prove continuity with respect to the initial state. Fix a
		finite $T>0$ and choose an integer $j\geq T$. By
		Theorem~\ref{thm:generic-main-fixed-horizon}, the map
		\[
		x\longmapsto X^{(j),x}
		\]
		is continuous from $\mathbb R^d$ to $\mathcal S_j^2$. Since the
		restriction map from $\mathcal S_j^2$ to $\mathcal S_T^2$ is a
		contraction and
		\[
		X^{a,x}|_{[0,T]}=X^{(j),x}|_{[0,T]},
		\]
		the map $x\mapsto X^{a,x}|_{[0,T]}$ is continuous with values in
		$\mathcal S_T^2$.
		
		Finally, let
		\[
		a_k\in\mathfrak A_{\mathrm{loc}}^{\mathrm{Lip}},
		\qquad
		d_{\mathrm{loc}}(a_k,a)\longrightarrow0.
		\]
		Fix a finite $T>0$, an integer $j\geq T$, and $m\in\mathbb N$.
		The continuity of $\pi_j$ gives
		\[
		d_j\bigl(\pi_j(a_k),\pi_j(a)\bigr)\longrightarrow0.
		\]
		By Proposition~\ref{prop:global-core-wellposedness},
		\[
		X^{a_k,x}|_{[0,j]}=X^{\pi_j(a_k),x},
		\]
		while the construction above gives
		\[
		X^{a,x}|_{[0,j]}=X^{\pi_j(a),x}.
		\]
		Applying
		Theorem~\ref{thm:generic-main-fixed-horizon}\textnormal{(iii)} on
		$[0,j]$, we obtain
		\[
		\sup_{x\in\overline B_m}
		\|X^{a_k,x}-X^{a,x}\|_{\mathcal S_j^2}
		\longrightarrow0.
		\]
		Since the $\mathcal S_T^2$-norm is bounded by the
		$\mathcal S_j^2$-norm, this proves
		\eqref{eq:global-uniform-initial-state-convergence}. The independence
		of the approximating sequence follows from the corresponding
		finite-horizon assertion.
	\end{proof}
	
	\appendix
	
	\section{Global convex regularization of the Aleksandrov test function}
	\label{app:aleksandrov-regularization}
	
	This appendix constructs the smooth global test functions used in
	Section~\ref{sec:krylov-ito-levy}. The only non-elementary analytic
	input is Theorem~2 of Krylov's parabolic convex-potential construction
	\cite{Krylov-1976}; see also \cite[Chapter~2]{Krylov-1980}. After
	time reversal, that theorem supplies a bounded spatially convex
	potential, monotone in time, whose standard space--time regularizations
	satisfy the determinant-weighted differential inequality used below.
	We isolate explicitly which properties are imported from the classical
	theorem; the global convex extension and the final regularization are
	proved here. The remaining steps use standard facts from
	finite-dimensional convex analysis and distribution theory; see
	\cite[Sections~10 and~23]{Rockafellar-1970} and
	\cite[Chapter~4]{Hormander-2003}.
	
	\begin{lemma}
		\label{lem:global-aleksandrov-regularization}
		Let $T,R>0$, and let
		\[
		f\in C_c^\infty((0,T)\times B_R),
		\qquad f\geq0.
		\]
		Let $\widetilde f$ denote the zero extension of $f$ to
		$\mathbb R\times\mathbb R^d$. Then there exist constants
		$\kappa_d>0$ and $C_{d,T,R}>0$, a sequence
		$\delta_n\downarrow0$, and functions
		\[
		u^n\in C^\infty(\mathbb R\times\mathbb R^d),
		\qquad
		f^n\in C_c^\infty(\mathbb R\times\mathbb R^d),
		\qquad
		f^n\geq0,
		\]
		such that the following properties hold:
		\begin{enumerate}
			\item[\textnormal{(i)}]
			for every $t\in\mathbb R$, the function
			$x\mapsto u^n(t,x)$ is convex on $\mathbb R^d$;
			
			\item[\textnormal{(ii)}]
			$f^n\to\widetilde f$ uniformly on
			$\mathbb R\times\mathbb R^d$;
			
			\item[\textnormal{(iii)}]
			for every symmetric non-negative definite matrix
			$A\in\mathbb R^{d\times d}$,
			\begin{equation}
				\label{eq:appendix-smoothed-aleksandrov-inequality}
				\partial_tu^n(t,x)
				+
				\operatorname{tr}\!\left(A D^2u^n(t,x)\right)
				\geq
				\kappa_d(\det A)^{1/(d+1)}f^n(t,x)
			\end{equation}
			for all $(t,x)\in[0,T]\times B_R$;
			
			\item[\textnormal{(iv)}]
			uniformly in $n$,
			\begin{equation}
				\label{eq:appendix-global-growth-bound}
				|u^n(t,x)|
				\leq
				C_{d,T,R}
				\|f\|_{L^{d+1}((0,T)\times B_R)}
				(1+|x|),
			\end{equation}
			and
			\begin{equation}
				\label{eq:appendix-global-gradient-bound}
				|\nabla u^n(t,x)|
				\leq
				C_{d,T,R}
				\|f\|_{L^{d+1}((0,T)\times B_R)}
			\end{equation}
			for all $(t,x)\in\mathbb R\times\mathbb R^d$.
		\end{enumerate}
	\end{lemma}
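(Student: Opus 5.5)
The plan is to import the parabolic convex potential from \cite[Theorem~2]{Krylov-1976} (see also \cite[Chapter~2]{Krylov-1980}), extend it globally by a convex-analytic construction, and then mollify in space--time. After the time reversal $t\mapsto T-t$, the imported theorem should supply, for the cylinder $(0,T)\times B_{2R}$ say, a function $w$ on $[0,T]\times\overline B_{2R}$ with the following four properties:
\begin{enumerate}
\item[\textnormal{(a)}] $x\mapsto w(t,x)$ is convex for every $t$;
\item[\textnormal{(b)}] $t\mapsto w(t,x)$ is nondecreasing;
\item[\textnormal{(c)}] $|w|\le K:=C_{d,T,R}\|f\|_{L^{d+1}((0,T)\times B_R)}$;
\item[\textnormal{(d)}] for every symmetric $A\ge0$, $\partial_tw+\operatorname{tr}(AD^2w)-\kappa_d(\det A)^{1/(d+1)}\widetilde f\ge0$ in the sense of distributions on $(0,T)\times B_{2R}$.
\end{enumerate}
The second-order part of (d) is a nonnegative measure by convexity and the time part by monotonicity. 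I will state precisely which of these four properties is taken from the classical theorem and check the sign conventions under time reversal. The determinant weight enters only through the inequality $\operatorname{tr}(AS)\ge d(\det A)^{1/d}(\det S)^{1/d}$ built into Krylov's construction.

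\textbf{Step 1: global convex extension in space.} A convex function bounded by $K$ on $B_{2R}$ has all subgradients at points of $\overline B_{R+1}$ bounded by $L:=2K/(R-1)$ (or by $CK/R$ after adjusting radii, e.g.\ working on $B_{3R}$). For each $t$ I define
\[
\overline w(t,x):=\sup\bigl\{w(t,y)+p\cdot(x-y):\ y\in\overline B_{R+1},\ p\in\partial_xw(t,y)\bigr\}.
\]
This $\overline w$ is convex and globally $L$-Lipschitz, it coincides with $w$ on $\overline B_{R+1}$ because a convex function equals the supremum of its supporting planes there, and it satisfies $|\overline w(t,x)|\le K+L(R+1+|x|)$. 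Monotonicity in $t$ is needed only on $B_{R+1}$, where $\overline w=w$. In time, I extend by $\overline w(t,\cdot):=\overline w(0,\cdot)$ for $t<0$ and $\overline w(t,\cdot):=\overline w(T,\cdot)$ for $t>T$, choosing the endpoint slices as one-sided limits so that monotonicity in $t$ survives across $t=0$ and $t=T$. Because $f$ vanishes near $t=0$ and $t=T$, the inequality (d) then holds as a distribution on $(-1,T+1)\times B_{R+1}$.

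\textbf{Step 2: mollification.} I mollify in space--time, $u^n:=\overline w*\zeta_{\delta_n}$ with $\delta_n<1$, and set $f^n:=\widetilde f*\zeta_{\delta_n}\ge0$. Convexity in $x$, the Lipschitz bound and the linear growth bound all pass through averaging, which gives (i) and (iv) uniformly in $n$. Property (ii) holds because $\widetilde f$ is smooth with compact support. For (iii), fix $A$. The distribution in (d) is nonnegative on the $\delta_n$-neighbourhood of $[0,T]\times B_R$, and convolution with $\zeta_{\delta_n}\ge0$ commutes with the constant-coefficient operator $\partial_t+\operatorname{tr}(AD^2)$. Hence $\partial_tu^n+\operatorname{tr}(AD^2u^n)\ge\kappa_d(\det A)^{1/(d+1)}f^n$ pointwise on $[0,T]\times B_R$ for every $A$ simultaneously, and the exceptional set does not depend on $A$ since the conclusion is pointwise for smooth functions.

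The main obstacle is the import step: extracting from Krylov's theorem exactly the distributional inequality (d) together with the bounds, with the correct time orientation. A second delicate point is the behaviour at the time endpoints, where the constant-in-time extension must not create a negative $\partial_t$ mass. If the imported potential is not continuous at $t=0$ or $t=T$, I will instead extend by the monotone one-sided limits, since only the sign of $\partial_t\overline w$ matters there.
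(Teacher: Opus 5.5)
\textbf{Same route as the paper, one real gap.} Your outline matches the paper: Krylov's convex potential on an enlarged cylinder, a convex and globally Lipschitz extension in $x$, a constant extension in time using the one-sided monotone limits, and space--time mollification. Your remark that monotonicity in $t$ is only needed on $B_{R+1}$ is correct. For $\delta_n<1$, the mollification at points of $[0,T]\times B_R$ only sees $(-1,T+1)\times B_{R+1}$, where your extension coincides with $w$.

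\textbf{The gap: joint measurability of $\overline w$.} You need $\overline w$ to be jointly Borel in $(t,x)$ to define $u^n=\overline w*\zeta_{\delta_n}$ on all of $\mathbb R\times\mathbb R^d$, which you use for smoothness and for (i) and (iv). You never address this, and for your formula it is not automatic. Since $\overline w(t,\cdot)$ is Lipschitz, it would suffice that $t\mapsto\overline w(t,x)$ is Borel. However, your supremum runs over uncountably many base points $y\in\overline B_{R+1}$, and no fixed countable set of base points reproduces it in general. For example, for $c\,(e\cdot y-(R+1))^+$ the only non-zero supporting slopes sit at the single boundary point $(R+1)e$, and $e$ is arbitrary.

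\textbf{How the paper avoids it.} The paper takes the supremum of the countably many affine functions $p\cdot x+q$ with $(p,q)\in(\mathbb Q^d\cap\overline B_L)\times\mathbb Q$. Such a function is admitted at time $t$ iff $p\cdot y+q\le v(t,y)$ for all $y\in\mathbb Q^d\cap B_{R+1}$. This gives:
\begin{itemize}
\item the admissibility sets $I_{p,q}$ are Borel, so the extension is jointly Borel;
\item it is convex and $L$-Lipschitz in $x$;
\item it is monotone in $t$ globally, because an admissible minorant stays admissible at later times;
\item it agrees with $v$ on $B_{R+1}$, by approximating a subgradient $p_x$ with $|p_x|\le L$ by rational slopes.
\end{itemize}
Either adopt this construction or supply a measurability proof for yours. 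One way: take base points only in the open ball $B_{R+1}$, and reduce to a countable dense set using that extreme subgradients at interior points are limits of gradients at nearby differentiability points, together with upper semicontinuity of $\partial_x w$.

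\textbf{A smaller slip on the radii.} The bound $L=2K/(R-1)$ needs $\overline B_{R+1}\subset B_{2R}$, i.e.\ $R>1$, and working on $B_{3R}$ still fails for $R\le 1/2$. Apply Krylov's theorem on $(0,T)\times B_{R+2}$, as the paper does. Then $\overline B_{R+1}$ is at distance $1$ from the boundary, every subgradient there has norm at most $2K$, and $L=C_{d,T,R}\|f\|_{L^{d+1}((0,T)\times B_R)}$ with no restriction on $R$.
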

	
	\begin{proof}
		Set
		\[
		F:=\|f\|_{L^{d+1}((0,T)\times B_R)}.
		\]

		\textbf{Step 1:}
		Apply Theorem~2 of \cite{Krylov-1976} on the enlarged cylinder
		\[
		(0,T)\times B_{R+2}
		\]
		to the time-reversed zero extension of
		$(s,x)\mapsto f(T-s,x)$, and then return to the original time variable
		$t=T-s$. In the notation of that theorem, the potential is spatially
		convex and monotone in time, its regularizations satisfy the
		determinant-weighted inequality for every constant symmetric
		non-negative definite matrix, and its size is controlled by the
		$L^{d+1}$-norm of the source. The potential is locally integrable by
		boundedness, so its standard regularizations converge to it in
		$L^1_{\mathrm{loc}}$, while the regularized sources converge to the
		original source in $L^{d+1}_{\mathrm{loc}}$. Testing the regularized
		inequalities against non-negative compactly supported smooth functions
		and passing to the limit therefore gives a bounded jointly Borel function
		\[
		v:[0,T]\times B_{R+2}\longrightarrow\mathbb R
		\]
		such that, for every $t\in[0,T]$, the map $x\mapsto v(t,x)$ is finite
		and convex, the map $t\mapsto v(t,x)$ is non-decreasing for every $x$,
		and, for every constant symmetric non-negative definite matrix $A$,
		\begin{equation}
			\label{eq:appendix-local-distributional-inequality}
			\partial_tv+\operatorname{tr}(A D^2v)
			\geq
			\kappa_d(\det A)^{1/(d+1)}f
		\end{equation}
		in the sense of distributions on
		$(0,T)\times B_{R+1}$. Moreover,
		\begin{equation}
			\label{eq:appendix-local-sup-bound}
			\|v\|_{L^\infty((0,T)\times B_{R+2})}
			\leq
			C_{d,T,R}F.
		\end{equation}
		We choose the endpoint values of $v$ to be its one-sided monotone
		limits. This convention does not alter the distributional inequality.
		
		A standard interior estimate for convex functions implies that $v$
		is uniformly Lipschitz on the smaller ball $B_{R+1}$. More
		precisely, using the local Lipschitz property and the existence of
		supporting hyperplanes for finite convex functions
		\cite[Sections~10 and~23]{Rockafellar-1970}, and enlarging
		$C_{d,T,R}$ if necessary, we have
		\begin{equation}
			\label{eq:appendix-local-subgradient-bound}
			\sup_{t\in[0,T]}
			\sup_{x\in B_{R+1}}
			\sup_{p\in\partial v(t,x)}
			|p|
			\leq L,
			\qquad
			L:=C_{d,T,R}F.
		\end{equation}

		\textbf{Step 2:}
		Let
		\[
		\mathcal Y:=\mathbb Q^d\cap B_{R+1},
		\qquad
		\mathcal P:=\mathbb Q^d\cap \overline B_L.
		\]
		For $(p,q)\in\mathcal P\times\mathbb Q$, define
		\[
		I_{p,q}
		:=
		\left\{
		t\in[0,T]:
		q+p\cdot y\leq v(t,y)
		\text{ for every }y\in\mathcal Y
		\right\}.
		\]
		Since $\mathcal Y$ is countable and
		$t\mapsto v(t,y)$ is Borel for every $y\in\mathcal Y$,
		each set $I_{p,q}$ is Borel. For every fixed $t$, the admissible
		family is non-empty: one may take $p=0$ and any rational number
		\[
		q<\inf_{y\in B_{R+1}}v(t,y).
		\]
		
		Define
		\begin{equation}
			\label{eq:appendix-countable-convex-extension}
			\overline v(t,x)
			:=
			\sup
			\left\{
			p\cdot x+q:
			(p,q)\in\mathcal P\times\mathbb Q,\;
			t\in I_{p,q}
			\right\},
			\qquad
			(t,x)\in[0,T]\times\mathbb R^d.
		\end{equation}
		Because the supremum is taken over a countable family of jointly
		Borel functions, $\overline v$ is jointly Borel. For each fixed
		$t$, it is a supremum of affine functions whose slopes have norm at
		most $L$. Hence $\overline v(t,\cdot)$ is convex and globally
		$L$-Lipschitz. Moreover, $\overline v$ is non-decreasing in $t$:
		if $s\leq t$, then every affine minorant admitted at time $s$ is also
		admitted at time $t$.
		
		We claim that
		\begin{equation}
			\label{eq:appendix-extension-agrees-locally}
			\overline v(t,x)=v(t,x),
			\qquad
			(t,x)\in[0,T]\times B_{R+1}.
		\end{equation}
		Indeed, every affine function admitted in
		\eqref{eq:appendix-countable-convex-extension} lies below
		$v(t,\cdot)$ on $B_{R+1}$. The inequality first holds on the dense
		set $\mathcal Y$ and then on all of $B_{R+1}$ by continuity.
		Therefore,
		\[
		\overline v(t,x)\leq v(t,x),
		\qquad x\in B_{R+1}.
		\]
		
		Conversely, fix $t\in[0,T]$ and $x\in B_{R+1}$. Choose
		\[
		p_x\in\partial v(t,x).
		\]
		By \eqref{eq:appendix-local-subgradient-bound},
		$|p_x|\leq L$. Choose a sequence
		\[
		p_k\in\mathcal P,
		\qquad
		p_k\longrightarrow p_x,
		\]
		and define
		\[
		c_k
		:=
		\inf_{y\in B_{R+1}}
		\bigl(v(t,y)-p_k\cdot y\bigr).
		\]
		Choose $q_k\in\mathbb Q$ such that
		\[
		c_k-\frac1k<q_k<c_k.
		\]
		Then $p_k\cdot y+q_k\leq v(t,y)$ on $B_{R+1}$, so
		$(p_k,q_k)$ is admissible. Since $B_{R+1}$ is bounded and
		$p_k\to p_x$,
		\[
		c_k+p_k\cdot x
		\longrightarrow
		v(t,x).
		\]
		It follows that
		\[
		\overline v(t,x)\geq v(t,x),
		\]
		which proves \eqref{eq:appendix-extension-agrees-locally}.
		
		Combining \eqref{eq:appendix-local-sup-bound},
		\eqref{eq:appendix-extension-agrees-locally}, and the global
		$L$-Lipschitz property gives
		\begin{equation}
			\label{eq:appendix-extension-growth}
			|\overline v(t,x)|
			\leq
			C_{d,T,R}F(1+|x|),
			\qquad
			(t,x)\in[0,T]\times\mathbb R^d.
		\end{equation}
		
		\medskip
		\noindent
		\textbf{Step 3: }
		Extend $\overline v$ to all $t\in\mathbb R$ by setting
		\begin{equation}
			\label{eq:appendix-time-extension}
			\widetilde v(t,x)
			:=
			\begin{cases}
				\overline v(0,x),&t<0,\\
				\overline v(t,x),&0\leq t\leq T,\\
				\overline v(T,x),&t>T.
			\end{cases}
		\end{equation}
		The endpoint values were chosen as the corresponding one-sided
		monotone limits. Since $\overline v=v$ on
		$[0,T]\times B_{R+1}$, the constant extension introduces no jump at
		$t=0$ or $t=T$ and hence no Dirac mass at either temporal endpoint.
		Any singular part of the distributional time derivative inside
		$(0,T)$ is non-negative, by monotonicity, and is already included in
		\eqref{eq:appendix-local-distributional-inequality}.
		
		For $t\notin[0,T]$, the function $\widetilde v$ is constant in
		time. Moreover, every spatial section is convex, and hence
		\[
		D^2\widetilde v\geq0
		\]
		in the sense of symmetric matrix-valued distributions. Therefore, for
		every constant symmetric non-negative definite matrix $A$,
		\begin{equation}
			\label{eq:appendix-extended-distributional-inequality}
			\partial_t\widetilde v
			+
			\operatorname{tr}(A D^2\widetilde v)
			\geq
			\kappa_d(\det A)^{1/(d+1)}\widetilde f
		\end{equation}
		in the sense of distributions on
		$\mathbb R\times B_{R+1}$.
		
		\medskip
		\noindent
		\textbf{Step 4: }
		Choose a non-negative product mollifier
		\[
		\rho(t,x)=\rho_0(t)\rho_1(x)
		\in C_c^\infty((-1,1)\times B_1),
		\qquad
		\int_{\mathbb R^{d+1}}\rho(t,x)\,dt\,dx=1,
		\]
		and define
		\[
		\rho_\delta(t,x)
		:=
		\delta^{-(d+1)}
		\rho(t/\delta,x/\delta).
		\]
		Let $0<\delta_n<1/2$, with $\delta_n\downarrow0$, and set
		\begin{equation}
			\label{eq:appendix-convolution-definition}
			u^n:=\widetilde v*\rho_{\delta_n},
			\qquad
			f^n:=\widetilde f*\rho_{\delta_n}.
		\end{equation}
		
		The convolution defining $u^n$ is finite because
		$\widetilde v$ has at most linear growth. Standard properties of
		convolution of distributions
		\cite[Chapter~4]{Hormander-2003} show that $u^n$ and $f^n$ are
		smooth and that differentiation commutes with convolution. Since
		$\rho_{\delta_n}\geq0$, we also have $f^n\geq0$.
		
		For every fixed $t$, $u^n(t,\cdot)$ is convex, because it is a
		non-negative average of translates of convex functions. The global
		$L$-Lipschitz estimate and
		\eqref{eq:appendix-extension-growth} are preserved by convolution, up
		to enlarging $C_{d,T,R}$. Thus
		\eqref{eq:appendix-global-growth-bound} and
		\eqref{eq:appendix-global-gradient-bound} hold.
		
		If $(t,x)\in[0,T]\times B_R$, then
		$\delta_n<1/2$ ensures that the spatial convolution only uses points
		in $B_{R+1}$. Convolving
		\eqref{eq:appendix-extended-distributional-inequality} with the
		non-negative kernel $\rho_{\delta_n}$ therefore yields
		\[
		\partial_tu^n(t,x)
		+
		\operatorname{tr}\!\left(A D^2u^n(t,x)\right)
		\geq
		\kappa_d(\det A)^{1/(d+1)}f^n(t,x)
		\]
		for every symmetric non-negative definite matrix $A$. This proves
		\eqref{eq:appendix-smoothed-aleksandrov-inequality}.
		
		Finally, since
		\[
		f\in C_c^\infty((0,T)\times B_R),
		\]
		its zero extension satisfies
		\[
		\widetilde f\in
		C_c^\infty(\mathbb R\times\mathbb R^d).
		\]
		The standard approximation properties of mollifiers therefore give
		\[
		f^n\longrightarrow\widetilde f
		\]
		uniformly on $\mathbb R\times\mathbb R^d$. The proof is complete.
	\end{proof}
	
	\begin{remark}
		\label{rem:constant-time-extension}
		The constant time extension in
		\eqref{eq:appendix-time-extension} permits regularization up to
		$t=0$ and $t=T$. Choosing the endpoint values as one-sided monotone
		traces prevents boundary Dirac masses in
		$\partial_t\widetilde v$, while spatial convexity makes the
		second-order contribution non-negative outside $[0,T]$.
	\end{remark}

\end{document}